\newcounter{Hequation}
\g@addto@macro\equation{\stepcounter{Hequation}}\makeatother
\newcommand{\Email}[1]{\href{mailto:#1}{\textsf{#1}}}
\newcommand{\be}[1]{\begin{equation}\label{#1}}
\newcommand{\ee}{\end{equation}}
\renewcommand{\(}{\left(}
\renewcommand{\)}{\right)}
\newcommand{\step}[1]{\par\medskip\noindent\textit{\textbf{$\bullet$ #1}}\medskip}
\newcommand{\Qed}{\hfill\ \qed}
\newcommand{\R}{{\mathbb R}}
\newcommand{\rint}{\int_{\R^d}}
\newcommand{\riint}{\iint_{\R^d\times\R^d}}
\newcommand{\irdx}[1]{\int_{\R^d}{#1}\,\d x}
\newcommand{\irdv}[1]{\int_{\R^d}{#1}\,\d v}
\newcommand{\irdmu}[1]{\int_{\R^d}{#1}\,\d\mu}
\newcommand{\irdxv}[1]{\iint_{\R^d\times\R^d}{#1}\,\d x\,\d v}
\newcommand{\irdvv}[1]{\iint_{\R^d\times\R^d}{#1}\,\d v\,\d v'}
\newcommand{\irdxmu}[1]{\iint_{\R^d\times\R^d}{#1}\,\d x\,\d\mu}
\newcommand{\n}[1]{\|#1\|}
\newcommand{\nrm}[2]{\|{#1}\|_{#2}}
\newcommand{\Ff}{\widehat f}
\newcommand{\M}{F}
\renewcommand\d{\mathrm d}
\newcommand{\bangle}[1]{\left\langle #1\right\rangle}
\newcommand{\re}{\mathop{\mathrm{Re}}}
\newcommand{\dt}{\frac{\mathrm d}{\mathrm dt}}
\newcommand{\I}[1]{\mathsf I_{#1}}
\DeclareMathOperator{\pv}{pv}
\newcommand{\vertiii}[1]{{\left\vert\kern-0.25ex\left\vert\kern-0.25ex\left\vert #1\right\vert\kern-0.25ex\right\vert\kern-0.25ex\right\vert}}
\begin{document}
\title{Fractional hypocoercivity}
\titlerunning{Fractional hypocoercivity}

\author{Emeric Bouin, Jean Dolbeault, Laurent Lafleche}
\authorrunning{E.~Bouin, J.~Dolbeault, L.~Lafleche}

\institute{CEREMADE (CNRS UMR n$^\circ$ 7534), PSL University, Universit\'e Paris-Dauphine\newline
Place de Lattre de Tassigny, 75775 Paris 16, France.\newline 
E-mail: \Email{bouin@ceremade.dauphine.fr} (E.B.), \Email{dolbeaul@ceremade.dauphine.fr} (J.D.),\newline
\hspace*{30pt}\Email{lafleche@ceremade.dauphine.fr} (L.L.)}

\date{\today}
\maketitle
\thispagestyle{empty}

\begin{abstract} This paper is devoted to kinetic equations without confinement. We investigate the large time behaviour induced by collision operators with fat tailed local equilibria. Such operators have an anomalous diffusion limit. In the appropriate scaling, the macroscopic equation involves a fractional diffusion operator so that the optimal decay rate is determined by a fractional Nash type inequality. At kinetic level we develop an $\mathrm L^2$-hypocoercivity approach and establish a rate of decay compatible with the fractional diffusion limit. \end{abstract}

\medskip\noindent\emph{Keywords:\/} Hypocoercivity; linear kinetic equations; fat tail equilibrium; Fokker-Planck operator; anomalous diffusion; fractional diffusion limt; scattering operator; transport operator; micro/macro decomposition; Fourier modes decomposition; fractional Nash inequality; algebraic decay rates
\\[4pt]\noindent\emph{Mathematics Subject Classification (2020):\/} Primary: \href{https://mathscinet.ams.org/mathscinet/search/mscbrowse.html?sk=default&sk=82C40&submit=Chercher}{82C40}; Secondary: \href{https://mathscinet.ams.org/mathscinet/search/mscbrowse.html?sk=default&sk=76P05&submit=Chercher}{76P05}; 
\href{https://mathscinet.ams.org/mathscinet/search/mscbrowse.html?sk=default&sk=35K65&submit=Chercher}{35K65}; 
\href{https://mathscinet.ams.org/mathscinet/search/mscbrowse.html?sk=default&sk=35Q84&submit=Chercher}{35Q84}; 
\href{https://mathscinet.ams.org/mathscinet/search/mscbrowse.html?sk=default&sk=35P15&submit=Chercher}{35P15}.\bigskip
\section{Introduction and main results}\label{Sec:intro}

We study the decay rates of the solutions in the kinetic equation
\be{eq:main}
\partial_tf+v\cdot\nabla_xf=\mathsf Lf\,,\quad f(0,x,v)=f^{\mathrm{in}}(x,v)
\ee
when the \emph{local equilibrium}~$\M$ has a \emph{fat tail} given for some $\gamma>0$ by
\be{eq:hypfattailed}
\forall\,v\in\R^d,\quad\M(v)=\frac{c_\gamma}{\bangle v^{d+\gamma}}\quad\mbox{where}\quad\bangle v:=\sqrt{1+|v|^2}\,.
\ee
In~\eqref{eq:main}, the distribution function $f(t,x,v)$ depends on a \emph{position} variable $x\in\R^d$, on a \emph{velocity} variable $v\in\R^d$, and on \emph{time} $t\ge 0$. The \emph{collision operator} $\mathsf L$ acts only on the $v$ variable and, by assumption, its null space is spanned by~$\M$. In~\eqref{eq:hypfattailed} the normalization constant is $c_\gamma=\pi^{-d/2}\,\Gamma\big((d+\gamma)/2\big)/\Gamma(\gamma/2)$ and associated to the measure
\[
\d\mu=\M^{-1}(v)\,\d v\,,
\]
we define a scalar product and a norm respectively by
\be{ScalarProduct-Norm}
\bangle{f,g}:=\irdmu{\bar f\,g}\quad\mbox{and}\quad\n{f}^2:=\irdmu{|f|^2}
\ee
for functions $f$ and $g$ of the variable $v\in\R^d$. Here $\bar f$ denotes the complex conjugate of $f$, as we shall later allow for complex valued functions. For any $k\in\R$, we define
\[
\vertiii f_k:=\nrm f{\mathrm L^1(\d x\,\d v)\cap\mathrm L^2(\bangle v^k\,\d x\,\d\mu)}:=\(\nrm f{\mathrm L^1(\d x\,\d v)}^2+\nrm f{\mathrm L^2(\bangle v^k\,\d x\,\d\mu)}^2\)^{1/2}\,.
\]

We consider three examples of linear collision operators and define for each of them an associated parameter $\beta$, to be discussed later:\\
$\rhd$ the \emph{Fokker-Planck} operator with $\beta:=2$ and local equilibrium $\M$
\[
\mathsf L_1 f:=\nabla_v\cdot\(\M\,\nabla_v\big(\M^{-1}f\big)\).
\]
$\rhd$ the \emph{linear Boltzmann} operator, or \emph{scattering} collision operator
\[
\mathsf L_2 f:=\rint\mathrm b(\cdot,v')\,\Big(f(v')\,\M(\cdot)-f(\cdot)\,\M(v')\Big)\,\d v'\,,
\]
with positive, locally bounded \emph{collision frequency}
\be{hyp:b_beta}
\nu(v):=\rint\mathrm b(v,v')\,\M(v')\,\d v'\underset{|v|\to+\infty}\sim|v|^{-\beta}
\ee
for a given $\beta\in\R$, and \emph{local mass conservation}, that is,
\be{hyp:b_mass}
\rint\big(\mathrm b(v,v')-\mathrm b(v',v)\big)\,\M(v')\,\d v'=0\,.
\ee
We assume the existence of a constant $Z>0$ such that, for any $v,\,v'\in\R^d$,
\be{hyp:b_bounds}
\mathrm b(v,v')\ge\frac1Z\bangle v^{- \beta} \bangle{v'}^{-\beta}\,.
\ee
We also assume that for any $k \in (0,\gamma+\beta)$,
\begin{align}
&\mathcal C_{\mathrm b}(k):=\sup_{v\in\R^d}\bangle v^{\beta}\rint\mathrm b(v',v)\bangle{v'}^k\,\M(v')\,\d v'<+\infty\,,\label{hyp:b_bounds2}\\
&\mathcal C_{\mathrm b}:=\riint\frac{\mathrm b(v',v)^2}{\nu(v')\,\nu(v)}\M\M'\,\d v\,\d v'<+\infty\,.\label{hyp:b_bounds3}
\end{align}
All these assumptions are verified for instance by the \emph{collision kernel} $\mathrm b$ such that
\begin{align*}
&\mbox{either}&\mathrm b(v,v')&=Z^{-1} \bangle{v'}^{- \beta}\bangle v^{- \beta}&\mbox{with}\quad|\beta|\le\gamma\,,\\
&\mbox{or}&\mathrm b(v,v')&=|v'-v|^{- \beta}&\mbox{with}\quad \beta\in [0,d/2)\,.
\end{align*}
$\rhd$ the \emph{fractional Fokker-Planck} operator
\[
\mathsf L_3 f:=\Delta_v^{\sigma/2}f+\nabla_v\cdot\(E\,f\)
\]
with $0<\sigma<2$, $\beta:= \sigma - \gamma$ and a radial \emph{friction force} $E=E(v)$ as a solution of
\be{eq:def_E}
\mathsf L_3 \M=\Delta_v^{\sigma/2}\M+\nabla_v\cdot\(E\,\M\)=0\,.
\ee
It turns out from a technical result exposed in Appendix~\ref{Sec:Ebound} that such a friction force then behaves like $\bangle v^{-\beta} v$ at infinity. 

We shall say that \emph{Assumption ~{\rm (H)} holds} if $\mathsf L$ is one of the three operators corresponding to the cases~$\mathsf L=\mathsf L_1$,~$\mathsf L=\mathsf L_2$, or~$\mathsf L=\mathsf L_3$, with corresponding assumptions and parameter $\beta$. 

Observe that due to total mass conservation, an initial data with finite total mass will necessary go to zero as $t$ goes to infinity since there is no global equilibrium state with finite mass except from zero. The aim of this paper is thus to derive rates of decay to zero. We shall use the notation $\beta_+ = \max(0,\beta)$ and the convention $1/0_+ = +\infty$. 
Let
\be{alpha}
\alpha := \begin{cases}
\frac{\gamma + \beta}{1 +\beta}\in(0,2)\quad&\mbox{if}\quad \gamma<2 + \beta\,,\\
2\,\quad\quad&\mbox{if}\quad \gamma\ge2 + \beta\,.
\end{cases}
\ee
\begin{theorem}\label{th:main2} Let $d\ge2$, $\beta\in\R$, $\gamma> {\max\{0,- \beta\}}$, $\alpha$ given by~\eqref{alpha} and $k \in [0,\gamma)$. Under Assumption~{\rm (H)}, let us consider a solution $f$ to~\eqref{eq:main} with initial condition $f^{\mathrm{in}}\in\mathrm L^1(\d x\,\d v)\cap\mathrm L^2\big(\bangle v^k\d x\,\d\mu\big)$.
If $\gamma \neq 2 + \beta$ or if $\gamma=2 + \beta$ and $\frac{k}{\beta_+} >\frac d2$, then 
\[
\forall\,t\ge0\,,\quad  \nrm{f(t,\cdot,\cdot)}{\mathrm L^2(\d x\,\d\mu)}^2\le \frac C{(1+t)^\tau}\,\vertiii{f^{\mathrm{in}}}_k^2\quad\mbox{with}\quad\tau =\min\big\{\tfrac d{\alpha },\tfrac k{\beta_+}\big\}\,.
\]
In the critical case $\gamma=2 + \beta$, and with either $k=0$ if $\beta < 0$, or $k>0$ if $\beta\ge0$, and under the additional condition $\frac{k}{\beta_+}\le\frac d2$ if $d\ge3$, 
\[
\forall\,t\ge2\,, \quad \nrm{f(t,\cdot,\cdot)}{\mathrm L^2(\d x\,\d\mu)}^2\le \frac C{(t\,\log t)^{d/2}}\,\vertiii{f^{\mathrm{in}}}_k^2\,.
\]
In the above estimates, $C>0$ is a constant which does not depend on $f^{\mathrm{in}}$.
\end{theorem}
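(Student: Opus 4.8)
The plan is to set up an $\mathrm L^2$-hypocoercivity estimate in Fourier variables (with respect to $x$) and then feed the resulting pointwise-in-$\xi$ decay into a fractional Nash-type interpolation argument. First I would perform the Fourier transform in $x$, writing $\hat f(t,\xi,v)$, so that the transport term $v\cdot\nabla_x$ becomes the multiplication operator $i\,(v\cdot\xi)$ and the equation decouples across modes $\xi$. For each fixed $\xi$ the problem becomes a family of $v$-only evolution equations, and the standard hypocoercivity program (à la Dolbeault–Mouhot–Schmeiss, adapted to the fat-tailed equilibrium) consists of building a modified entropy / Lyapunov functional $\mathsf H_\xi[\hat f] = \n{\hat f}^2 + \varepsilon\,\re\bangle{A_\xi\hat f,\hat f}$ where $A_\xi$ is built from the macroscopic projection $\Pi$ onto $\mathrm{span}(\M)$ and a suitable regularization of $(v\cdot\xi)$. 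The microscopic coercivity of $\mathsf L$ on $(\mathrm{Id}-\Pi)$, the macroscopic coercivity, and the boundedness of the auxiliary operator then combine to yield a differential inequality of the form $\dt\,\mathsf H_\xi \le -\,\kappa\,\Lambda(\xi)\,\mathsf H_\xi$, where the key point is to identify the correct small-$\xi$ behaviour $\Lambda(\xi)\sim|\xi|^\alpha$ (when $\gamma<2+\beta$, hence anomalous diffusion) or $|\xi|^2$ (when $\gamma\ge 2+\beta$), matching the exponent $\alpha$ in~\eqref{alpha}.

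**Next,** integrating this mode-by-mode inequality gives $\n{\hat f(t,\xi,\cdot)}^2 \lesssim e^{-\kappa\,\Lambda(\xi)\,t}\,\mathsf H_\xi[\hat f^{\mathrm{in}}]$, and the $\mathrm L^1(\d x\,\d v)$ control of the initial data bounds $\mathsf H_\xi[\hat f^{\mathrm{in}}]$ uniformly in $\xi$ in terms of $\vertiii{f^{\mathrm{in}}}_k^2$, while the $\mathrm L^2(\bangle v^k\d x\,\d\mu)$ piece controls it after integrating a weight in $\xi$; one must be careful that the velocity weight $\bangle v^k$ is what lets the auxiliary operator and the moments $\int v\,\M\,\d v$-type quantities make sense, which is exactly where the constraint $k<\gamma$ enters. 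By Plancherel, $\nrm{f(t)}{\mathrm L^2(\d x\,\d\mu)}^2 = \frac1{(2\pi)^d}\int_{\R^d} \n{\hat f(t,\xi,\cdot)}^2\,\d\xi$, and splitting the $\xi$-integral into low frequencies $|\xi|\le R(t)$ and high frequencies $|\xi|>R(t)$: on low frequencies one uses $\n{\hat f(t,\xi,\cdot)}^2\le \n{\hat f^{\mathrm{in}}(\xi,\cdot)}_{\mathrm L^\infty_\xi}^2 \lesssim \nrm{f^{\mathrm{in}}}{\mathrm L^1}^2$ to get a contribution $\sim R(t)^d$, and on high frequencies one uses $e^{-\kappa\Lambda(\xi)t}\le e^{-\kappa c R(t)^\alpha t}$ and a crude bound; optimizing over $R(t)$ produces the $t^{-d/\alpha}$ rate. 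This is precisely the fractional-Nash mechanism: the macroscopic equation's decay $\|u\|_2^2\lesssim t^{-d/\alpha}$ from the fractional Nash inequality is reproduced at kinetic level.

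**The second exponent $k/\beta_+$** in $\tau=\min\{d/\alpha,k/\beta_+\}$ comes from a separate source: the velocity-weighted norm $\nrm{f(t)}{\mathrm L^2(\bangle v^k\d x\,\d\mu)}$ does not stay bounded by the unweighted one for free, because the collision operator only provides coercivity with a degenerate weight $\bangle v^{-\beta}$; propagating the weight $\bangle v^k$ costs an algebraic factor, and a moment/interpolation estimate gives that the weighted norm decays only like $t^{-k/\beta_+}$, which then caps the overall rate when $k/\beta_+ < d/\alpha$. I would handle this via a weighted energy estimate: multiply the equation by $\bangle v^k$ and absorb the bad terms using~\eqref{hyp:b_bounds2} and the friction-force asymptotics from Appendix~\ref{Sec:Ebound}, then interpolate between the weighted $\mathrm L^2$ bound and the (already established) unweighted decay.

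**The main obstacle** I expect is the critical case $\gamma=2+\beta$, where $\alpha=2$ but the anomalous and classical diffusion scalings coincide logarithmically — here the coercivity constant $\Lambda(\xi)$ degenerates like $|\xi|^2/|\log|\xi||$ (or one loses a $\log$ in the auxiliary-operator bound because a borderline velocity moment $\int \bangle v^{2}\M\,\d v$ diverges logarithmically), so the naive optimization gives $(t\log t)^{-d/2}$ instead of $t^{-d/2}$. Getting the sharp logarithmic correction requires a careful choice of the truncation in the auxiliary operator $A_\xi$ (a $\xi$-dependent velocity cutoff at scale $|\xi|^{-1}$) and a precise accounting of the borderline moment, and then re-running the frequency splitting with $R(t)$ chosen so that $R(t)^2 t\,|\log R(t)|\sim 1$. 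The side conditions on $k/\beta_+$ versus $d/2$ in the statement reflect exactly which of the two competing mechanisms (frequency-optimization loss vs.\ weight-propagation loss) dominates the log, and disentangling them is the delicate bookkeeping at the heart of the proof.
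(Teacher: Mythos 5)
The broad architecture you describe---Fourier transform in $x$, a modified entropy $\mathsf H_\xi$ built from an auxiliary operator $\mathsf A_\xi$, microscopic plus macroscopic coercivity, and a fractional-Nash mechanism producing the $t^{-d/\alpha}$ rate---matches the paper's strategy. But the key technical move you propose is genuinely different from what the paper does, and it breaks precisely where the second exponent $k/\beta_+$ enters.

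You want to close the estimate \emph{mode by mode}: obtain $\dt\,\mathsf H_\xi\le-\,\kappa\,\Lambda(\xi)\,\mathsf H_\xi$, deduce $\n{\widehat f(t,\xi,\cdot)}^2\lesssim e^{-\kappa\,\Lambda(\xi)\,t}$ for each $\xi$, and then integrate over $\xi$ with a frequency cutoff at $R(t)\sim t^{-1/\alpha}$. This would require microscopic coercivity in the form of a spectral gap, $-\langle\mathsf L g,g\rangle\gtrsim\n{(1-\mathsf\Pi)g}^2$, uniformly in $\xi$. That gap does \emph{not} exist here when $\beta>0$, because the collision frequency $\nu(v)\sim|v|^{-\beta}$ degenerates at infinity; the only available estimate is a weighted Poincar\'e/interpolation inequality, namely Proposition~\ref{prop:WPII}, which has the form $\nrm{(1-\mathsf\Pi)f}{}^{2(1+\beta/k)}\lesssim\big(-\irdxmu{f\,\mathsf Lf}\big)\,\nrm f k^{2\beta/k}$. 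No choice of $\Lambda(\xi)$ lets you write a linear Gr\"onwall inequality for $\mathsf H_\xi$ out of this, because the right-hand side has a superlinear power of the norm and involves the weight $\nrm{\cdot}k$, which is controlled only after integrating in $\xi$ (Proposition~\ref{prop:propag_L2_m}), not pointwise in $\xi$. The paper's actual mechanism is to work with the $\xi$-integrated functional $\mathsf H[f]=\int\mathsf H_\xi[\widehat f]\,\d\xi$ and combine the $x$-integrated interpolation inequality (microscopic) with the $\xi$-integrated fractional Nash inequality (macroscopic, Corollary~\ref{Cor:fractionalNash2}) into a \emph{nonlinear} Gr\"onwall inequality $-\dt\mathsf H[f]\gtrsim\mathsf H[f]^{1+\kappa}$ with $\kappa=\max\{\alpha/d,\beta/k\}$. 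The rate $\tau=\min\{d/\alpha,k/\beta_+\}$ then falls out of this single ODE; it is not the cap of two independent decays.

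Your proposed fix for the $\beta>0$ case---``the weighted norm decays like $t^{-k/\beta_+}$, which then caps the overall rate''---is not what happens. Proposition~\ref{prop:propag_L2_m} only gives \emph{boundedness} (propagation) of $\nrm{f(t)}{\mathrm L^2(\langle v\rangle^k\d x\,\d\mu)}$, not decay, and that is all the proof uses; the $k/\beta_+$ exponent is the cost of the weighted interpolation in the dissipation, not a decay of the weighted norm. ``Interpolate between the weighted $\mathrm L^2$ bound and the (already established) unweighted decay'' is circular: the unweighted decay at rate $\min\{d/\alpha,k/\beta_+\}$ is the target, and it cannot be established at the better rate $d/\alpha$ first and then corrected. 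A further, smaller issue: in your frequency-splitting step you bound $\n{\widehat f(t,\xi,\cdot)}^2$ by the $\mathrm L^1(\d x\,\d v)$ norm, but that control is valid only for $\mathsf\Pi f=\rho_f\M$ (as in Lemma~\ref{lem:fractionalNash2}); it fails for the full $\widehat f$, since $\mathrm L^1(\d x\,\d v)$ does not dominate $\mathrm L^\infty_\xi\mathrm L^2(\d\mu)$.

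On the critical case $\gamma=2+\beta$: your intuition that a logarithm arises from a borderline integral is correct, and the paper indeed tracks $\mu_2(\xi)\sim-\,\tfrac1{d|1+\beta|}|\xi|^2\log|\xi|$ (Lemma~\ref{lem:equiv_symbol_mu}) and then a $\log$-corrected Nash inequality (Lemma~\ref{lem:fractionalNash3}). But you should be aware that the logarithm is produced in the estimate of the symbol $\mu_2$ inside the macroscopic coercivity, not by a choice of $\xi$-dependent velocity cutoff in $\mathsf A_\xi$---the operator $\mathsf A_\xi$ already has the regularization $\langle v\rangle^{2|1+\beta|}|\xi|^2$ in its denominator from the start, for all $\gamma$, and the critical case falls out of the same formula.
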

In Theorem~\ref{th:main2}, the case $\gamma>\beta+2$ gives rise to a decay rate corresponding to a standard, \emph{i.e.}, non-fractional diffusion regime, with $\alpha=2$, as we shall see later. For legibility, we state the $d=1$ case separately.
\begin{theorem}\label{th:main1} Let $d = 1$, $\beta\in\R$, ${\gamma>\max\{0,- \beta\}}$, $\alpha$ given by~\eqref{alpha} and $k\in [0,\gamma)$. Under Assumption~{\rm (H)}, let us consider a solution $f$ of~\eqref{eq:main} with initial condition $f^{\mathrm{in}}\in\mathrm L^1(\d x\,\d v)\cap\mathrm L^2\big(\bangle v^k\d x\,\d\mu\big)$. 
If $\gamma \neq 2 +\beta$, then the estimate
\[
\forall\,t\ge0\,,\quad  \nrm{f(t,\cdot,\cdot)}{\mathrm L^2(\d x\,\d\mu)}^2\le \frac C{(1+t)^\tau}\,\vertiii{f^{\mathrm{in}}}_k^2
\]
holds
\begin{itemize}
\item[$\bullet$] for any $\tau < \tfrac{k+\gamma}{k\,\alpha-\,\gamma+\,\beta\,(\alpha+1)}$ if $\beta > 1$, $\gamma\in(1,\beta)$, $k\in\(\tfrac{\gamma}{\alpha},\gamma\)$, 
\item[$\bullet$] with $\tau=\min\big\{\tfrac d{\alpha },\tfrac k{\beta_+}\big\}$ in the other cases.
\end{itemize}
In the case $\gamma=2 + \beta$, then
\[
\forall\,t\ge2\,, \quad \nrm{f(t,\cdot,\cdot)}{\mathrm L^2(\d x\,\d\mu)}^2\le \frac C{(t\,\log t)^{d/2}}\,\vertiii{f^{\mathrm{in}}}_k^2\,,
\]
with either $k=0$ if $\beta<0$, or $k >0$ if $\beta\ge0$.
\\
In the above estimates, $C>0$ is a constant which does not depend on $f^{\mathrm{in}}$.
\end{theorem}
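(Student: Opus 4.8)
The plan is to recover the one–dimensional statement as a variant of the $d\ge 2$ argument underlying Theorem~\ref{th:main2}, tracking carefully where the dimension enters and isolating the one genuinely new regime. First I would perform the micro/macro decomposition $f=\Pi f+(1-\Pi)f$, where $\Pi$ is the orthogonal projection onto $\mathrm{span}(\M)$ for $\bangle{\cdot,\cdot}$, i.e.\ $\Pi f=\rho_f\,\M$ with $\rho_f=\irdv f$, and take the Fourier transform in $x$ so that \eqref{eq:main} becomes, for each mode $\xi\in\R$, a family of evolution equations $\partial_t\Ff+i\,(v\cdot\xi)\,\Ff=\mathsf L\Ff$ on velocity space. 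On this family one builds the usual modified entropy (Lyapunov) functional $\mathsf H[\Ff]:=\tfrac12\n{\Ff}^2+\delta\,\re\bangle{\mathsf A\Ff,\Ff}$, with $\mathsf A$ the Dolbeault–Mouhot–Schmeiser–type operator adapted to the fat–tailed equilibrium (so that $\mathsf A$ ``sees'' the macroscopic part), and derive a differential inequality $\dt\mathsf H[\Ff]\le-\kappa(\xi)\,\mathsf H[\Ff]$. The spectral rate $\kappa(\xi)$ will behave like $|\xi|^{\alpha}$ for $|\xi|$ small — this is where the fractional diffusion limit with exponent $\alpha$ from \eqref{alpha} enters — and like a constant for $|\xi|$ bounded away from $0$; the constant-in-$\xi$ part requires the coercivity estimates on $\mathsf L$ furnished by hypotheses \eqref{hyp:b_bounds}–\eqref{hyp:b_bounds3} (and the analogous bounds for $\mathsf L_1,\mathsf L_3$), together with the bound on $E$ from Appendix~\ref{Sec:Ebound} in the case $\mathsf L=\mathsf L_3$.

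Next I would integrate over $\xi$. Splitting $\int_\R\n{\Ff(t,\xi,\cdot)}^2\,\d\xi$ into low modes $|\xi|\le R(t)$ and high modes $|\xi|>R(t)$: on high modes one gains the factor $e^{-\kappa(R(t))\,t}$ from the decay estimate, while on low modes one uses that $\|\rho_{f^{\mathrm{in}}}\|_{\mathrm L^\infty_\xi}\le\|f^{\mathrm{in}}\|_{\mathrm L^1(\d x\,\d v)}$ so that the contribution is controlled by $R(t)$ itself (this is exactly the Fourier–space form of the fractional Nash inequality announced in the abstract). Optimizing $R(t)$ yields the algebraic rate $t^{-d/\alpha}$. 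The competing mechanism is the loss of velocity moments: the weight $\bangle v^k$ is only finite on the initial datum, and along the flow the moment of order $k$ is consumed at a rate governed by $\beta_+$, producing the second term $t^{-k/\beta_+}$ in $\tau$. Combining the two gives $\tau=\min\{d/\alpha,\,k/\beta_+\}$, which is the ``other cases'' clause; the borderline $\gamma=2+\beta$ produces the logarithmic correction $(t\log t)^{-d/2}$ by the same optimization with a logarithmically corrected $\kappa(\xi)$, exactly as in Theorem~\ref{th:main2}.

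The one feature special to $d=1$ is the regime $\beta>1$, $\gamma\in(1,\beta)$, $k\in(\gamma/\alpha,\gamma)$. Here $\alpha=(\gamma+\beta)/(1+\beta)$ is small, $d/\alpha$ is large, and $d=1$ is too low for the naive moment bookkeeping to close: the weighted estimate on $(1-\Pi)\Ff$ at low modes degenerates, and one cannot afford the same crude bound $R(t)$ on the low-mode mass because the single spatial dimension does not give enough room. The remedy I would use is an interpolation between the $\mathrm L^1$ bound on $\rho$ and the weighted $\mathrm L^2(\bangle v^k\d\mu)$ information, trading a fractional power of the moment decay for a sharper low-mode estimate; the exponent one gets from balancing these is precisely $\frac{k+\gamma}{k\alpha-\gamma+\beta(\alpha+1)}$, and since the interpolation is not tight this is only achieved up to $\tau<\frac{k+\gamma}{k\alpha-\gamma+\beta(\alpha+1)}$. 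I expect this interpolation step — showing that the denominator $k\alpha-\gamma+\beta(\alpha+1)$ is positive in the stated parameter range and that the resulting exponent genuinely dominates $\min\{d/\alpha,k/\beta_+\}$ there — to be the main obstacle; the rest is a dimensional specialization of the machinery already developed for $d\ge2$.
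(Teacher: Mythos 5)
Your outline follows the broad architecture of the paper (micro/macro decomposition, Fourier mode analysis, modified entropy functional, fractional Nash–type bound on low frequencies, loss of velocity moments at rate $k/\beta_+$), but there are two genuine gaps, one of which invalidates the proposed proof in the main range of parameters.

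First, the claimed mode-by-mode Gr\"onwall inequality $\dt\mathsf H_\xi[\Ff]\le-\kappa(\xi)\,\mathsf H_\xi[\Ff]$ cannot hold in the regime $\beta>0$. The microscopic coercivity estimates (Lemma~\ref{Lem:Hardy-Poincare}, Lemma~\ref{lem:ScatteringGap}, Corollary~\ref{Cor:FFPPoincare}) give, for each $\xi$,
\[
-\bangle{\Ff,\mathsf L\Ff}\gtrsim\nrm{(1-\mathsf\Pi_{-\beta})\Ff}{-\beta}^2\,,
\]
which controls a $\bangle v^{-\beta}$-weighted norm. When $\beta\le 0$ this dominates $\n{(1-\mathsf\Pi)\Ff}^2$ and a pointwise Gr\"onwall does close; when $\beta>0$ it does not, and the paper instead passes through Proposition~\ref{prop:WPII}, which interpolates with the $\bangle v^k$-weighted norm. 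That weighted norm is controlled only after integrating in $x$ via Proposition~\ref{prop:propag_L2_m}, whose proof (Lyapunov function, $\mathrm L^1/\mathrm L^\infty$ interpolation, Duhamel splitting) is intrinsically global in $x$. Consequently the differential inequality is nonlinear ($-\dt\mathsf H\gtrsim\mathsf H^{1+\kappa}$) at the level of the $\xi$-integrated entropy, and there is no $e^{-\kappa(\xi)t}$ to split over low and high modes as you propose. The exceptional $d=1$ regime you are trying to treat has $\beta>1$, so your mechanism fails exactly where the result is new.

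Second, the genuine extra argument for the clause $\tau<\tfrac{k+\gamma}{k\alpha-\gamma+\beta(\alpha+1)}$ (and more generally for $\gamma\le\beta$) is not an interpolation between the $\mathrm L^1$ bound on $\rho$ and the $k$-moment, as you suggest. The issue when $\gamma\le\beta$ is that the coercivity weight $\eta=-\beta$ is no longer admissible ($\eta\ge -\gamma$ is forced by integrability, see Lemma~\ref{lem:equiv_symbol_lambda}), so one must take $\eta\in(-\gamma,0)$, which weakens both the microscopic dissipation (Corollary~\ref{Cor:WPIIext}) and the error term in Corollary~\ref{Cor:fractionalNash2ext}. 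The paper's device (Section~\ref{Sec:beta+gamma<0}) is to let the coupling strength in the entropy depend on time, $\delta(t)=\delta_0\,(1+\varepsilon t)^{-\mathsf a}$, and to close by a barrier function $\overline{\mathcal H}(t)=(1+t)^{-\tau}$, optimizing over $(\eta,k,\mathsf a)$. This optimization is where the competing rates $\kappa=\max\{\alpha/d,-\eta/k\}$ and $\mathsf a=(\beta+\eta)/\beta$ are balanced and the denominator $\alpha(k+\beta)+\eta+\beta$ (hence $k\alpha-\gamma+\beta(\alpha+1)$ in the limit $\eta\to-\gamma$) appears; it is not a Nash-type interpolation on $\rho$. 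You would need to supply this time-dependent $\delta$ argument (or an equivalent) to make the one-dimensional statement go through, as the naive optimization you describe does not control the microscopic remainder that the weakened $\eta$-weight leaves unabsorbed.
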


See Figures~\ref{fig:d3},~\ref{fig:d2} and~\ref{fig:d1} for a representation of the regions of the parameters respectively in dimensions $d=3$, $d=2$ and $d=1$.

If $d\ge2$ and $\beta\ge0$, the threshold between the region with decay rate $O(t^{-k/ \beta})$, with $k<\gamma$ but close enough to $\gamma$, and the region with decay rate $O(t^{-d/\alpha })$ is obtained by solving $\frac d{\alpha}=\frac k\beta$ in the limit case $k=\gamma$. The corresponding curve is given by $\beta\mapsto\gamma_\star(\beta)$ defined as
\be{gammastar}\begin{array}{rl}
\textstyle\gamma_\star(\beta):=\max\left\{\frac 12\(\sqrt{(4\,d+1)\, \beta^2 + 4\,d\,\beta} - \beta\),\frac d2\, \beta\right\}\quad&\text{if}\quad d\ge3\,,\\[8pt]
\textstyle\gamma_\star(\beta)=\frac12\(\sqrt{\beta\,(9\,\beta+8)} - \beta\)\quad&\text{if}\quad d=2\,.
\end{array}\ee
If $d\ge3$, notice that $\gamma_\star(\beta) = \big(\sqrt{(4\,d+1)\, \beta^2 + 4\,d\,\beta} - \beta\big)/2$ if $\beta \in [0,4/(d-2)]$ and $\gamma_\star( \beta)=\frac d2\,\beta$ if $  \beta\ge 4/(d-2)$. See Figures~\ref{fig:d3} and~\ref{fig:d2}. If $d=1$, the results when $\gamma < 2 + \beta$ slightly differs from the case $d\ge2$ and
\[
\textstyle\gamma_\star(\beta)= \frac12\(\sqrt{(5\,\beta + 4)\, \beta}- \beta\) \quad \text{if}\quad \beta\in[0,1],
\]
while a new intermediate rate appears when $\beta \geq 1$ and $\gamma\in(1,\beta)$.

\begin{figure}
\begin{center}
\includegraphics[width=8cm]{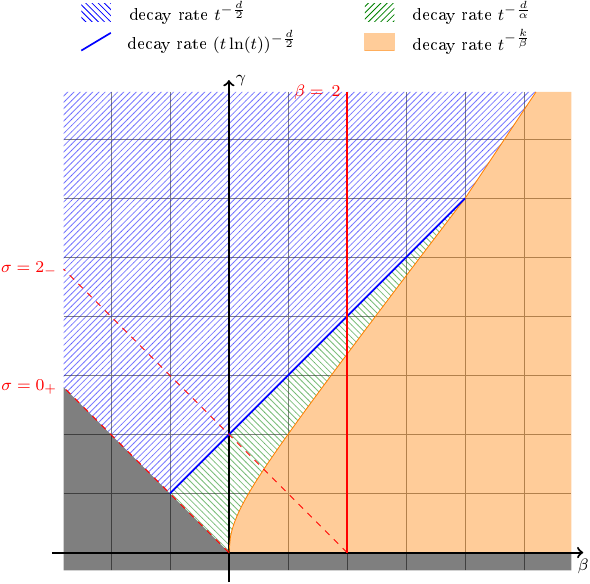}
\caption{As $t\to+\infty$, decay rates are at most $O(t^{-k/\beta})$ if $- \beta<0<k<\gamma$ sufficiently close to~$\gamma$ and $\gamma<\gamma_\star( \beta)$, with $\gamma_\star$ given by~\eqref{gammastar}, and otherwise either $O(t^{-d/\alpha})$ if $\max\{0,- \beta\}<\gamma<2 + \beta$ or $O(t^{-d/2})$ if $\gamma>\max\{2 + \beta,- \beta\}$. The picture corresponds to $d=3$. In Case~$\mathsf L=\mathsf L_3$, $\gamma$ is limited to the strip enclosed between the two dashed red lines.
}\label{fig:d3}
\end{center}
\end{figure}
\begin{figure}
\begin{center}
\includegraphics[width=8cm]{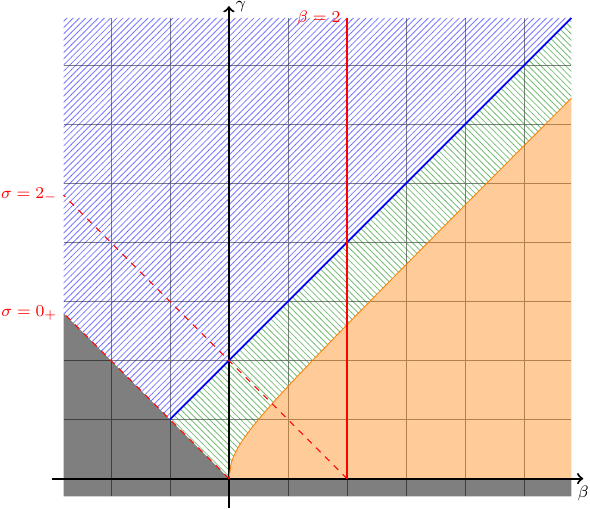}
\caption{Decay rates depending on $\beta$ and $\gamma$ in dimension $d=2$, as $t\to+\infty$. The caption convention is the same as for Figure~\ref{fig:d3}. When $ \beta\ge0$, the upper threshold of the region with decay rate $O(t^{-k/\beta})$, with $k$ close enough to $\gamma$, is $\gamma=\gamma_\star(\beta)$.}\label{fig:d2}
\end{center}
\end{figure}
\begin{figure}
\begin{center}
\includegraphics[width=8cm]{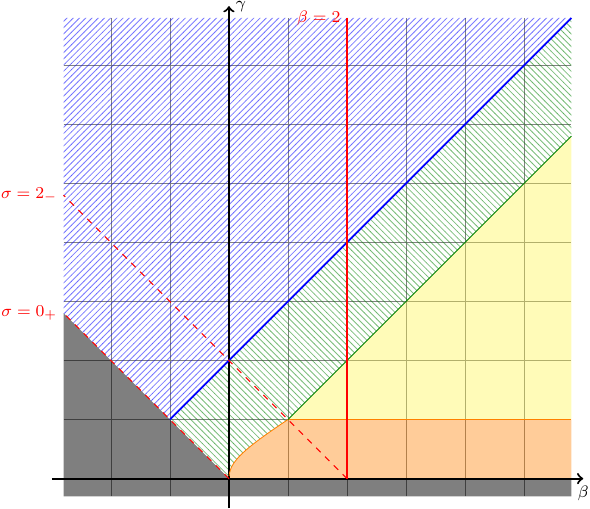}
\caption{Decay rates depending on $\beta$ and $\gamma$ in dimension $d=1$, as $t\to+\infty$. When $\beta\ge0$, $k$ is chosen arbitrarily close to~$\gamma$. The caption convention is the same as for Figure~\ref{fig:d3} except for $1<\gamma<\beta$ which corresponds to the intermediate decay rate $\tau < \frac{2\gamma}{\gamma\(\alpha-1\)+\beta\(\alpha+1\)}$ indicated in Theorem~\ref{th:main2}.}\label{fig:d1}
\end{center}
\end{figure}

The large time decay rates are governed by the scaling properties of~\eqref{eq:main}. According to~\cite{mellet_fractional_2011,MR2588245} (more references will be given later), a \emph{local equilibrium with fat tail} implies that the diffusion limit involves a \emph{fractional diffusion} operator. As in~\cite{bouin_hypocoercivity_2017}, the key idea is that the (fractional) diffusion limit determines the rate of decay. Let us explain how the exponent $\alpha$ arises through a formal analysis in the case of the simple scattering operator $\mathsf L=\mathsf L_2$ corresponding~to
\[
\mathrm b(v,v')=Z^{-1}\bangle v^{-\beta} \bangle{v'}^{-\beta}\,,\quad Z:=\irdv{\bangle v^{-\beta} \M(v)}\,,\quad \beta\in\R\,.
\]
Let us investigate the diffusion limit as $\varepsilon\to 0^+$ of the scaled kinetic equation written in terms of the Fourier variable
$\xi$ corresponding to the macroscopic position variable $\varepsilon\,x$ :
\be{Eqn1}
\varepsilon^\alpha\,\partial_t\Ff+i\,\varepsilon\,v\cdot\xi\,\Ff=\mathsf L\Ff\,.
\ee
The exponent $\alpha$, which determines the macroscopic time scale, is to be chosen. By the \emph{local mass conservation} property, $\irdv{\mathsf Lf}=0$, the spatial density, defined as
\be{SpatialDensity}
\rho_f(t,x):=\irdv{f(t,x,v)}\,,
\ee
solves in Fourier variables the continuity equation
\be{cont-equ}
\partial_t\widehat\rho_f+i\,\varepsilon^{1-\alpha}\irdv{v\cdot\xi\,\Ff}=0\,.
\ee
The \emph{fractional diffusion limit} $\varepsilon\to 0^+$ can be obtained by a formal Hilbert expansion as in~\cite{Puel_2011}, in which only the case $\beta=0$ is covered, or as in~\cite{MR3535408}, where the collision frequency is $|v|^{- \beta}$. Here we use a more direct computation.

Rewriting the scattering operator as
\[
\mathsf L\Ff=\bangle v^{- \beta}\(\mathsf r_f\,\frac{\M}{Z}-\Ff\)\quad\mbox{with}\quad\mathsf r_f(t,\xi):=\irdv{\bangle {v'}^{- \beta}\,\Ff(t,\xi,v')}'\,,
\]
the kinetic equation \eqref{Eqn1} takes the form
\[
\left( \bangle v^{- \beta}+i\,\varepsilon\,v\cdot\xi\right)\Ff=\bangle v^{- \beta}\,\mathsf r_f\,\frac{\M}{Z} - \varepsilon^\alpha\,\partial_t \Ff\,.
\]
The flux term in the continuity equation \eqref{cont-equ} can be rewritten as
\be{flux}
i\,\varepsilon^{1-\alpha}\irdv{v\cdot\xi\,\Ff}=\mathsf b_\varepsilon \frac{\mathsf r_f}{Z}- i\,\varepsilon\irdv{\frac{v\cdot \xi\,\partial_t \Ff}{\bangle v^{- \beta}+i\,\varepsilon\,v\cdot\xi}}\,,
\ee
with
\be{b-eps}
\mathsf b_\varepsilon:=i\,\varepsilon^{1-\alpha}\irdv{\frac{\bangle v^{- \beta}\,v\cdot\xi\,\M}{\bangle v^{- \beta}+i\,\varepsilon\,v\cdot\xi}}=\varepsilon^{2-\alpha}\irdv{\frac{\bangle v^{- \beta}\,(v\cdot\xi)^2\,\M}{\bangle v^{-2 \,\beta}+\varepsilon^2\,(v\cdot\xi)^2}}\,.
\ee
The second representation is due to the evenness of $\M$. Recalling \eqref{eq:hypfattailed}, we observe that the formal limit of the integral is finite for $\gamma>2 + \beta$ and, by rotational symmetry, of the form $\kappa\,|\xi|^2$, for some $\kappa>0$. In this case the appropriate macroscopic time scale is diffusive, \emph{i.e.}, $\alpha=2$, and the macroscopic limit is the heat equation for the limiting spatial density $\rho_0$, written in Fourier variables,
\[
\partial_t\widehat\rho_0+\kappa\,|\xi|^2\,\widehat\rho_0=0\,.
\]
Here we use that formally $\lim_{\varepsilon\to 0} f=\rho_0\,\M$, and therefore $\lim_{\varepsilon\to 0} \mathsf{r}_f=Z\,\widehat\rho_0$, and assume that the last term in \eqref{flux} is a perturbation, which tends to zero.

Now let us consider the case $\gamma<2+ \beta$. The computation of the asymptotic behaviour of $\mathsf b_\varepsilon$ is a bit tedious in this case. First note that in this case \hbox{$\mathsf b_\varepsilon<\infty$} requires $\min\{1,\gamma\} + \beta >0$, which we assume in the following. With the coordinate change $v=(\varepsilon\,|\xi|)^{-1/(1+\beta)}\,w$, we obtain
\[
\mathsf b_\varepsilon \approx \varepsilon^{\frac{\gamma + \beta}{1+\beta}-\alpha}\,|\xi|^{\frac{\gamma+\beta}{1+\beta}}\,\kappa\,,\quad \kappa:=c_\gamma\int_{\R^d} \frac{|w|^{- \beta-\gamma-d}\,(w\cdot\mathsf{e})^2}{|w|^{-2\,\beta}+(w\cdot\mathsf{e})^2}\,\d w
\]
as $\varepsilon\to 0^+$, with $\mathsf e=\xi/|\xi|$. By rotational symmetry, $\kappa$ is independent from $\mathsf e\in\mathbb S^{d-1}$. The appropriate choice of the macroscopic time scale is now such that~$\mathsf b_\varepsilon$ has a finite positive limit, which determines $\alpha$ as in~\eqref{alpha}. Our assumptions on $ \beta$ and $\gamma$ imply $0<\alpha<2$. The macroscopic limit is the \emph{fractional heat equation}
\be{fhe}
\partial_t\widehat\rho_0+\kappa\,|\xi|^\alpha\,\widehat\rho_0=0\,.
\ee

In the general case, $\alpha \leq 2$ covers the two cases, $\gamma\ge2 + \beta$ and $\gamma<2 + \beta$, with a standard macroscopic limit when $\alpha =2$, and a fractional diffusion limit when $\alpha <2$. If $\rho$ solves~\eqref{fhe}, then
\[
\frac d{dt}\nrm{\,\widehat\rho\,}{\mathrm L^2(\d x)}^2=-\,2\,\kappa\,\nrm{\,|\xi|^\frac{\alpha }2\,\widehat\rho\,}{\mathrm L^2(\d\xi)}^2\,.
\]
Using the \emph{fractional Nash inequality}
\be{eq:NashFrac}
\nrm\rho{\mathrm L^2(\d x)}\le\mathcal C_{\mbox{Nash}}\,\nrm\rho{\mathrm L^1(\d x)}^\frac{\alpha }{d+\alpha }\,\nrm{\,|\xi|^\frac{\alpha }2\,\widehat\rho\,}{\mathrm L^2(\d\xi)}^\frac d{d+\alpha }\,,
\ee
and Plancherel's identity, we obtain $\nrm{\rho(t,\cdot)}{\mathrm L^2(\d x)}=O\big(t^{-d/\alpha }\big)$ as $t\to+\infty$. The proof of~\eqref{eq:NashFrac} can be found in~\cite{Nash58} if $\alpha =2$ and the extension to the case $\alpha <2$ is straightforward. This heuristic analysis is responsible for the rates $\tau=d/\alpha $ of the solution of~\eqref{eq:main}, at least under appropriate $\bangle v{}\!^k$ moment conditions. At formal level, the decay estimates of the solution of~\eqref{Eqn1} are uniform as $\varepsilon\to0$. See Section~\ref{Sec:difflim} for additional comments.

\medskip For $\mathsf L=\mathsf L_1$ and $\mathsf L=\mathsf L_3$, we want to keep the same value for $\alpha$ as for $\mathsf L=\mathsf L_2$. This implicitly defines $\beta$. Technically, what matters is the \emph{Lyapunov function}
property, namely the fact that there exist three positive constant $a$, $b$ and $R$, a real parameter $\beta$, and a (smooth) positive \emph{Lyapunov function} $F=F(v)$ on $\R^d$ such that
\[
-\mathsf L F\le \(a\,\mathbbm 1_{B_R}-b\bangle v^{- \beta}\)F
\]
where $\mathsf L$ is a self-adjoint operator associated with a Dirichlet integral on the space of square integrable functions on $\R^d$ with respect to some probability measure. Details can be found for instance  in~\cite{bakry_rate_2008}. In integral form, this Lyapunov function property is used in Lemma~\ref{lem:Lyapunov}, responsible for the interpolation inequality of Proposition~\ref{prop:WPII}, and finally for the decay rates of Theorems~\ref{th:main2} and~\ref{th:main1}.

The expression of $\alpha$ can also be related with the macroscopic diffusion limit for $\mathsf L=\mathsf L_1$ and $\mathsf L=\mathsf L_3$. This is not as simple as for $\mathsf L=\mathsf L_2$ and we shall omit this computation here, even at formal level. The interested reader is invited to refer to~\cite{bouin2020quantitative} for such a justification.

\medskip Let us conclude this introduction by a brief review of the literature. \emph{Fractional diffusion limits} of kinetic equations attracted a considerable interest in the recent years. The microscopic jump processes are indeed easy to encode in kinetic equations and the diffusion limit provides a simple procedure to justify the use of fractional operators at macroscopic level. Formal derivations are known for a long time, see for instance~\cite{SCALAS_2003}, but rigorous proofs are more recent. In the case of linear scattering operators like those of Case~$\mathsf L=\mathsf L_2$, we refer to~\cite{mellet_fractional_2011,mellet_fractional_2010,Puel_2011,ben_abdallah_anomalous_2011} for some early results and to~\cite{MR2588245} for a closely related work on Markov chains. Numerical schemes which are asymptotically preserving have been obtained in~\cite{MR3470742,MR3535408}. Beyond the classical paper~\cite{degond_diffusion_2000}, we also refer to~\cite{mellet_fractional_2011,mellet_fractional_2010,Puel_2011,ben_abdallah_anomalous_2011} for a discussion of earlier results on standard, \emph{i.e.}, non-fractional, diffusion limits. Concerning the generalized Fokker-Planck operators of Case~$\mathsf L=\mathsf L_1$, such that local equilibria have fat tails, the problem has recently been studied in~\cite{MR3922536} in dimension $d=1$ by spectral methods and, from a probabilistic point of view, in~\cite{fournier_one_2018}. Depending on the range of the exponents, various regimes corresponding to Brownian processes, stable processes or integrated symmetric Bessel processes are obtained and described in~\cite{fournier_one_2018} as well as the threshold cases (some were already known, see for instance~\cite{cattiaux2019diffusion}). Higher dimensional results have recently been obtained in~\cite{fournier2018anomalous}. Concerning Case~$\mathsf L=\mathsf L_3$, the fractional diffusion limit of the fractional Vlasov-Fokker-Planck equation, or Vlasov–L\'evy–Fokker–Planck equation, has been studied in~\cite{cesbron_anomalous_2012,Aceves_Sanchez_2019,2019arXiv191111535A} when the friction force is proportional to the velocity. Here our Case~$\mathsf L=\mathsf L_3$ is slightly different, as we pick forces giving rise to collision frequencies of the order of $|v|^{- \beta}$ as $|v|\to+\infty$. We refer to~\cite{bouin2020quantitative} for new results, a recent overview and further references.

In the \emph{homogeneous case}, that is, when there is no $x$-dependence, it is classical to introduce a function $\Phi(v)=-\log\M(v)$, where $\M$ denotes the local equilibrium but is not necessarily of the form~\eqref{eq:hypfattailed}, and classify the possible behaviors of the solution~$f$ to~\eqref{eq:main} according to the growth rate of $\Phi$. Assume that the collision operator is either the generalized Fokker-Planck operator of Case~$\mathsf L=\mathsf L_1$ or the scattering operator of Case~$\mathsf L=\mathsf L_2$. Schematically, if
\[
\Phi(v)=\bangle v^\zeta\,,
\]
we obtain that $\nrm{f(t,\cdot)-M\,\M}{\mathrm L^2(\d\mu)}$ decays exponentially if $\zeta\ge1$, with $M=\irdv f$. In the range $\zeta\in(0,1)$, the Poincar\'e inequality of Case~$\mathsf L=\mathsf L_1$ has to be replaced by a \emph{weak Poincar\'e} or a \emph{weighted Poincar\'e inequality}: see~\cite{rockner_weak_2001,kavian_fokker-planck_2015,BDLS} and rates of convergence are typically algebraic in $t$. Summarizing, the lowest is the rate of growth of $\Phi$ as $|v|\to+\infty$, the slowest is the rate of convergence of~$f$ to~$M\,\M$. Now let us focus on the limiting case as $\zeta\to 0^+$. The turning point precisely occurs for the minimal growth which guarantees that $\M$ is integrable, at least for solutions of the homogeneous equation with initial data in $\mathrm L^1(\d v)$. Hence, if we consider
\[
\Phi(v)=\eta\,\log\bangle v,
\]
with $\eta<d$, then diffusive effects win over confinement and the unique local equilibrium with finite mass is $0$. To measure the sharp rate of decay of~$f$ towards~$0$, one can replace the Poincar\'e inequality and the weak Poincar\'e or the weighted Poincar\'e inequalities by \emph{weighted Nash inequalities}. See~\cite{bouin_diffusion_2019} for details. In this paper, we consider the case $\eta = d + \gamma>d$, which guarantees that~$\M$ is integrable. Standard diffusion limits can be invoked if $\gamma>2+\beta$, but here we are also interested in the regime corresponding to fractional diffusion limits, with $\gamma\le2+\beta$.

As explained in Section~\ref{Sec:intro}, standard diffusion limits provide an interesting insight into the \emph{micro/macro decomposition} which is the key of the \emph{$\mathrm L^2$-hypocoercive approach} of~\cite{dolbeault_hypocoercivity_2015}. Another parameter can be taken into account: the confinement in the spatial variable $x$. In presence of a confining potential $V=V(x)$ with sufficient growth and when~$\M$ has fast decay, typically for $\zeta\ge1$, the rate of convergence is found to be exponential. A milder growth of $V$ gives a slower convergence rate as analyzed in~\cite{cao_kinetic_2018}. If $e^{-V}$ is not integrable, the diffusion wins in the hypocoercive picture, and the rate of convergence of a finite mass solution of~\eqref{eq:main} towards $0$ can be captured by Nash and related Caffarelli-Kohn-Nirenberg inequalities: see~\cite{bouin_hypocoercivity_2017,bouin_diffusion_2019}.

A typical regime for fractional diffusion limits is given by local equilibria with fat tails which behave according to~\eqref{eq:hypfattailed} with $\gamma\in(0,2+\beta)$: $\M$ is integrable but has no standard diffusion limit. Whenever fractional diffusion limits can be obtained, it was expected that rates of convergence can also be obtained by an adapted $\mathrm L^2$-hypocoercive approach. In this paper, we shall consider only the case $V=0$ and measure the decay rate. In view of~\cite{lafleche_fractional_2020} (also see references therein), it is natural to expect that a fractional Nash type approach has to play the central role, and this is indeed what happens. The mode-by-mode hypocoercivity estimate shows that rates are of the order of $|\xi|^\alpha$ as $\xi\to0$ which results in the expected time decay. In this direction, let us mention that the spectral information associated with $|\xi|^\alpha$ is very natural in connection with the fractional heat equation as was recently observed in~\cite{ben-artzi_weak_2018}. As far as we know, asymptotic rates for~\eqref{eq:main} have not been studied so far, to the exception of the very recent results of~\cite{2019arXiv191111535A} which deal with the Vlasov–L\'evy–Fokker–Planck equation in the case of a spatial variable in the flat torus $\mathbb T^d$ by an $\mathrm H^1$-hypocoercivity method and the simplest version ($\beta=0$) of the scattering collision operator: see Section~\ref{Sec:torus} for more details. A preliminary version of the present paper can be found in~\cite{LaflechePhD2019}.

\section{Outline of the method}\label{Sec:Hypocoercivity}

\subsection{Decay rates of the homogeneous solution}

If $f$ is an \emph{homogeneous solution} of~\eqref{eq:main}, that is, a function independent from $x\in\R^d$, with initial datum $f^{\mathrm{in}}\in\mathrm L^1_+(\d v)\cap\mathrm L^2(\d\mu)$ such that $\irdv{f^{\mathrm{in}}}=1$, then
\[
\dt\,\n{f-\M}^2=2\bangle{f,\mathsf Lf}\,.
\]
It is natural to ask whether such an estimate proves the convergence of the solution $f(t,\cdot)$ to $\M$ as $t\to+\infty$ and provides a rate of convergence. Let us assume that $\mathsf L$ is a self-adjoint operator on $\mathrm L^2(\d\mu)$ such that, for some $k\in(0,\gamma)$,\\[2pt]
(i) the \emph{interpolation inequality}
\be{GeneralInterpolation}
\irdmu{|g|^2}\le\mathcal C\,\Big(-\bangle{g,\mathsf Lg}\Big)^\frac1{1+a}\(\irdmu{|g|^2\bangle v^k}\)^\frac a{1+a}
\ee
holds for some $a>0$ and $\mathcal C>0$, if $\irdmu g=0$, \\[2pt]
(ii) there is a constant $\mathcal C_k$ such that
\[
\forall\,t\ge0\,,\quad\irdmu{|f(t,\cdot)|^2\bangle v^k}\le\mathcal C_k\irdmu{\left|f^{\mathrm{in}}\right|^2\bangle v^k} \,.
\]
Then\, an elementary computation shows the \emph{algebraic decay rate}
\[
\forall\,t\ge0\,,\quad\n{f(t,\cdot)-\M}^2\le\(\n{f^{\mathrm{in}}-\M}^{-2\,a}+\kappa\,a\,t\)^{-1/a}
\]
with $\kappa=2\,\mathcal C^{-(1+a)}\,\big(\mathcal C_k\irdmu{|f^{\mathrm{in}}|^2\bangle v^k}\big)^{-a}$. Note that the exponent $a$ depends on $k$.
This result is an indication that also in the general spatially non-homo\-ge\-neous case of~\eqref{eq:main}, we cannot expect a better rate of convergence. The main difficulty there is to understand the interplay of the transport operator $v\cdot\nabla_x$ and of the collision operator $\mathsf L$, which is the main issue of this paper.

\subsection{Non-homogeneous solutions: mode-by-mode analysis}

Let us consider the measure $\d\mu=\M^{-1}(v)\,\d v$ and the Fourier transform of $f$ in $x$ defined by
\[
\Ff(t,\xi,v):= \frac{1}{(2\pi)^{d/2}}\rint e^{-\,i\,x\cdot\xi}f(t,x,v)\,\d x\,.
\]
If $f$ solves~\eqref{eq:main}, then the equation satisfied by $\Ff$ is
\[
\partial_t\Ff+\mathsf T\Ff=\mathsf L\Ff\,,\quad\Ff(0,\xi,v)=\Ff^{\mathrm{in}}(\xi,v)
\]
where $\mathsf T$ is the \emph{transport operator} in Fourier variables given by
\[
\mathsf T\Ff=i\,v\cdot\xi\,\Ff\,,
\]
and $\xi\in\R^d$ can be seen as a parameter, so that for each Fourier mode $\xi$, $\mathsf T$ is a multiplication operator and we can study the decay of $t\mapsto\Ff(t,\xi,\cdot)$. For this reason, we call it a \emph{mode-by-mode analysis}, as in~\cite{bouin_hypocoercivity_2017}.

For any given $\xi\in\R^d$, taken as a parameter, we consider $(t,v)\mapsto\Ff(t,\xi,v)$ on the complex valued Hilbert space $\mathrm L^2(\d\mu)$ with scalar product and norm given by~\eqref{ScalarProduct-Norm}. We define the orthogonal projection $\mathsf\Pi$ on the subspace generated by~$\M$~by
\[
\mathsf\Pi\,f=\rho_f\,\M\,,
\]
where $\rho_f$ is given by~\eqref{SpatialDensity} and observe that the property
\[
\mathsf\Pi\mathsf T\mathsf\Pi=0
\]
holds as a consequence of the radial symmetry of $\M$. Let us define the operator
\[
\mathsf A_\xi:=\frac1{\bangle v^2}\,\mathsf\Pi\,\frac{\(-\,i\,v\cdot\xi\)\bangle v^{\beta}}{1+\bangle v^{2\,|1+\beta|} |\xi|^2}\,,
\]
and the \emph{entropy functional} by
\[
\mathsf H_\xi[f]:=\n\Ff^2+\delta\,\re\bangle{\mathsf A_\xi\Ff,\Ff}\,.
\]
The definition of $\mathsf A_\xi$ is reminiscent of the computation of $\mathsf b_\varepsilon$ in~\eqref{b-eps}, with $\varepsilon=1$. In the case $\mathsf L=\mathsf L_2$ and $\beta\ge-1$, we can indeed notice that
\[
\mathsf A_\xi\mathsf{T\,\Pi}=\frac1{\bangle v^2}\,\mathsf\Pi\,\frac{\(v\cdot\xi\)^2 \bangle v^{\beta}}{1+\bangle v^{2\,|1+\beta|} |\xi|^2}\,\mathsf\Pi=\frac1{\bangle v^2}\,\mathsf\Pi\,\frac{\bangle v^{- \beta}\(v\cdot\xi\)^2}{\bangle v^{-2\, \beta}+\bangle v^2|\xi|^2}\,\mathsf\Pi\,.
\]
Compared with the expression $\mathsf b_1$, there are two minor differences: the $\bangle v^{-2}$ factor is needed for technical reasons, in order to compute $\bangle v$ moments and in particular $\I1$ and $\I2$ in Section~\ref{Sec:macro}; in the denominator of the symbol, $\bangle v^{-2\, \beta}+(v\cdot\xi)^2$ is replaced by $\bangle v^{-2\, \beta}+\bangle v^2|\xi|^2$ which has similar scaling properties as \hbox{$|v|\to+\infty$} but offers simpler integration properties. Moreover, the same definition for $\mathsf A_\xi$ can be used in the cases $\mathsf L=\mathsf L_1$ and $\mathsf L=\mathsf L_3$. The first elementary result is the observation that $\mathsf A_\xi$ is a bounded operator and that $\mathsf H_\xi[f]$ is equivalent to $\n f^2$ if $\delta>0$ is not too large.

\begin{lemma}\label{Lem:Equivalence} With the above notation, for any $\delta\in(0,2)$ and $f\in\mathrm L^2(\d\mu)$, we have
\[
|\bangle{\mathsf A_\xi f,f}|\le\frac12\,\n f^2\quad\mbox{and}\quad\(2-\delta\)\n f^2\le2\,\mathsf H_\xi[f]\le\(2+\delta\)\n f^2\,.
\]
\end{lemma}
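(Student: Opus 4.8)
The plan is to estimate the bilinear form $\bangle{\mathsf A_\xi f,f}$ directly from the definition of $\mathsf A_\xi$ and a Cauchy--Schwarz argument. First I would write out $\bangle{\mathsf A_\xi f,f}$ using the definition of the scalar product $\bangle{\cdot,\cdot}$ with respect to $\d\mu=\M^{-1}\d v$. Since $\mathsf A_\xi$ has the projector $\mathsf\Pi$ sitting in it and $\mathsf\Pi g=\rho_g\,\M$, the relevant quantity is $\rho_{\mathsf g}$ where $\mathsf g:=\frac{(-iv\cdot\xi)\bangle v^{\beta}}{\bangle v^2(1+\bangle v^{2|1+\beta|}|\xi|^2)}\,f$; concretely, using self-adjointness of $\mathsf\Pi$ and $\mathsf\Pi f=\rho_f\M$, one gets $\bangle{\mathsf A_\xi f,f}=\irdv{\frac{(-iv\cdot\xi)\bangle v^\beta}{\bangle v^2(1+\bangle v^{2|1+\beta|}|\xi|^2)}\,f\,\overline{\rho_f}}$ (up to complex conjugation bookkeeping, which I would keep track of carefully).

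Next I would apply Cauchy--Schwarz in $\mathrm L^2(\d\mu)$ to split this into $\n f$ times the $\mathrm L^2(\d\mu)$-norm of the multiplier part. The key point is then the pointwise bound on the symbol: I claim
\[
\frac{|v\cdot\xi|\,\bangle v^{\beta}}{\bangle v^2\,\big(1+\bangle v^{2|1+\beta|}|\xi|^2\big)}\le\frac12\quad\text{for all }v,\xi\in\R^d.
\]
This follows from $|v\cdot\xi|\le|v|\,|\xi|\le\bangle v\,|\xi|$ together with the elementary inequality $\frac{\bangle v^{1+\beta}|\xi|}{1+\bangle v^{2|1+\beta|}|\xi|^2}\le\frac12$, which is just $\frac{s}{1+s^2}\le\frac12$ applied to $s=\bangle v^{1+\beta}|\xi|$ when $\beta\ge-1$, or to $s=\bangle v^{-(1+\beta)}|\xi|^{-1}\cdot(\bangle v^{2|1+\beta|}|\xi|^2)$... — more carefully, one writes the denominator as $1+s^2$ with $s=\bangle v^{|1+\beta|}|\xi|$ and the numerator is bounded by $\bangle v^{|1+\beta|}|\xi|=s$ since $1+\beta\le|1+\beta|$ when $\beta\ge-1$ and $|1+\beta|\le\ldots$ — here I would split according to the sign of $1+\beta$, noting $\bangle v\ge1$ so $\bangle v^{1+\beta}\le\bangle v^{|1+\beta|}$ in all cases. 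With this pointwise bound, $|\bangle{\mathsf A_\xi f,f}|\le\frac12\n f\cdot\n f=\frac12\n f^2$ after using that the projection $\mathsf\Pi$ does not increase the $\mathrm L^2(\d\mu)$-norm.

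Finally, the norm equivalence is immediate: from $|\bangle{\mathsf A_\xi f,f}|\le\frac12\n f^2$ and the definition $\mathsf H_\xi[f]=\n\Ff^2+\delta\,\re\bangle{\mathsf A_\xi\Ff,\Ff}$, one has $\n f^2-\frac\delta2\n f^2\le\mathsf H_\xi[f]\le\n f^2+\frac\delta2\n f^2$, i.e.\ $(2-\delta)\n f^2\le2\,\mathsf H_\xi[f]\le(2+\delta)\n f^2$, which is what is claimed; the restriction $\delta\in(0,2)$ is exactly what makes the lower bound positive. I expect the only real obstacle to be the careful case analysis on the sign of $1+\beta$ in establishing the pointwise symbol bound, and keeping the powers of $\bangle v$ straight; everything else is routine. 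One should also double-check that the $(v\cdot\xi)$ in the numerator contributes only $|v\cdot\xi|\le\bangle v|\xi|$ and that the leftover factor $\bangle v^{-1}$ combines with $\bangle v^{\beta-2}$... in fact the cleanest bookkeeping is: numerator $\le\bangle v^{1+\beta-2}|\xi|\cdot\bangle v=\bangle v^{\beta-1}\cdot\bangle v|\xi|$, so writing $s=\bangle v^{|1+\beta|}|\xi|$ one checks $\bangle v^{\beta}|\xi|/\bangle v\le s/\bangle v^{?}$ — I would organize this so that the factor multiplying $s/(1+s^2)$ in front is $\le1$ using $\bangle v\ge1$, and conclude via $\sup_s s/(1+s^2)=1/2$.
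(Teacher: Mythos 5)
Your overall approach --- a pointwise bound on the multiplier symbol, Cauchy--Schwarz against the probability measure $\M\,\d v$, and contractivity of $\mathsf\Pi$ --- is exactly the paper's, and the norm-equivalence step at the end is correct. There is, however, a slip in your explicit expression for $\bangle{\mathsf A_\xi f,f}$: the factor $\bangle v^{-2}$ in the definition of $\mathsf A_\xi$ sits \emph{outside} the projector $\mathsf\Pi$, not inside it. Writing $\psi=\bangle v^{-2}$, $\varphi(\xi,v)=\bangle v^{\beta}/(1+\bangle v^{2|1+\beta|}|\xi|^2)$ and $m:=(-\,i\,v\cdot\xi)\,\varphi$, the operator is $\mathsf A_\xi f=\psi\,\mathsf\Pi(m\,f)$, and since $\mathsf\Pi g=\big(\irdv g\big)\M$ one finds
\[
\bangle{\mathsf A_\xi f,f}=\overline{\(\irdv{m\,f}\)}\,\irdv{\psi\,f}\,,
\]
so the second factor is $\irdv{\psi f}$, not your $\rho_f=\irdv f$, and $\psi$ should not appear next to $m$ in the other factor. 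The slip turns out to be harmless: the paper bounds $|\psi|\le1$ and $|(v\cdot\xi)\varphi|\le\tfrac12$ \emph{separately}, applies Cauchy--Schwarz (against $\M$, using $\irdv\M=1$) to each of the two integrals, and obtains the factor $\tfrac14$ on $\n f^4$; your combined bound $|\psi\,(v\cdot\xi)\varphi|\le\tfrac12$ together with $|\rho_f|\le\n f$ happens to yield the same constant. Your pointwise estimate of the symbol --- via $|v\cdot\xi|\le\bangle v\,|\xi|$, $\bangle v^{1+\beta}\le\bangle v^{|1+\beta|}$ since $\bangle v\ge1$, and $\sup_{s\ge0}s/(1+s^2)=\tfrac12$ --- is correct and is precisely the estimate used in the paper; just be sure to reorganize the computation so that the two weights land in the two separate integrals coming from the rank-one structure of $\mathsf\Pi$.
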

We shall use the notation
\[
\varphi(\xi,v):=\frac{\bangle v^{\beta}}{1+\bangle v^{2\,|1+\beta|} |\xi|^2}\quad\mbox{and}\quad\psi(v):=\bangle v^{-2},
\]
and may notice that $\mathsf A_\xi\Ff=\psi\,\mathsf\Pi\mathsf T^*\,\varphi\,\Ff$, where $\mathsf T^*$ denotes the dual of $\mathsf T$ acting on $\mathrm L^2(\d\mu)$.
\begin{proof}[Proof of Lemma~\ref{Lem:Equivalence}]
With these definitions, we obtain $|\psi|\le1$ and $\left|(v\cdot\xi)\,\varphi(\xi,v)\right|\le1/2$, so that the Cauchy-Schwarz inequality yields
\[
\left|\bangle{\mathsf A_\xi f,f}\right|^2\le\rint|\psi(v)|^2\,|f(\xi,v)|^2\,\d v\rint|(v\cdot\xi)\,\varphi(\xi,v)|^2\,|f(\xi,v)|^2\,\d v\le\frac14\,\n f^4\,,
\]
which completes the proof of Lemma~\ref{Lem:Equivalence}.
\Qed\end{proof}

We observe that if $f$ solves~\eqref{eq:main}, then
\[
\dt\mathsf H_\xi[\Ff]= -\mathsf D_\xi[\Ff]:=\,2\,\langle\mathsf L\Ff,\Ff\rangle - \delta\,\mathsf R_\xi[\Ff]
\]
where $\mathsf R_\xi[\Ff]=-\,\dt\re\,\langle\mathsf A_\xi\Ff,\Ff\rangle$. Our goal is to relate $\mathsf H_\xi[\Ff]$ and $\mathsf D_\xi[\Ff]$. Any decay rate of $\mathsf H_\xi[\Ff]$ obtained by a Gr\"onwall estimate gives us a decay rate for $\n f^2$ by Lemma~\ref{Lem:Equivalence} and, using an inverse Fourier transform, in $\mathrm L^2(\d x\,\d\mu)$.

\medskip More notation will be needed. Let us define the weighted norms
\[
\nrm gk^2:=\rint|g|^2\bangle v^k\d\mu\,,
\]
so that in particular $\n g=\nrm g0$. A crucial observation, which will be used repeatedly, is the fact that for any constant $\kappa>0$,
\[
\nrm{g-\kappa\,\M}k^2=\nrm gk^2+\kappa^2\rint\bangle v^k\M\,\d v-2\,\kappa\rint\bangle v^kg\,\d v\ge\nrm{(1-\mathsf\Pi_k)\,g}k^2
\]
where
\[
\mathsf\Pi_k\,g:=\frac{\rint\bangle v^kg\,\d v}{\rint\bangle v^k\M\,\d v}\,\M\,.
\]
This is easily shown by optimizing the left-hand side of the inequality on $\kappa\in\R$. Notice that $\mathsf\Pi_0=\mathsf\Pi$.

The parameters $\beta$ and $\gamma$ are chosen as in Theorems~\ref{th:main2} and~\ref{th:main1} while $\alpha$ is given by~\eqref{alpha}: $\alpha =\frac{\gamma+\beta}{1+\beta}$ if $\gamma < 2 + \beta$ and $\alpha = 2$ if $\gamma\ge2+ \beta$. For simplicity, we shall not keep track of all constants and simply write that $\mathsf a\lesssim\mathsf b$ and $\mathsf a\gtrsim\mathsf b$ if there is a positive constant $\mathsf c$ such that, respectively, $\mathsf a\le\mathsf b\,\mathsf c$ and $\mathsf a\ge\mathsf b\,\mathsf c$. We also define $\omega_d:=|\mathbb S^{d-1}|$ where $\mathbb S^{d-1}$ denotes the unit sphere in $\R^d$.

\subsection{Outline of the method and key intermediate estimates}

Assume that $f$ is a finite mass solution of~\eqref{eq:main} on $\R_+\times\R^d\times\R^d$. Our goal is to relate
\[
\mathsf H[f]:=\int_{\R^d}\mathsf H_\xi[\Ff]\,\d\xi
\]
and
\[
-\,\dt\mathsf H[f]=-\,2\,\irdxmu{f\,\mathsf L f}+\delta\rint\mathsf R_\xi[\Ff]\,\d\xi
\]
by a differential inequality and use a Gr\"onwall estimate. According to Lemma~\ref{Lem:Equivalence}, the decay rate of $\n f^2$ is the same as for $\mathsf H_\xi[\Ff]$. Under Assumption~{\rm (H)}, we consider a solution $f$ of~\eqref{eq:main} with initial condition $f^{\mathrm{in}}\in\mathrm L^1(\d x\,\d v)\cap\mathrm L^2(\d x\,\d\mu)$. The main steps of our method are as follows:

\medskip\noindent$\rhd$ \emph{The solution is bounded in a weighted $\mathrm L^2$ space}. We shall prove the following result in Section~\ref{Sec:WeightedSpaces}.
\begin{proposition}\label{prop:propag_L2_m} Assume that~{\rm (H)} holds. Let $d\ge1$, $\gamma>0$, $\gamma + \beta \ge 0$, $k\in(0,\gamma)$ and $f$ be a solution of~\eqref{eq:main} with initial condition $f^{\mathrm{in}}\in\mathrm L^2(\bangle v^k\d x\,\d\mu)$. Then, there exists a positive constant $\mathcal C_k$ depending on $d$, $\gamma$, $\beta$ and $k$ such that
\[
\forall\,t\ge0\,,\quad\nrm{f(t,\cdot,\cdot)}{\mathrm L^2(\bangle v^k\d x\,\d\mu)}\le\mathcal C_k\,\nrm{f^{\mathrm{in}}}{\mathrm L^2(\bangle v^k\d x\,\d\mu)}\,.
\]
\end{proposition}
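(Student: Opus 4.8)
The plan is to derive a closed differential inequality for the quantity $M_k(t) := \nrm{f(t,\cdot,\cdot)}{\mathrm L^2(\bangle v^k\d x\,\d\mu)}^2 = \irdxmu{|f|^2\,\bangle v^k}$ and then conclude by a Grönwall argument. Working in Fourier variables in $x$ (so that the transport term $v\cdot\nabla_x$ becomes the multiplication operator $\mathsf T\Ff = i\,v\cdot\xi\,\Ff$), we compute
\[
\dt\,M_k(t) = 2\,\re\irdxmu{\bangle v^k\,\bar{\Ff}\,\big(\mathsf L\Ff - i\,v\cdot\xi\,\Ff\big)} = 2\,\re\irdxmu{\bangle v^k\,\bar{\Ff}\,\mathsf L\Ff}\,,
\]
since the transport term contributes $2\,\re\big(-i\,v\cdot\xi\,|\Ff|^2\big)=0$ pointwise (the weight $\bangle v^k$ and the measure $\d\mu$ depend only on $v$, so the transport term is still antisymmetric after insertion of $\bangle v^k$). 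Thus the $x$-transport plays no role here and it suffices to bound the collision contribution $\mathcal I_k := \irdmu{\bangle v^k\,\bar g\,\mathsf L g}$ for $g = \Ff(t,\xi,\cdot)$, uniformly in $\xi$, and then integrate in $\xi$.

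The heart of the matter is the pointwise-in-$(t,\xi)$ weighted estimate: I claim that for each of the three operators there is a constant $b>0$, an $R>0$ and an $a>0$ such that
\[
\irdmu{\bangle v^k\,\bar g\,\mathsf L g} \le -\,b\irdmu{\bangle v^{k-\beta}\,|g|^2} + a\irdmu{\bangle v^k\,|g|^2\,\mathbbm 1_{B_R}} \le a\,\bangle R^k\irdmu{|g|^2}\,,
\]
i.e.\ after dropping the good negative term, the weighted dissipation is controlled by $\n g^2$, which is itself nonincreasing (this is the $k=0$, $x$-homogeneous version, or equivalently follows from $\langle\mathsf L g,g\rangle\le 0$). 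Concretely: for $\mathsf L=\mathsf L_1$, writing $h=\M^{-1}g$, one has $\irdmu{\bangle v^k\,\bar g\,\mathsf L_1 g} = -\,\re\irdv{\bangle v^k\,\M\,|\nabla_v h|^2} - \re\irdv{\nabla_v(\bangle v^k)\cdot\M\,\bar h\,\nabla_v h}$, and the second term is handled by a weighted Cauchy–Schwarz/Young inequality, absorbing part of the Dirichlet term and leaving a zeroth-order term supported where $\bangle v^{k}$ grows slower than the confinement brought by the first term; the condition $k<\gamma$ (together with $\gamma+\beta\ge0$) is exactly what makes the remainder integrable and of the stated sign structure. For $\mathsf L=\mathsf L_2$, one splits $\irdmu{\bangle v^k\,\bar g\,\mathsf L_2 g}$ using the $\mathrm b$-bilinear form and local mass conservation~\eqref{hyp:b_mass}; the gain term is estimated with~\eqref{hyp:b_bounds2} (whose range $k\in(0,\gamma+\beta)$ covers $k\in(0,\gamma)$) and the loss term with~\eqref{hyp:b_bounds}, producing $-\,b\irdmu{\bangle v^{k-\beta}|g|^2}$ up to a term supported on a bounded set. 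For $\mathsf L=\mathsf L_3$ one argues as for $\mathsf L_1$ after using~\eqref{eq:def_E} and the decay $E\sim\bangle v^{-\beta}v$ from Appendix~\ref{Sec:Ebound}, together with an integration-by-parts/commutator bound for the fractional Laplacian against the weight $\bangle v^k$ (here one uses $0<\sigma<2$ and $k<\gamma$ to control the nonlocal commutator).

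Granting the pointwise estimate, integrate over $\xi\in\R^d$: with $\mathsf E(t):=\irdxmu{|f|^2}=\n f^2_{\mathrm L^2(\d x\,\d\mu)}$ nonincreasing in $t$ and bounded by its initial value $\mathsf E(0)\le\vertiii{f^{\mathrm{in}}}_k^2$, we get $\dt\,M_k(t)\le 2\,a\,\bangle R^k\,\mathsf E(t)\le 2\,a\,\bangle R^k\,\mathsf E(0)$, hence $M_k(t)\le M_k(0) + 2\,a\,\bangle R^k\,\mathsf E(0)\,t$ — which is only linear growth, not a uniform bound. To upgrade to the stated uniform bound one keeps the good term: $\dt M_k(t)\le -\,2\,b\irdxmu{\bangle v^{k-\beta}|f|^2} + 2\,a\,\bangle R^k\,\mathsf E(t)$. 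When $\beta\ge0$ one has $\bangle v^{k-\beta}\ge\bangle v^{k}\,\bangle R^{-\beta}$ away from... no: rather, since $k-\beta$ may be negative, one instead uses that on $\{|v|\ge R\}$, $\bangle v^{k-\beta}\gtrsim \bangle v^{k}$ when $\beta\le 0$, and when $\beta>0$ one runs a Nash/interpolation between $\irdmu{\bangle v^{k-\beta}|g|^2}$, $\irdmu{\bangle v^{k}|g|^2}$ and $\irdmu{|g|^2}$ (of the type~\eqref{GeneralInterpolation} or a direct Hölder interpolation $\bangle v^{k}\le\bangle v^{k-\beta\,\theta}\cdot\bangle v^{(k+?)}$) to obtain $\dt M_k\le -\,c\,M_k^{1+\eta}/\mathsf E(0)^{\eta} + C\,\mathsf E(0)$ for suitable $\eta>0$, an inequality whose solutions stay bounded in terms of $M_k(0)$ and $\mathsf E(0)$. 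Either way the final output is $M_k(t)\le\mathcal C_k\,M_k(0)$.

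**Main obstacle.** The genuinely delicate point is the pointwise weighted estimate for $\mathsf L=\mathsf L_3$: controlling $\irdmu{\bangle v^k\,\bar g\,\Delta_v^{\sigma/2}g}$ requires a commutator estimate between the nonlocal operator $\Delta_v^{\sigma/2}$ and the polynomial weight $\bangle v^k$, using the pointwise singular-integral representation of the fractional Laplacian and the precise asymptotics of $E$ from Appendix~\ref{Sec:Ebound}; ensuring the error terms have the right sign and integrability under only $k<\gamma$ (rather than a stronger moment condition) is where the argument must be pushed carefully. The $\mathsf L_1$ and $\mathsf L_2$ cases are comparatively routine weighted-energy computations.
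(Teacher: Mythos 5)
Your computation of $\dt M_k$ is correct: the transport term is antisymmetric against the weight $\bangle v^k\,\d\mu$, so only the collision term survives. The weighted Lyapunov-type quadratic-form estimate you claim,
\[
\re\irdmu{\bangle v^k\,\bar g\,\mathsf L g}\;\le\;-\,b\irdmu{\bangle v^{k-\beta}|g|^2}+a\irdmu{\bangle v^k|g|^2\,\mathds1_{B_R}}\,,
\]
is the $\mathrm L^2$ analogue of the paper's $\mathrm L^1$ Lyapunov estimate (Lemma~\ref{lem:Lyapunov}), and is indeed provable for $\mathsf L_1$, $\mathsf L_2$ and, with more work, $\mathsf L_3$, for $k<\gamma+\beta$. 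So far so good.

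The genuine gap is the closure step when $\beta>0$, and your own hesitation (``no: rather\,\dots'') rightly signals it. Since $\bangle v^{k-\beta}\le\bangle v^k$ pointwise, the ``good'' quantity $X_{k-\beta}:=\irdxmu{\bangle v^{k-\beta}|f|^2}$ is \emph{weaker} than $M_k$, and the interpolation you propose runs in the \emph{wrong} direction. Hölder gives, for $\beta\le k$,
\[
X_{k-\beta}\le M_k^{\,1-\beta/k}\,\mathsf E^{\,\beta/k}\,,
\]
and for $\beta>k$ simply $X_{k-\beta}\le\mathsf E$. In either case $X_{k-\beta}$ is bounded \emph{above} by a quantity involving $M_k$; there is no inequality of the form $X_{k-\beta}\gtrsim F(M_k,\mathsf E)$ with $F$ increasing in $M_k$. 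Concretely, a distribution concentrated near $|v|\approx R$ (in $v$) with fixed $\mathrm L^2(\d x\,\d\mu)$-norm has $X_{k-\beta}/M_k\approx R^{-\beta}\to0$ while $M_k/\mathsf E\to+\infty$, so the differential inequality $\dt M_k\le-\,b\,X_{k-\beta}+C\,\mathsf E$ cannot be closed to a uniform bound by any moment interpolation, and~\eqref{GeneralInterpolation} is of no help either (it controls the unweighted $\mathrm L^2(\d\mu)$-norm, not $M_k$).

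The paper proves Proposition~\ref{prop:propag_L2_m} by a completely different route that sidesteps this obstruction. It observes that $\mathrm L^2(\bangle v^k\M\,\d x\,\d v)$ is a Stein--Weiss interpolation space between $\mathrm L^1(\bangle v^k\M\,\d x\,\d v)$ and $\mathrm L^\infty(\d x\,\d v)$, and establishes (a) a maximum principle $\|\M^{-1}e^{t(\mathsf L-\mathsf T)}\|_{\mathrm L^\infty\to\mathrm L^\infty}\le1$ (Section~\ref{Sec:Linfinity}) and (b) uniform boundedness of $e^{t(\mathsf L-\mathsf T)}$ on $\mathrm L^1(\bangle v^k\d x\,\d v)$ (Lemma~\ref{lem:propag_L1_m}). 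Crucially, (b) is not obtained by a direct Gr\"onwall argument: it combines the $\mathrm L^1$ Lyapunov estimate with a splitting $\mathsf L-\mathsf T=\mathsf B+\mathsf C$, $\mathsf C=a\,\mathds1_{B_R}$ bounded and $\mathsf B$ dissipative, and a Duhamel iteration (Lemma~\ref{lem:splitting}). The algebraic decay $\|e^{t\mathsf B}\|_{\mathrm L^1(\bangle v^{k_*})\to\mathrm L^1(\bangle v^k)}\lesssim(1+t)^{-(k_*-k)/\beta}$ for an auxiliary moment $k_*\in(k+\beta,\gamma+\beta)$ makes the Duhamel integral convergent. This moment-ladder mechanism is exactly what your proposal is missing. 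If you insist on staying in $\mathrm L^2$, you would need to reproduce the splitting/Duhamel scheme with your $\mathrm L^2$ Lyapunov estimate applied at both levels $k$ and $k_*\in(k+\beta,\gamma+\beta)$; that may be feasible (and would bypass both the maximum principle and Stein interpolation), but it is what must replace your non-working interpolation, and it would make the $\mathsf L_3$ commutator estimate considerably harder than the $\mathrm L^1$ analogue that the paper handles via a Kato-type inequality. So your diagnosis of ``the main obstacle'' (the $\mathsf L_3$ nonlocal commutator) is real but secondary; the structural obstacle is the closure.
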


\medskip\noindent$\rhd$ \emph{The collision term controls the distance to the local equilibrium}. We have the following microscopic coercivity estimate.
\begin{proposition}\label{prop:WPII} Let $d\ge1$, $\gamma>0$, $\gamma + \beta \ge 0$, $\eta\in[- \beta,\gamma)$ and $k\in(0,\gamma)$. Assume that $ \beta=\,2$ if $\mathsf L=\mathsf L_1$, that Assumptions~\eqref{hyp:b_mass}--\eqref{hyp:b_bounds3} hold if $\mathsf L=\mathsf L_2$, and that $\sigma\in(0,2)$, $\beta=\sigma - \gamma$ if $\mathsf L=\mathsf L_3$. Then there exists a positive constant $\mathcal C$ depending on $\nrm f{\mathrm L^2(\d x\,\d\mu)}$ such that for any $f\in\mathrm L^2(\bangle v^k\d x\,\d\mu)$,
\[
\mathcal C\,\nrm{(1-\mathsf\Pi_\eta)f}{\mathrm L^2(\d x\,\bangle v^\eta\d\mu)}^{2\,\frac{k + \beta}{k-\eta}}\,\nrm f{\mathrm L^2(\d x\,\bangle v^k\d\mu)}^{-\,2\,\frac{\eta+\beta}{k-\eta}}\le-\irdxmu{f\,\mathsf L f}\,.
\]
\end{proposition}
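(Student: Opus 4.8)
The plan is to establish the stated interpolation inequality separately for each of the three collision operators, by first proving a ``microscopic coercivity'' bound of the form
\[
-\irdxmu{f\,\mathsf L f}\gtrsim\nrm{(1-\mathsf\Pi_k)f}{\mathrm L^2(\d x\,\bangle v^{-\beta}\d\mu)}^2
\]
(or something comparable involving the natural Dirichlet form associated with $\mathsf L$), and then upgrading it by a weighted interpolation between the weights $\bangle v^{-\beta}$, $\bangle v^\eta$ and $\bangle v^k$. The key structural input for the first step is the \emph{Lyapunov function property} invoked in the introduction: for $\mathsf L=\mathsf L_1$ and $\mathsf L=\mathsf L_3$ one has a positive $F$ with $-\mathsf L F\le(a\,\mathbbm 1_{B_R}-b\,\bangle v^{-\beta})F$, which in integral form controls the $\bangle v^{-\beta}$-weighted $\mathrm L^2$ norm of $(1-\mathsf\Pi)f$ by the Dirichlet form (this is the content referred to as Lemma~\ref{lem:Lyapunov}); for $\mathsf L=\mathsf L_2$ the bound~\eqref{hyp:b_bounds} on the kernel gives directly
\[
-\irdxmu{f\,\mathsf L f}=\tfrac12\irdx{\irdvv{\mathrm b(v,v')\M\M'\,\big|\mathsf M^{-1}f-(\mathsf M^{-1}f)'\big|^2}}\ge\tfrac1Z\nrm{(1-\mathsf\Pi_{-\beta})f}{\mathrm L^2(\d x\,\bangle v^{-\beta}\d\mu)}^2
\]
after expanding the square and using $\irdv{\bangle v^{-\beta}\M}<\infty$, which holds since $\gamma+\beta\ge0$.

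The second step is the interpolation. Fix $x$ (everything is pointwise in $x$, so I work with functions of $v$ only). With $g:=(1-\mathsf\Pi_\eta)f$ the goal is to bound $\nrm g\eta^2$ by a power of $\nrm{(1-\mathsf\Pi_{-\beta})f}{-\beta}^2$ — which is what the Dirichlet form controls — times a power of $\nrm f k^2$. The weight exponents $-\beta\le\eta<\gamma$ and $-\beta<k<\gamma$ sit so that $\eta$ is a convex combination, $\bangle v^\eta=(\bangle v^{-\beta})^{1-\theta}(\bangle v^k)^\theta$ with $\theta=\tfrac{\eta+\beta}{k+\beta}$, hence by Hölder
\[
\nrm g\eta^2\le\nrm g{-\beta}^{2(1-\theta)}\,\nrm g k^{2\theta}\,.
\]
Then I replace $\nrm g{-\beta}$ by $\nrm{(1-\mathsf\Pi_{-\beta})f}{-\beta}$ using the elementary inequality $\nrm{(1-\mathsf\Pi_\eta)f}{-\beta}\ge\nrm{(1-\mathsf\Pi_{-\beta})f}{-\beta}$ (the projection $\mathsf\Pi_{-\beta}$ is the $\bangle v^{-\beta}\d\mu$-orthogonal one, so subtracting it minimizes that norm — this is exactly the minimization remark made just after the definition of $\mathsf\Pi_k$), and I replace $\nrm g k$ by $\nrm f k$ similarly (up to a constant: $\nrm{(1-\mathsf\Pi_\eta)f}k\le\nrm f k+|\mathsf\Pi_\eta f|$-terms, controlled because $\eta<\gamma\le k+\beta$-type moments of $\M$ are finite). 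Rearranging, $\nrm g{-\beta}^2\ge\nrm g\eta^{2/(1-\theta)}\nrm f k^{-2\theta/(1-\theta)}$, and since $1-\theta=\tfrac{k-\eta}{k+\beta}$, $\tfrac1{1-\theta}=\tfrac{k+\beta}{k-\eta}$ and $\tfrac\theta{1-\theta}=\tfrac{\eta+\beta}{k-\eta}$, this produces exactly the exponents $\tfrac{k+\beta}{k-\eta}$ and $-\tfrac{\eta+\beta}{k-\eta}$ in the statement. Integrating in $x$ and applying Hölder once more in the $x$ variable (with the same exponents, which are conjugate by construction) transfers the pointwise-in-$x$ inequality to the spatial integrals, and combining with the step-one coercivity bound gives the claim; the dependence of $\mathcal C$ on $\nrm f{\mathrm L^2(\d x\,\d\mu)}$ enters only through the Lyapunov-function constants $a,b,R$ and, in the $\mathsf L_2$ case, through bounding $\mathsf\Pi_{-\beta}f-\mathsf\Pi f$ by $\nrm f{\mathrm L^2(\d x\,\d\mu)}$.

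**The main obstacle** will be step one for $\mathsf L=\mathsf L_1$ and $\mathsf L=\mathsf L_3$: turning the differential Lyapunov inequality $-\mathsf L F\le(a\,\mathbbm 1_{B_R}-b\,\bangle v^{-\beta})F$ into a clean lower bound of the Dirichlet form by $\nrm{(1-\mathsf\Pi_{-\beta})f}{-\beta}^2$. This requires (a) controlling the local term $a\,\mathbbm 1_{B_R}$ — absorbed via a weighted Poincaré inequality on the ball $B_R$ against the Dirichlet form, valid because $\mathsf M$ is bounded below on $B_R$; and (b) for $\mathsf L_3$, a careful understanding of the fractional Dirichlet form $\riint{\frac{(h-h')^2}{|v-v'|^{d+\sigma}}}$ together with the precise asymptotics $E(v)\sim\bangle v^{-\beta}v$ established in Appendix~\ref{Sec:Ebound}, since the nonlocality of $\Delta_v^{\sigma/2}$ prevents a naive integration by parts. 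I expect these to be handled by the auxiliary lemma (Lemma~\ref{lem:Lyapunov}) already announced in the introduction, so here I would cite it and focus the written proof on the interpolation bookkeeping of step two.
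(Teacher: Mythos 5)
Your overall plan — a microscopic coercivity estimate of the form $-\irdxmu{f\,\mathsf Lf}\gtrsim\nrm{(1-\mathsf\Pi_{-\beta})f}{-\beta}^2$ followed by a weight interpolation — is indeed the strategy of the paper, and your treatment of $\mathsf L_2$ (expanding the square and using the kernel lower bound~\eqref{hyp:b_bounds}) is exactly Lemma~\ref{lem:ScatteringGap}. Two points, however, need attention.

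On the coercivity for $\mathsf L_1$ and $\mathsf L_3$, you propose to derive it from the pointwise Lyapunov inequality $-\mathsf LF\le(a\,\mathbbm1_{B_R}-b\,\bangle v^{-\beta})F$ and attribute this to Lemma~\ref{lem:Lyapunov}. This misreads the paper's structure: Lemma~\ref{lem:Lyapunov} serves the $\mathrm L^1$-weighted propagation estimate via the splitting of Lemma~\ref{lem:splitting} and Proposition~\ref{prop:propag_L2_m}, and plays no role in the microscopic coercivity. The paper instead obtains the coercivity for $\mathsf L_1$ from a Hardy--Poincar\'e inequality quoted from the literature (Lemma~\ref{Lem:Hardy-Poincare}) and for $\mathsf L_3$ by comparing the singular kernel of the fractional Dirichlet form from below to the scattering kernel and then invoking Lemma~\ref{lem:ScatteringGap} (this is Corollary~\ref{Cor:FFP}). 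Your Lyapunov-to-weighted-Poincar\'e route is a legitimate alternative for a local generator, but for the nonlocal $\mathsf L_3$ it would be substantially heavier than the one-line kernel comparison; either way it is not what the paper's Lemma~\ref{lem:Lyapunov} delivers.

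More importantly, the interpolation step does not chain as written. You set $g=(1-\mathsf\Pi_\eta)f$, apply H\"older $\nrm g\eta^2\le\nrm g{-\beta}^{2(1-\theta)}\nrm gk^{2\theta}$, and then ``replace $\nrm g{-\beta}$ by $\nrm{(1-\mathsf\Pi_{-\beta})f}{-\beta}$'' via the minimization $\nrm{(1-\mathsf\Pi_\eta)f}{-\beta}\ge\nrm{(1-\mathsf\Pi_{-\beta})f}{-\beta}$. This inequality goes the wrong way: the coercivity gives $-\irdxmu{f\,\mathsf Lf}\gtrsim\nrm{(1-\mathsf\Pi_{-\beta})f}{-\beta}^2$, while your rearranged H\"older gives a lower bound on the \emph{larger} quantity $\nrm{(1-\mathsf\Pi_\eta)f}{-\beta}^2$, and the two cannot be concatenated. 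The fix is to apply the minimization in the $\eta$-weighted norm first, $\nrm{(1-\mathsf\Pi_\eta)f}\eta\le\nrm{(1-\mathsf\Pi_{-\beta})f}\eta$, and only then interpolate, writing $g_{-\beta}:=(1-\mathsf\Pi_{-\beta})f$ and $\theta=\tfrac{\eta+\beta}{k+\beta}$:
\begin{equation*}
\nrm{(1-\mathsf\Pi_\eta)f}\eta^2\le\nrm{g_{-\beta}}\eta^2\le\nrm{g_{-\beta}}{-\beta}^{2(1-\theta)}\,\nrm{g_{-\beta}}k^{2\theta}\lesssim\nrm{g_{-\beta}}{-\beta}^{2(1-\theta)}\,\nrm fk^{2\theta}\,.
\end{equation*}
Rearranging now yields a lower bound on $\nrm{g_{-\beta}}{-\beta}^2$, which is precisely what the Dirichlet form controls, and integration in $x$ concludes. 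The last step uses $\nrm{g_{-\beta}}k\le\nrm fk+\nrm{\mathsf\Pi_{-\beta}f}k\lesssim\nrm fk$, which holds because $k<\gamma$ makes $\nrm\M k$ finite and $-\beta\le\eta<k$ gives $|\irdv{\bangle v^{-\beta}f}|\lesssim\nrm f{-\beta}\le\nrm fk$. Your exponent bookkeeping, $1/(1-\theta)=\tfrac{k+\beta}{k-\eta}$ and $\theta/(1-\theta)=\tfrac{\eta+\beta}{k-\eta}$, is correct.
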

This estimate is the extension of~\eqref{GeneralInterpolation} to the non-homogeneous case. The proof is done in Section~\ref{sec:WPII}. We shall use Proposition~\ref{prop:WPII} with $\eta=- \beta$ if $\gamma> \beta$ and for some $\eta\in(-\,\gamma,0)$ if $\gamma\le \beta$. The case $\eta\ge0$ is needed only in Step 4 of the proof of Proposition~\ref{prop:entropy_decay_A}.

\medskip\noindent$\rhd$ Our proofs require the computation of a large number of coefficients and various estimates which are collected in Sections~\ref{Sec:mu-lambda} and~\ref{Sec::equiv_symbol_mu_and_lambda}.

\medskip\noindent$\rhd$ \emph{A microscopic coercivity estimate} is established in Section~\ref{Sec:macro}, which goes as follows. Let us define the function
\[
\mathcal L(\xi):=\frac{|\xi|^{\alpha }}{\bangle\xi^{\alpha }}\quad\mbox{if}\quad \gamma\neq2 +\beta\,,\quad\mathcal L(\xi):=\frac{|\xi|^2\,\big|\log|\xi|\big|}{1+|\xi|^2\,\log|\xi|}\quad\mbox{if}\quad \gamma=2+\beta\,.
\]
\begin{proposition}\label{prop:entropy_decay_A} Let $\gamma>\max\{0,- \beta\}$ and $\eta\in(-\,\gamma,\gamma)$ such that $\eta \ge- \beta$. Under Assumption~{\rm (H)}, there exists a positive, bounded function $\xi\mapsto\mathcal K(\xi)$ such that
\[
\mathsf R_\xi[\Ff]\gtrsim\mathcal L(\xi)\,\n{\mathsf\Pi\Ff}^2-\mathcal K(\xi)\,\nrm{(1-\mathsf\Pi)\Ff}\eta^2\,.
\]
\end{proposition}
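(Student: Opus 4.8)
The strategy is to expand $\mathsf R_\xi[\Ff] = -\frac{\mathrm d}{\mathrm dt}\re\langle\mathsf A_\xi\Ff,\Ff\rangle$ along the flow $\partial_t\Ff = \mathsf L\Ff - \mathsf T\Ff$, so that
\[
\mathsf R_\xi[\Ff] = \re\langle\mathsf A_\xi\mathsf T\Ff,\Ff\rangle + \re\langle\mathsf A_\xi\Ff,\mathsf T\Ff\rangle - \re\langle\mathsf A_\xi\mathsf L\Ff,\Ff\rangle - \re\langle\mathsf A_\xi\Ff,\mathsf L\Ff\rangle\,.
\]
Writing $\Ff = \mathsf\Pi\Ff + (1-\mathsf\Pi)\Ff$ and using $\mathsf\Pi\mathsf T\mathsf\Pi = 0$, the leading term is $\re\langle\mathsf A_\xi\mathsf T\mathsf\Pi\Ff,\mathsf\Pi\Ff\rangle$, which, using $\mathsf\Pi\Ff = \rho\,\M$ and the identity displayed before Lemma~\ref{Lem:Equivalence}, equals $|\rho|^2$ times the scalar quantity
\[
\mathsf m(\xi) := \irdv{\frac{\bangle v^{-\beta}\,(v\cdot\xi)^2}{\bangle v^{2-\beta}+\bangle v^2|\xi|^2}\,\M}\,,
\]
at least when $\beta\ge-1$; the case $\beta<-1$ uses the $|1+\beta|$ in the definition of $\varphi$ and is handled analogously. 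The heuristic computation of $\mathsf b_\varepsilon$ in~\eqref{b-eps} shows $\mathsf m(\xi) \gtrsim \mathcal L(\xi)$ as $|\xi|\to 0$ (this is the role of the function $\mathcal L$, with the logarithmic correction precisely in the critical case $\gamma = 2+\beta$), and $\mathsf m(\xi)$ is bounded above, so $\re\langle\mathsf A_\xi\mathsf T\mathsf\Pi\Ff,\mathsf\Pi\Ff\rangle \gtrsim \mathcal L(\xi)\,\n{\mathsf\Pi\Ff}^2$. The precise asymptotics of $\mathsf m$, and the verification that the various $\bangle v$-moments appearing implicitly in the definition of $\mathsf A_\xi$ converge under the hypothesis $\gamma > \max\{0,-\beta\}$, are exactly what Sections~\ref{Sec:mu-lambda} and~\ref{Sec::equiv_symbol_mu_and_lambda} are meant to supply, so I would quote them.

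All remaining contributions are error terms that I want to absorb into $-\mathcal K(\xi)\,\nrm{(1-\mathsf\Pi)\Ff}\eta^2$. There are four of them: the cross transport terms $\re\langle\mathsf A_\xi\mathsf T(1-\mathsf\Pi)\Ff,\mathsf\Pi\Ff\rangle$ and $\re\langle\mathsf A_\xi\mathsf\Pi\Ff,\mathsf T(1-\mathsf\Pi)\Ff\rangle$ plus $\re\langle\mathsf A_\xi\mathsf T(1-\mathsf\Pi)\Ff,(1-\mathsf\Pi)\Ff\rangle$, and the two collision terms involving $\mathsf L\Ff$. Since $\mathsf L\M = 0$, one has $\mathsf L\Ff = \mathsf L(1-\mathsf\Pi)\Ff$, so the collision terms are bilinear in $(1-\mathsf\Pi)\Ff$ after using that $\mathsf A_\xi$ is bounded (Lemma~\ref{Lem:Equivalence}); this needs Step~4-type control of $\nrm{(1-\mathsf\Pi)\Ff}\eta$ by $\nrm{(1-\mathsf\Pi_\eta)\Ff}{k'}$ for a suitable $k'$, which is why the statement of Proposition~\ref{prop:WPII} is invoked for nonnegative $\eta$ as the excerpt announces. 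For the transport error terms I would use $\mathsf A_\xi = \psi\,\mathsf\Pi\mathsf T^*\varphi$ together with the pointwise bounds $|\psi|\le 1$, $|(v\cdot\xi)\varphi|\le 1/2$ and Cauchy–Schwarz, controlling factors of $v\cdot\xi$ against moments of $(1-\mathsf\Pi)\Ff$ in the weighted norm $\nrm{\cdot}\eta$; the weight $\bangle v^\eta$ with $\eta \ge -\beta$ is chosen precisely so that $\bangle v^{-\beta}\,v\cdot\xi$-type factors are integrable against $\d\mu$.

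**Main obstacle.** The delicate point is not the leading term but the bookkeeping in the error terms: one must choose the coefficient $\mathcal K(\xi)$ so that it stays bounded (including as $|\xi|\to 0$ and $|\xi|\to\infty$) while still dominating all the cross terms, and this requires matching the $\xi$-dependent scaling of $\varphi(\xi,v)$ against the decay $\mathcal L(\xi)$ of the main term — in particular checking that the cross transport term $\re\langle\mathsf A_\xi\mathsf T(1-\mathsf\Pi)\Ff,\mathsf\Pi\Ff\rangle$, which is only \emph{linear} in $\mathsf\Pi\Ff$, does not destroy the coercivity of $\mathcal L(\xi)\n{\mathsf\Pi\Ff}^2$; one absorbs half of the main term via Young's inequality, paying a factor $\mathcal L(\xi)^{-1}$ into $\mathcal K(\xi)$, and must then verify this is still bounded, which forces the specific form of $\varphi$ with its denominator $1+\bangle v^{2|1+\beta|}|\xi|^2$. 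I expect this matching of exponents, case by case according to the sign of $\beta$, to $\gamma \lessgtr 2+\beta$, and to whether $|\xi|$ is large or small, to be the bulk of the real work, with the moment computations of Sections~\ref{Sec:mu-lambda}–\ref{Sec::equiv_symbol_mu_and_lambda} doing the heavy lifting underneath.
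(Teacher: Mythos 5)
Your decomposition of $\mathsf R_\xi[\Ff]$ along the flow, your identification of $\I1 := \re\langle\mathsf A_\xi\mathsf T\mathsf\Pi\Ff,\mathsf\Pi\Ff\rangle$ as the coercive term with $\xi$-scaling $\mathcal L(\xi)$, and your treatment of the cross transport terms (Cauchy--Schwarz, then Young's inequality to split, paying a factor $\mathcal L(\xi)^{-1}$ into $\mathcal K(\xi)$) are all exactly the paper's strategy. Two minor bookkeeping slips: the coefficient of $\n{\mathsf\Pi\Ff}^2$ is actually the product $\lambda_0\,\mu_2(\xi)$ because of the $\psi=\bangle v^{-2}$ factor and the projection inside $\mathsf A_\xi$, not your single integral $\mathsf m(\xi)$ (same asymptotics, so harmless); and one of your listed error terms, $\re\langle\mathsf A_\xi\mathsf\Pi\Ff,\mathsf T(1-\mathsf\Pi)\Ff\rangle$, is identically zero since $\mathsf A_\xi\mathsf\Pi=0$, while you omitted $\langle\mathsf A_\xi\mathsf T\mathsf\Pi\Ff,(1-\mathsf\Pi)\Ff\rangle$.

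The genuine gap is in your treatment of the collision terms $\I6=-\langle\mathsf A_\xi\mathsf L(1-\mathsf\Pi)\Ff,\Ff\rangle$ and $\I7=-\langle\mathsf A_\xi(1-\mathsf\Pi)\Ff,\mathsf L(1-\mathsf\Pi)\Ff\rangle$. You propose to bound these using the boundedness of $\mathsf A_\xi$ (Lemma~\ref{Lem:Equivalence}) together with a Proposition~\ref{prop:WPII}-type estimate. Neither ingredient actually helps here. Boundedness of $\mathsf A_\xi$ would give $|\I6|\lesssim\nrm{\mathsf L(1-\mathsf\Pi)\Ff}{}\,\n\Ff$, but $\mathsf L$ is an unbounded operator so $\nrm{\mathsf L(1-\mathsf\Pi)\Ff}{}$ is not controlled by any $\bangle v$-weighted $\mathrm L^2$ norm of $(1-\mathsf\Pi)\Ff$. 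Proposition~\ref{prop:WPII} is also of no use because it gives a \emph{lower} bound on the Dirichlet form $-\langle f,\mathsf Lf\rangle$ in terms of $\nrm{(1-\mathsf\Pi)f}{}$, which is the wrong direction for controlling $\nrm{\mathsf Lf}{}$ from above. The correct mechanism exploits the specific structure $\mathsf A_\xi=\psi\,\mathsf\Pi\,\mathsf T^*\varphi$: because of the projection $\mathsf\Pi$ inside $\mathsf A_\xi$, the quantity $\mathsf A_\xi\mathsf Lg$ is a scalar multiple of the fixed function $\psi\M$, and that scalar is $\langle(v\cdot\xi)\,\varphi\,\M,\mathsf Lg\rangle$. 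One then moves $\mathsf L^*$ by duality onto the \emph{smooth} test function $(v\cdot\xi)\,\varphi\,\M$ (and, for $\I7$, onto $\psi\M$), obtaining $|\langle\mathsf L^*\big((v\cdot\xi)\varphi\M\big),(1-\mathsf\Pi)\Ff\rangle|\le\mu_\mathsf L\,\nrm{(1-\mathsf\Pi)\Ff}\eta$. Finiteness and the right $\xi$-decay of the coefficients $\mu_\mathsf L$ and $\lambda_\mathsf L$ are then established by the explicit computations of Section~\ref{Sec:mu-lambda} (Proposition~\ref{prop:entropy_decay_A_Assumption}), separately for each of $\mathsf L_1$, $\mathsf L_2$, $\mathsf L_3$. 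This is a genuinely different and essential step that your outline does not supply.
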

In Section~\ref{Sec:fractionalNash}, inspired by \emph{fractional Nash inequalities}, we deduce from~Proposition~\ref{prop:entropy_decay_A} an estimate on the distance in the direction which is orthogonal to the local equilibria.
\begin{corollary}\label{Cor:fractionalNash2} Under Assumption~{\rm (H)}, we have
\[
\rint\mathsf R_\xi[\Ff]\,\d\xi\;\gtrsim\;\nrm{\mathsf\Pi f}{\mathrm L^2(\d x\,\d\mu)}^{2\,(1+\frac{\alpha }d)}-\vertiii{(1-\mathsf\Pi)f}_{- \beta}^2\quad\mbox{if}\quad \gamma\neq 2 + \beta\,,
\]
\begin{multline*}
\rint\mathsf R_\xi[\Ff]\,\d\xi\;\gtrsim\;\nrm{\mathsf\Pi f}{\mathrm L^2(\d x\,\d\mu)}^{2\,(1+\frac{\alpha }d)}\,\log\(\frac{\;\nrm{\mathsf\Pi f}{\mathrm L^2(\d x\,\d\mu)}}{\;\nrm f{\mathrm L^1(\d x\,\d\mu)}}\)-\vertiii{(1-\mathsf\Pi)f}_{- \beta}^2\\\mbox{if}\quad \gamma=2+ \beta\,.
\end{multline*}
\end{corollary}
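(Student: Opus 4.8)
The plan is to integrate the mode-by-mode estimate of Proposition~\ref{prop:entropy_decay_A} over $\xi\in\R^d$ and then estimate the resulting integral $\int_{\R^d}\mathcal L(\xi)\,\n{\mathsf\Pi\Ff}^2\,\d\xi$ from below by the fractional Nash argument sketched in the introduction around~\eqref{eq:NashFrac}. First I would fix the choice $\eta=-\beta$ if $\gamma>\beta$ and some admissible $\eta\in(-\gamma,0)$ otherwise, so that the negative term in Proposition~\ref{prop:entropy_decay_A} becomes $-\int_{\R^d}\mathcal K(\xi)\,\nrm{(1-\mathsf\Pi)\Ff}\eta^2\,\d\xi$; since $\mathcal K$ is bounded and Plancherel's identity turns the $\xi$-integral of $\nrm{(1-\mathsf\Pi)\Ff}\eta^2$ into $\nrm{(1-\mathsf\Pi)f}{\mathrm L^2(\d x\,\bangle v^\eta\d\mu)}^2\lesssim\vertiii{(1-\mathsf\Pi)f}_{-\beta}^2$, this term is absorbed into the claimed $-\vertiii{(1-\mathsf\Pi)f}_{-\beta}^2$. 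The substance is therefore the lower bound on $\int_{\R^d}\mathcal L(\xi)\,\n{\mathsf\Pi\Ff}^2\,\d\xi=\int_{\R^d}\mathcal L(\xi)\,|\widehat\rho_f(\xi)|^2\,\d\xi$, using $\n{\mathsf\Pi\Ff}^2=|\widehat\rho_f|^2\int\M\,\d v$ and $\int\M\,\d v=1$.

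Next I would split $\R^d=\{|\xi|\le1\}\cup\{|\xi|>1\}$. On $\{|\xi|\le1\}$, $\mathcal L(\xi)\gtrsim|\xi|^\alpha$ in the non-critical case (and $\mathcal L(\xi)\gtrsim|\xi|^2|\log|\xi||$ in the critical case), while on $\{|\xi|>1\}$ we have $\mathcal L(\xi)\gtrsim1$, so $\int_{|\xi|>1}\mathcal L(\xi)\,|\widehat\rho_f|^2\,\d\xi\gtrsim\nrm{\widehat\rho_f}{\mathrm L^2(|\xi|>1)}^2$. For the low-frequency part, the standard Nash truncation argument applies: for any $r\in(0,1]$,
\[
\nrm{\widehat\rho_f}{\mathrm L^2(\d\xi)}^2=\int_{|\xi|<r}|\widehat\rho_f|^2\,\d\xi+\int_{|\xi|\ge r}|\widehat\rho_f|^2\,\d\xi\le\omega_d\,\frac{r^d}{d}\,\nrm{\widehat\rho_f}{\mathrm L^\infty}^2+r^{-\alpha}\int_{\R^d}|\xi|^\alpha\,|\widehat\rho_f|^2\,\d\xi\,,
\]
and $\nrm{\widehat\rho_f}{\mathrm L^\infty}\le(2\pi)^{-d/2}\nrm{\rho_f}{\mathrm L^1(\d x)}$. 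Optimizing over $r$ (taking $r\asymp1$ in the critical logarithmic case, which produces the extra $\log$ factor via the choice $r$ scaling like a power of the ratio of norms) yields
\[
\nrm{\rho_f}{\mathrm L^2(\d x)}^{2(1+\alpha/d)}\lesssim\nrm{\rho_f}{\mathrm L^1(\d x)}^{2\alpha/d}\(\int_{\R^d}|\xi|^\alpha|\widehat\rho_f|^2\,\d\xi\)\lesssim\nrm{\rho_f}{\mathrm L^1(\d x)}^{2\alpha/d}\int_{\R^d}\mathcal L(\xi)\,|\widehat\rho_f|^2\,\d\xi\,,
\]
where the last inequality uses the high-frequency control to dominate the missing $\int_{|\xi|>1}|\xi|^\alpha|\widehat\rho_f|^2\,\d\xi$ by $\int_{|\xi|>1}\mathcal L(\xi)|\widehat\rho_f|^2\,\d\xi$ (since $|\xi|^\alpha\lesssim1\cdot|\xi|^\alpha$ fails for large $\xi$, one must instead bound $\nrm{\widehat\rho_f}{\mathrm L^2(|\xi|>1)}^2$ directly and combine). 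Recognizing $\nrm{\rho_f}{\mathrm L^1(\d x)}\le\nrm{f}{\mathrm L^1(\d x\,\d v)}$ and $\nrm{\rho_f}{\mathrm L^2(\d x)}=\nrm{\mathsf\Pi f}{\mathrm L^2(\d x\,\d\mu)}$ gives the first displayed inequality of the corollary after dividing by the $\mathrm L^1$ factor; in the critical case the refined Nash inequality with the logarithmic correction (as in~\cite{bouin_diffusion_2019}) gives the second.

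The main obstacle I anticipate is the bookkeeping at the boundary between low and high frequencies in the critical case $\gamma=2+\beta$: there $\mathcal L(\xi)$ behaves like $|\xi|^2|\log|\xi||$ near $0$, so the Nash truncation must be carried out with the logarithmic weight, and one needs the sharpened logarithmic Nash inequality rather than~\eqref{eq:NashFrac}; getting the exponent of the $\log$ factor and the argument of the $\log$ (namely the ratio $\nrm{\mathsf\Pi f}{\mathrm L^2}/\nrm f{\mathrm L^1}$) exactly right requires care, and one must also verify the ratio inside the logarithm is $\gtrsim1$ (or handle the degenerate regime separately) so that the logarithm is nonnegative. The passage from the mode-by-mode negative term to the clean $\vertiii{(1-\mathsf\Pi)f}_{-\beta}^2$ is routine given boundedness of $\mathcal K$ and $\bangle v^\eta\le\bangle v^{-\beta}$ (after possibly enlarging $\eta$ to $-\beta$ using $(1-\mathsf\Pi)$ in place of $(1-\mathsf\Pi_\eta)$, which only helps), so that part should not cause trouble.
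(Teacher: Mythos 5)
Your plan follows essentially the same route as the paper: integrate the mode-by-mode bound from Proposition~\ref{prop:entropy_decay_A} over $\xi$, absorb the $\mathcal K$-weighted microscopic term using boundedness of $\mathcal K$ together with Plancherel and $\bangle v^\eta\le\bangle v^{-\beta}$, and then bound $\int_{\R^d}\mathcal L(\xi)\,|\widehat\rho_f(\xi)|^2\,\d\xi$ from below by a fractional Nash-type truncation.

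The one real snag, which you flag yourself, is the second inequality in your chain
\[
\int_{\R^d}|\xi|^\alpha\,|\widehat\rho_f|^2\,\d\xi\lesssim\int_{\R^d}\mathcal L(\xi)\,|\widehat\rho_f|^2\,\d\xi\,,
\]
which is simply false: $\mathcal L(\xi)\to1$ as $|\xi|\to+\infty$ while $|\xi|^\alpha$ grows, so the classical inequality~\eqref{eq:NashFrac} cannot be applied as a black box. The way the paper resolves this is precisely the fix you gesture at, but carried out in one step rather than by a low/high split. In Lemma~\ref{lem:fractionalNash}, the truncation is run directly against the bounded, monotone weight $\mathcal N_\alpha(s)=s^\alpha/(1+s^2)^{\alpha/2}$ (which is exactly $\mathcal L(\xi)$ for $s=|\xi|$): for every $R>0$,
\[
\nrm{\widehat\rho_f}{\mathrm L^2(\d\xi)}^2\le\tfrac{\omega_d}{d}\,R^d\,\nrm{\widehat\rho_f}{\mathrm L^\infty(\d\xi)}^2+\frac1{\mathcal N_\alpha(R)}\int_{\R^d}\mathcal N_\alpha(|\xi|)\,|\widehat\rho_f|^2\,\d\xi\,,
\]
and optimizing over $R$ yields a monotone $\Phi_\alpha$ with $\Phi_\alpha(s)\sim s^{d/(d+\alpha)}$ as $s\to 0^+$; Lemma~\ref{lem:fractionalNash2} then just carries this through $\mathsf\Pi f=\rho_f\,\M$. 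Note that the exponent $2(1+\alpha/d)$ comes from the small-argument regime of $\Phi_\alpha$ (for large argument $\Phi_\alpha$ is only linear), so the $\gtrsim$ in the corollary is understood with a constant that may depend on the conserved $\mathrm L^1$ mass of the solution; this is consistent with how the corollary is used in Section~\ref{Sec:Proofs}. For the critical case $\gamma=2+\beta$, the paper's Lemma~\ref{lem:fractionalNash3} makes the logarithmic refinement precise, and, as you anticipate, it does require a smallness threshold $\nrm{\mathsf\Pi f}{\mathrm L^2(\d x\,\d\mu)}^2/\nrm f{\mathrm L^1(\d x\,\d v)}^2\le\mathsf A$ so that the truncation radius stays below $1/\sqrt e$ and the logarithm has the correct sign.
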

The proof is a straightforward consequence of Lemma~\ref{lem:fractionalNash2} if $\gamma \neq 2+ \beta$ and of Lemma~\ref{lem:fractionalNash3} if $\gamma=2+ \beta$. See details in Section~\ref{Sec:fractionalNash} and~\ref{Sec:fractionalNashLimit}.

\subsection{Sketch of the proof of the main results}\label{Sec:Proofs}

The difficult part of the paper is the proof of Propositions~\ref{prop:propag_L2_m},~\ref{prop:WPII} and~\ref{prop:entropy_decay_A}, and Corollary~\ref{Cor:fractionalNash2}. If $\gamma\le \beta$, we have to take $\eta\neq- \beta$ and use additional interpolation estimates: see Section~\ref{Sec:MoreProofs}. Otherwise, the proof of Theorems~\ref{th:main2} and~\ref{th:main1} is not difficult if $\gamma>\beta$ and can be done as follows.

Under Assumption~{\rm (H)}, a solution of~\eqref{eq:main} is such that
\[
-\,\dt\mathsf H[f]=-2\irdxmu{f\,\mathsf L f}-\delta\rint\mathsf R_\xi[\Ff]\,\d\xi\,.
\]
Let us assume that $\gamma\neq2+ \beta$ and $\gamma>\beta$. We rely on Proposition~\ref{prop:WPII}.
\\[4pt]
$\bullet$ If $\beta\le0$, with $\eta=- \beta$, we find that
\[
\rint\mathsf R_\xi[\Ff]\,\d\xi\gtrsim\nrm{\mathsf\Pi f}{\mathrm L^2(\d x\,\d\mu)}^{2\,(1+\frac{\alpha }d)}
-\vertiii{(1-\mathsf\Pi)f}_{- \beta}^2\,.
\]
We obtain
\begin{align*}
-\,\dt\mathsf H[f]&\gtrsim(1-\delta)\,\vertiii{(1-\mathsf\Pi)f}_{- \beta}^2+\delta\,\nrm{\mathsf\Pi f}{\mathrm L^2(\d x\,\d\mu)}^{2\,(1+\frac{\alpha }d)}\\
&\gtrsim(1-\delta)\,\nrm{(1-\mathsf\Pi)f}{\mathrm L^2(\d x\,\d\mu)}^2+\delta\,\nrm{\mathsf\Pi f}{\mathrm L^2(\d x\,\d\mu)}^{2\,(1+\frac{\alpha }d)}\\
&\gtrsim\mathsf H[f]^{2\,(1+\frac{\alpha }d)}
\end{align*}
using the simple observation that $\nrm{(1-\mathsf\Pi)f}{- \beta}^2\ge\nrm{(1-\mathsf\Pi)f}{\mathrm L^2(\d\mu)}^2$ if $\beta\le0$.
\\[4pt]
$\bullet$ If $\beta\in(0,\gamma)$ and $2+\beta \neq \gamma > \beta$, again with $\eta=- \beta$, we find that
\[
-\,\dt\mathsf H[f]\gtrsim(1-\delta)\vertiii{(1-\mathsf\Pi)f}_{- \beta}^2+\delta\,\nrm{\mathsf\Pi f}{\mathrm L^2(\d x\,\d\mu)}^{2\,(1+\frac{\alpha }d)}\,.
\]
Using H\"older's inequality
\[
\n{(1-\mathsf\Pi)f}^2\le\nrm{(1-\mathsf\Pi)f}{- \beta}^\frac{2\,k}{k+\beta}\,\nrm{(1-\mathsf\Pi)f}k^\frac{\beta - 2}{k+\beta}\,,
\]
we conclude that
\[
-\,\dt\mathsf H[f]\gtrsim(1-\delta)\nrm{(1-\mathsf\Pi)f}{\mathrm L^2(\d x\,\d\mu)}^{2\,(1+\frac{\beta}k)}+\delta\,\nrm{\mathsf\Pi f}{\mathrm L^2(\d x\,\d\mu)}^{2\,(1+\frac{\alpha }d)}\,.
\]
$\bullet$ If $d\ge1$, $\beta\ge0$ and $\gamma=2+\beta$, $\alpha =2$ but there is a logarithmic correction in the expression of $\rint\mathsf R_\xi[\Ff]\,\d\xi$, which is responsible for the $O(\log t)$ correction of Theorems~\ref{th:main2} and~\ref{th:main1} in case $\gamma=2+\beta$ as $t\to+\infty$.
\\[4pt]
$\bullet$ For integrability reasons, the case $\gamma\le  \beta$ requires further estimates involving some $\eta\in(-\,\gamma,0)$ that will be dealt with in Sections~\ref{Sec:Interp_ext} and~\ref{Sec:beta+gamma<0}. Except in this case, the proof of Theorems~\ref{th:main2} and~\ref{th:main1} is complete.

\section{Estimates in weighted \texorpdfstring{$\mathrm L^2$}{L2} spaces}\label{Sec:WeightedSpaces}

In this section, we assume that $\beta\ge0$.

\subsection{A result in weighted \texorpdfstring{$\mathrm L^2$}{L2} spaces}

Let us prove Proposition~\ref{prop:propag_L2_m}, \emph{i.e.}, the propagation of weighted norms $\mathrm L^2(\bangle v^k\d x\,\d\mu)$ with power law of order $k\in(0,\gamma)$.

The conservation of weighted norms has also been used in~\cite{BDLS} when $\M$ has a sub-exponential form. In that case, any value of~$k$ was authorized, and this was implicitly a consequence of the fact that such a local equilibrium~$\M$ had finite weighted norms $\mathrm L^2(\bangle v^k\d x\,\d\mu)$ for any $k\in\R_+$. For a local equilibrium given by~\eqref{eq:hypfattailed}, there is a limitation on~$k$ as we cannot expect a global propagation of higher moments than those of~$\M$.

For any function $h\in\mathrm L^2(\bangle v^k\d x\,\d\mu)$, one can notice that
\[
\nrm h{\mathrm L^2(\bangle v^k\d x\,\d\mu)}=\nrm{\M^{-1} h}{\mathrm L^2(\M\bangle v^k\d x\,\d v)}\,.
\]
In other words, it is equivalent to control the semi-group $e^{(\mathsf L-\mathsf T)t}$ in $\mathrm L^2(\bangle v^k\d x\,\d\mu)$ and $\M^{-1}\,e^{(\mathsf L-\mathsf T)t}$ in $\mathrm L^2(\M\bangle v^k\d x\,\d v)$. Since $\mathrm L^2(\bangle v^k\M\,\d x\,\d v)$ is a space interpolating between $\mathrm L^1(\M\bangle v^k\d x\,\d v)$ and $\mathrm L^\infty(\d x\,\d v)$ (see~\cite[Theorem~(2.9)]{stein_interpolation_1958}), we shall establish the result of Proposition~\ref{prop:propag_L2_m} by proving that $\M^{-1}\,e^{(\mathsf L-\mathsf T)t}$ is bounded onto $\mathrm L^\infty(\d x\,\d v)$ in Section~\ref{Sec:Linfinity} and onto $\mathrm L^1(\M\bangle v^k\d x\,\d v)$ in Section~\ref{Sec:L1}. In order to prove this last estimate, as in~\cite{kavian_fokker-planck_2015,lafleche_fractional_2020,BDLS}, we shall use a Lyapunov function method in Section~\ref{Sec:LyapunovFunction} and a splitting of the operator in Section~\ref{Sec:Splitting}.

\subsection{The boundedness in \texorpdfstring{$\mathrm L^\infty(\d x\,\d v)$}{L infinity(dxdv)}}\label{Sec:Linfinity}

\begin{lemma} Let $d\ge1$ and $\gamma>0$. If~{\rm (H)} holds, then
\[
\forall\,t\ge0\,,\quad\nrm{\M^{-1}e^{t(\mathsf L-\mathsf T)}}{\mathrm L^\infty(\d x\,\d v)\to\mathrm L^\infty(\d x\,\d v)}\le1\,,
\]
where the norm $\nrm{\cdot}{X\to Y}$ denotes the operator norm for an operator with domain~$X$ and codomain $Y$.
\end{lemma}
\begin{proof} This is a consequence of the maximum principle in Case~$\mathsf L=\mathsf L_1$. In Case~$\mathsf L=\mathsf L_2$, $h^\#(t,x,v)=\M^{-1}(v)\,f(t,x+v\,t,v)$ solves
\[
\partial_th^\#+\nu(v)\,h^\#=\rint\mathrm b(v,v')\,\M(v')\,h^\#(t,x,v')\,\d v'\,,
\]
which is clearly a positivity preserving equation. The positivity of
\[
(t,x,v)\mapsto\nrm{h(0,\cdot,\cdot)}{\mathrm L^\infty(\d x\,\d v)}-h^\#(t,x,v)
\]
is also preserved, as it solves the same equation, which proves the claim. Case~$\mathsf L=\mathsf L_3$ is less standard as it relies on the maximum principle for fractional operators. As this is out of the scope of the present paper, we will only sketch the main steps of a proof. First of all, the results of~\cite{lafleche_fractional_2020} can be adapted to $E$ as defined by~\eqref{eq:def_E}, thus proving that the evolution according to $\partial_t-\M\,\mathsf L_3(\M^{-1}\cdot)$ preserves $\mathrm L^\infty$ bounds. This is also the case of $\partial_t-\mathsf T$. We can then conclude using a time-splitting approximation scheme of evolution and a Trotter formula.\Qed\end{proof}

\subsection{A Lyapunov function method}\label{Sec:LyapunovFunction}

The boundedness of the operator $\M^{-1}e^{t(\mathsf L-\mathsf T)}$ in $\mathrm L^1(\M\bangle v^k\d x\,\d v)$ is equivalent to the boundedness of the operator $e^{t(\mathsf L-\mathsf T)}$ in $\mathrm L^1(\bangle v^k\d x\,\d v)$. To obtain such a bound, we rely on a Lyapunov function estimate.
\begin{lemma}\label{lem:Lyapunov} Let $d\ge1$, $\beta \geq 0$ and $\gamma > 0$. If~{\rm (H)} holds, then for any $k\in[0,\gamma + \beta)$, there exists $(a,b,R)\in\R\times\R_+\times\R_+$ such that for any $f\in\mathrm L^1(\bangle v^k\d x\,\d v)$,
\[\label{eq:Lyapunov}
\riint\tfrac f{|f|}\,\mathsf L f\,\bangle v^k\d x\,\d v\le\riint\(a\,\mathds1_{B_R}-b\bangle v^{- \beta}\)|f| \bangle v^k\d x\,\d v\,.
\]
As a special case corresponding to $k=0$, we have $\riint\tfrac f{|f|}\,\mathsf L f\,\d x\,\d v\le0$.
\end{lemma}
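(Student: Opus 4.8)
The plan is to prove the Lyapunov estimate case by case, treating each of the three collision operators separately, since the structure of $\mathsf L$ differs substantially. The statement is local in $x$ for $\mathsf L=\mathsf L_1,\mathsf L_2$ (and becomes a genuinely nonlocal but still $x$-pointwise statement for $\mathsf L_3$), so after the $\d x$ integration the task reduces to showing that the velocity integral $\irdv{\frac f{|f|}\,\mathsf L f\,\bangle v^k}$ is bounded above by $\irdv{(a\,\mathds 1_{B_R}-b\,\bangle v^{-\beta})\,|f|\,\bangle v^k}$ pointwise in $x$. The key heuristic is that $\bangle v^k$ is an $\mathsf L$-supersolution up to a compactly supported error as soon as $k<\gamma+\beta$: the "gain" in the tail, of order $\bangle v^{-\beta}$, must dominate the moment order $k$ relative to the equilibrium weight $\gamma$.

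\medskip\noindent\textbf{Case $\mathsf L=\mathsf L_2$.} This is the most transparent. Writing $\mathsf L_2 f = \irdv{\mathrm b(v,v')\,(f(v')\,\M(v)-f(v)\,\M(v'))}'$ and using $\frac f{|f|}\,f(v')\le|f(v')|$, one gets
\[
\irdv{\tfrac f{|f|}\,\mathsf L_2 f\,\bangle v^k}\le \irdvv{\mathrm b(v,v')\,\big(|f(v')|\,\M(v)\,\bangle v^k-|f(v)|\,\M(v')\,\bangle v^k\big)}.
\]
Swapping $v\leftrightarrow v'$ in the first term turns this into $\irdv{|f(v)|\,\big(\mathcal N(v)-\nu(v)\,\bangle v^k\big)}$ with $\mathcal N(v):=\M(v)\irdv{\mathrm b(v',v)\,\bangle{v'}^k}'$. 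By assumption~\eqref{hyp:b_bounds2}, $\mathcal N(v)\lesssim \M(v)\,\bangle v^{-\beta}$, which decays faster than $\nu(v)\,\bangle v^k\sim \bangle v^{k-\beta}$ is expected to... wait—more carefully: $\mathcal N(v)\lesssim c_\gamma\bangle v^{-d-\gamma}\cdot\mathcal C_{\mathrm b}(k)\bangle v^{-\beta}$ which is $\bangle v^{-d-\gamma-\beta}$, while $\nu(v)\bangle v^k\gtrsim Z^{-1}\bangle v^{-\beta}\cdot\bangle v^k$ using~\eqref{hyp:b_bounds} and $\irdv{\bangle v^{-\beta}\M}'>0$. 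So $\mathcal N(v)-\nu(v)\,\bangle v^k\le (a-b\bangle v^{-\beta})\,\bangle v^k$ on all of $\R^d$ once $R$ is large, with $a$ absorbing the bounded region; actually one can even take $a\,\mathds 1_{B_R}$ because outside a large ball the negative term wins. The constraint $k<\gamma+\beta$ is not needed here beyond $k<\gamma$, and indeed $k\in[0,\gamma+\beta)$ is only used in full in the Fokker–Planck cases.

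\medskip\noindent\textbf{Cases $\mathsf L=\mathsf L_1$ and $\mathsf L=\mathsf L_3$.} Here one writes $\irdv{\frac f{|f|}\,\mathsf L f\,\bangle v^k}$ and uses that $\mathsf L$ is in divergence form to integrate by parts, transferring derivatives (or the fractional operator, via its integral representation and a Kato-type inequality $\frac f{|f|}\,\Delta_v^{\sigma/2} f\le \Delta_v^{\sigma/2}|f|$ as in~\cite{lafleche_fractional_2020}) onto the weight $\bangle v^k$. For $\mathsf L_1$ this gives, after the standard computation $\irdv{\frac f{|f|}\,\mathsf L_1 f\,\bangle v^k}\le \irdv{|f|\,\M\,\Delta_v\big(\M^{-1}\bangle v^k\big)}$ type expression; expanding $\M^{-1}\bangle v^k=c_\gamma^{-1}\bangle v^{d+\gamma+k}$ one finds $\M\,\Delta_v(\M^{-1}\bangle v^k)\sim (\text{const})\,\bangle v^{k-2}$ with a sign depending on whether the "drift toward infinity" is overcome by the moment order, and one checks the coefficient of $\bangle v^{k-2}$ is negative precisely when $k<\gamma+\beta=\gamma+2$. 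For $\mathsf L_3$ the analogous computation uses the asymptotics $E(v)\sim \bangle v^{-\beta}v$ from Appendix~\ref{Sec:Ebound} together with the $\sigma$-homogeneity of $\Delta_v^{\sigma/2}$ acting on $\bangle v^k$, producing a leading term of order $\bangle v^{k-\beta}$ (note $\beta=\sigma-\gamma$) whose coefficient is negative iff $k<\gamma+\beta$. In both cases the remainder is controlled on a large ball, giving the $a\,\mathds 1_{B_R}$ term, and the $k=0$ special case is just the contractivity already implicit in the $\mathrm L^\infty$ lemma, i.e. $\mathsf L$ applied to a constant times $\M$ integrates to zero and the Kato inequality gives $\le 0$.

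\medskip\noindent\textbf{Main obstacle.} The genuinely delicate point is Case~$\mathsf L=\mathsf L_3$: justifying the Kato-type inequality for $\Delta_v^{\sigma/2}$ against the unbounded weight $\bangle v^k$, and obtaining a clean pointwise asymptotic for $\M\,\big(\Delta_v^{\sigma/2}(\M^{-1}\bangle v^k)+\bangle v^k\,\M^{-1}\nabla_v\cdot(E\,\M) + \nabla_v(\M^{-1}\bangle v^k)\cdot E\big)$ — in particular matching the cancellation coming from $\mathsf L_3\M=0$ so that only the $\bangle v^{k-\beta}$ term with the correct (negative, for $k<\gamma+\beta$) coefficient survives. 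This requires the precise tail behaviour of $E$ from Appendix~\ref{Sec:Ebound} and a careful splitting of the singular integral defining $\Delta_v^{\sigma/2}\bangle v^k$ into a near-diagonal part (handled by the $C^2$ bound on $\bangle v^k$) and a far part (handled by the decay/growth rates). For $\mathsf L_1$ the computation is elementary, and for $\mathsf L_2$ it is purely algebraic given hypotheses~\eqref{hyp:b_bounds}--\eqref{hyp:b_bounds2}; the $\mathsf L_3$ estimate is where most of the work lies and where one most wants to invoke, rather than reprove, the fractional maximum-principle machinery of~\cite{lafleche_fractional_2020}.
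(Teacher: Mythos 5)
Your overall strategy coincides with the paper's: reduce to $f\ge0$ via a Kato-type inequality, pass the operator to the weight $\bangle v^k$ via duality/adjointness, and compute the leading asymptotics of $\M^{-1}\mathsf L^*(\M\,\bangle\cdot^k)$ case by case. The threshold $k<\gamma+\beta$ comes out exactly where you say it does. So this is not a different route; it is a sketchier version of the same one. That said, there are several concrete inaccuracies worth flagging.

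In the $\mathsf L_2$ computation, after swapping $v\leftrightarrow v'$ you define $\mathcal N(v):=\M(v)\irdv{\mathrm b(v',v)\,\bangle{v'}^k}'$, but the swap of $\irdvv{\mathrm b(v,v')\,|f(v')|\,\M(v)\,\bangle v^k}$ gives $\irdv{|f(v)|\irdv{\mathrm b(v',v)\,\M(v')\,\bangle{v'}^k}'}$. The correct kernel is therefore $\mathcal N(v)=\irdv{\mathrm b(v',v)\,\M(v')\,\bangle{v'}^k}'$, which Assumption~\eqref{hyp:b_bounds2} bounds by $\mathcal C_{\mathrm b}(k)\,\bangle v^{-\beta}$; there is no extra $\M(v)\sim\bangle v^{-d-\gamma}$ factor. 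Your displayed estimate $\mathcal N(v)\lesssim\bangle v^{-d-\gamma-\beta}$ is off by that factor, although the final sign comparison with $\nu(v)\,\bangle v^k\gtrsim\bangle v^{k-\beta}$ survives the correction since $\bangle v^{-\beta-k}-c$ becomes negative for $|v|$ large. Relatedly, the remark that "$k<\gamma+\beta$ is not needed here beyond $k<\gamma$" is wrong: the finiteness of $\mathcal C_{\mathrm b}(k)$ in~\eqref{hyp:b_bounds2} is hypothesized precisely for $k\in(0,\gamma+\beta)$, and that is what the lemma's range reflects in the scattering case, not merely $k<\gamma$.

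For $\mathsf L_1$, the object to compute is $\M^{-1}\,\mathsf L_1(\M\,\bangle\cdot^k)=\M^{-1}\nabla_v\cdot\bigl(\M\,\nabla_v\bangle v^k\bigr)$, not $\M\,\Delta_v(\M^{-1}\bangle v^k)$ as you wrote; the latter would put the equilibrium weight on the wrong side. The paper carries out the former and gets $k(d+\gamma-k+2)\bangle v^{k-4}-k(\gamma+2-k)\bangle v^{k-2}$, whose leading coefficient is negative iff $k<\gamma+2$, which is your conclusion but not your formula. Finally, your concern about "justifying Kato against the unbounded weight" in the $\mathsf L_3$ case is a false worry: the Kato inequality $\tfrac f{|f|}\,\Delta_v^{\sigma/2}f\le\Delta_v^{\sigma/2}|f|$ is pointwise in $v$ and independent of the weight, so one simply multiplies by $\bangle v^k$ and integrates afterwards. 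The paper does give a short self-contained proof via the regularization $\varphi_\varepsilon(s)=\sqrt{\varepsilon^2+s^2}$, rather than importing the whole fractional maximum-principle apparatus. The remaining ingredient you need (and correctly name) is the bound $\Delta_v^{\sigma/2}\bangle v^k\lesssim\bangle v^{k-\sigma}$ for $k<\sigma=\gamma+\beta$, cited from~\cite{biler_blowup_2009}, together with the lower bound $k\,(v\cdot E)\,\bangle v^{-2}\gtrsim\bangle v^{-\beta}$ from Proposition~\ref{prop:estE}.
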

Here by convention, we shall write that $\tfrac f{|f|}=0$ if $f=0$.
\begin{proof} First assume that $f\ge 0$. Then one may write,
\begin{multline*}
\riint\mathsf Lf\,\bangle v^k\d x\,\d v=\riint\mathsf Lf\,\M \bangle v^k\d x\,\d\mu\\
=\riint\mathsf L^*(\M \bangle v^k)\,f\,\d x\,\d\mu\,.
\end{multline*}

\noindent$\bullet$ In Case~$\mathsf L=\mathsf L_1$, we notice that $\mathsf L$ is self-adjoint on $\mathrm L^2(\d\mu)$, recall that $ \beta= 2$ and compute
\begin{align*}
\M^{-1}\,\mathsf L_1\big(\M\bangle{\cdot}^k\big)(v)&=\bangle v^{d+\gamma} \nabla_v\cdot\(\bangle v^{-d-\gamma}\nabla_v\bangle v^k\)\\
&=k\bangle v^{d+\gamma}\,\nabla_v\cdot\(\bangle v^{-d-\gamma+k-2} v\)\\
&=k\(d+\gamma-k+2\)\bangle v^{k-4}-k\(\gamma+2-k\)\bangle v^{k-2}
\end{align*}
and obtain the result for any $k<\gamma + \beta=\gamma+2$.\medskip

\noindent$\bullet$ In Case~$\mathsf L=\mathsf L_2$, by Assumption~\eqref{hyp:b_mass} one obtains that
\begin{align*}
\M^{-1}\,\mathsf L_2^*\big(\M \bangle{\cdot}^k\big)(v)&=\rint\mathrm b(v',v)\(\bangle{v'}^k\M(v') -\bangle v^k \M(v')\)\d v'\\
&=\(\rint\mathrm b(v',v)\frac{\bangle{v'}^k}{\bangle v^k}\,\M(v')\,\d v'-\nu(v)\)\bangle v^k\,.
\end{align*}
By Assumption~\eqref{hyp:b_bounds2}, $\mathcal C_{\mathrm b}(k)=\sup_{v\in\R^d}\bangle v^{\beta}\rint\mathrm b(v',v)\bangle{v'}^k \M(v')\,\d v'$ is finite for any $k\in(0,\gamma + \beta)$, and as a consequence, we know that
\[
\quad\forall\,v\in\R^d\,,\quad\nu(v)\le\rint\mathrm b(v',v)\bangle{v'}^k \M(v')\,\d v'\le\mathcal C_{\mathrm b}(k) \bangle v^{- \beta} .
\]
This yields
\[
\M^{-1}\,\mathsf L_2^*\big(\M\,\bangle{\cdot}^k\big)(v)\le\(\frac{\mathcal C_{\mathrm b}(b)}{\bangle v^k}-\frac{\nu(v)}{\bangle v^{- \beta}}\)\bangle v^{- \beta}\,.
\]
We conclude that Inequality~\eqref{eq:Lyapunov} holds for any $k\in(0,\gamma +\beta)$ by Assumption~\eqref{hyp:b_beta}.\medskip

\noindent$\bullet$ In Case~$\mathsf L=\mathsf L_3$, it is elementary to compute $\mathsf L_3^*$ and observe that
\begin{align*}
\M^{-1}\,\mathsf L_3^*\big(\M\,\bangle{\cdot}^k\big)(v)&=\Delta_v^{\sigma/2}\bangle v^k-E(v)\,\cdot\nabla_v\bangle v^k\\
&=\left[\bangle v^{-k}\Delta_v^{\sigma/2}\bangle v^k-k\(v\cdot E\)\bangle v^{-2}\right] \bangle v^k,\\
&\le\left[\bangle v^{-k}\Delta_v^{\sigma/2}\bangle v^k-C\,\bangle v^{- \beta}\right] \bangle v^k,
\end{align*}
where the estimate $k\(v\cdot E\)\bangle v^{-2}\ge C\,\bangle v^{- \beta}$ for some $C>0$ arises as a consequence of Proposition~\ref{prop:estE}. According to~\cite[Lemma~3.1]{biler_blowup_2009} (also see~\cite{biler_blowup_2010,lafleche_fractional_2020}), we~have
\[
\forall\,v\in\R^d\,,\quad\Delta_v^{\sigma/2}\bangle v^k\lesssim \bangle v^{k-\sigma}\,,
\]
under the condition that $k<\sigma=\gamma +\beta$. This again completes the proof of Inequality~\eqref{eq:Lyapunov}.

\medskip When $f$ changes sign, it is possible to reduce the problem to the case $f\ge 0$ as follows. In Case~$\mathsf L=\mathsf L_1$, we use Kato's inequality to assert that
\[
\frac{f}{|f|}\,\Delta_vf\le\Delta_v|f|
\]
in the sense of Radon measures (see~\cite[Lemma~A]{MR333833} or, for instance,~\cite[Theorem~1.1]{brezis_katos_2004}). Case~$\mathsf L=\mathsf L_2$ relies on the elementary observation that
\begin{align*}
\riint\tfrac f{|f|}\,\mathsf L_2f \bangle v^k\d v\,\d v'&=\riint\mathrm b(v,v')\,f'\,\frac f{|f|}\,\M\bangle v^k\d v\,\d v'-\rint\nu\,|f|\,\d v\\
&\le\riint\mathrm b(v,v')\,|f'|\,\M\bangle v^k\d v\,\d v'-\rint\nu\,|f|\,\d v\,.
\end{align*}
In Case~$\mathsf L=\mathsf L_3$, the result follows from Kato's inequality extended to the fractional Laplacian as follows. Let us consider $\varphi_\varepsilon(s)=\sqrt{\varepsilon^2+s^2}$ and notice that
\begin{multline*}
\(\Delta_v^{\sigma/2}\varphi_\varepsilon(f)\)(v)-\varphi_\varepsilon'(f(v))\(\Delta_v^{\sigma/2}f\)(v)\\
=C_{d,\sigma}\iint_{\R^d}\frac{\varphi_\varepsilon(f(v'))-\varphi_\varepsilon(f(v))-\varphi_\varepsilon'(f(v))\,(f(v')-f(v))}{|v'-v|^{d+\sigma}}\,\d v\ge0
\end{multline*}
because $\varphi_\varepsilon$ is convex since $\varphi_\varepsilon''(s)=\varepsilon^2\,(\varepsilon^2+s^2)^{-3/2}$ and according for example to \cite[Chapter~2]{landkof_foundations_1972}
\be{Garofalo}
C_{d,\sigma}=-\,\frac{2^{\sigma}}{\pi^{d/2}}\,\frac{\Gamma\big(\frac{d+\sigma}2\big)}{\Gamma\big(-\frac\sigma2\big)}> 0\,.
\ee
By passing to the limit as $\varepsilon\to0$, we obtain
\[
\frac{f}{|f|}\,\Delta_v^{\sigma/2}f\le \Delta_v^{\sigma/2}|f|\,.
\]
In all cases, with $\mathsf L=\mathsf L_i$, $i=1$, $2$, $3$, we have
\[
\rint\frac f{|f|}\,\mathsf Lf \bangle v^k\d x\,\d v\le\rint\(\mathsf L|f|\)\bangle v^k\d x\,\d v
\]
and the problem is reduced to the case of a nonnegative distribution function~$f$.\Qed\end{proof}

\subsection{A splitting of the evolution operator}\label{Sec:Splitting}

We rely on the strategy of~\cite{gualdani_factorization_2017,kavian_fokker-planck_2015,mischler_exponential_2016} by writing $\mathsf L-\mathsf T$ as the sum of a dissipative part $\mathsf C$ and a bounded part $\mathsf B$ such that $\mathsf L-\mathsf T=\mathsf B+\mathsf C$.
\begin{lemma}\label{lem:splitting}
Under the assumptions of Lemma~\ref{lem:Lyapunov}, let $(k,k_*)\in(0,\gamma)\times(0,\gamma+\beta)$ be such that $k_*>k+\beta$, $a=\max\{a_k,a_{k_*}\}$, $R=\min\big\{R_k,R_{k_*}\big\}$, $\mathsf C:=a\,\mathds1_{B_R}$ and $\mathsf B:=\mathsf L-\mathsf T-\mathsf C$. Then for any $t\in\R_+$, we have:
\begin{enumerate}
\item[{\rm (i)}] $\|\mathsf C\|_{\mathrm L^1(\d x\,\d\mu)\to\mathrm L^1\(\bangle v^{k_*}\,\d x\,\d\mu\)}\le a\,(1+R^2)^{k_*/2}$,
\item[{\rm (ii)}] $\|e^{t\mathsf B}\|_{\mathrm L^1\(\bangle v^k\d x\,\d\mu\)\to\mathrm L^1\(\bangle v^k\d x\,\d\mu\)}\le1$,
\item[\rm{(iii)}] $\|e^{t\mathsf B}\|_{\mathrm L^1\(\bangle v^{k_*}\,\d x\,\d\mu\)\to\mathrm L^1\(\bangle v^k\d x\,\d\mu\)}\le c\(1+t\)^{\frac{k-k_*}{\beta}}$ for some $c>0$.
\end{enumerate}
\end{lemma}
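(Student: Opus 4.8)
The three claims are of different natures: (i) is a direct computation, (ii) a contraction property, and (iii) the substantial one — a regularization-type estimate in the spirit of the enlargement/factorization method of \cite{gualdani_factorization_2017,mischler_exponential_2016}. I would treat them in order.

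For (i), since $\mathsf C=a\,\mathds1_{B_R}$ acts as multiplication by the indicator of the ball $B_R$, for any $g$ one has $\bangle v^{k_*}\,|\mathsf Cg|=a\,\bangle v^{k_*}\,\mathds1_{B_R}\,|g|\le a\,(1+R^2)^{k_*/2}\,|g|$ pointwise, whence integrating against $\d x\,\d\mu$ gives $\nrm{\mathsf Cg}{\mathrm L^1(\bangle v^{k_*}\,\d x\,\d\mu)}\le a\,(1+R^2)^{k_*/2}\,\nrm g{\mathrm L^1(\d x\,\d\mu)}$, which is exactly (i).

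For (ii), I would set $g(t)=e^{t\mathsf B}g^{\mathrm{in}}$, so $\partial_t g=\mathsf Bg=(\mathsf L-\mathsf T)g-a\,\mathds1_{B_R}g$, and compute $\dt\nrm{g(t)}{\mathrm L^1(\bangle v^k\d x\,\d\mu)}=\dt\nrm{\M^{-1}g(t)}{\mathrm L^1(\bangle v^k\M\,\d x\,\d v)}$. Writing $h=\M^{-1}g$ and using $\tfrac{h}{|h|}\,\partial_t h=\partial_t|h|$, this derivative equals $\riint \tfrac{g}{|g|}\,(\mathsf L g-\mathsf Tg)\,\bangle v^k\d x\,\d v-a\riint\mathds1_{B_R}|g|\,\bangle v^k\d x\,\d v$. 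The transport term drops because $\tfrac{g}{|g|}\,v\cdot\nabla_x g=v\cdot\nabla_x|g|$ integrates to zero in $x$; then Lemma~\ref{lem:Lyapunov} (with parameter $k$, constants $a_k,R_k$) bounds the collision term by $\riint(a_k\,\mathds1_{B_{R_k}}-b_k\bangle v^{- \beta})|g|\bangle v^k\d x\,\d v$. Since $a\ge a_k$ and $R\le R_k$ forces $a_k\,\mathds1_{B_{R_k}}\le a\,\mathds1_{B_R}$ is \emph{false} in general — here one instead uses $a_k\,\mathds1_{B_{R_k}}\le a\,\mathds1_{B_R}+a_k\,\mathds1_{B_{R_k}\setminus B_R}$, but because $R=\min\{R_k,R_{k_*}\}\le R_k$ the genuine inequality needed is $a_k\,\mathds1_{B_{R_k}}-a\,\mathds1_{B_R}\le 0$ on $B_R$ and $\le a_k$ on the annulus; so the correct reading is that the problem forces $R_k\le R$, i.e. one chooses $R$ so that $a_k\mathds1_{B_{R_k}}\le a\mathds1_{B_R}$ pointwise, giving $\dt\nrm{g(t)}{\mathrm L^1(\bangle v^k\d x\,\d\mu)}\le -b_k\riint\bangle v^{-\beta}|g|\bangle v^k\le 0$, hence (ii). (I would double-check the precise role of $R=\min$ versus $R=\max$ here; the essential point is only that $\mathsf C$ dominates the bad part of $-\mathsf L$ and the remainder is dissipative.)

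The main obstacle is (iii). The plan is a Duhamel/iteration argument: let $\mathsf S_{\mathsf B}(t)=e^{t\mathsf B}$ and $\mathsf S(t)=e^{t(\mathsf L-\mathsf T)}$, with the known bound (from Proposition~\ref{prop:propag_L2_m}'s $\mathrm L^1$ ingredient, or directly from Lemma~\ref{lem:Lyapunov} with $k=0$) that $\mathsf S(t)$ is a contraction on $\mathrm L^1(\d x\,\d\mu)$. First I would establish a time-weighted dissipation estimate for $\mathsf S_{\mathsf B}$ alone: from the computation in (ii) but keeping the good term, $\dt\nrm{g}{\mathrm L^1(\bangle v^k\d x\,\d\mu)}\le -b_k\nrm{g}{\mathrm L^1(\bangle v^{k-\beta}\d x\,\d\mu)}$, and interpolating $\nrm{g}{\mathrm L^1(\bangle v^k)}\le \nrm{g}{\mathrm L^1(\bangle v^{k-\beta})}^{\theta}\nrm{g}{\mathrm L^1(\bangle v^{k_*})}^{1-\theta}$ with $\theta=\tfrac{k_*-k}{k_*-k+\beta}$, combined with the a priori boundedness of $\nrm{g(t)}{\mathrm L^1(\bangle v^{k_*}\d x\,\d\mu)}$ on the relevant range (which needs $k_*<\gamma+\beta$, available from Lemma~\ref{lem:Lyapunov}), yields a differential inequality $y'\le -c\,y^{1/\theta}\,M^{1-1/\theta}$ for $y=\nrm{g}{\mathrm L^1(\bangle v^k)}$, $M=\nrm{g^{\mathrm{in}}}{\mathrm L^1(\bangle v^{k_*})}$. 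Integrating this gives precisely $y(t)\lesssim M\,(1+t)^{-\theta/(1-\theta)}=M\,(1+t)^{(k-k_*)/\beta}$, which is (iii). The delicate points I expect to fight with are: (a) justifying that $\nrm{g(t)}{\mathrm L^1(\bangle v^{k_*}\d x\,\d\mu)}$ stays controlled — this is \emph{not} a contraction at level $k_*$ since the Lyapunov inequality only gives $\dt\nrm{g}{k_*}\le a_{k_*}\nrm{\mathds1_{B_R}g}{k_*}\le a_{k_*}(1+R^2)^{k_*/2}\nrm{g}{\mathrm L^1(\d x\,\d\mu)}$, but the $\mathrm L^1(\d x\,\d\mu)$ norm is itself non-increasing, so $\nrm{g(t)}{k_*}\le \nrm{g^{\mathrm{in}}}{k_*}+a_{k_*}(1+R^2)^{k_*/2}\,t\,\nrm{g^{\mathrm{in}}}{\mathrm L^1(\d x\,\d\mu)}\lesssim (1+t)\,M$; one then has to carry this linearly-growing factor through the interpolation, which merely shifts the exponent and is absorbed provided $k_*>k+\beta$ (the hypothesis), and (b) checking the exact arithmetic of the exponents so that the iteration closes to the stated power $(k-k_*)/\beta$ rather than something weaker. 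I would also verify that no positivity of $g^{\mathrm{in}}$ is needed, thanks to the Kato-inequality reduction already performed in Lemma~\ref{lem:Lyapunov}.
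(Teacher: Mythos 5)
Your treatment of (i) is exactly the paper's: $\mathsf C$ is multiplication by $a\,\mathds1_{B_R}$, so the $\bangle v^{k_*}$ weight is bounded by $(1+R^2)^{k_*/2}$ on $B_R$. For (ii) your idea is also the paper's; the uneasiness you flag about $R=\min$ versus $R=\max$ is warranted and should be resolved rather than left open. The Lyapunov inequality of Lemma~\ref{lem:Lyapunov} is stable under enlarging $a$ and $R$ simultaneously (if $-\mathsf L F\le(a\mathds1_{B_R}-b\bangle v^{-\beta})F$, then also $-\mathsf L F\le(a'\mathds1_{B_{R'}}-b\bangle v^{-\beta})F$ for any $a'\ge a$, $R'\ge R$), so the argument closes by taking $a=\max\{a_k,a_{k_*}\}$ \emph{and} $R=\max\{R_k,R_{k_*}\}$; the stated $\min$ is a typo. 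With that reading, $a_k\,\mathds1_{B_{R_k}}\le a\,\mathds1_{B_R}$ holds pointwise, the transport term kills itself after integration in $x$, and $\dt\nrm{g}{\mathrm L^1(\bangle v^k\d x\,\d v)}\le -b_k\,\nrm g{\mathrm L^1(\bangle v^{k-\beta}\d x\,\d v)}\le 0$.

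The genuine gap is in (iii), and it is precisely a consequence of not having resolved the $\min$/$\max$ point. You assert that $e^{t\mathsf B}$ is \emph{not} a contraction on $\mathrm L^1(\bangle v^{k_*}\d x\,\d v)$, and to compensate you allow $\nrm{g(t)}{\mathrm L^1(\bangle v^{k_*})}\lesssim(1+t)\,M$. This is wrong, and the error feeds through to a strictly weaker final rate. Since $k_*\in(0,\gamma+\beta)$, Lemma~\ref{lem:Lyapunov} applies with exponent $k_*$ too, and with $a=\max\{a_k,a_{k_*}\}$, $R=\max\{R_k,R_{k_*}\}$ the operator $\mathsf C=a\,\mathds1_{B_R}$ dominates the bad Lyapunov term at \emph{both} levels: $a_{k_*}\,\mathds1_{B_{R_{k_*}}}\le a\,\mathds1_{B_R}$ as well. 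Hence $\dt\nrm{g(t)}{\mathrm L^1(\bangle v^{k_*}\d x\,\d v)}\le-b_{k_*}\nrm{g}{\mathrm L^1(\bangle v^{k_*-\beta}\d x\,\d v)}\le 0$, i.e.\ $e^{t\mathsf B}$ \emph{is} a contraction at level $k_*$, which is the whole point of introducing $\mathsf C$. The paper then uses $\nrm{g(t)}{\mathrm L^1(\bangle v^{k_*})}\le\nrm{g^{\mathrm{in}}}{\mathrm L^1(\bangle v^{k_*})}$ directly in the H\"older interpolation $\nrm g{\mathrm L^1(\bangle v^k)}\le\nrm g{\mathrm L^1(\bangle v^{k-\beta})}^{\theta}\nrm{g^{\mathrm{in}}}{\mathrm L^1(\bangle v^{k_*})}^{1-\theta}$ with $\theta=\frac{k_*-k}{k_*-k+\beta}$, feeds this into the dissipation from (ii), and Gr\"onwall gives $(1+t)^{(k-k_*)/\beta}$ exactly. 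Your $(1+t)$-growing bound, carried through the same interpolation, produces $y'\lesssim -y^{1+p}\big((1+t)M\big)^{-p}$ with $p=\beta/(k_*-k)<1$, and integrating gives $y(t)\lesssim M\,(1+t)^{-(1-p)/p}=M\,(1+t)^{(k-k_*)/\beta+1}$. This is weaker by a full power of $(1+t)$, and it would force $k_*>k+2\beta$ rather than $k_*>k+\beta$ for the convergence of $\int_0^\infty(1+s)^{(k-k_*)/\beta}\,\d s$ in Lemma~\ref{lem:propag_L1_m}, shrinking the admissible window of exponents. Finally, the ``Duhamel/iteration'' framing you announce is not what (iii) requires: the Duhamel formula belongs to the proof of Lemma~\ref{lem:propag_L1_m}, whereas (iii) is a direct Gr\"onwall estimate on $e^{t\mathsf B}$ alone.
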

\begin{proof} Property (i) is a consequence of the definition of $\mathsf C$. Property (ii) follows from Lemma~\ref{lem:Lyapunov}. Indeed, for any $g\in\mathrm L^1(\bangle v^k\d x\,\d v)$,
\begin{align*}
\riint\kern-2pt\frac g{|g|}\,\mathsf B\,g\,\bangle v^k\d x\,\d v&\le\riint\kern-2pt\(a_k\,\mathds1_{B_{R_k}}\kern-4.5pt-a\,\mathds1_{B_R}\!-b_k\bangle v^{- \beta}\)|g| \bangle v^k\d x\,\d v\\
&\le-\,b_k\,\nrm g{\mathrm L^1\(\bangle v^{k- \beta} \d x\,\d v\)}\,.
\end{align*}
To prove (iii), define $g:=e^{t\mathsf B}\,g^{\mathrm{in}}$. By H\"older's inequality, we get
\[
\nrm g{\mathrm L^1\(\bangle v^k\d v\,\d x\)}\le\nrm g{\mathrm L^1\(\bangle v^{k- \beta}\,\d x\,\d v\)}^{\frac{k_*-k}{k_*-k +\beta}}\,\nrm{g^{\mathrm{in}}}{\mathrm L^1\(\bangle v^{k_*}\,\d x\,\d v\)}^\frac{\beta}{k_*-k + \beta}
\]
and, as a consequence of the above contraction property,
\[
\riint\frac g{|g|}\,\mathsf B\,g \bangle v^k\d x\,\d v\le-\,b_k\,\(\nrm g{\mathrm L^1\(\bangle v^k\d v\,\d x\)}\)^{1+\frac{ \beta }{k_*-k}}\,\nrm{g^{\mathrm{in}}}{\mathrm L^1\(\bangle v^{k_*}\,\d x\,\d v\)}^{-\,\frac{ \beta }{k_*-k}}\,,
\]
so that by Gr\"onwall's lemma, we obtain
\begin{align*}
\nrm g{\mathrm L^1(\bangle v^k\d x\,\d v)}&\le\(\nrm{g^{\mathrm{in}}}{\mathrm L^1(\bangle v^k\d x\,\d v)}^{-\,\frac{\beta}{k_*-k}}+\tfrac{b_k\,\beta}{k_*-k}\,\nrm{g^{\mathrm{in}}}{\mathrm L^1(\bangle v^{k_*} \d x\,\d v)}^{-\,\frac{\beta}{k_*-k}}\,t\)^{\!-\,\frac{k_*-k}{\beta}}\\
&\le\(1+\tfrac{k_*-k}{b_k\,\beta}\,t\)^{-\frac{k_*-k}{\beta}}\,\nrm{g^{\mathrm{in}}}{\mathrm L^1(\bangle v^{k_*} \d x\,\d v)}\,.
\end{align*}
\Qed\end{proof}

\subsection{The boundedness in \texorpdfstring{$\mathrm L^1(\M\bangle v^k\d x\,\d v)$}{L1(F(v)<v>kdxdv)}}\label{Sec:L1}

\begin{lemma}\label{lem:propag_L1_m} Let $d\ge1$, $\gamma>0$ and $\beta \ge 0$, $k\in(0,\gamma)$ and assume that~{\rm (H)} holds. There exists a positive constant $\mathcal C_k$ such that, for any solution $f$ of~\eqref{eq:main} with initial condition $f^{\mathrm{in}}\in\mathrm L^1(\bangle v^k\d x\,\d v)$,
\[
\forall\,t\ge0\,,\quad\nrm{f(t,\cdot,\cdot)}{\mathrm L^1(\bangle v^k\d x\,\d v)}\le\mathcal C_k\,\nrm{f^{\mathrm{in}}}{\mathrm L^1(\bangle v^k\d x\,\d v)}\,.
\]
\end{lemma}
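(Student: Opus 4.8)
The plan is to combine the three lemmas just established (Lemma~\ref{lem:Lyapunov}, Lemma~\ref{lem:splitting}, together with the $\mathrm L^\infty$ bound and the already-established preliminary discussion) into a contraction estimate in $\mathrm L^1(\bangle v^k\d x\,\d v)$, and then translate this to the weighted measure $\M\,\d v$ via the identity $\nrm h{\mathrm L^1(\M\bangle v^k\d x\,\d v)}=\nrm{\M^{-1}h}{\mathrm L^1(\bangle v^k\d x\,\d v)}$. More precisely, since $f(t)=e^{t(\mathsf L-\mathsf T)}f^{\mathrm{in}}$, it suffices to bound the semigroup $e^{t(\mathsf L-\mathsf T)}$ on $\mathrm L^1(\bangle v^k\d x\,\d v)$ uniformly in $t\ge0$. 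Write $\mathsf L-\mathsf T=\mathsf B+\mathsf C$ with $\mathsf B$, $\mathsf C$ as in Lemma~\ref{lem:splitting}.

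First I would fix an auxiliary exponent $k_*\in(k+\beta,\gamma+\beta)$ --- this is possible precisely because $k<\gamma$ --- so that Lemma~\ref{lem:splitting} applies with this pair $(k,k_*)$. By Duhamel's formula,
\[
e^{t(\mathsf L-\mathsf T)}=e^{t\mathsf B}+\int_0^t e^{(t-s)(\mathsf L-\mathsf T)}\,\mathsf C\,e^{s\mathsf B}\,\d s\,.
\]
Applying this identity and using successively property (ii) of Lemma~\ref{lem:splitting} for the first term, and for the integral term the chain $\mathrm L^1(\bangle v^{k_*})\xrightarrow{e^{s\mathsf B}}\mathrm L^1(\bangle v^{k_*})$... wait, more carefully: estimate $\|\mathsf C\,e^{s\mathsf B}g^{\mathrm{in}}\|_{\mathrm L^1(\bangle v^{k_*}\d x\,\d\mu)}$ by property (i) in terms of $\|e^{s\mathsf B}g^{\mathrm{in}}\|_{\mathrm L^1(\d x\,\d\mu)}\le\|e^{s\mathsf B}g^{\mathrm{in}}\|_{\mathrm L^1(\bangle v^k\d x\,\d\mu)}\le\|g^{\mathrm{in}}\|_{\mathrm L^1(\bangle v^k\d x\,\d\mu)}$ by (ii), then propagate the remaining factor $e^{(t-s)(\mathsf L-\mathsf T)}$ from $\mathrm L^1(\bangle v^{k_*}\d x\,\d\mu)$ to $\mathrm L^1(\bangle v^k\d x\,\d\mu)$ using property (iii) of Lemma~\ref{lem:splitting} (noting that $e^{(t-s)(\mathsf L-\mathsf T)}$ agrees with $e^{(t-s)\mathsf B}$ up to the compactly supported, hence harmless, operator $\mathsf C$, or simply bounding directly via (iii) if one sets up the iteration so that only $e^{\cdot\mathsf B}$ appears). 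The factor $(1+(t-s))^{(k-k_*)/\beta}$ coming from (iii) is integrable over $s\in[0,t]$ because $k_*-k>\beta$ forces the exponent $(k-k_*)/\beta<-1$, so the time integral is bounded uniformly in $t$. Collecting, $\|e^{t(\mathsf L-\mathsf T)}g^{\mathrm{in}}\|_{\mathrm L^1(\bangle v^k\d x\,\d\mu)}\le \mathcal C_k\,\|g^{\mathrm{in}}\|_{\mathrm L^1(\bangle v^k\d x\,\d\mu)}$ with a constant independent of $t$.

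Applying this to $g^{\mathrm{in}}=\M^{-1}f^{\mathrm{in}}$ and recalling that $\M^{-1}$ intertwines $e^{t(\mathsf L-\mathsf T)}$ on $\mathrm L^1(\bangle v^k\d x\,\d v)$ with the corresponding evolution weighted by $\M$, equivalently just reading the estimate in the measure $\bangle v^k\d x\,\d v$ directly since $\nrm{f(t)}{\mathrm L^1(\M\bangle v^k\d x\,\d v)}=\nrm{f(t)}{\mathrm L^1(\bangle v^k\d x\,\d v)}$... in fact the statement of the lemma is already phrased in $\mathrm L^1(\bangle v^k\d x\,\d v)$, so the intertwining is not even needed --- one runs the Duhamel argument directly on $\mathrm L^1(\bangle v^k\d x\,\d v)$ using Lemma~\ref{lem:splitting} as stated (which is phrased in $\mathrm L^1(\bangle v^k\d x\,\d\mu)=\mathrm L^1(\M\bangle v^k\d x\,\d v)$, so a preliminary change of measure or re-reading of Lemma~\ref{lem:Lyapunov} via the factor $\M$ is what connects the two). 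This yields the claimed inequality $\nrm{f(t,\cdot,\cdot)}{\mathrm L^1(\bangle v^k\d x\,\d v)}\le\mathcal C_k\,\nrm{f^{\mathrm{in}}}{\mathrm L^1(\bangle v^k\d x\,\d v)}$.

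The main obstacle I anticipate is the bookkeeping in the Duhamel iteration: ensuring that every factor appearing is controlled by one of the three properties of Lemma~\ref{lem:splitting} with consistent exponents, and in particular that the operator sandwiched between the two $\mathsf B$-semigroups --- which is genuinely $e^{(t-s)(\mathsf L-\mathsf T)}$, not $e^{(t-s)\mathsf B}$ --- can be handled. The clean way is to iterate Duhamel once more (or to a full Dyson series), at each stage gaining a compactly supported operator $\mathsf C$ that upgrades integrability from $\bangle v^k$ to $\bangle v^{k_*}$ while the time-decay factor from (iii) provides summability of the series; alternatively, a single application of Duhamel suffices if one observes that $\|e^{r(\mathsf L-\mathsf T)}\|_{\mathrm L^1(\bangle v^{k_*})\to\mathrm L^1(\bangle v^k)}$ itself satisfies an estimate of the form $c(1+r)^{(k-k_*)/\beta}$ --- which follows by the same Gr\"onwall/Lyapunov computation as in the proof of (iii), since $\mathsf C$ only adds a nonnegative compactly supported bounded perturbation that does not destroy the $b_k\bangle v^{-\beta}$ dissipation. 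A secondary, more minor point is the rigorous justification that $f(t)=e^{t(\mathsf L-\mathsf T)}f^{\mathrm{in}}$ is well-defined and that the formal manipulations (differentiating $\nrm{f(t)}{\mathrm L^1(\bangle v^k\d x\,\d v)}$, Kato-type inequalities already recorded in Lemma~\ref{lem:Lyapunov}) are licit on the relevant class of solutions; this is routine and handled as in the cited references~\cite{gualdani_factorization_2017,kavian_fokker-planck_2015,mischler_exponential_2016}.
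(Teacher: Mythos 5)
Your high-level plan coincides with the paper's: split $\mathsf L-\mathsf T=\mathsf B+\mathsf C$ via Lemma~\ref{lem:splitting}, apply Duhamel, and use the algebraic decay of $e^{t\mathsf B}$ from $\mathrm L^1(\bangle v^{k_*})$ to $\mathrm L^1(\bangle v^k)$ to produce a convergent time integral. The problem is the variant of Duhamel you chose. You wrote
\[
e^{t(\mathsf L-\mathsf T)}=e^{t\mathsf B}+\int_0^t e^{(t-s)(\mathsf L-\mathsf T)}\,\mathsf C\,e^{s\mathsf B}\,\d s\,,
\]
which puts the full semigroup $e^{(t-s)(\mathsf L-\mathsf T)}$ in the position where one must go from $\mathrm L^1(\bangle v^{k_*})$ down to $\mathrm L^1(\bangle v^k)$ with an integrable-in-time operator norm. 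That is exactly the estimate you do not yet have, and you correctly flag this as the main obstacle. However, the repair you propose as sufficient on its own --- that $\|e^{r(\mathsf L-\mathsf T)}\|_{\mathrm L^1(\bangle v^{k_*})\to\mathrm L^1(\bangle v^k)}\lesssim(1+r)^{(k-k_*)/\beta}$ ``follows by the same Gr\"onwall/Lyapunov computation as in the proof of (iii), since $\mathsf C$ only adds a nonnegative compactly supported bounded perturbation that does not destroy the $b_k\bangle v^{-\beta}$ dissipation'' --- is wrong. The algebraic decay in Lemma~\ref{lem:splitting}(iii) is obtained precisely \emph{because} $\mathsf C=a\,\mathds1_{B_R}$ has been subtracted off; the Lyapunov inequality for $\mathsf B$ has the pure dissipative form $\frac{\d}{\d t}\nrm g{\mathrm L^1(\bangle v^k)}\le -b_k\,\nrm g{\mathrm L^1(\bangle v^{k-\beta})}$, and interpolating this against the fixed $\mathrm L^1(\bangle v^{k_*})$ norm of the initial datum yields the nonlinear Gr\"onwall inequality that produces $(1+t)^{-(k_*-k)/\beta}$. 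Re-inserting the term $+a\,\mathds1_{B_R}|g|\bangle v^k$ gives $\frac{\d}{\d t}\nrm g{\mathrm L^1(\bangle v^k)}\le a\,\nrm g{\mathrm L^1(\bangle v^k)}-b_k\,\nrm g{\mathrm L^1(\bangle v^{k-\beta})}$, which via Gr\"onwall only yields an $e^{at}$ bound, not algebraic decay. So a single application of your form of Duhamel does not close.

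Your other suggestion, a full Dyson expansion, could in principle be pushed through (each $\mathsf C$-insertion gains the $k\to k_*$ upgrade, and one then has to verify that the iterated convolutions of the $(1+r)^{(k-k_*)/\beta}$ factors sum), but this is considerably more involved than what is needed, and you do not carry it out. The clean way out --- and what the paper does --- is to use the \emph{other} Duhamel formula,
\[
e^{t(\mathsf L-\mathsf T)}=e^{t\mathsf B}+\int_0^t e^{(t-s)\mathsf B}\,\mathsf C\,e^{s(\mathsf L-\mathsf T)}\,\d s\,,
\]
so that the full semigroup $e^{s(\mathsf L-\mathsf T)}$ sits in the innermost position, where only $\mathrm L^1(\d x\,\d v)\to\mathrm L^1(\d x\,\d v)$ contraction is required; this is the $k=0$ case of Lemma~\ref{lem:Lyapunov} (plus conservativity of the transport), and needs no a~priori control of weighted norms for the full flow. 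Then $\mathsf C$ lifts $\mathrm L^1\to\mathrm L^1(\bangle v^{k_*})$ by Lemma~\ref{lem:splitting}(i), and $e^{(t-s)\mathsf B}$ brings it back to $\mathrm L^1(\bangle v^k)$ with the integrable factor $(1+(t-s))^{(k-k_*)/\beta}$ from (iii). Choosing $k_*\in(k+\beta,\gamma+\beta)$ as you already noted makes the integral over $s\in[0,t]$ uniformly bounded, and the estimate closes in one step.

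A minor point: your aside identifying $\mathrm L^1(\bangle v^k\d x\,\d\mu)$ with $\mathrm L^1(\M\,\bangle v^k\d x\,\d v)$ has the wrong power of $\M$ (it should be $\M^{-1}$, since $\d\mu=\M^{-1}\d v$). This does not affect the substance of your argument, but you should keep straight which measure the Lyapunov estimates are phrased in; the Lyapunov inequality and the splitting estimates are naturally stated in $\mathrm L^1(\bangle v^k\d x\,\d v)$, which is also the norm in the statement of the lemma, so no change of measure is in fact needed.
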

\begin{proof} Let us consider the Duhamel formula
\[
e^{t(\mathsf L-\mathsf T)}=e^{t\mathsf B}+\int_0^t\mathsf e^{(t-s)\mathsf B}\,\mathsf C\,e^{s(\mathsf L-\mathsf T)}\,\d s\,.
\]
By Lemma~\ref{lem:Lyapunov}, we know that
\[
\|e^{t(\mathsf L-\mathsf T)}\|_{\mathrm L^1(\bangle v^k\d x\,\d v)\to\mathrm L^1(\bangle v^k\d x\,\d v)}\le a\,(1+R^2)^{k_*/2}\,.
\]
Using the estimates of Lemma~\ref{lem:splitting}, we get
\[
\|e^{t(\mathsf L-\mathsf T)}\|_{\mathrm L^1(\bangle v^k\d x\,\d v)\to\mathrm L^1(\bangle v^k\d x\,\d v)}\le1+a\,c\,(1+R^2)^{k_*/2}\int_0^t\(1+s\)^{-\frac{k_*-k}{\beta}}\d s\,,
\]
which is bounded uniformly in time with the choice $k_*-k> \beta$.
\Qed\end{proof}

\section{Interpolation inequalities}\label{sec:WPII}

We refer to~\cite{wang_simple_2014} for a general strategy for proving~\eqref{GeneralInterpolation} which applies in particular to $\mathsf L_3$ in the case~$\mathsf L=\mathsf L_3$. However, for the operators considered in this paper, direct estimates can be obtained as follows.

\subsection{Hardy-Poincar\'e inequality and consequences}

\begin{lemma}\label{Lem:Hardy-Poincare} Let $d\ge1$ and $\gamma>0$. We have the \emph{Hardy-Poincar\'e inequality}
\[
\forall\,h\in\mathrm L^2(\bangle v^{-2}\M\,\d v)\,,\quad\irdv{|\nabla_vh|^2\,\M}\ge2\,(d+\gamma)\irdv{|h-\overline{h}_{-2}|^2\bangle v^{-2}\M}
\]
with $\overline{h}_{-2}:=\frac{\irdv{h\bangle v^{-2}\M}}{\irdv{\bangle v^{-2}\M}}$. \end{lemma}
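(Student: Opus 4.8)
The plan is to reduce the weighted inequality to a known spectral-gap/Hardy-type inequality for the measure $\M\,\d v = c_\gamma\,\bangle v^{-d-\gamma}\,\d v$ on $\R^d$. First I would observe that, because of the additive constant, it suffices to prove the inequality for $h$ with $\overline h_{-2}=0$, i.e. $\irdv{h\,\bangle v^{-2}\M}=0$; by the usual optimization over constants the general case follows since the right-hand side is minimized at $\kappa=\overline h_{-2}$. So the task is the Poincaré-type bound
\[
\irdv{|h|^2\,\bangle v^{-2}\,\M}\le\frac1{2\,(d+\gamma)}\irdv{|\nabla_vh|^2\,\M}
\]
under that orthogonality constraint. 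The natural route is to use the explicit local equilibrium: since $\nabla_v\log\M = -(d+\gamma)\,\bangle v^{-2}\,v$, the drift associated with $\M$ is $-(d+\gamma)\,\bangle v^{-2}v$, and the weight $\bangle v^{-2}$ appearing on the left is exactly (up to the factor $d+\gamma$) the one produced by the generator $\Delta_v + \nabla_v\log\M\cdot\nabla_v$ when tested against suitable functions. Concretely, I would try the IbP identity: for nice $h$,
\[
\irdv{h^2\,\M\,\nabla_v\cdot\!\big(\bangle v^{-2}v\big)}+\irdv{h^2\,\bangle v^{-2}v\cdot\nabla_v\M}=-\,2\irdv{h\,\nabla_vh\cdot\bangle v^{-2}v\,\M}.
\]
Computing $\nabla_v\cdot(\bangle v^{-2}v)=(d+|v|^2\bangle v^{-2})\bangle v^{-2}\ge d\,\bangle v^{-2}$ and $\bangle v^{-2}v\cdot\nabla_v\M=-(d+\gamma)\,|v|^2\bangle v^{-4}\M$, one gets a lower bound on the left in terms of $\irdv{h^2\,\bangle v^{-2}\M}$ plus a remainder, and Cauchy–Schwarz on the right absorbs the gradient: $2|h\,\nabla_vh\cdot v|\,\bangle v^{-2}\le \epsilon\,h^2\,\bangle v^{-2}+\epsilon^{-1}|\nabla_vh|^2\,|v|^2\bangle v^{-2}\le\epsilon\,h^2\,\bangle v^{-2}+\epsilon^{-1}|\nabla_vh|^2$. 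Choosing $\epsilon$ optimally should yield the constant $2\,(d+\gamma)$; if the direct computation gives a slightly worse constant I would instead pass to the substitution that is standard for fat-tail equilibria, namely writing $\M\,\d v$ as the pushforward of a dilation of a sphere measure or using the change of variables $v\mapsto v/\bangle v$ that maps $\R^d$ to the unit ball and turns $\M$ into a power of $(1-|y|^2)$, reducing to a classical weighted Hardy inequality on the ball whose sharp constant is known.

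Alternatively — and this is probably the cleanest rigorous path — I would invoke the known Hardy–Poincaré inequalities for Barenblatt-type measures (e.g. from the fast-diffusion / porous-medium literature, Blanchet–Bonforte–Dolbeault–Grillo–V\'azquez and related works), which state precisely that for $\M_p \propto (1+|v|^2)^{-p}$ with $p>d/2$ one has $\irdv{|\nabla h|^2\M_p}\ge \Lambda\irdv{|h-\bar h|^2\,\bangle v^{-2}\M_p}$ with an explicit $\Lambda$ depending on $p,d$; specializing $p=(d+\gamma)/2$ gives $\Lambda=2(d+\gamma)$ and the stated orthogonality weight $\bangle v^{-2}\M$. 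I would state this, give the short IbP verification above as a self-contained proof, and remark that the constant is sharp (attained in the limit by affine functions $h=a\cdot v$, which saturate the Cauchy–Schwarz step).

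The main obstacle I anticipate is bookkeeping the boundary/decay terms in the integration by parts so that everything is justified for all $h\in\mathrm L^2(\bangle v^{-2}\M\,\d v)$ with $\nabla_v h\in\mathrm L^2(\M\,\d v)$ — i.e. a density argument reducing to compactly supported smooth $h$, and checking that the formal IbP has no boundary contribution at infinity (which holds because $\bangle v^{-2}v\,\M\sim \bangle v^{-d-\gamma-1}\to0$ fast enough). The only other delicate point is getting the sharp constant $2(d+\gamma)$ rather than something smaller from the crude Cauchy–Schwarz; if the elementary argument is lossy I would fall back on citing the sharp Hardy–Poincaré inequality for the inverse-Barenblatt profile.
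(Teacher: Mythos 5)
Your ``cleanest rigorous path'' is exactly what the paper does: the proof in the paper consists of a single citation to Blanchet--Bonforte--Dolbeault--Grillo--V\'azquez (the reference~\cite{MR2481073}), which is precisely the source you name. So route~2 is the correct match.

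Your primary route, the direct integration-by-parts argument, does not work as written, for two reasons. First, the key divergence is computed incorrectly: one has
\[
\nabla_v\cdot\big(\bangle v^{-2}v\big)=d\,\bangle v^{-2}-2\,|v|^2\,\bangle v^{-4}=(d-2)\,\bangle v^{-2}+2\,\bangle v^{-4},
\]
not $(d+|v|^2\bangle v^{-2})\,\bangle v^{-2}$, so the bound $\nabla_v\cdot(\bangle v^{-2}v)\ge d\,\bangle v^{-2}$ is false (it even becomes negative for $d=1$, $|v|>1$). Second, and more structurally, the quantity you need to be nonnegative is
\[
\nabla_v\cdot\big(\bangle v^{-2}v\,\M\big)=\bangle v^{-4}\M\,\big(d-(\gamma+2)\,|v|^2\big),
\]
which changes sign at $|v|^2=d/(\gamma+2)$; thus the left-hand side of your IbP identity,
\[
\irdv{h^2\,\nabla_v\cdot\big(\bangle v^{-2}v\,\M\big)},
\]
is not bounded below by a positive multiple of $\irdv{h^2\,\bangle v^{-2}\M}$, and Cauchy--Schwarz on the right cannot rescue this: after absorption you are left with a genuinely negative term $-\,\epsilon\,(d+\gamma+2)\irdv{h^2\,\bangle v^{-4}\M}$. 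The orthogonality constraint $\overline h_{-2}=0$ must enter in an essential way (via a spectral decomposition, as in the cited reference), not merely through the optimization over constants. Finally, your sharpness remark is internally inconsistent: testing $h=a\cdot v$ gives $\irdv{|\nabla h|^2\M}=|a|^2$, $\overline h_{-2}=0$, and $\irdv{|h|^2\bangle v^{-2}\M}=|a|^2/(d+\gamma)$, so the ratio of the two sides is $d+\gamma$, not $2\,(d+\gamma)$; the Cauchy--Schwarz step $|\nabla h|^2\,|v|^2\bangle v^{-2}\le|\nabla h|^2$ is never saturated, so affine functions cannot saturate your chain of estimates.
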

See~\cite{MR2481073} for a proof. We deduce the following interpolation inequality.
\begin{corollary}\label{Cor:Hardy-Poincare}
Let $d\ge1$, $\gamma>0$ and $k\in(0,\gamma)$. There exists a positive constant~$\mathcal C_1$ such that, for any $f\in\mathrm L^2(\bangle v^k\d x\,\d\mu)$ such that $\nabla_vh\in\mathrm L^2(\d x\,\d\mu)$ where $h=f/\M$, we have the inequality
\[
\mathcal C_1\,\nrm{(1-\mathsf\Pi)f}{\mathrm L^2(\d x\,\d\mu)}^{2+\frac4k}\le\(\irdxv{|\nabla_vh|^2\,\M}\)\,\nrm f{\mathrm L^2(\bangle v^k\d x\,\d\mu)}^\frac4k\,.
\]
\end{corollary}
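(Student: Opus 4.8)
The plan is to combine the Hardy--Poincar\'e inequality of Lemma~\ref{Lem:Hardy-Poincare} with a H\"older interpolation argument, integrated over the position variable $x\in\R^d$. First I would fix $t$ (or simply work with a function $f$ of $(x,v)$) and apply Lemma~\ref{Lem:Hardy-Poincare} to $h(x,\cdot)=f(x,\cdot)/\M$ for each $x$. This gives, after integration in $x$,
\[
\irdxv{|\nabla_vh|^2\,\M}\ge2\,(d+\gamma)\,\nrm{(1-\mathsf\Pi_{-2})f}{\mathrm L^2(\bangle v^{-2}\d x\,\d\mu)}^2\,.
\]
The projection appearing here is $\mathsf\Pi_{-2}$ rather than $\mathsf\Pi=\mathsf\Pi_0$, but by the general remark in Section~\ref{Sec:Hypocoercivity} (optimizing over the constant $\kappa$), we have $\nrm{(1-\mathsf\Pi_{-2})f}{\bangle v^{-2}\d\mu}\ge\nrm{(1-\mathsf\Pi_{-2})f}{\bangle v^{-2}\d\mu}$ trivially, and more to the point $\nrm{(1-\mathsf\Pi_{-2})f}{\bangle v^{-2}\d\mu}^2=\inf_{\kappa}\nrm{f-\kappa\M}{\bangle v^{-2}\d\mu}^2$, so this quantity controls $\nrm{g-\kappa\M}{\bangle v^{-2}\d\mu}$ for the particular choice $\kappa=\rho_f$, i.e. it bounds $\nrm{(1-\mathsf\Pi)f}{\bangle v^{-2}\d\mu}$ from below. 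Hence
\[
\irdxv{|\nabla_vh|^2\,\M}\gtrsim\nrm{(1-\mathsf\Pi)f}{\mathrm L^2(\bangle v^{-2}\d x\,\d\mu)}^2\,.
\]

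Next I would interpolate the unweighted norm $\nrm{(1-\mathsf\Pi)f}{\mathrm L^2(\d x\,\d\mu)}$ between the negatively weighted norm just obtained and the positively weighted norm $\nrm f{\mathrm L^2(\bangle v^k\d x\,\d\mu)}$. Writing $g:=(1-\mathsf\Pi)f$ and using H\"older's inequality in $(x,v)$ with the weight identity $\bangle v^0=\bangle v^{-2\theta}\bangle v^{k(1-\theta)}$ where $\theta=k/(k+2)$, one gets
\[
\nrm g{\mathrm L^2(\d x\,\d\mu)}^2\le\nrm g{\mathrm L^2(\bangle v^{-2}\d x\,\d\mu)}^{2\theta}\,\nrm g{\mathrm L^2(\bangle v^{k}\d x\,\d\mu)}^{2(1-\theta)}\,.
\]
Finally, one needs $\nrm{(1-\mathsf\Pi)f}{\mathrm L^2(\bangle v^k\d x\,\d\mu)}\lesssim\nrm f{\mathrm L^2(\bangle v^k\d x\,\d\mu)}$, which follows again from the optimization-over-$\kappa$ remark (the projection $\mathsf\Pi_k$ is the $\bangle v^k$-orthogonal one, but $\nrm{(1-\mathsf\Pi)f}{\bangle v^k\d\mu}\le\nrm{f-\rho_f\M}{\bangle v^k\d\mu}$ with $\rho_f\M=\mathsf\Pi f$, and one checks directly that $\nrm{(1-\mathsf\Pi)f}{\bangle v^k\d\mu}\lesssim\nrm f{\bangle v^k\d\mu}$ since $\mathsf\Pi$ is bounded on $\mathrm L^2(\bangle v^k\d\mu)$ for $k<\gamma$, because $\rho_f=\irdv f$ is controlled by $\nrm f{\bangle v^k\d\mu}$ via Cauchy--Schwarz using $\irdv{\bangle v^{-k}\M}<\infty$). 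Substituting this into the interpolation bound and then into the Hardy--Poincar\'e estimate, with $\theta=k/(k+2)$ so that $1/\theta=1+2/k$, yields exactly
\[
\nrm{(1-\mathsf\Pi)f}{\mathrm L^2(\d x\,\d\mu)}^{2+4/k}\lesssim\(\irdxv{|\nabla_vh|^2\,\M}\)\,\nrm f{\mathrm L^2(\bangle v^k\d x\,\d\mu)}^{4/k}\,,
\]
which is the claim with an appropriate $\mathcal C_1>0$.

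The main obstacle, and the point that requires care rather than routine computation, is the bookkeeping of the various projections $\mathsf\Pi=\mathsf\Pi_0$, $\mathsf\Pi_{-2}$ and $\mathsf\Pi_k$: the Hardy--Poincar\'e inequality naturally produces the $\bangle v^{-2}$-orthogonal projection, the moment propagation produces the $\bangle v^k$-orthogonal one, but the hypocoercivity scheme downstream wants the plain $\mathsf\Pi$. The clean way around this is to use throughout the inequality $\nrm{g-\kappa\M}m^2\ge\nrm{(1-\mathsf\Pi_m)g}m^2$ for every $m$ and every constant $\kappa$ (established in Section~\ref{Sec:Hypocoercivity} by optimizing in $\kappa$): applied with the single choice $\kappa=\rho_f$ it lets one pass freely from $(1-\mathsf\Pi)f=f-\rho_f\M$ to lower bounds involving $\mathsf\Pi_{-2}$ and to upper bounds replacing $\mathsf\Pi_k$ by $0$, at the cost only of constants. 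One also needs the elementary integrability facts $\irdv{\bangle v^{-2}\M}<\infty$ and $\irdv{\bangle v^{-k}\M}<\infty$ for $0<k<\gamma$, which hold since $\M\sim\bangle v^{-d-\gamma}$. Everything else is H\"older in the joint variable and Fubini.
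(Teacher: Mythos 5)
Your general plan — Hardy--Poincar\'e plus a H\"older interpolation between the $\bangle v^{-2}$-weighted and $\bangle v^k$-weighted norms — is indeed the strategy of the paper's proof. However, there is a concrete directional error at the step where you pass from the Hardy--Poincar\'e output to the projection $\mathsf\Pi$ that the corollary is stated in terms of, and as written this error invalidates the argument.

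Lemma~\ref{Lem:Hardy-Poincare} and the optimization remark from Section~\ref{Sec:Hypocoercivity} give, in the weight $\bangle v^{-2}$, the chain
\[
\irdxv{|\nabla_vh|^2\,\M}\;\gtrsim\;\nrm{(1-\mathsf\Pi_{-2})f}{\mathrm L^2(\bangle v^{-2}\d x\,\d\mu)}^2
\qquad\text{and}\qquad
\nrm{(1-\mathsf\Pi_{-2})f}{\mathrm L^2(\bangle v^{-2}\d x\,\d\mu)}^2\;\le\;\nrm{(1-\mathsf\Pi)f}{\mathrm L^2(\bangle v^{-2}\d x\,\d\mu)}^2\,,
\]
the second inequality being precisely the ``$\inf_\kappa$'' property applied with $\kappa=\rho_f$. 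These two inequalities do \emph{not} combine to give
$\irdxv{|\nabla_vh|^2\,\M}\gtrsim\nrm{(1-\mathsf\Pi)f}{\mathrm L^2(\bangle v^{-2}\d x\,\d\mu)}^2$: you would need the second inequality reversed, and it cannot be reversed, since $\overline h_{-2}$ is by definition the minimizer of $H\mapsto\irdv{|h-H|^2\bangle v^{-2}\M}$. In fact, by the Pythagorean identity, $\nrm{(1-\mathsf\Pi)f}{\bangle v^{-2}\d\mu}^2-\nrm{(1-\mathsf\Pi_{-2})f}{\bangle v^{-2}\d\mu}^2 = |\overline h_0-\overline h_{-2}|^2\irdv{\bangle v^{-2}\M}$, and this extra term is not controlled by $\irdv{|\nabla_v h|^2\M}$ in general (a Cauchy--Schwarz bound on $\overline h_0-\overline h_{-2}=\irdv{(h-\overline h_{-2})\M}$ needs $\irdv{\bangle v^{2}\M}<\infty$, i.e. $\gamma>2$). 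So the display you label ``Hence'' is a non sequitur.

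The clean repair, which is how the paper actually proceeds, is to keep the interpolation variable equal to $g:=h-\overline h_{-2}$ (i.e.\ $(1-\mathsf\Pi_{-2})f$, not $(1-\mathsf\Pi)f$) all the way through the H\"older step, so that the $\bangle v^{-2}$-weighted factor is exactly the quantity Hardy--Poincar\'e bounds. The comparison with $(1-\mathsf\Pi)f$ is then made only once, at the top, in the \emph{unweighted} norm, where the inequality \emph{does} go the right way because $\overline h_0$ minimizes $\irdv{|h-H|^2\M}$:
\[
\nrm{(1-\mathsf\Pi)f}{\mathrm L^2(\d x\,\d\mu)}^2 = \irdxv{|h-\overline h_0|^2\M} \le \irdxv{|g|^2\M}\,.
\]
The $\bangle v^k$-weighted factor $\irdxv{|g|^2\bangle v^k\M}$ is then bounded, as you correctly note, in terms of $\nrm f{\mathrm L^2(\bangle v^k\d x\,\d\mu)}$ using the crude inequality $|g|^2\lesssim |h|^2+\overline h_{-2}^2$ and the control of $\overline h_{-2}$ (or, equivalently, $\rho_f$) by Cauchy--Schwarz and the finiteness of $\irdv{\bangle v^{-k}\M}$. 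The rest of your computation (the choice $\theta=k/(k+2)$, the exponent bookkeeping giving $2+4/k$, and the boundedness of $\mathsf\Pi$ on $\mathrm L^2(\bangle v^k\d\mu)$ for $k<\gamma$) is correct.
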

\begin{proof}
Let $\overline{h}_0:=\irdv{h\,\M}$ and observe that
\[
\irdv{|h-\overline{h}_0|^2\,\M}=\inf_{H\in\R}\irdv{|h-H|^2\,\M} \leq\irdv{|h-\overline{h}_{-2}|^2\,\M}\,.
\]
Setting $g=h-\overline{h}_{-2}$, we deduce on the one hand from the Cauchy-Schwarz inequality that
\begin{multline*}
\irdxv{|h-\overline{h}_{-2}|^2\,\M}=\irdxv{|g|^2\,\M}\\
\le\(\irdxv{|g|^2\,\frac\M{\bangle v^2}}\)^\frac k{k+2}\(\irdxv{|g|^2\bangle v^k\M}\)^\frac2{k+2},
\end{multline*}
and we deduce that
\[
|g|^2\le\frac12\(|h|^2+\overline{h}_{-2}^2\)\le\frac12\(|h|^2+\frac{\overline{h}_0^2}{\irdv{\bangle v^{-2}\M}}\)
\]
using $\bangle v\ge1$ and the definition of $\overline{h}_{-2}$ on the other hand. Collecting these estimates with the result of Lemma~\ref{Lem:Hardy-Poincare} shows that
\[
\frac{2^{1+\frac2k}\,(d+\gamma)\,\nrm{(1-\mathsf\Pi)f}{\mathrm L^2(\d x\,\d\mu)}^{2+\frac4k}}{\(\nrm f{\mathrm L^2(\bangle v^k\d x\,\d\mu)}^2+\mathsf c_k\,\nrm{\mathsf\Pi f}{\mathrm L^2(\d x\,\d\mu)}^2\)^\frac2k}\le\irdv{|\nabla_vh|^2\,\M}
\]
where $\mathsf c_k:=\irdv{\bangle v^k\M}/\irdv{\bangle v^{-2}\M}$. This completes the proof after observing that
\[
\nrm{\mathsf\Pi f}{\mathrm L^2(\d x\,\d\mu)}\le\nrm f{\mathrm L^2(\d x\,\d\mu)}\le\nrm f{\mathrm L^2(\bangle v^k\d x\,\d\mu)}\,.
\]
\Qed\end{proof}

\subsection{A gap inequality for the scattering operator}

Let $\mathsf L=\mathsf L_2$ be the scattering operator of Case~$\mathsf L=\mathsf L_2$.
\begin{lemma}\label{lem:ScatteringGap} Let $\gamma>\max\{0,- \beta\}$. Assume that~\eqref{hyp:b_mass} and~\eqref{hyp:b_bounds2} hold. Then we have
\[
\riint\mathrm b(v,v')\(h-h'\)^2\,\M\,\M'\,\d v\,\d v'\ge\Lambda\irdv{\left|h-\overline h_{- \beta}\right|^2 \bangle v^{- \beta}\,\M}
\]
for any $h\in\mathrm L^2(\d v)$, with $\Lambda:=\frac2Z\irdv{\M\bangle v^{- \beta}}$ and $\overline h_{- \beta}:=\frac{\irdv{h\,\M\bangle v^{- \beta}}}{\irdv{\M\bangle v^{- \beta}}}$.\end{lemma}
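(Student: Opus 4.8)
The natural approach is to expand the bilinear form and use the lower bound~\eqref{hyp:b_bounds} on $\mathrm b$, exactly as one proves a spectral gap for a scattering operator by comparing it to the ``product'' kernel $Z^{-1}\bangle v^{-\beta}\bangle{v'}^{-\beta}$. First I would symmetrize: write
\[
\riint\mathrm b(v,v')\,(h-h')^2\,\M\M'\,\d v\,\d v'
=\tfrac12\riint\big(\mathrm b(v,v')+\mathrm b(v',v)\big)\,(h-h')^2\,\M\M'\,\d v\,\d v',
\]
which is legitimate because $(h-h')^2\,\M\M'$ is symmetric under $(v,v')\mapsto(v',v)$. Hence the integrand is bounded below, using~\eqref{hyp:b_bounds} applied to both $\mathrm b(v,v')$ and $\mathrm b(v',v)$, by $Z^{-1}\bangle v^{-\beta}\bangle{v'}^{-\beta}(h-h')^2\,\M\M'$. (Note that here one does not even need~\eqref{hyp:b_mass} or~\eqref{hyp:b_bounds2}; those hypotheses are presumably listed because they are the standing assumptions under which the scattering operator is considered, or used elsewhere.)

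Next I would compute the resulting lower bound explicitly. Expanding $(h-h')^2 = h^2 - 2\,h\,h' + h'^2$ and integrating against $Z^{-1}\bangle v^{-\beta}\bangle{v'}^{-\beta}\M\M'$, the two square terms are equal by symmetry, giving
\[
\frac{2}{Z}\irdv{\bangle{v'}^{-\beta}\M'}\cdot\irdv{h^2\,\bangle v^{-\beta}\M}
-\frac{2}{Z}\left(\irdv{h\,\bangle v^{-\beta}\M}\right)^{\!2}.
\]
Writing $m:=\irdv{\bangle v^{-\beta}\M}$ (finite since $\gamma>-\beta$, which is exactly the hypothesis $\gamma>\max\{0,-\beta\}$), this is $\tfrac{2m}{Z}\big(\irdv{h^2\bangle v^{-\beta}\M} - m^{-1}(\irdv{h\,\bangle v^{-\beta}\M})^2\big)$. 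The quantity in parentheses is precisely the variance of $h$ with respect to the (unnormalized) measure $\bangle v^{-\beta}\M\,\d v$, i.e.\ equals $\irdv{|h-\overline h_{-\beta}|^2\,\bangle v^{-\beta}\M}$ with $\overline h_{-\beta}$ as defined, by the standard identity that the variance is the minimum over constants $c$ of $\int (h-c)^2$. Recognizing $\Lambda = 2m/Z = \tfrac2Z\irdv{\M\,\bangle v^{-\beta}}$ finishes the computation.

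There is essentially no hard obstacle here: the only point requiring a little care is justifying that the integrals are finite so that the expansion and the symmetrization are valid — this needs $\irdv{\bangle v^{-\beta}\M}<\infty$ (which holds iff $\gamma>\max\{0,-\beta\}$, matching the hypothesis) together with $h\in\mathrm L^2(\d v)$; more precisely one should check $h\in\mathrm L^2(\bangle v^{-\beta}\M\,\d v)$, which follows since $\bangle v^{-\beta}\M$ is bounded when $\beta\le0$ and, when $\beta>0$, since $\bangle v^{-\beta}\M\le\M\in\mathrm L^\infty$ is still bounded, so $h\in\mathrm L^2(\d v)$ is more than enough only if that weight is integrable against $|h|^2$ — in fact the weight is a bounded measure times $|h|^2$, and boundedness of the weight plus $h\in\mathrm L^2(\d v)$ gives finiteness directly. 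If one wanted the cleanest statement one could first prove the inequality for, say, bounded compactly supported $h$ and pass to the limit, but this is routine. So the proof is a short direct computation: symmetrize, bound below by the product kernel, expand, and identify the variance.
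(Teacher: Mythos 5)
Your proof is correct and follows the same route as the paper's: bound the kernel from below by the product kernel $Z^{-1}\bangle v^{-\beta}\bangle{v'}^{-\beta}$, expand $(h-h')^2$, and identify the variance with respect to $\bangle v^{-\beta}\M\,\d v$; the symmetrization step you add is superfluous, since the product weight is already symmetric and the lower bound on $\mathrm b(v,v')$ can be applied directly. Your remark about the hypotheses is accurate: what is actually used is the lower bound~\eqref{hyp:b_bounds} (the reference to~\eqref{hyp:b_bounds2} in the statement and in the proof is a labelling slip), while~\eqref{hyp:b_mass} enters only in the preliminary rewriting of $-\bangle{f,\mathsf L_2 f}$ as $\tfrac12\riint\mathrm b(v,v')\,(h-h')^2\,\M\M'\,\d v\,\d v'$, which the paper folds into this proof for use in Corollary~\ref{Cor:ScatteringGap} but which is not needed for the stated inequality.
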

Notice that here we do not assume~\eqref{hyp:b_bounds2} or~\eqref{hyp:b_bounds3}, and consider any $\beta+\gamma >0$.
\begin{proof}
Using~\eqref{hyp:b_mass}, we have
\begin{multline*}
\hspace*{-8pt}2\riint\mathrm b(v,v')\(h'-h\)h\,\M\,\M'\,\d v\,\d v'\\
=\riint\mathrm b(v,v')\,\(2\,h'h-h^2\)\,\M\,\M'\,\d v\,\d v'-\riint\mathrm b(v',v)\, h^2 \M\,\M'\,\d v\,\d v'\,.
\end{multline*}
Exchanging variables $v$ and $v'$ gives
\begin{multline*}
-\riint\mathrm b(v,v')\(h'-h\)h\,\M\,\M'\,\d v\,\d v'\\
=\frac 12\riint\mathrm b(v,v')\(h-h'\)^2\,\M\,\M'\,\d v\,\d v'\,.
\end{multline*}
By assumption~\eqref{hyp:b_bounds2}, we know that $\mathrm b(v,v')\ge Z^{-1}\bangle v^{- \beta} \bangle{v'}^{- \beta}$ and observe that
\begin{multline*}
\riint\bangle v^{- \beta} \bangle{v'}^{- \beta}\(h-h'\)^2\,\M\,\M'\,\d v\,\d v'\\
\hspace*{-1cm}=2{\irdv{\M \bangle v^{- \beta}}}\irdmu{\frac{|f|^2}\M \bangle v^{- \beta}}-2\(\irdv{f \bangle v^{- \beta}}\)^2\\
=2\irdv{\M \bangle v^{- \beta}}\irdv{\left|\frac f\M-\frac{\irdv{f \bangle v^{- \beta}}}{\irdv{\M \bangle v^{- \beta}}}\right|^2 \bangle v^{- \beta}\,\M}\\
=\Lambda\,Z\irdv{\left|h-\overline h_{- \beta}\right|^2 \bangle v^{- \beta}\,\M}\,.
\end{multline*}
\Qed\end{proof}
Next we deduce the following interpolation inequality.
\begin{corollary}\label{Cor:ScatteringGap} Under the assumptions of Lemma~\ref{lem:ScatteringGap}, for any $k\in(0,\gamma)$, there exists a positive constant~$\mathcal C_2$ such that, for any $f\in\mathrm L^2(\bangle v^k\d x\,\d\mu)$,
\begin{align*}
\mathcal C_2\,\nrm{(1-\mathsf\Pi)f}{\mathrm L^2(\d x\,\d\mu)}^{2+2\,\frac{\beta}k}&\le\(-\irdxmu{f\,\mathsf L_2 f}\)\nrm f{\mathrm L^2(\bangle v^k\d x\,\d\mu)}^{2\,\frac{\beta}k}\quad\mbox{if}\quad \beta >0\,,\\
\mathcal C_2\,\nrm{(1-\mathsf\Pi)f}{\mathrm L^2(\d x\,\d\mu)}^2&\le-\irdxmu{f\,\mathsf L_2 f}\quad\mbox{if}\quad \beta\le 0\,.
\end{align*}
\end{corollary}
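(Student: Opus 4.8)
\textbf{Proof proposal for Corollary~\ref{Cor:ScatteringGap}.}
The plan is to combine the gap inequality of Lemma~\ref{lem:ScatteringGap} with the same Cauchy--Schwarz interpolation in velocity weights that produced Corollary~\ref{Cor:Hardy-Poincare}, applied to the projection $1-\mathsf\Pi$ rather than to $h-\overline h_{-2}$. First I would write, using the elementary identity displayed just before Proposition~\ref{prop:entropy_decay_A} (the ``crucial observation'' that optimizing over $\kappa$ gives $\nrm{g-\kappa\M}k^2\ge\nrm{(1-\mathsf\Pi_k)g}k^2$), that $-\irdxmu{f\,\mathsf L_2f}=\frac12\riint\mathrm b(v,v')(h-h')^2\M\M'$ after a change of variables exactly as in the proof of Lemma~\ref{lem:ScatteringGap}, and that by that lemma this is bounded below (up to the constant $\Lambda$, integrated in $x$) by $\irdxv{|h-\overline h_{-\beta}|^2\bangle v^{-\beta}\M}=\nrm{(1-\mathsf\Pi_{-\beta})f}{\mathrm L^2(\bangle v^{-\beta}\d x\,\d\mu)}^2$. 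Here one uses crucially that $\gamma>\max\{0,-\beta\}$ ensures $\bangle v^{-\beta}\M$ is integrable so $\overline h_{-\beta}$ and $\mathsf\Pi_{-\beta}$ make sense.

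Next I would bound $\nrm{(1-\mathsf\Pi)f}{\mathrm L^2(\d x\,\d\mu)}$ from above by $\nrm{(1-\mathsf\Pi_{-\beta})f}{\mathrm L^2(\d x\,\d\mu)}$, again by the optimization remark, and then split into the two sign cases for $\beta$. If $\beta\le0$ then $\bangle v^{-\beta}\ge1$, so $\nrm{(1-\mathsf\Pi_{-\beta})f}{\mathrm L^2(\d x\,\d\mu)}^2\le\nrm{(1-\mathsf\Pi_{-\beta})f}{\mathrm L^2(\bangle v^{-\beta}\d x\,\d\mu)}^2\lesssim-\irdxmu{f\,\mathsf L_2f}$, which is the second inequality. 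If $\beta>0$ I would interpolate: by Hölder in $v$ with exponents $\frac{k+\beta}{\beta}$ and $\frac{k+\beta}{k}$,
\[
\nrm{(1-\mathsf\Pi_{-\beta})f}{\mathrm L^2(\d x\,\d\mu)}^2\le\nrm{(1-\mathsf\Pi_{-\beta})f}{\mathrm L^2(\bangle v^{-\beta}\d x\,\d\mu)}^{\frac{2k}{k+\beta}}\,\nrm{(1-\mathsf\Pi_{-\beta})f}{\mathrm L^2(\bangle v^k\d x\,\d\mu)}^{\frac{2\beta}{k+\beta}},
\]
then raise to the power $\frac{k+\beta}{k}$ and estimate the high-moment factor by $\nrm{(1-\mathsf\Pi_{-\beta})f}{\mathrm L^2(\bangle v^k\d x\,\d\mu)}\lesssim\nrm f{\mathrm L^2(\bangle v^k\d x\,\d\mu)}$ (the constant here depending on $\nrm f{\mathrm L^2(\d x\,\d\mu)}$ through the $\mathsf\Pi_{-\beta}$ term, as in the statement) and the low-moment factor by the gap bound; replacing $\mathsf\Pi_{-\beta}$ by $\mathsf\Pi$ on the left via the optimization remark one more time gives exactly the claimed inequality with exponent $2+2\beta/k$.

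The only genuinely delicate point is the bookkeeping of the projection constants: $\mathsf\Pi_{-\beta}\neq\mathsf\Pi$ in general, so one must be careful that each replacement of one projection by another goes in the direction that preserves the inequality, and that the boundedness of $\mathsf\Pi_{-\beta}f$ in the weighted norm (needed to absorb it into $\nrm f{\mathrm L^2(\bangle v^k\d x\,\d\mu)}$) is where the dependence of $\mathcal C_2$ on $\nrm f{\mathrm L^2(\d x\,\d\mu)}$ enters, matching the phrasing of Proposition~\ref{prop:WPII}. Everything else is the Hardy--Poincaré-type pattern already carried out for $\mathsf L_1$, so I expect the proof to be short.
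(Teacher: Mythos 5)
Your proposal is correct and follows essentially the same route as the paper's proof: invoke the gap inequality of Lemma~\ref{lem:ScatteringGap}, compare the reference density via the optimality of $\mathsf\Pi_0$ (which is exactly the paper's observation $\irdv{|h-\overline h_0|^2\M}=\inf_H\irdv{|h-H|^2\M}\le\irdv{|g|^2\M}$ with $g=h-\overline h_{-\beta}$, i.e.\ $(1-\mathsf\Pi_{-\beta})f=g\,\M$ in your notation), then split on the sign of $\beta$, with H\"older interpolation between the weights $\bangle v^{-\beta}$ and $\bangle v^k$ in the case $\beta>0$. One small slip worth flagging: the constant $\mathcal C_2$ in the corollary is a genuine, $f$-independent constant. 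The bound you need for the high-moment factor, $\nrm{(1-\mathsf\Pi_{-\beta})f}{\mathrm L^2(\bangle v^k\d x\,\d\mu)}\lesssim\nrm f{\mathrm L^2(\bangle v^k\d x\,\d\mu)}$, holds with a universal constant, because $\mathsf\Pi_{-\beta}f=\overline h_{-\beta}\M$ and $|\overline h_{-\beta}|^2\le\irdv{|h|^2\M}/\irdv{\bangle v^{-\beta}\M}$ by Cauchy--Schwarz, so $\nrm{\mathsf\Pi_{-\beta}f}{\mathrm L^2(\bangle v^k\d x\,\d\mu)}\lesssim\nrm f{\mathrm L^2(\d x\,\d\mu)}\le\nrm f{\mathrm L^2(\bangle v^k\d x\,\d\mu)}$; no dependence on $\nrm f{\mathrm L^2(\d x\,\d\mu)}$ remains in the constant. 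The $f$-dependence you allude to only appears later, in Proposition~\ref{prop:WPII}, once this corollary is combined with the propagation estimate of Proposition~\ref{prop:propag_L2_m}.
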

\begin{proof} With $h=f/\M$, we recall that, as a consequence of Lemma~\ref{lem:ScatteringGap},
\[
-\irdmu{f\,\mathsf L_2 f}=\riint\kern-4pt\mathrm b(v,v')\(h-h'\)^2\,\M\,\M'\,\d v\,\d v'\ge\frac\Lambda2\irdv{|g|^2 \bangle v^{- \beta}\,\M}\,,
\]
with $g:=h-\overline h_{- \beta}$. Moreover, we observe that
\be{eq:inf}
\irdv{|h-\overline h_0|^2\,\M}=\inf_{H\in\R}\irdv{|h-H|^2\,\M} \leq \irdv{|g|^2\,\M}.
\ee
Hence, if $\beta \leq 0$, the result follows from the fact that $\bangle v^{- \beta}\ge1$.

Assume next that $\beta > 0$. We deduce from \eqref{eq:inf} and the Cauchy-Schwarz inequality that
\begin{multline*}
\nrm{(1-\mathsf\Pi)f}{\mathrm L^2(\d x\,\d\mu)}^2 = \irdxv{|h-\overline h_0|^2\,\M} \leq \irdxv{|g|^2\,\M}\\
\le\(\irdxv{|g|^2 \bangle v^{- \beta}\,\M}\)^\frac k{k +\beta}\(\irdxv{|g|^2 \bangle v^k\M}\)^\frac{\beta}{k +\beta}.
\end{multline*}
and to control the last factor we notice that
\[
|g|^2\le\frac12\(|h|^2+\overline h_{- \beta}^2\)\le\frac12\(|h|^2+\frac{\overline h^2}{\irdv{|g|^2 \bangle v^{- \beta} \M}}\)
\]
using $\bangle v\ge1$ and the definition of $\overline h_{- \beta}$. Collecting these estimates with the result of Lemma~\ref{lem:ScatteringGap} completes the proof.
\Qed\end{proof}

\subsection{Fractional Fokker-Planck operator: an interpolation inequality}

Let us compute $\irdmu{f\,\mathsf L_3 f}$. We recall that in Case~$\mathsf L=\mathsf L_3$, $\mathsf L_3 f=\Delta_v^{\sigma/2}f+\nabla_v\cdot\(E\,f\)$. With $h=f/\M$, we have
\[
\irdmu{f\,\Delta_v^{\sigma/2}f}=C_{d,\sigma}\irdvv{\frac{h^2-h\,h'}{|v-v'|^{d+\sigma}}\,\M}.
\]
On the other hand, we know that $h\,\nabla_v\cdot(f\,E)=h\,\nabla_v\cdot(h\,\M\,E)=\frac12\,\nabla_v (h^2)\cdot(\M\,E)+h^2\,\nabla_v\cdot(\M\,E)$ and after an integration by parts, we obtain
\[
\irdv{h\,\nabla_v\cdot(f\,E)}=\frac12\irdv{h^2\,\nabla_v\cdot(\M\,E)}=\frac12\irdv{h^2\,\Delta^{\sigma/2}\M}\,.
\]
After exchanging the variables $v$ and $v'$, we arrive at
\[
\irdv{h\,\nabla_v\cdot(f\,E)}=-\,\frac{C_{d,\sigma}}2\irdvv{\frac{h^2-(h')^2}{|v-v'|^{d+\sigma}}\,\M}.
\]
Altogether, this means that
\begin{multline*}
-\irdmu{f\,\mathsf L_3 f}=\frac{C_{d,\sigma}}2\!\irdvv{\frac{|h-h'|^2}{|v-v'|^{d+\sigma}}\,\M}\\
=\frac{C_{d,\sigma}}4\!\irdvv{\frac{|h-h'|^2}{|v-v'|^{d+\sigma}}\,(\M+\M')}\,.
\end{multline*}
\begin{corollary}\label{Cor:FFP}
Let $d\ge1$, $\gamma>0$, $\sigma\in(0,2)$, $\beta=\sigma - \gamma$ and $k\in(0,\gamma)$. With the notation of Corollary~\ref{Cor:ScatteringGap}, there exists a positive constant~$\mathcal C_3$ such that, for any $f\in\mathrm L^2(\bangle v^k\d x\,\d\mu)$, we have the inequality
\begin{align*}
\mathcal C_3\,\nrm{(1-\mathsf\Pi)f}{\mathrm L^2(\d x\,\d\mu)}^{2+2\,\frac{\beta}k}&\le\(-\irdxmu{f\,\mathsf L_3 f}\)\nrm f{\mathrm L^2(\bangle v^k\d x\,\d\mu)}^{2\,\frac{\beta}k}\quad\mbox{if}\quad \beta > 0\,,\\
\mathcal C_3\,\nrm{(1-\mathsf\Pi)f}{\mathrm L^2(\d x\,\d\mu)}^2&\le-\irdxmu{f\,\mathsf L_3 f}\quad\mbox{if}\quad \beta\le0\,.
\end{align*}
\end{corollary}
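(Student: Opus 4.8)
The plan is to imitate the proof of Corollary~\ref{Cor:ScatteringGap} (or equivalently Corollary~\ref{Cor:Hardy-Poincar\'e}), since the Dirichlet form for $\mathsf L_3$ was just shown to equal $\frac{C_{d,\sigma}}{2}\irdvv{|h-h'|^2|v-v'|^{-d-\sigma}\M}$ with $h=f/\M$, which has exactly the same structural shape as the scattering Dirichlet form. The first ingredient I need is a \emph{fractional Hardy-Poincar\'e inequality} playing the role of Lemma~\ref{lem:ScatteringGap}: namely a bound of the form
\[
\frac{C_{d,\sigma}}2\irdvv{\frac{|h-h'|^2}{|v-v'|^{d+\sigma}}\,\M}\;\gtrsim\;\irdv{\big|h-\overline h_{-\beta}\big|^2\,\bangle v^{-\beta}\,\M}
\]
for $h\in\mathrm L^2(\bangle v^{-\beta}\M\,\d v)$, where $\overline h_{-\beta}$ is the weighted average and $\beta=\sigma-\gamma$. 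Once this is available, the argument is formally identical to Corollary~\ref{Cor:ScatteringGap}: set $g:=h-\overline h_{-\beta}$, use $\irdv{|h-\overline h_0|^2\M}=\inf_{H\in\R}\irdv{|h-H|^2\M}\le\irdv{|g|^2\M}$, then split by Cauchy-Schwarz into the weighted Dirichlet norm raised to power $k/(k+\beta)$ times the $\bangle v^k$-weighted norm raised to power $\beta/(k+\beta)$ when $\beta>0$, control the latter using $|g|^2\le\tfrac12(|h|^2+\overline h_{-\beta}^2)$ and the crude bound $\overline h_{-\beta}^2\le\overline h{}^2/\irdv{|g|^2\bangle v^{-\beta}\M}$, and integrate in $x$. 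When $\beta\le0$ one just uses $\bangle v^{-\beta}\ge1$ directly and no interpolation is needed. This produces exactly the two stated inequalities.

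The main obstacle is establishing the fractional Hardy-Poincar\'e inequality with the \emph{correct weight} $\bangle v^{-\beta}=\bangle v^{\gamma-\sigma}$ on the right-hand side. For the scattering operator this came essentially for free from the pointwise lower bound $\mathrm b(v,v')\ge Z^{-1}\bangle v^{-\beta}\bangle{v'}^{-\beta}$, which turned a double integral into a rank-one quadratic form. Here the kernel $|v-v'|^{-d-\sigma}$ is singular near the diagonal and decays at infinity, so the weight is not visible pointwise and one cannot simply factor. I expect this inequality to be either quoted from the literature (it is of the type proved in the references on fat-tailed fractional Fokker-Planck equilibria, e.g.~\cite{lafleche_fractional_2020}, and is closely analogous to the weighted Poincar\'e inequalities obtained there) or proved by a localisation argument: estimate the contribution of $|v-v'|\lesssim\bangle v$ to the Dirichlet form from below by a local fractional Poincar\'e inequality on a ball of radius $\sim\bangle v$, which by scaling gains the factor $\bangle v^{-\sigma}$, then combine with the tail behaviour $\M(v)\sim\bangle v^{-d-\gamma}$ to recover the weight $\bangle v^{-d-\gamma-\sigma}\,\bangle v^{\gamma}=\bangle v^{-\sigma}\M$, and handle the zeroth moment via a covering/chaining argument controlling the difference of local averages. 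The normalising constant $C_{d,\sigma}>0$ from~\eqref{Garofalo} guarantees the form is nonnegative and nondegenerate.

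A clean alternative I would consider is to bypass an explicit Hardy-Poincar\'e statement and instead directly invoke the general strategy of~\cite{wang_simple_2014} mentioned at the start of Section~\ref{sec:WPII}, which is designed precisely to yield the weighted interpolation~\eqref{GeneralInterpolation} for operators like $\mathsf L_3$; the exponents $a=\beta/k$ in the two displayed inequalities match $(1+a)^{-1}$ and $a/(1+a)$ after the substitution, so the corollary would follow by the same passage-to-$x$-integration step used in the other two cases. Either way, the routine part—Cauchy-Schwarz interpolation in the velocity variable, the elementary bound on $\overline h_{-\beta}^2$, the sign convention $f/|f|$ issues being irrelevant here since everything is quadratic, and integrating the resulting pointwise (in $x$) inequality over $x\in\R^d$ using H\"older once more—mirrors Corollary~\ref{Cor:ScatteringGap} verbatim and I would not belabour it.
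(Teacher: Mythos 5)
Your plan is structurally sensible, but it leaves the one genuinely hard ingredient — the fractional Hardy--Poincar\'e inequality with weight $\bangle v^{-\beta}$ — as a promissory note, to be either quoted from the literature or proved by a localisation/chaining argument that you sketch but do not carry out. That inequality \emph{is} the whole difficulty, and as written the proposal does not close the gap. The paper takes a much shorter route that you did not anticipate: it never proves a fresh Poincar\'e-type inequality for the fractional Dirichlet form. Instead, starting from the symmetrized representation obtained just above the corollary,
\[
-\irdmu{f\,\mathsf L_3 f}=\frac{C_{d,\sigma}}4\irdvv{\frac{|h-h'|^2}{|v-v'|^{d+\sigma}}\,(\M+\M')}\,,
\]
it observes the elementary pointwise bound
\[
\bangle v^{-\beta}\bangle{v'}^{-\beta}\,\M\,\M'\le\kappa\,\frac{\M+\M'}{|v-v'|^{d+\sigma}}\,,
\]
with $\kappa=c_\gamma\sup_{v,v'}\frac{\bangle v^{d+\gamma}+\bangle{v'}^{d+\gamma}}{\bangle v^{d+\sigma}\bangle{v'}^{d+\sigma}}\,|v-v'|^{d+\sigma}$, which dominates the scattering Dirichlet form with the reference kernel $Z^{-1}\bangle v^{-\beta}\bangle{v'}^{-\beta}$ by a constant times the $\mathsf L_3$ Dirichlet form. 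The corollary then follows at once from Corollary~\ref{Cor:ScatteringGap}, and the fractional Poincar\'e inequality (Corollary~\ref{Cor:FFPPoincare}) drops out as a byproduct rather than being needed as a prerequisite.

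So the comparison is this: you correctly identified that the $\mathsf L_3$ Dirichlet form has the same structural shape as the scattering one and that the issue is making the weight $\bangle v^{-\beta}$ appear, but you tried to reproduce Lemma~\ref{lem:ScatteringGap} directly for the singular kernel $|v-v'|^{-d-\sigma}$, whereas the paper circumvents that by a one-line comparison of quadratic forms that reduces the whole corollary to the already-proved scattering case. If you want to complete your route, you would essentially have to re-prove the weighted Poincar\'e inequality that the paper's trick supplies for free. Worth noting in passing: the finiteness of the constant $\kappa$ in the paper's elementary estimate is transparent when $\sigma\ge\gamma$ (i.e.\ $\beta\ge0$), and requires a little care when $\beta<0$; in that regime no weight is actually needed in the target inequality, since $\bangle v^{-\beta}\ge1$, so a plain (unweighted) Poincar\'e comparison already suffices.
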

\begin{proof} From the elementary estimate
\[
\forall\,(v,v')\in\R^d\times\R^d\,,\quad\bangle v^{- \beta} \bangle{v'}^{- \beta} \M\,\M' \leq \kappa\,\frac{\M+\M'}{|v-v'|^{d+\sigma}}
\]
with $\kappa=c_\gamma\,\sup_{(v,v')\in\R^d\times\R^d}\frac{\bangle v^{d+\gamma}+\bangle{v'}^{d+\gamma}}{\bangle v^{d+\sigma} \bangle{v'}^{d+\sigma}}\,|v-v'|^{d+\sigma}$, we deduce that
\[
-\irdmu{f\,\mathsf L_2 f}\le -\,\kappa\irdmu{f\,\mathsf L_3 f}
\]
and conclude by Corollary~\ref{Cor:ScatteringGap} with $\mathcal C_3=\mathcal C_2/\kappa$.
\Qed\end{proof}

As a side result, let us observe that we obtain a \emph{fractional Poincar\'e inequality} as in~\cite[Corollary~1.2, (1)]{wang_functional_2015}, with an explicit constant, that goes as follows.
\begin{corollary}\label{Cor:FFPPoincare} Under the same assumptions as in Corollary~\ref{Cor:FFP},
\[
-\irdmu{f\,\mathsf L_3 f}\ge\kappa\,\Lambda\irdv{\left|h-\overline h_{- \beta}\right|^2 \bangle v^{- \beta} \M}
\]
where $\kappa$ is as in the proof of Corollary~\ref{Cor:FFP} and $\Lambda$ is the constant of Lemma~\ref{lem:ScatteringGap}. \end{corollary}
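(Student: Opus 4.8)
The plan is to chain together the two ingredients already assembled for the proof of Corollary~\ref{Cor:FFP}. First I would recall the exact quadratic identity established just before that corollary: with $h=f/\M$,
\[
-\irdmu{f\,\mathsf L_3 f}=\frac{C_{d,\sigma}}4\irdvv{\frac{|h-h'|^2}{|v-v'|^{d+\sigma}}\,(\M+\M')}\,.
\]
Second, the pointwise kernel comparison $\bangle v^{-\beta}\,\bangle{v'}^{-\beta}\,\M\,\M'\le\kappa\,(\M+\M')/|v-v'|^{d+\sigma}$ used in that proof is now read as a lower bound on $(\M+\M')/|v-v'|^{d+\sigma}$. Plugging it into the identity and discarding the harmless constants $C_{d,\sigma}/4$ and $1/\kappa$ yields
\[
-\irdmu{f\,\mathsf L_3 f}\gtrsim\irdvv{\bangle v^{-\beta}\,\bangle{v'}^{-\beta}\,|h-h'|^2\,\M\,\M'}\,.
\]

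It then remains to recognize the bilinear form on the right. This is precisely the quantity computed inside the proof of Lemma~\ref{lem:ScatteringGap}: expanding $|h-h'|^2$, integrating in $v$ and $v'$, and using $\irdv{f\,\bangle v^{-\beta}}=\overline h_{-\beta}\,\irdv{\M\,\bangle v^{-\beta}}$ produces
\[
\irdvv{\bangle v^{-\beta}\,\bangle{v'}^{-\beta}\,|h-h'|^2\,\M\,\M'}=\Lambda\,Z\irdv{\left|h-\overline h_{-\beta}\right|^2\,\bangle v^{-\beta}\,\M}\,,
\]
with $\Lambda$ the constant of Lemma~\ref{lem:ScatteringGap}. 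Combining the three displays proves the inequality, the constant being $\Lambda$ times a fixed numerical factor depending only on $d$, $\sigma$ and $\gamma$, which with the normalizations of Corollary~\ref{Cor:FFP} is the $\kappa$ appearing in the statement.

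There is no genuine obstacle here: the argument is a two-line consequence of the energy identity for $\mathsf L_3$ and of the explicit computation already carried out in Lemma~\ref{lem:ScatteringGap}. The only point that calls for mild care is the bookkeeping of constants, namely checking that the factors $C_{d,\sigma}/4$, the kernel-comparison constant $\kappa$ and $Z$ recombine as claimed; alternatively one simply records the inequality up to a positive constant, consistently with the rest of Section~\ref{sec:WPII}.
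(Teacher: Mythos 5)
Your proof is correct and reconstructs precisely the argument that the paper leaves implicit: combine the quadratic identity for $-\irdmu{f\,\mathsf L_3 f}$ with the pointwise kernel comparison from the proof of Corollary~\ref{Cor:FFP}, read as a lower bound, then recognize the resulting bilinear form as the quantity computed in the proof of Lemma~\ref{lem:ScatteringGap}. You also rightly flag the constant bookkeeping: with the $\kappa$ literally defined in the proof of Corollary~\ref{Cor:FFP} the prefactor works out to $C_{d,\sigma}\,\Lambda\,Z/(4\,\kappa)$ rather than $\kappa\,\Lambda$, so the statement should indeed be read up to a fixed positive factor, exactly as you suggest.
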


\subsection{Convergence to the local equilibrium: microscopic coercivity}\label{Sec:Interp_ext}

We can summarize Lemma~\ref{Lem:Hardy-Poincare}, Lemma~\ref{lem:ScatteringGap} and Corollary~\ref{Cor:FFPPoincare} as
\[
\mathcal C\,\nrm{f-\overline h_{- \beta}\,\M}{- \beta}^2\le-\bangle{f\,,\mathsf L f}
\]
for positive constant $\mathcal C$, where $\overline h_{- \beta}=\irdv{h\,\M \bangle v^{- \beta}}/\irdv{\M \bangle v^{- \beta}}$ and $h=f/\M$. Here $\mathsf L=\mathsf L_1$, $\mathsf L_2$ or $\mathsf L_3$ respectively in Cases~$\mathsf L=\mathsf L_1$,~$\mathsf L=\mathsf L_2$ or~$\mathsf L=\mathsf L_3$. In the homogeneous case, an additional H\"older inequality establishes Inequality~\eqref{GeneralInterpolation} of Section~\ref{Sec:intro}. The same strategy can be applied in the non-homogeneous case after integrating with respect to $x\in\R^d$.

\begin{proof}[Proof of Proposition~\ref{prop:WPII}] It is a straightforward consequence of Proposition~\ref{prop:propag_L2_m} on the one hand, and of Corollaries~\ref{Cor:Hardy-Poincare},~\ref{Cor:ScatteringGap} and~\ref{Cor:FFP} if, respectively, $\mathsf L=\mathsf L_1$, $\mathsf L_2$ or $\mathsf L_3$.
\Qed\end{proof}

As an alternative formulation of Proposition~\ref{prop:WPII} and in preparation for the case $\gamma\le \beta$ (see Section~\eqref{Sec:beta+gamma<0}), let us collect some additional observations. The inequality
\[
\nrm{(1-\mathsf\Pi)f}\eta^2\le\(-\bangle{f,\mathsf Lf}\)^\theta\,\nrm{(1-\mathsf\Pi)f}k^{2\,(1-\theta)}
\]
with $\theta=\frac{k-\eta}{k+ \beta}$ can be rewritten with $\mathsf x_\zeta:=\nrm{(1-\mathsf\Pi)f}\zeta^2$ and $\mathsf z:=-\bangle{f,\mathsf Lf}$ as
\[
\mathsf x_\eta\le\mathsf z^\theta\,\mathsf x_k^{1-\theta}=\(R^{-1/\theta}\,\mathsf z\)^\theta\(R^{1/(1-\theta)}\mathsf x_k\)^{1-\theta}\le\theta\,R^{-1/\theta}\,\mathsf z+(1-\theta)\,R^{1/(1-\theta)}\mathsf x_k
\]
for any $R>0$, by Young's inequality. This amounts to
\[
\mathsf z\ge\frac1\theta\,R^\frac1\theta\,\mathsf x_\eta-\frac{1-\theta}\theta\,R^{\frac1\theta+\frac1{1-\theta}}\,\mathsf x_k=r\,\mathsf x_\eta-(1-\theta)\,\theta^\frac\theta{1-\theta}\,r^\frac1{1-\theta}\,\mathsf x_k\,.
\]
An integration with respect to $x$ shows the following result.
\begin{corollary}\label{Cor:WPIIext} Let $\theta=\frac{k-\eta}{k +\beta}$. Under the assumptions of Proposition~\ref{prop:WPII}, we have
\begin{multline}\label{Z}
-\irdx{\bangle{f,\mathsf Lf}}\ge r\,\nrm{(1-\mathsf\Pi)f}{\mathrm L^2(\d x\,\bangle v^\eta\d v)}^2\\
-(1-\theta)\,\theta^\frac\theta{1-\theta}\,r^\frac1{1-\theta}\,\nrm{(1-\mathsf\Pi)f}{\mathrm L^2(\d x\,\bangle v^k\d v)}^2
\end{multline}
for any $f\in\mathrm L^2(\bangle v^k\d x\,\d\mu)$ and for any $r>0$.\end{corollary}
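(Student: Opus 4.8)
Since the estimate is essentially an arithmetic restatement of Proposition~\ref{prop:WPII}, my plan is to apply Young's inequality with a free scaling parameter to the pointwise-in-$x$ form of that proposition, and then to integrate in $x$---this is exactly the chain of computations displayed in the lines preceding the statement, and it only remains to spell it out. To begin, I would fix $x\in\R^d$, abbreviate $\mathsf z:=-\bangle{f,\mathsf Lf}$ and $\mathsf x_\zeta:=\nrm{(1-\mathsf\Pi)f}\zeta^2$ for the quantities evaluated at that position, and record the homogeneous interpolation inequality
\[
\mathsf x_\eta\le\mathsf z^{\,\theta}\,\mathsf x_k^{\,1-\theta}\,,\qquad\theta=\tfrac{k-\eta}{k+\beta}\,.
\]
This is Proposition~\ref{prop:WPII} read at a fixed position: one splits the velocity weight as $\bangle v^\eta=(\bangle v^{-\beta})^{\theta}(\bangle v^{k})^{1-\theta}$ (indeed $\eta=-\beta\,\theta+k\,(1-\theta)$), applies Hölder's inequality in $v$, bounds the $\bangle v^{-\beta}$-weighted factor by $-\bangle{f,\mathsf Lf}$ through Lemma~\ref{Lem:Hardy-Poincare}, Lemma~\ref{lem:ScatteringGap} or Corollary~\ref{Cor:FFPPoincare} according to whether $\mathsf L=\mathsf L_1$, $\mathsf L_2$ or $\mathsf L_3$, and uses the elementary observation of Section~\ref{Sec:Hypocoercivity} (subtracting a multiple of $\M$ does not increase a weighted $\mathrm L^2(\bangle v^\zeta\d\mu)$ norm) so that every norm appears as $\nrm{(1-\mathsf\Pi)f}\zeta$.

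The next step turns the product into a sum. For any $R>0$,
\[
\mathsf x_\eta=\(R^{-1/\theta}\,\mathsf z\)^{\theta}\(R^{1/(1-\theta)}\,\mathsf x_k\)^{1-\theta}\le\theta\,R^{-1/\theta}\,\mathsf z+(1-\theta)\,R^{1/(1-\theta)}\,\mathsf x_k
\]
by Young's inequality in the form $u^{\theta}w^{1-\theta}\le\theta\,u+(1-\theta)\,w$, and solving for $\mathsf z$ gives
\[
\mathsf z\ge\tfrac1\theta\,R^{1/\theta}\,\mathsf x_\eta-\tfrac{1-\theta}{\theta}\,R^{\frac1\theta+\frac1{1-\theta}}\,\mathsf x_k\,.
\]
Now I would set $r:=\theta^{-1}R^{1/\theta}$, equivalently $R=(\theta\,r)^{\theta}$: the first coefficient becomes $r$, and since $R^{1/\theta}R^{1/(1-\theta)}=(\theta\,r)(\theta\,r)^{\theta/(1-\theta)}$ and $1+\tfrac{\theta}{1-\theta}=\tfrac1{1-\theta}$, the second coefficient becomes exactly $(1-\theta)\,\theta^{\theta/(1-\theta)}\,r^{1/(1-\theta)}$. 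Thus, for every $x\in\R^d$ and every $r>0$,
\[
-\bangle{f,\mathsf Lf}\ge r\,\nrm{(1-\mathsf\Pi)f}\eta^2-(1-\theta)\,\theta^{\theta/(1-\theta)}\,r^{1/(1-\theta)}\,\nrm{(1-\mathsf\Pi)f}k^2\,.
\]

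Finally I would integrate in $x\in\R^d$: since $r$ and $\theta$ are independent of $x$, and
\[
\int_{\R^d}\mathsf z\,\d x=-\irdx{\bangle{f,\mathsf Lf}}\,,\qquad\int_{\R^d}\mathsf x_\zeta\,\d x=\nrm{(1-\mathsf\Pi)f}{\mathrm L^2(\d x\,\bangle v^\zeta\d\mu)}^2\,,
\]
the displayed pointwise inequality yields~\eqref{Z} after renaming $r$. I do not expect any genuine obstacle here: the whole argument reduces to Hölder's and Young's inequalities together with the bookkeeping of exponents in the substitution $r=\theta^{-1}R^{1/\theta}$. The only input that is not purely cosmetic is the homogeneous interpolation inequality of the first step; there, a small amount of care is needed to recast the microscopic coercivity estimates of Section~\ref{sec:WPII}, which naturally feature the $\bangle v^{-\beta}$-adapted projection, in terms of the plain projection $\mathsf\Pi$, which for the admissible ranges of $\beta$, $\gamma$, $\eta$ and $k$ follows from the finiteness of the relevant $\bangle v$-moments of $\M$ and the variational characterisation of $\nrm{(1-\mathsf\Pi_\zeta)g}\zeta$ as an infimum over multiples of $\M$.
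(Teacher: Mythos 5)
Your proof is correct and follows essentially the same route as the paper: starting from the pointwise-in-$x$ interpolation inequality $\mathsf x_\eta\le\mathsf z^\theta\,\mathsf x_k^{1-\theta}$, applying Young's inequality with a free parameter $R$, reparametrizing via $r=\theta^{-1}R^{1/\theta}$, and integrating in $x$. The extra care you take in spelling out the exponent bookkeeping and in explaining how the microscopic coercivity estimates yield the first inequality is welcome but does not change the argument.
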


\section{Hypocoercivity estimates}\label{Sec:hypoco}

We start by defining some coefficients. With the notation $\nrm g\eta^2:=\!\irdmu{|g|^2 \bangle v^\eta}$, we define $\mu_\mathsf L$ and $\lambda_\mathsf L$ by
\[
\mu_\mathsf L(\xi):=\nrm{\mathsf L^*\big((v\cdot\xi)\,\varphi(\xi,v)\,\M\big)}{-\eta}^2\,\quad\lambda_\mathsf L:=\nrm{\mathsf L^*(\psi\,\M)}{-\eta}^2\,,
\]
for some parameter $\eta\in(-\,\gamma,\gamma)$, where $\mathsf L^*$ denotes the dual of $\mathsf L$ in $\mathrm L^2(\d\mu)$, and
\be{Notations:LambdaMu}
\begin{array}{l}
\lambda_k:=\irdv{|v\cdot\xi|^k\bangle v^{-2}\M}=\bangle{\M,\,|\mathsf T|^k\,\psi\,\M}\,,\\[4pt]
\mu_k:=\irdv{|v\cdot\xi|^k\,\varphi(v)\,\M}=\bangle{\M,\,|\mathsf T|^k\,\varphi}\,,\\[4pt]
\tilde\lambda_k:=\left\||v\cdot\xi|^k\,\psi\,\M\right\|_{-\eta}\,,\\[4pt]
\tilde\mu_k:=\left\||v\cdot\xi|^k\,\varphi\,\M\right\|_{-\eta}\,.
\end{array}
\ee
Notice that only the case $\eta = - \beta$ will be needed if $\gamma>\beta$. When $\gamma\le \beta$, we shall assume that $\eta\in(-\,\gamma,0)$. See Section~\ref{Sec:beta+gamma<0} for consequences.

\subsection{Quantitative estimates of \texorpdfstring{$\mu_\mathsf L$ and $\lambda_\mathsf L$}{mu L and lambda L}}\label{Sec:mu-lambda}

\begin{proposition}\label{prop:entropy_decay_A_Assumption} Under Assumption~{\rm (H)}, if $\eta\in(-\,\gamma,\gamma)$ is such that $\eta\ge- \beta$, then $\lambda_\mathsf L$ is finite and
\[
\forall\,\xi\in\R^d\,,\quad\mu_\mathsf L(\xi)\lesssim\frac{|\xi|^{\alpha }}{\bangle\xi^{\alpha }}\,.
\]
\end{proposition}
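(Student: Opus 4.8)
The goal is to bound $\mu_\mathsf L(\xi)=\nrm{\mathsf L^*\big((v\cdot\xi)\,\varphi(\xi,v)\,\M\big)}\eta^2$ and to show $\lambda_\mathsf L<\infty$. The common strategy for all three operators $\mathsf L=\mathsf L_1,\mathsf L_2,\mathsf L_3$ is to compute $\mathsf L^*\big((v\cdot\xi)\,\varphi\,\M\big)$ explicitly (or estimate it pointwise), divide by $\M$ to pass to the measure $\d\mu=\M^{-1}\d v$, and then integrate $\bangle v^\eta$ against $\M$ — which converges precisely under the moment restriction $\eta<\gamma$ that is assumed. For $\lambda_\mathsf L=\nrm{\mathsf L^*(\psi\,\M)}\eta^2$ with $\psi=\bangle v^{-2}$ the computation is the same but simpler, since there is no $\xi$-dependence and no cancellation to track: one just checks that $\M^{-1}\mathsf L^*(\psi\,\M)$ is a fixed function with tails no worse than $\bangle v^{-\beta-2}$ (or $\bangle v^{-\sigma}$ in Case $\mathsf L_3$), so the weighted integral is finite for $\eta<\gamma$; I would treat this quickly and focus on $\mu_\mathsf L$.

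\textbf{Case-by-case treatment of $\mu_\mathsf L$.} First I would recall $\varphi(\xi,v)=\bangle v^{\beta}/(1+\bangle v^{2|1+\beta|}|\xi|^2)$ and note the two elementary bounds $|(v\cdot\xi)\,\varphi(\xi,v)|\le\tfrac12$ and, more quantitatively, $|(v\cdot\xi)\,\varphi(\xi,v)|\lesssim\min\{|\xi|\,\bangle v^{\beta+1},\,|\xi|^{-1}\bangle v^{-|1+\beta|+\beta}\}$, so that the relevant scale is $\bangle v\sim|\xi|^{-1/(1+\beta)}$ — this is exactly the change of variables $v=(\varepsilon|\xi|)^{-1/(1+\beta)}w$ from the formal derivation of $\alpha$ in the introduction, and it is what produces the exponent $\alpha=(\gamma+\beta)/(1+\beta)$. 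In Case $\mathsf L=\mathsf L_1$ one has $\mathsf L_1^*=\mathsf L_1$ self-adjoint, so $\M^{-1}\mathsf L_1^*(g\,\M)=\bangle v^{d+\gamma}\nabla_v\cdot(\bangle v^{-d-\gamma}\nabla_v g)$ with $g=(v\cdot\xi)\,\varphi$; differentiating $\varphi$ produces at most two extra powers of $\bangle v$, so $\big|\M^{-1}\mathsf L_1^*((v\cdot\xi)\varphi\M)\big|\lesssim |v\cdot\xi|\,\varphi\,\bangle v^{-2}+|\xi|\,\varphi\,(\dots)$, and squaring and integrating $\bangle v^\eta\M$ over the region $\bangle v\lesssim|\xi|^{-1/(1+\beta)}$ versus its complement gives the claimed $|\xi|^\alpha/\bangle\xi^\alpha$ after matching the two regimes; the cutoff at $\gamma\ge 2+\beta$ (where $\alpha=2$) appears because for $\gamma$ large the integral $\int|v\cdot\xi|^2\bangle v^{\beta-2}\M$ converges and one simply gets $O(|\xi|^2)$. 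Case $\mathsf L=\mathsf L_3$ is handled the same way, using $\mathsf L_3^*g=\Delta_v^{\sigma/2}g-E\cdot\nabla_v g$, the bound $|E(v)|\lesssim\bangle v^{-\beta}|v|$ from Appendix~\ref{Sec:Ebound} (Proposition~\ref{prop:estE}), and the fractional Leibniz-type estimate $\Delta_v^{\sigma/2}\big((v\cdot\xi)\varphi\big)\lesssim|\xi|\,\bangle v^{\beta+1-\sigma}(1+\bangle v^{2|1+\beta|}|\xi|^2)^{-1}$ (again as in~\cite{biler_blowup_2009}); since $\beta=\sigma-\gamma$ the powers of $\bangle v$ work out to the same scaling. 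Case $\mathsf L=\mathsf L_2$ is the most direct: $\M^{-1}\mathsf L_2^*(g\,\M)(v)=\int \mathrm b(v',v)\big(g(v')\M(v')-g(v)\M(v')\big)\d v'$, and using $\nu(v)\sim|v|^{-\beta}$, the bound~\eqref{hyp:b_bounds2} on $\mathcal C_{\mathrm b}(k)$ to control $\int \mathrm b(v',v)|g(v')|\M(v')\d v'$, and the evenness of $\M$ (which kills part of the $g(v)\nu(v)$ term when integrated), one reduces to integrals of the form $\int|v\cdot\xi|\,\varphi\,\bangle v^{-\beta}\,\bangle v^\eta\M\,\d v$, again split at $\bangle v\sim|\xi|^{-1/(1+\beta)}$.

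\textbf{Main obstacle.} The routine part is the change of variables and the interpolation between the small-$v$ and large-$v$ regimes; the genuinely delicate point is Case $\mathsf L=\mathsf L_3$, where $\mathsf L_3^*$ involves the \emph{nonlocal} operator $\Delta_v^{\sigma/2}$ applied to the $\xi$-dependent profile $(v\cdot\xi)\,\varphi(\xi,v)$, so one cannot just differentiate but must control a singular integral whose kernel interacts with the $|\xi|$-dependent length scale $\bangle v\sim|\xi|^{-1/(1+\beta)}$. I expect this to require a careful splitting of the fractional-Laplacian integral into a near-diagonal part (handled by the $C^{1,\alpha}$-type regularity of $\varphi$ in $v$, giving a second-order Taylor remainder) and a far part (handled by the decay of $\varphi$), and then checking uniformly in $\xi$ that the resulting bound has exactly the tail $\bangle v^{\beta+1-\sigma}=\bangle v^{1-\gamma}$ needed for the weighted $\mathrm L^2(\bangle v^\eta\,\d\mu)$ norm to reproduce $|\xi|^\alpha/\bangle\xi^\alpha$. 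The secondary subtlety, common to all three cases, is keeping track of the constraint $\eta<\gamma$ (so $\int\bangle v^{\eta+\text{junk}}\M<\infty$) together with $\eta\ge-\beta$ (so the near-origin/low-velocity contributions are summable when $\beta>0$); both are exactly the hypotheses of the proposition, so no new restriction should emerge, but the bookkeeping must be done with care in the borderline exponents.
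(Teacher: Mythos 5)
Your outline matches the paper's proof closely: the argument there also proceeds by explicit case-by-case computation of $\mathsf L^*\big((v\cdot\xi)\,\varphi\,\M\big)$, reduces each case to weighted integrals of the form $\int |v\cdot\xi|^2\,\varphi^2\,\bangle v^\eta\,\M\,\d v$ controlled by the change of scale $\bangle v\sim|\xi|^{-1/(1+\beta)}$, and treats $\mathsf L_3$ by exactly the near/far splitting of the fractional Laplacian that you flag as the delicate step (Lemma~\ref{lem:estim_lap}, with $I_1$ handled by a second-order Taylor expansion and $I_2$ by separate treatment of $|\xi|\le1$ and $|\xi|\ge1$), with the restrictions $\eta\in(-\gamma,\gamma)$ and $\eta\ge-\beta$ entering precisely as you describe. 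Two small discrepancies you would encounter when writing it out: in Case $\mathsf L_2$ the Cauchy--Schwarz step invokes the finiteness of the double integral $\mathcal C_{\mathrm b}=\riint\frac{\mathrm b(v',v)^2}{\nu(v')\,\nu(v)}\M\M'\,\d v\,\d v'$ of~\eqref{hyp:b_bounds3} rather than the single-integral constant $\mathcal C_{\mathrm b}(k)$ of~\eqref{hyp:b_bounds2} (and no evenness cancellation is used there), and in Case $\mathsf L_3$ the bound actually proved on $\Delta_v^{\sigma/2}\big((v\cdot\xi)\,\varphi\big)$ is \emph{uniform in $v$} --- namely $\lesssim|\xi|^{\alpha/2}$ for $|\xi|\le1$ and $\lesssim1$ for $|\xi|\ge1$ --- not the $v$-weighted pointwise bound you conjecture, the $\bangle v$-dependence being supplied instead by the explicit drift term $E\cdot\nabla_v\big((v\cdot\xi)\,\varphi\big)$ in Proposition~\ref{prop:estcoeffmuFFP}.
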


\subsubsection{Generalized Fokker-Planck operators}\label{Sec:Fokker-Planck}

\begin{lemma} With $\mathsf L=\mathsf L_1$, we have
\[
\mu_\mathsf L\lesssim|\xi|^{\min\left\{2,\frac{\gamma+4+\eta}3\right\}}\,\mathds1_{|\xi|\le1}+|\xi|^{-2}\,\mathds1_{|\xi|\ge1}\quad\mbox{and}\quad\lambda_\mathsf L\lesssim 1\,.
\]
\end{lemma}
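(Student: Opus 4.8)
The plan is to use the self-adjointness of $\mathsf L_1$ on $\mathrm L^2(\d\mu)$ to make $\mathsf L_1^*\big((v\cdot\xi)\,\varphi(\xi,\cdot)\,\M\big)$ and $\mathsf L_1^*(\psi\,\M)$ completely explicit, and then to estimate elementary integrals with power-law weights. Since $\mathsf L_1^*=\mathsf L_1$ and $\mathsf L_1(g\,\M)=\nabla_v\cdot(\M\,\nabla_v g)$ for any function $g=g(v)$, while $\nabla_v\M=-(d+\gamma)\,\bangle v^{-2}\,v\,\M$, one gets
\[
\mathsf L_1(g\,\M)=\M\,\mathsf G[g]\,,\qquad\mathsf G[g]:=\Delta_v g-(d+\gamma)\,\bangle v^{-2}\,v\cdot\nabla_v g\,,
\]
so in particular $|\mathsf G[g]|\lesssim|\nabla_v^2g|+\bangle v^{-1}\,|\nabla_v g|$. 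Using $\d\mu=\M^{-1}\,\d v$ and $\M=c_\gamma\,\bangle v^{-d-\gamma}$, this recasts
\[
\lambda_\mathsf L\lesssim\irdv{\big|\mathsf G[\psi]\big|^2\,\bangle v^{\eta-d-\gamma}}\,,\qquad\mu_\mathsf L(\xi)\lesssim\irdv{\big|\mathsf G[g_\xi]\big|^2\,\bangle v^{\eta-d-\gamma}}\,,
\]
with $\psi=\bangle v^{-2}$ and $g_\xi:=(v\cdot\xi)\,\varphi(\xi,\cdot)$; since $\beta=2$ we have $\varphi(\xi,v)=\bangle v^2/\big(1+\bangle v^6\,|\xi|^2\big)$. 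A one-line computation gives that $\mathsf G[\psi]$ is a linear combination of $\bangle v^{-4}$ and $\bangle v^{-6}$, whence $\lambda_\mathsf L\lesssim\irdv{\bangle v^{\eta-d-\gamma-8}}<+\infty$ because $\eta<\gamma$.

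For $\mu_\mathsf L$ the key point is a two-regime control of $\varphi$ and its velocity derivatives. Writing $\varphi(\xi,v)=F(\bangle v)$ with $F(s)=s^2/(1+s^6\,|\xi|^2)$, one checks from the explicit expressions $F'(s)=(2s-4s^7|\xi|^2)/(1+s^6|\xi|^2)^2$ and the analogue for $F''$ that $\big|F^{(j)}(s)\big|\lesssim s^{-j}\,\min\{s^2,\,|\xi|^{-2}\,s^{-4}\}$ for $j=0,1,2$, uniformly in $\xi$: $\varphi$ behaves like $\bangle v^2$ on the \emph{head} $\{\bangle v\lesssim|\xi|^{-1/3}\}$ and like $|\xi|^{-2}\,\bangle v^{-4}$ on the \emph{tail} $\{\bangle v\gtrsim|\xi|^{-1/3}\}$, each derivative costing a power of $\bangle v$. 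Feeding this into $g_\xi=(v\cdot\xi)\,\varphi$, with $|v\cdot\xi|\le|\xi|\,\bangle v$, yields the pointwise bound $\big|\mathsf G[g_\xi]\big|\lesssim|\xi|\,\bangle v^{-1}\,\min\{\bangle v^2,\,|\xi|^{-2}\,\bangle v^{-4}\}$ and therefore
\[
\mu_\mathsf L(\xi)\lesssim|\xi|^2\irdv{\bangle v^{\eta-d-\gamma-2}\,\min\big\{\bangle v^4,\,|\xi|^{-4}\,\bangle v^{-8}\big\}}\,.
\]
When $|\xi|\ge1$ the minimum equals $|\xi|^{-4}\,\bangle v^{-8}$ on all of $\R^d$, the integral is $\lesssim|\xi|^{-4}$, and $\mu_\mathsf L\lesssim|\xi|^{-2}$. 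When $|\xi|\le1$ one splits at $\bangle v\sim|\xi|^{-1/3}$: the tail part is $|\xi|^{-2}\int_{\bangle v\gtrsim|\xi|^{-1/3}}\bangle v^{\eta-d-\gamma-10}\,\d v$, a convergent integral (since $\eta<\gamma$) equal to a fixed power of $|\xi|$, while the head part is $|\xi|^2\int_{\bangle v\lesssim|\xi|^{-1/3}}\bangle v^{\eta-d-\gamma+2}\,\d v$, which is $\lesssim|\xi|^2$ when the integral converges at infinity and otherwise contributes the same power of $|\xi|$ as the tail. Collecting these power-law exponents and keeping the smaller of the two gives the stated bound on $\mu_\mathsf L$.

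The main work is the book-keeping of all these exponents through the head/tail split; the one estimate needing a little care is the two-regime envelope for $F'$ and $F''$. There $F'$ vanishes near $s\sim|\xi|^{-1/3}$, so the leading-order behaviours on head and tail have opposite signs and cancel in the transition zone, and one should therefore read the crude bounds $\big|F^{(j)}(s)\big|\lesssim s^{-j}\min\{s^2,|\xi|^{-2}s^{-4}\}$ directly off the rational expressions for $F'$ and $F''$ (they hold in both regimes simultaneously) rather than from the leading asymptotics. Everything else is routine, and only $\eta<\gamma$ (together with $\beta=2$) is used for the convergence of the weighted integrals.
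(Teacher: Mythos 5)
Your proof is correct and takes essentially the same route as the paper: both exploit the Fokker--Planck structure to write $\mathsf L_1\big((v\cdot\xi)\varphi\,\M\big)=\M\big(\Delta_v\big((v\cdot\xi)\varphi\big)-(d+\gamma)\bangle v^{-2}v\cdot\nabla_v\big((v\cdot\xi)\varphi\big)\big)$, derive the same pointwise bound $\lesssim|\xi|\,\bangle v\,\big(1+\bangle v^6|\xi|^2\big)^{-1}$ (your two-regime $\min$-envelope for $F,F',F''$ is exactly the paper's $A^{-1}=(1+\bangle v^6|\xi|^2)^{-1}$), and finish the $\mu_\mathsf L$ and $\lambda_\mathsf L$ estimates by the head/tail split at $\bangle v\sim|\xi|^{-1/3}$. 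One small flag: carrying your power-counting through with the weight $\bangle v^{\eta-d-\gamma}$ gives the exponent $\tfrac{\gamma+4-\eta}{3}$ rather than the $\tfrac{\gamma+4+\eta}{3}$ printed in the lemma; this traces back to a sign ambiguity in the paper's own use of $\|\cdot\|_{\pm\eta}$ (compare the definition of $\mu_\mathsf L$ with its use via Cauchy--Schwarz in Step~2 of the proof of Proposition~\ref{prop:entropy_decay_A}), not to a gap in your argument.
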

We recall that $\mathsf L_1$ is self-adjoint.
\begin{proof} Let us start by estimating $\mu_\mathsf L$. With
\begin{align*}
\M^{-1}\,\mathsf L\big((v\cdot\xi)\,\varphi\,\M\big)&=\nabla_v\cdot\Big(\M\,\nabla_v\,\big((v\cdot\xi)\,\varphi\big)\Big)\\
&=\Delta_v\big((v\cdot\xi)\,\varphi\big)-(d+\gamma)\,\frac v{\bangle v^2}\cdot\nabla_v\big((v\cdot\xi)\,\varphi\big)
\end{align*}
and $\nabla_v\big((v\cdot\xi)\,\varphi\big)=\varphi\,\xi+(v\cdot\xi)\,\nabla_v\varphi$, $\Delta_v\big((v\cdot\xi)\,\varphi\big)=2\,\xi\cdot\nabla_v\varphi+(v\cdot\xi)\,\Delta_v\varphi$, we end up with
\[
\M^{-1}\,\mathsf L\big((v\cdot\xi)\,\varphi\big)=2\,\xi\cdot\nabla_v\varphi+(v\cdot\xi)\(\Delta_v\varphi-\frac{(d+\gamma)}{\bangle v^2}\,\big(\varphi+v\cdot\nabla_v\varphi\big)\)\,.
\]
We recall that $\beta=2$ and
\[
\varphi(\xi,v)=\frac{\bangle v^2}{A(\xi,v)}\quad\mbox{where}\quad A(\xi,v):=1+\bangle v^6 |\xi|^2\,,
\]
so that
\[
\nabla_v\varphi=\(2\,A^{-1}-6\bangle v^6|\xi|^2A^{-2}\)v=2\(1-2\bangle v^6|\xi|^2\)\frac v{A^2}
\]
and
\[
\xi\cdot\nabla_v\varphi=2\(1-2\bangle v^6|\xi|^2\)\frac{v\cdot\xi}{A^2}\,,\quad v\cdot\nabla_v\varphi=2\(1-2\bangle v^6|\xi|^2\)\frac{|v|^2}{A^2}\,.
\]
Using $\bangle v^6|\xi|^2\le A$, we can readily estimate
\[
\left|\xi\cdot\nabla_v\varphi\right|\lesssim|v\cdot\xi| A^{-1},\quad
\left|v\cdot\nabla_v\varphi \right|\lesssim \bangle v^2 A^{-1}\,.
\]
The last part to estimate is
\begin{align*}
\Delta_v\varphi&=2\(1-2\bangle v^6|\xi|^2\)\nabla_v\cdot\(\frac v{A^2}\)+2\,\nabla_v\cdot\(1-2\bangle v^6|\xi|^2\)\frac v{A^2}\\
&=\frac2{A^2}\(1-2\bangle v^6|\xi|^2\)\(d+12\,|v|^2\bangle v^4|\xi|^2\,A^{-1}\)-24\,|v|^2\bangle v^4|\xi|^2\,A^{-2}\,,
\end{align*}
from which we deduce that
\[
\left|\Delta_v\varphi\right|\lesssim A^{-1}\,.
\]
Combining previous estimates, we thus end up with
\begin{align*}
\left|\M^{-1}\,\mathsf L\big((v\cdot\xi)\,\varphi\big)\right|&\lesssim|v\cdot\xi|\,A^{-1}\,.
\end{align*}
This provides us with the estimate
\[
\mu_\mathsf L(\xi)=\nrm{\mathsf L\((v\cdot\xi)\,\varphi(\xi,\cdot)\,\M\)}{\mathrm L^2(\bangle v^{-\eta}\d\mu)}^2\lesssim\rint\frac{|v\cdot\xi|^2}{\big(1+\bangle v^6|\xi|^2\big)^2}\frac{\d v}{\bangle v^{d+\gamma+\eta}}
\]
which allows us to conclude by elementary computations. Similar computations will be detailed in the proof of Lemma~\ref{lem:equiv_symbol_mu}.

Next we have to estimate $\lambda_\mathsf L$. After recalling that $\psi=\bangle v^{-2}$, we observe that
\[
\left|\M^{-1}\,\mathsf L\(\psi\,\M\)\right|=\Big|\Delta_v\psi-(d+\gamma)\,\frac v{\bangle v^2}\cdot\nabla_v\psi\Big|
\]
is a bounded quantity. Since $\bangle v^{-\eta}\M\in\mathrm L^1(\R^d)$, we conclude that $\lambda_\mathsf L$ is bounded.~$\square$\end{proof}

\subsubsection{Scattering collision operators}\label{Sec:Scattering}

\begin{lemma} Assume that~{\rm (H)} holds. With $\mathsf L=\mathsf L_2$, we have
\[
\mu_\mathsf L\lesssim|\xi|^{\min\left\{1,1+\frac{\gamma+\eta-2}{|1+ \beta|}\right\}}\,\mathds1_{|\xi|<1}+|\xi|^{-2}\,\mathds1_{|\xi|\ge1}\quad\mbox{and}\quad\lambda_\mathsf L\lesssim 1\,.
\]
\end{lemma}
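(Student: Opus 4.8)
The proof will closely parallel the Fokker-Planck case of the previous lemma, so the strategy is: (1) compute $\mathsf L_2^*\big((v\cdot\xi)\,\varphi\,\M\big)$ explicitly using the structure of the scattering operator; (2) split the resulting expression into a ``diagonal'' multiplication part and an ``off-diagonal'' integral part, each controlled by Assumptions~\eqref{hyp:b_beta}--\eqref{hyp:b_bounds3}; (3) bound the weighted $\mathrm L^2(\bangle v^\eta\d\mu)$ norm by an explicit integral in $\xi$; and (4) evaluate that integral by the coordinate rescaling $v=(|\xi|)^{-1/|1+\beta|}w$ in the regime $|\xi|<1$, and by a crude bound for $|\xi|\ge 1$.

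\medskip First I would recall that $\mathsf L_2 f=\rint\mathrm b(\cdot,v')\big(f(v')\,\M(\cdot)-f(\cdot)\,\M(v')\big)\,\d v'$, and therefore for a test function $g$,
\[
\mathsf L_2^*(g\,\M)(v)=\M(v)\rint\mathrm b(v',v)\,g(v')\,\M(v')\,\d v'-\nu(v)\,g(v)\,\M(v)\,,
\]
using the definition $\nu(v)=\rint\mathrm b(v,v')\,\M(v')\,\d v'$. Applying this with $g(v)=(v\cdot\xi)\,\varphi(\xi,v)$, and recalling $|(v\cdot\xi)\,\varphi(\xi,v)|\le\tfrac12$ (from the proof of Lemma~\ref{Lem:Equivalence}), the second term is pointwise bounded by $\tfrac12\,\nu(v)\,\M(v)\lesssim\bangle v^{-\beta}\,\M(v)$ since $\nu(v)\sim|v|^{-\beta}$. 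For the first, nonlocal term I would use Assumption~\eqref{hyp:b_bounds2}, i.e. that $\bangle v^{\beta}\rint\mathrm b(v',v)\,\bangle{v'}^k\,\M(v')\,\d v'\le\mathcal C_{\mathrm b}(k)$ for $k\in(0,\gamma+\beta)$; taking $k$ just above $0$ and absorbing the extra decay of $\varphi$ shows the first term is also $\lesssim\bangle v^{-\beta}\,\M(v)$ — but crucially I want to retain the $|\xi|$-dependence coming from $\varphi$, so rather than throwing away $\varphi$ I would keep the bound $|g(v')|\le\tfrac12\min\{1,\varphi(\xi,v')\,|v'|\,|\xi|^{-1}\cdot|\xi|\}$ and, more usefully, estimate $\int\mathrm b(v',v)\,|v'\cdot\xi|\,\varphi(\xi,v')\,\M(v')\,\d v'$ directly, splitting $\{|v'|^{|1+\beta|}\,|\xi|\le1\}$ from its complement as in the standard fractional-diffusion-limit computation. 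Both the diagonal and off-diagonal contributions to $\M^{-1}\mathsf L_2^*\big((v\cdot\xi)\,\varphi\,\M\big)$ are then controlled by (a constant times) $\bangle v^{-\beta}$ multiplied by the scalar $\mathsf m(\xi):=\int_{\R^d}|v'\cdot\xi|\,\varphi(\xi,v')\,\M(v')\,\d v'$, plus the genuinely local piece $|v\cdot\xi|\,\varphi(\xi,v)$.

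\medskip Consequently $\mu_\mathsf L(\xi)=\big\|\mathsf L_2^*\big((v\cdot\xi)\,\varphi\,\M\big)\big\|_\eta^2\lesssim \mathsf m(\xi)^2\,\rint\bangle v^{-2\beta+\eta}\,\M\,\d v+\rint|v\cdot\xi|^2\,\varphi(\xi,v)^2\,\bangle v^{\eta}\,\M\,\d v$. The first integral is finite precisely because $\eta\ge-\beta$ (so $-2\beta+\eta\ge-3\beta\ge\ldots$; more simply, $\eta<\gamma$ gives integrability after checking $2\beta-\eta<\gamma+d$, which holds in the relevant range — this is the integrability point that must be verified carefully). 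For $\mathsf m(\xi)$ and for the second integral I would use the rescaling $v=|\xi|^{-1/|1+\beta|}w$: the integrand of $\mathsf m$ scales like $|\xi|^{(\gamma+\beta)/|1+\beta|}\cdot|\xi|^{-1}$ times a convergent $w$-integral when $\gamma<2+\beta$, giving $\mathsf m(\xi)\lesssim|\xi|^{(\gamma-1+\beta)/|1+\beta|}$ for $|\xi|<1$ (and one checks this matches the exponent $1+\tfrac{\gamma-\eta-2}{|1+\beta|}$ after combining with the second term and taking square roots); similarly the second integral rescales to $|\xi|^{2}\cdot|\xi|^{-(\gamma+\eta-2)/|1+\beta|}$-type powers, and taking the better (smaller near $0$) of the two exponents produces $\min\{1,1+\tfrac{\gamma-\eta-2}{|1+\beta|}\}$. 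For $|\xi|\ge1$ one simply bounds $\varphi(\xi,v)\le\bangle v^{\beta-2|1+\beta|}|\xi|^{-2}$, yielding the $|\xi|^{-2}$ tail. Finally, $\lambda_\mathsf L=\|\mathsf L_2^*(\psi\,\M)\|_\eta^2$: since $\psi=\bangle v^{-2}$ is bounded, $\mathsf L_2^*(\psi\,\M)(v)=\M(v)\big(\int\mathrm b(v',v)\,\psi(v')\,\M(v')\,\d v'-\nu(v)\,\psi(v)\big)$ is bounded pointwise by $C\,\bangle v^{-\beta}\,\M(v)$ via~\eqref{hyp:b_beta} and~\eqref{hyp:b_bounds2}, hence $\lambda_\mathsf L\lesssim\rint\bangle v^{\eta-2\beta}\,\M\,\d v<\infty$, i.e. $\lambda_\mathsf L\lesssim1$.

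\medskip The main obstacle is bookkeeping the $\xi$-exponents in step~(4): one must carefully track how the two competing integrals (the one carrying $\mathsf m(\xi)^2$ and the purely local one) scale under $v=|\xi|^{-1/|1+\beta|}w$, verify that the relevant $w$-integrals converge (this is where both $\eta>-\gamma$ and $\eta<\gamma$, together with $\gamma>\max\{0,-\beta\}$, are used, and where the threshold $\gamma=2+\beta$ versus $\gamma\ne2+\beta$ enters the behaviour near $|\xi|=0$), and reconcile the resulting exponent with the stated $\min\{1,1+\tfrac{\gamma-\eta-2}{|1+\beta|}\}$. The sign of $1+\beta$ also matters: if $\beta<-1$ the rescaling runs the opposite way, but since the relevant combination is $|1+\beta|$ throughout, the same computation applies verbatim; I would just remark on this rather than redo it. Everything else is a direct transcription of the $\mathsf L_1$ argument, so the proof can be kept short by referring back to Lemma~\ref{lem:equiv_symbol_mu} for the detailed integral estimates.
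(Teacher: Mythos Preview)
Your overall strategy mirrors the paper's, but there is a genuine gap in Step~(2), specifically in your treatment of the nonlocal (``off-diagonal'') part of $\mathsf L_2^*\big((v\cdot\xi)\,\varphi\,\M\big)$. You assert that $\int\mathrm b(v',v)\,|v'\cdot\xi|\,\varphi(\xi,v')\,\M(v')\,\d v'$ is controlled by $\bangle v^{-\beta}\,\mathsf m(\xi)$, but Assumption~\eqref{hyp:b_bounds2} only provides bounds of the form $\int\mathrm b(v',v)\,\bangle{v'}^k\,\M(v')\,\d v'\lesssim\bangle v^{-\beta}$ for \emph{fixed} weights $\bangle{v'}^k$. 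The weight $|v'\cdot\xi|\,\varphi(\xi,v')$ is $\xi$-dependent and not of this form; if you majorize it by a constant (using $|v'\cdot\xi|\,\varphi\le\tfrac12$) you lose all $\xi$-dependence, and if you majorize it by $|\xi|\,\bangle{v'}^{1+\beta}$ you need $1+\beta\in(0,\gamma+\beta)$ and still do not recover the correct exponent. A second symptom of the same problem: the integral $\int\bangle v^{-2\beta+\eta}\,\M\,\d v$ that appears in your bound for $\mu_\mathsf L$ need not be finite in the stated parameter range (it requires $\eta<\gamma+2\beta$, which does not follow from $\eta\ge-\beta$).

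The paper resolves this by invoking the assumption you never use, namely~\eqref{hyp:b_bounds3}. A Cauchy--Schwarz inequality with weight $\nu(v')^{-1}$,
\[
\left|\int\mathrm b(v',v)\,(v'\cdot\xi)\,\varphi(v')\,\M'\,\d v'\right|^2\le\int\big|(v'\cdot\xi)\,\varphi(v')\big|^2\,\nu(v')\,\M'\,\d v'\;\int\frac{\mathrm b(v',v)^2}{\nu(v')}\,\M'\,\d v'\,,
\]
cleanly separates a $\xi$-dependent factor (comparable to $\int|(v\cdot\xi)\,\varphi|^2\,\bangle v^{-\beta}\,\M\,\d v$, since $\nu\sim\bangle\cdot^{-\beta}$) from a $v$-dependent factor whose weighted integral is controlled by $\mathcal C_{\mathrm b}$. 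After this step both the local and nonlocal contributions collapse to the single integral $\int|v\cdot\xi|^2\,\big(1+\bangle v^{2|1+\beta|}|\xi|^2\big)^{-2}\,\bangle v^{-(d+\gamma-\eta)}\,\d v$, and your Step~(4) rescaling then goes through exactly as you describe. The $\lambda_\mathsf L$ argument is handled the same way.
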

\begin{proof} To estimate $\mu_\mathsf L$, we write
\begin{align*}
\M^{-1}\,\mathsf L^*\((v\cdot\xi)\,\varphi\,\M\)&=\rint\mathrm b(v',v)\,\big((v'\cdot\xi)\,\varphi(v')-(v\cdot\xi)\,\varphi(v)\big)\,\M(v')\,\d v'\\
&=\rint\mathrm b(v',v)\,(v'\cdot\xi)\,\varphi(v')\,\M(v')\,\d v'-(v\cdot\xi)\,\varphi(v)\,\nu(v).
\end{align*}
The Cauchy-Schwarz inequality yields
\begin{align*}
&\rint\left|\rint\mathrm b(v',v)\,(v'\cdot\xi)\,\varphi(v')\,\M'\,\d v'\right|^2 \bangle v^{-\eta}\M\,\d v\\
&\le\rint\(\rint\big|(v'\cdot\xi)\,\varphi(v')\big|^2\bangle v^{- \beta}\M'\,\d v'\)\(\rint\frac{\mathrm b(v',v)^2}{\nu(v')}\M'\,\d v'\)\bangle v^{-\eta}\M\,\d v\\
&\le\mathcal C_{\mathrm b}\rint\big|\nu(v)\,(v\cdot\xi)\,\varphi(v)\big|^2\bangle v^{-\eta}\M\,\d v\,,
\end{align*}
where, by Assumption~\eqref{hyp:b_bounds2}, $\mathcal C_{\mathrm b}=\riint\frac{\mathrm b(v',v)^2}{\nu(v')\,\nu(v)}\M\M'\,\d v\,\d v'$ is finite. Hence
\[
\rint\big|\nu(v)\,(v\cdot\xi)\,\varphi\big|^2 \bangle v^{-\eta}\M\,\d v\le C\rint\frac{|v\cdot\xi|^2}{\big(1+\bangle v^{2\,|1 + \beta|}\,|\xi|^2\big)^2}\frac{\d v}{\bangle v^{d+\gamma+\eta}}
\]
for some positive constant $C$, which provides us with the result.

The estimate for $\lambda_\mathsf L$ arises from
\begin{align*}
\M^{-1}\,\mathsf L^*\(\psi\,\M\)&=\rint\mathrm b(v',v)\,\big(\psi (v')-\psi(v)\big)\,\M(v')\,\d v'\\
&=\rint\mathrm b(v',v)\,\psi (v')\,\M(v')\,\d v'-\nu(v)\,\psi(v)\,.
\end{align*}
Again, the Cauchy-Schwarz inequality yields
\begin{align*}
\left|\rint\mathrm b(v',v)\,\psi(v')\,\M'\,\d v'\right|^2&\le\rint|\psi(v')|^2\bangle{v'}^{- \beta}\M'\,\d v'\rint\frac{\mathrm b(v',v)^2}{\nu(v')}\M'\,\d v'\\
&\le\mathcal C_{\mathrm b}\left|\nu(v)\,\psi(v)\right|^2\,,
\end{align*}
so that $\left|\M^{-1}\,\mathsf L^*\(\psi\,\M\)\right|\le\big(\mathcal C_{\mathrm b}^{1/2}+1\big)\,\left|\nu(v)\,\psi(v)\right|$. It follows from
\[
\rint|\nu(v)\,\psi(v)|^2 \bangle v^{-\eta}\M\,\d v\le C \rint\frac{\d v}{\bangle v^{d+\gamma+\eta+2\,\beta+4}}
\]
that $\lambda_\mathsf L\lesssim 1$ because $\gamma+\eta + 2\, \beta+4>\gamma + \beta+\eta +\beta>0$.
\Qed\end{proof}

\subsubsection{Fractional Fokker-Planck operators}\label{Sec:FractionalFokker-Planck}

\begin{lemma}\label{lem:estim_lap} For any $\sigma\in(0,2)$, we have
\[
|\Delta^{\sigma/2}\big((v\cdot\xi)\,\varphi\big)|\lesssim|\xi|^{\frac{\alpha }2}\,\mathds1_{|\xi|\le1}+\mathds1_{|\xi|\ge1}\,.
\]
\end{lemma}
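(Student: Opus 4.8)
Write $g_\xi(v):=(v\cdot\xi)\,\varphi(\xi,v)$, so that the claim is a pointwise bound, uniform in $v\in\R^d$, on $\Delta^{\sigma/2}g_\xi$. Two facts are available at the outset. First, $\n{g_\xi}{\mathrm L^\infty(\R^d)}\le\tfrac12$: this is exactly the estimate $|(v\cdot\xi)\,\varphi(\xi,v)|\le\tfrac12$ used in the proof of Lemma~\ref{Lem:Equivalence}. Second, $g_\xi$ is smooth and has a two–scale profile governed by $V:=|\xi|^{-1/|1+\beta|}$ (the degenerate case $\beta=-1$ is immediate, since then $\varphi=\bangle v^{-1}/(1+|\xi|^2)$ factors): in the \emph{inner} zone $\bangle v\lesssim V$ the denominator of $\varphi$ is $\approx1$, so $|g_\xi(v)|\lesssim|\xi|\,\bangle v^{1+\beta}$ and $|D^2g_\xi(v)|\lesssim|\xi|\,\bangle v^{\beta-1}$, while in the \emph{outer} zone $\bangle v\gtrsim V$ one has $|g_\xi(v)|\lesssim|\xi|^{-1}\bangle v^{1+\beta-2|1+\beta|}$ and $|D^2g_\xi(v)|\lesssim|\xi|^{-1}\bangle v^{\beta-2|1+\beta|-1}$; moreover $\bangle v\approx|v|$ only for $|v|\gtrsim1$. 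The only analytic tool is the elementary pointwise estimate for the principal–value representation of $\Delta^{\sigma/2}g_\xi$: subtracting on the ball $B_\ell(v_0)$ the odd first–order term $\mathds1_{|v'-v_0|\le\ell}\,\nabla g_\xi(v_0)\cdot(v'-v_0)$, which has zero principal value and makes the integral absolutely convergent for every $\sigma\in(0,2)$, one gets, for all $\ell>0$,
\[
\big|\Delta^{\sigma/2}g_\xi(v_0)\big|\;\lesssim\;\ell^{\,2-\sigma}\sup_{B_\ell(v_0)}|D^2g_\xi|\;+\;\ell^{-\sigma}\,|g_\xi(v_0)|\;+\;\int_{|v'-v_0|>\ell}\frac{|g_\xi(v')|}{|v'-v_0|^{d+\sigma}}\,\d v'\,.
\]

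For $|\xi|\ge1$ the inner zone lies below the floor $|v|=1$, so $1+\bangle v^{2|1+\beta|}|\xi|^2\gtrsim\bangle v^{|1+\beta|}|\xi|\gtrsim1$ for every $v$; a direct differentiation then gives $\n{D^2g_\xi}{\mathrm L^\infty(\R^d)}\lesssim1$, and the displayed estimate with $\ell=1$, together with $\n{g_\xi}{\infty}\le\tfrac12$, yields $|\Delta^{\sigma/2}g_\xi|\lesssim1$, which is the second term of the claim.

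For $|\xi|\le1$ — the substantive case — I apply the displayed bound at each $v_0$ with $\ell$ adapted to the location: $\ell$ of unit size when $|v_0|\lesssim1$ and $\ell\sim|v_0|$ when $|v_0|\gtrsim1$, so that $\bangle v$ is comparable to $\rho:=\max\{1,|v_0|\}$ on $B_\ell(v_0)$. Plugging in the inner/outer pointwise estimates for $\sup_{B_\ell(v_0)}|D^2g_\xi|$ and $|g_\xi(v_0)|$, and bounding the last integral by a dyadic decomposition in $|v'|$ (using $|g_\xi|\lesssim|\xi|\bangle v^{1+\beta}$ on the inner annuli, $|g_\xi|\lesssim|\xi|^{-1}\bangle v^{1+\beta-2|1+\beta|}$ on the outer annuli, and the crude $|g_\xi|\le\tfrac12$ far away), every piece reduces to a one–dimensional integral $\int r^{a}\,\d r$; using $\beta-\sigma=-\gamma$ and $V^{|1+\beta|}|\xi|=1$, all the exponents collapse to powers $|\xi|^{\alpha}$ or $|\xi|^{\min\{1,\alpha\}}$, at worst times $|\log|\xi||$. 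Summing over the dyadic scales, $|\Delta^{\sigma/2}g_\xi(v_0)|\lesssim|\xi|^{\min\{1,\alpha\}}\,(1+|\log|\xi||)$ uniformly in $v_0$. Since $\alpha\le2$ one has $\min\{1,\alpha\}\ge\alpha/2$, and when $\alpha<2$ the inequality is strict, which absorbs the logarithm (it occurs only at the non-generic value $\gamma=1$, where $\alpha=1$); when $\alpha=2$ one has $\gamma>1$ and, under the standing assumptions of Case~$\mathsf L=\mathsf L_3$, the borderline power integrals do not arise, so no logarithm appears. This gives $|\Delta^{\sigma/2}g_\xi|\lesssim|\xi|^{\alpha/2}$ for $|\xi|\le1$.

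The main obstacle is entirely contained in the third paragraph: the $\mathrm L^\infty$ bound must be established in \emph{every} regime, which forces a case split according to the sign of $1+\beta$ (whether the denominator of $\varphi$ grows or decays in $|v|$), according to whether $\gamma<1$ or $\gamma\ge1$ (this decides whether the inner power integral $\int^{V}r^{-\gamma}\,\d r$ stays bounded or grows like $V^{1-\gamma}$, which is precisely what upgrades $|\xi|$ to $|\xi|^{\alpha}$), and according to whether $\gamma<2+\beta$ or $\gamma\ge2+\beta$ ($\alpha<2$ versus $\alpha=2$); moreover the convergence of the outer tail integrals must be checked, which relies on the constraints on $\gamma$ built into Case~$\mathsf L=\mathsf L_3$ — those ensuring that the friction force $E$ of~\eqref{eq:def_E} exists and behaves like $\bangle v^{-\beta}v$ at infinity. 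No single estimate is hard; the care lies in organising the bookkeeping so that each regime produces an exponent at least $\alpha/2$.
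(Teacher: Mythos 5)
Your proposal follows the same overall architecture as the paper's proof: a near/far decomposition of the principal-value integral with radius $\ell\sim\langle v_0\rangle$, a second-order Taylor bound for the near part (identical to the paper's bound $|I_1|\lesssim|\xi|\,\varphi(v)\,\langle v\rangle^{1-\sigma}$, since $|D^2((v\cdot\xi)\varphi)|\lesssim|\xi|\,\langle v\rangle^{-1}\varphi$), and a separate treatment of the far part split between $|\xi|\ge 1$ and $|\xi|\le 1$. The genuine difference is in the far-field estimate for $|\xi|\le1$, $1+\beta>0$. The paper inserts a H\"older quotient: it shows $|m(w)|\lesssim|\xi|^{\alpha/2}\langle w\rangle^\ell$ with the one well-chosen exponent $\ell=\alpha(1+\beta)/2\in(0,1)$ (note $\ell=\sigma/2<\sigma$, so the tail integral converges), and the bound $|\xi|^{\alpha/2}$ falls out in one line. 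You instead push the two-scale pointwise bounds on $|g_\xi|$ (inner zone $\lesssim|\xi|\,\langle v\rangle^{1+\beta}$, outer zone $\lesssim|\xi|^{-1}\langle v\rangle^{1+\beta-2|1+\beta|}$) through a dyadic decomposition of the tail; with $\beta-\sigma=-\gamma$ and $V^{|1+\beta|}|\xi|=1$ the pieces indeed recombine to $|\xi|^{\min\{1,\alpha\}}$ up to a logarithm, and the observation that $\min\{1,\alpha\}>\alpha/2$ whenever $\alpha<2$ (and that no borderline exponent occurs when $\alpha=2$, since then $\gamma>1$) correctly absorbs the log. What your approach buys is a slightly stronger intermediate bound and transparency about where the exponent $\alpha$ comes from; what it costs is the bookkeeping you yourself flag in the third paragraph, which you outline but do not carry out (there are several sub-cases in $v_0$, the sign of $1+\beta$, and $\gamma\lessgtr1$). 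The paper's H\"older-seminorm device is the one-line shortcut that bypasses all of that when $1+\beta>0$, at the price of producing exactly $|\xi|^{\alpha/2}$ and no more. Your $|\xi|\ge1$ argument ($\|D^2g_\xi\|_\infty\lesssim1$ and take $\ell=1$) is in fact simpler than the paper's three-piece splitting of $I_2$, and is correct since $1+\langle v\rangle^{2|1+\beta|}|\xi|^2\gtrsim\langle v\rangle^{|1+\beta|}|\xi|$ makes $\varphi\lesssim|\xi|^{-2}\langle v\rangle^{\beta-2|1+\beta|}$ hold everywhere once $|\xi|\ge1$.
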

\begin{proof} Let us introduce the notation
\[
\forall\,v\in\R^d,\quad m(v):=(v\cdot\xi)\,\varphi(v)
\]
and estimate the fractional Laplacian by $I_1+I_2$ where
\begin{align*}
&I_1:=\int_{|v-v'|<\bangle v/2}\frac{m(v')-m(v)-(v'-v)\cdot\nabla m(v)}{|v-v'|^{d+\sigma}}\,\d v'\,,\\
&I_2:=\int_{|v-v'|\ge\bangle v/2}\frac{m(v')-m(v)}{|v-v'|^{d+\sigma}}\,\d v'\,.
\end{align*}

\step{Step 1 : a bound of $I_1$.}

We perform a second order Taylor expansion. From
\begin{align*}
\nabla_v\varphi&=\(\beta+(\beta-2\,| 1+\beta |)\bangle v^{2\,| 1+ \beta |}|\xi|^2\)\bangle v^{- \beta-2} \varphi^2\,v\,,
\end{align*}
we deduce that $|\nabla_v\varphi|\lesssim \bangle v^{-1}\varphi$. In order to estimate the Hessian of~$\varphi$, we write
\begin{align*}
\left|\nabla_v^2\varphi(v)\right|&=\left|\nabla_v\(\(\beta+(\beta-2\,|1+ \beta|) \bangle v^{2\,|1+ \beta|}|\xi|^2\)\bangle v^{- \beta-2} \varphi^2\,v\)\right|\\
&\lesssim \bangle v^{-2} \varphi^2+\bangle v^{-1}\left|\nabla_v\varphi \right|\,,
\end{align*}
from which we deduce that $|\nabla^2_v\varphi|\lesssim\bangle v^{-2}\varphi$. It turns out that
\[
\left|\nabla_v^2\big((v\cdot\xi)\,\varphi\big)\right|\lesssim\left|\nabla_v\varphi (v)\right||\xi|+|v\cdot\xi|\left|\nabla^2_v\(\varphi(v)\)\right|\lesssim|\xi|\bangle v^{-1}\varphi
\]
because $\nabla_v\big((v\cdot\xi)\,\varphi(v)\big)=\varphi(v)\,\xi+(v\cdot\xi)\,\nabla_v\varphi(v)$.
Therefore,
\begin{multline*}
\left|I_1\right|\le\int_{|z|\le\bangle v/2}\frac{\nrm{\nabla_v^2 m}{\mathrm L^\infty(B(v,\bangle v/2))}}{|z|^{d+\sigma-2}}\,\d z\\
\le\frac{2^{\sigma-2}\,\omega_d\,|\xi|}{(2-\sigma)\bangle v^{\sigma-2}}\,\nrm{\bangle{\cdot}^{-1}\varphi}{\mathrm L^\infty(B(v,\bangle v/2))}\lesssim\frac{|\xi|\,\varphi(v)}{(2-\sigma)\bangle v^{\sigma-1}}
\end{multline*}
because $\bangle{v'}$ is comparable to $\bangle v$ uniformly on $B(v,\bangle v/2)$.

\step{Step 2: a bound of $I_2$.}

We distinguish two cases, $|\xi|\le1$ and $|\xi|\ge1$.

\medskip\noindent$\bullet$ Assume that $|\xi|\ge1$. We estimate $I_2$ by the three integrals
\begin{multline*}
\int_{\substack{|v-v'|\ge\bangle v/2\\ |v'|<\bangle v}}\frac{\left|m(v')\right|}{|v-v'|^{d+\sigma}}\,\d v'\,,\quad\int_{\substack{|v-v'|\ge\bangle v/2\\ |v'|\ge\bangle v}}\frac{\left|m(v')\right|}{|v-v'|^{d+\sigma}}\,\d v'\\
\mbox{and}\quad\int_{|v-v'|\ge\bangle v/2}\frac{\left|m(v)\right|}{|v-v'|^{d+\sigma}}\,\d v'
\end{multline*}
so that
\begin{align*}
|I_2|&\le\frac{2^{d+\sigma}}{\bangle v^{d+\sigma}}\,\nrm{m}{\mathrm L^1(B_0(\bangle v))}+\frac{2^\sigma\omega_d\,\nrm{m}{\mathrm L^\infty(B_0^c(\bangle v))}}{\sigma\bangle v^\sigma}+\frac{2^\sigma\omega_d\left|m(v)\right|}{\sigma\bangle v^\sigma}\\
&\lesssim \bangle v^{-\sigma}\(\bangle v^{-d}\,\nrm{m}{\mathrm L^1(B_0(\bangle v))}+\nrm{m}{\mathrm L^\infty(B_0^c(\bangle v))}+\left|m(v)\right|\).
\end{align*}
To proceed further, we have to estimate $\bangle v^{-d}\,\nrm{m}{\mathrm L^1(B_0(\bangle v))}$ and $\nrm{m}{\mathrm L^\infty(B_0^c(\bangle v))}$ for any $v\in\R^d$ and this is where $|\xi|\ge1$ will help. Let us observe that
\[
\bangle v^{-d} \nrm m{\mathrm L^1(B_0(\bangle v))}\le\left\{\begin{array}{ll}
\bangle v^{-d} |\xi|^{-1}&\mbox{ if }- \beta+2\,|1+ \beta| -1>d\,,\\[2pt]
\bangle v^{-d}\bangle v^{d+1+ \beta-2\,|1+ \beta|}|\xi|^{-1}&\mbox{ if }- \beta+2\,|1+ \beta|-1<d\,,\\[2pt]
2\,|\xi|\bangle v\varphi(v)&\mbox{ if }- \beta+2\,|1+ \beta|-1<d\,.
\end{array}\right.
\]
For any $v'\in B_0^c(\bangle v)$, we have
\begin{multline*}
|(v'\cdot\xi)\,\varphi(v')|\le\bangle{v'}|\xi|\frac{\bangle{v'}^{\beta}}{1+\bangle{v'}^{2\,|1+ \beta|}|\xi|^2}
\le\frac{\bangle{v'}^{1+ \beta}|\xi|}{1+\bangle{v'}^{2\,|1+ \beta|}|\xi|^2}\\
\le\frac{\bangle v^{1+\beta}|\xi|}{1+\bangle v^{2\,|1+ \beta|}|\xi|^2}=\bangle v\,|\xi|\,\varphi(v)\,,
\end{multline*}
where we have used that $\bangle{v'} \mapsto\frac{\bangle{v'}^{1+\beta}|\xi|}{1+\bangle{v'}^{2\,|1+\beta|}|\xi|^2}$ is decreasing for $\bangle{v'}\ge\bangle v$. Indeed, when $1+\beta\le0$ this is straightforward and when $1+\beta\ge 0$ it results from the fact that $\bangle{v'}^{|1+\beta|}|\xi|\ge1$ because $|\xi|\ge1$. Hence
\[
|I_2|\lesssim\left\{\begin{array}{ll}
\bangle v^{-\sigma}\(\bangle v^{-d}\,|\xi|^{-1}+\bangle v\,|\xi|\,\varphi(v)\)\quad&\mbox{ if }- \beta+2\,|1+ \beta|-1>d\,,\\[2pt]
\bangle v^{-\sigma}\bangle v |\xi|\,\varphi(v)&\mbox{ if }- \beta+2\,|1+ \beta|-1<d\,.
\end{array}\right.
\]

\medskip\noindent$\bullet$ Assume now that $|\xi|<1$. Let us write
\begin{multline*}
\left| I_2 \right|\le\int_{|z|\ge\bangle v/2} \sup_{|v-v'|>\bangle v/2}\(\frac{|m(v)-m(v')|}{|v-v'|^\ell}\)\frac{\,\d z}{|z|^{d+\sigma-\ell}}\\
\le\frac{ 2^{\sigma-\ell}\,\omega_d}{(\sigma-\ell)\bangle v^{\sigma-\ell}}\,\sup_{|v-v'|>\bangle v/2}\(\frac{|m(v)-m(v')|}{|v-v'|^\ell}\)
\end{multline*}
where $\ell$ will be chosen later. The next step is to estimate the $\ell-$H\"older semi-norm of $m$. For $1+\beta >0$ and any $w\in\R^d$, we may write
\begin{multline*}
\vert m(w) \vert\le|\xi|\bangle w\varphi=\frac{\bangle w^{1+\beta}|\xi|}{1+\bangle w^{2\,|1+ \beta|}|\xi|^2}\\\le|\xi|^\frac{\alpha }2 \bangle w^\frac{\alpha (1+\beta)}2\frac{\bangle w^\frac{(1+\beta)(2-\alpha )}2\,|\xi|^\frac{2-\alpha }2}{1+\bangle w^{2\,|1+ \beta|}|\xi|^2}
\lesssim|\xi|^\frac{\alpha }2 \bangle w^\ell\,,
\end{multline*}
with $\ell=\alpha (1+\beta)/2\in(0,1)$. For any $(v,v')$ such that $|v-v'|>\bangle v/2$, we deduce that
\[
|m(v)-m(v')|\lesssim|\xi|^\frac{\alpha }2\(2\bangle v^\ell+|v'-v|^\ell\)\lesssim|\xi|^\frac{\alpha }2\,|v-v'|^\ell\,,
\]
and finally obtain
\[
\left| I_2 \right|\le|\xi|^\frac{\alpha }2\frac{ 2^{\sigma-\ell}\omega_d}{(\sigma-\ell)\bangle v^{\sigma-\ell}}\,.
\]
In the case $1+\beta \leq 0$, the estimate can be performed exactly as for $|\xi|\ge1$ and we do not repeat the argument.
\Qed\end{proof}

\begin{proposition}\label{prop:estcoeffmuFFP} Let $\gamma>|\beta|$. With $\mathsf L=\mathsf L_3$, we have
\[
\mu_\mathsf L\lesssim|\xi|^{\alpha }\,\mathds1_{|\xi|\le1}+\mathds1_{|\xi|\ge1}\,.
\]
\end{proposition}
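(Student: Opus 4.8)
The plan is to reduce $\mu_\mathsf L$ to a pointwise estimate of $\M^{-1}\mathsf L_3^*\big((v\cdot\xi)\,\varphi\,\M\big)$ and then to integrate against $\bangle v^\eta\,\M\,\d v$, with the choice $\eta=-\beta$, which is admissible here since $\gamma>|\beta|$ forces $\eta\in(-\gamma,\gamma)$.

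First, arguing exactly as for the computation of $\M^{-1}\mathsf L_3^*\big(\M\,\bangle{\cdot}^k\big)$ in the proof of Lemma~\ref{lem:Lyapunov}, one obtains
\[
\M^{-1}\,\mathsf L_3^*\big((v\cdot\xi)\,\varphi\,\M\big)=\Delta_v^{\sigma/2}\big((v\cdot\xi)\,\varphi\big)-E\cdot\nabla_v\big((v\cdot\xi)\,\varphi\big),
\]
so that $\mu_\mathsf L(\xi)$ equals the square of the $\mathrm L^2(\bangle v^{-\beta}\,\M\,\d v)$-norm of the right-hand side, and by the triangle inequality it suffices to bound the two terms separately. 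The nonlocal term is handled by Lemma~\ref{lem:estim_lap}: the pointwise bound $\big|\Delta_v^{\sigma/2}\big((v\cdot\xi)\,\varphi\big)\big|\lesssim|\xi|^{\alpha/2}\,\mathds1_{|\xi|\le1}+\mathds1_{|\xi|\ge1}$ yields a contribution $\lesssim\big(|\xi|^{\alpha}\,\mathds1_{|\xi|\le1}+\mathds1_{|\xi|\ge1}\big)\irdv{\bangle v^{-\beta}\,\M}$, and the integral is finite because $-\beta<\gamma$.

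It remains to estimate the drift term. Here I would use the bound $|E(v)|\lesssim\bangle v^{1-\beta}$ coming from Proposition~\ref{prop:estE} and the fact that $E$ behaves like $\bangle v^{-\beta}v$ at infinity, together with the identity $\nabla_v\big((v\cdot\xi)\,\varphi\big)=\varphi\,\xi+(v\cdot\xi)\,\nabla_v\varphi$ and the estimate $|\nabla_v\varphi|\lesssim\bangle v^{-1}\varphi$ established in the proof of Lemma~\ref{lem:estim_lap}. Since $E$ is radial, the combination that actually appears is $v\cdot\nabla_v\big((v\cdot\xi)\,\varphi\big)=(v\cdot\xi)\big(\varphi+|v|\,\varphi'(|v|)\big)$, and one should exploit the partial cancellation in $\varphi+|v|\,\varphi'=\big(|v|\,\varphi\big)'$, which vanishes near the transition scale $\bangle v\sim|\xi|^{-1/|1+\beta|}$. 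Splitting the weighted integral at that scale, as in the treatment of $I_2$ in the proof of Lemma~\ref{lem:estim_lap}, and using $\M\sim\bangle v^{-d-\gamma}$ together with the explicit value of $\alpha$, one recovers the bound $\lesssim|\xi|^{\alpha}$ for $|\xi|\le1$ and $\lesssim1$ for $|\xi|\ge1$.

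The main obstacle is precisely this last step. Because the friction force grows like $\bangle v^{1-\beta}$, the crude bound $\big|E\cdot\nabla_v\big((v\cdot\xi)\,\varphi\big)\big|\lesssim\bangle v^{1-\beta}\,|\xi|\,\varphi$ is too lossy in the regime $\bangle v\gg|\xi|^{-1/|1+\beta|}$, so one has to keep track of the cancellation in $\big(|v|\,\varphi\big)'$ and then verify that, after squaring and multiplying by $\bangle v^{-\beta}\,\M$, all the exponents conspire to produce a convergent integral with the expected power of $|\xi|$; the hypothesis $\gamma>|\beta|$ and the choice $\eta=-\beta$ are what make this bookkeeping work.
\Qed
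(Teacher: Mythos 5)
Your decomposition of $\mu_\mathsf L$ is the same as the paper's: split $\M^{-1}\mathsf L_3^*\big((v\cdot\xi)\varphi\,\M\big)=\Delta_v^{\sigma/2}\big((v\cdot\xi)\varphi\big)-E\cdot\nabla_v\big((v\cdot\xi)\varphi\big)$, invoke Lemma~\ref{lem:estim_lap} for the nonlocal term, and handle the drift term separately. The fractional Laplacian part of your argument is fine. The gap is in your treatment of the drift term.

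You assert that the crude bound $\big|E\cdot\nabla_v\big((v\cdot\xi)\varphi\big)\big|\lesssim\bangle v^{1-\beta}|\xi|\varphi$ is "too lossy" in the regime $\bangle v\gg|\xi|^{-1/|1+\beta|}$ and that one must exploit a cancellation in $\varphi+|v|\varphi'=(|v|\varphi)'$. This premise is false, and it is precisely where your sketch diverges from (and fails to reproduce) the paper's argument. The paper uses the pointwise bound $\big|E\cdot\nabla_v\big((v\cdot\xi)\varphi\big)\big|\lesssim|v\cdot\xi|\,\bangle v^{-\beta}\varphi$, obtained directly from the triangle inequality on $\varphi\,(E\cdot\xi)+(v\cdot\xi)\,(E\cdot\nabla_v\varphi)$ and the facts that $E$ is radial with $|E|\lesssim\bangle v^{1-\beta}$ and $|\nabla_v\varphi|\lesssim\bangle v^{-1}\varphi$. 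In particular $\big|\varphi+|v|\varphi'\big|\lesssim\varphi$ is a pure size estimate, not a cancellation. Your "crude bound" $\bangle v^{1-\beta}|\xi|\varphi$ differs from the paper's $|v\cdot\xi|\bangle v^{-\beta}\varphi$ only in replacing the directional factor $|v\cdot\xi|$ by $\bangle v|\xi|$, and after squaring and integrating against the radial measure $\bangle v^\eta\M\,\d v$ the two bounds give the same power of $|\xi|$ up to a factor $1/d$ (since $\rint|v\cdot\xi|^2 g(|v|)\,\d v=\tfrac{|\xi|^2}d\rint|v|^2g(|v|)\,\d v$ for radial $g$). So the "crude" bound is not lossy, there is no cancellation to chase, and the transition-scale splitting you invoke is unnecessary.

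Because of this misdiagnosis, your sketch stops at a phantom obstacle and never actually computes the resulting integral
\[
\rint\frac{|v\cdot\xi|^2}{\big(1+\bangle v^{2|1+\beta|}|\xi|^2\big)^2}\,\frac{\d v}{\bangle v^{d+\gamma-\eta}}\,,
\]
which is the step that closes the proof. That computation (a change of variables $v=u\,|\xi|^{-1/(1+\beta)}$ when $1+\beta>0$, and a direct estimate otherwise, as in Lemma~\ref{lem:equiv_symbol_mu}) yields $\lesssim|\xi|^{\alpha+\frac{\beta-\eta}{1+\beta}}\,\mathds1_{|\xi|\le1}+\mathds1_{|\xi|\ge1}$, and with the admissible choice of $\eta$ this is dominated by $|\xi|^{\alpha}\,\mathds1_{|\xi|\le1}+\mathds1_{|\xi|\ge1}$. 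You should replace the cancellation discussion with this direct computation.
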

\begin{proof} We recall that $\M^{-1}\,\mathsf L^*(\M\,\cdot)=\Delta_v^{\sigma/2}-E\,\cdot\nabla_v$ and compute
\begin{multline*}
\mu_{\mathsf L}^2=\rint\left| \Delta_v^{\sigma/2}\big((v\cdot\xi)\,\varphi\big)-E\,\cdot\nabla_v\big((v\cdot\xi)\,\varphi\big)\right|^2\frac{\,\d v}{\bangle v^{d+\gamma+\eta}}\\
\le2\rint\left|\Delta_v^{\sigma/2}\big((v\cdot\xi)\,\varphi\big)\right|^2\frac{\,\d v}{\bangle v^{d+\gamma+\eta}}\\
+2\rint\left|E\,\cdot\nabla_v\big((v\cdot\xi)\,\varphi\big)\right|^2\frac{\,\d v}{\bangle v^{d+\gamma+\eta}}\,.
\end{multline*}
We have to estimate the two integrals of the latter right-hand side. The first one follows from Lemma~\ref{lem:estim_lap}. As for the second one, using Proposition~\ref{prop:estE}, we obtain
\begin{align*}
\left|E\cdot\nabla_v\big((v\cdot\xi)\,\varphi\big)\right|&\lesssim\left|E\cdot\xi\,\varphi\right|+\left|(v\cdot\xi)\,E\cdot\nabla_v\varphi\right|\\
&\lesssim|v\cdot\xi|\bangle v^{- \beta}\varphi+|v\cdot\xi|\,\varphi\bangle v^{-2}\bangle v^{- \beta}|v|^2\\
&\lesssim|v\cdot\xi|\bangle v^{- \beta}\varphi\,,
\end{align*}
so that
\begin{multline*}
\nrm{E\cdot\nabla_v\big((v\cdot\xi)\,\varphi\big)}{\mathrm L^2(\bangle v^{-\eta}\M\,\d v)}^2\lesssim\rint\frac{|v\cdot\xi|^2}{(1+\bangle v^{2\,|1+\beta|}|\xi|^2)^2}\frac{\,\d v}{\bangle v^{d+\gamma+\eta}}\\\lesssim|\xi|^{\alpha +\frac{\beta+\eta}{1+\beta}}\,\mathds1_{|\xi|\le1}+\mathds1_{|\xi|\ge1}\,.
\end{multline*}
\Qed\end{proof}

\begin{proposition}\label{prop:estcoefflambdaFFP} Let $\gamma\ge (- \beta)_+$ and consider $\mathsf L=\mathsf L_3$. There exists a constant $C>0$, independent of $\xi$, such that
\[
\lambda_\mathsf L\le C\,.
\]
\end{proposition}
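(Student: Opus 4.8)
The plan is to reduce $\lambda_\mathsf L$ to the explicit formula for $\mathsf L_3^*$ already obtained in the proof of Lemma~\ref{lem:Lyapunov}, and then to bound two elementary power‑law integrals. Recall $\psi=\bangle v^{-2}$ and that $\M^{-1}\,\mathsf L_3^*(\M\,g)=\Delta_v^{\sigma/2}g-E\cdot\nabla_v g$ for smooth $g$; applied to $g=\psi$ this gives
\[
\mathsf L_3^*(\psi\,\M)=\M\,\big(\Delta_v^{\sigma/2}\psi-E\cdot\nabla_v\psi\big)\,,
\]
so that $|\mathsf L_3^*(\psi\M)|^2\,\M^{-1}=\M\,|\Delta_v^{\sigma/2}\psi-E\cdot\nabla_v\psi|^2$ and
\[
\lambda_\mathsf L=\irdv{\M\,\big|\Delta_v^{\sigma/2}\psi-E\cdot\nabla_v\psi\big|^2\,\bangle v^\eta}\le2\,\big(J_1+J_2\big)\,,
\]
where $J_1:=\irdv{\M\,|\Delta_v^{\sigma/2}\psi|^2\,\bangle v^\eta}$ and $J_2:=\irdv{\M\,|E\cdot\nabla_v\psi|^2\,\bangle v^\eta}$. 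Nothing here depends on $\xi$, so any finite bound is automatically uniform in $\xi$; and since $\psi$ has bounded derivatives, the integrands are smooth near the origin, so only the behaviour as $|v|\to+\infty$ is at stake.

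For $J_1$: since $\psi\in C^\infty(\R^d)\cap\mathrm L^\infty(\R^d)$, its fractional Laplacian $\Delta_v^{\sigma/2}\psi$ is bounded on $\R^d$, hence $\M\,|\Delta_v^{\sigma/2}\psi|^2\,\bangle v^\eta\lesssim\bangle v^{\eta-d-\gamma}$, which is integrable on $\R^d$ because $\eta<\gamma$; thus $J_1\lesssim1$. (If decay rather than mere boundedness is wanted, \cite[Lemma~3.1]{biler_blowup_2009} with the admissible exponent $k=-2<\sigma$ gives $|\Delta_v^{\sigma/2}\psi|\lesssim\bangle v^{-2-\sigma}$.)

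For $J_2$: from $\nabla_v\psi=-2\,\bangle v^{-4}\,v$ we get $E\cdot\nabla_v\psi=-2\,\bangle v^{-4}\,(v\cdot E)$, and the bound $|E(v)|\lesssim\bangle v^{1-\beta}$ supplied by Proposition~\ref{prop:estE} — the very estimate already used in the proof of Proposition~\ref{prop:estcoeffmuFFP} and in Case $\mathsf L=\mathsf L_3$ of Lemma~\ref{lem:Lyapunov} — yields $|E\cdot\nabla_v\psi|\lesssim\bangle v^{-2-\beta}$, and hence $\M\,|E\cdot\nabla_v\psi|^2\,\bangle v^\eta\lesssim\bangle v^{\eta-d-\gamma-4-2\beta}$. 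This is integrable on $\R^d$ precisely when $\eta<\gamma+4+2\beta$. I would then check this inequality against the running hypotheses, recalling $\beta=\sigma-\gamma$: when $\gamma\le\beta$ one works with $\eta\in(-\gamma,0)$ and $\gamma+4+2\beta=(\gamma+\beta)+(\beta+4)>0>\eta$; when $\gamma>\beta$ one takes $\eta=-\beta$, so the requirement reduces to an inequality between $\sigma$ and $\gamma$ valid in the admissible range for Case $\mathsf L=\mathsf L_3$. Thus $J_2\lesssim1$ and $\lambda_\mathsf L\le C$ follows.

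The only genuinely delicate point is $J_2$: because $\beta$ may be negative the friction force $E$ grows at infinity, so $E\cdot\nabla_v\psi$ need not decay at all, and the bound survives only thanks to the fast decay $\bangle v^{\eta-d-\gamma}$ of the weight $\bangle v^\eta\,\M$, which is exactly what the exponent inequality $\eta<\gamma+4+2\beta$ encodes. The term $J_1$ is essentially free — $\psi$ is a fixed bounded profile and $\bangle v^\eta\,\M$ is integrable — and there is no near‑origin subtlety since all quantities are smooth there; so the real work is the case distinction ($\gamma\le\beta$ versus $\gamma>\beta$, with the corresponding choice of $\eta$) needed to verify that exponent inequality, which I would carry out as the final step.
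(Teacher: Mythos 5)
Your proof is correct and follows essentially the same route as the paper: express $\mathsf L_3^*(\psi\,\M)=\M\,(\Delta_v^{\sigma/2}\psi-E\cdot\nabla_v\psi)$, split the square into the fractional Laplacian and drift contributions, bound the first via boundedness of $\Delta_v^{\sigma/2}\psi$ together with $\eta<\gamma$, and bound the second via $|E|\lesssim\bangle v^{1-\beta}$ from Proposition~\ref{prop:estE}. As a minor remark, your exponent condition $\eta<\gamma+4+2\beta$ is the correct one, whereas the paper's displayed bound $\rint |v|^2\,\bangle v^{-(d+\gamma-\eta+2\beta+8)}\,\d v$ and the resulting sufficient condition $\gamma-\eta+2\beta+6>0$ carry a small arithmetic slip (a factor $|v|^2$ instead of $|v|^4$), so your version is slightly tighter.
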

\begin{proof} We follow the same steps as in the proof of Proposition~\ref{prop:estcoeffmuFFP}. We have
\begin{align*}
\lambda_\mathsf L^2&=\rint\big(\Delta_v^{\sigma/2}\,\psi-E\,\cdot\nabla_v\psi\big)^2\,\frac{\,\d v}{\bangle v^{d+\gamma+\eta}}\\
&\le2\rint\big|\Delta_v^{\sigma/2}\,\psi\big|^2\,\frac{\,\d v}{\bangle v^{d+\gamma+\eta}}+2\rint\left|E\,\cdot\nabla_v\psi\right|^2\,\frac{\,\d v}{\bangle v^{d+\gamma+\eta}}\,.
\end{align*}
Since $\Delta_v^{\sigma/2}\psi$ is a bounded function, the first integral of the right-hand side is bounded because $\gamma+\eta>0$. For the second integral, we simply observe that
\[
\rint\left|E\,\cdot\nabla_v\psi\right|^2\,\frac{\,\d v}{\bangle v^{d+\gamma+\eta}}\le\rint\frac{|v|^2\,\d v}{\bangle v^{d+\gamma+\eta +2\,\beta+8}}
\]
is bounded because $\gamma+\eta + 2\,\beta+6 = (\gamma+\beta) + (\eta+\beta)+6>0$.
\Qed\end{proof}

\subsection{Quantitative estimates of \texorpdfstring{$\mu_2$ and $\tilde\mu_k$}{mu2 and muk}}\label{Sec::equiv_symbol_mu_and_lambda}

We recall that the coefficient $\mu_2$ and $\tilde\mu_k$ have been defined in~\eqref{Notations:LambdaMu}. The coefficients $\tilde\mu_1$ and $\tilde\mu_2$ are well defined when $- \beta\le\eta<\gamma$ since $-( \beta + 1)+\left| \beta + 1\right|=2\,(- \beta-1)_+$ so that, for any $k\le2$,
\[
\eta+\gamma -2\, \beta+4\,|1+\beta|=\(\eta+ \beta\)+\(\gamma+\beta\)+8\,(- \beta-1)_++4>2\,k\,.
\]
The notation $a\simeq b$ means that there exists a constant $C>0$ such that $a/C\le b\le C\,a$. Our first result investigates the dependence of $\mu_2$ and $\tilde\mu_k$ in $\xi\in\R^d$.
\begin{lemma}\label{lem:equiv_symbol_mu}
For $- \beta\le\eta<\gamma$ with $\gamma>0$, the coefficient $\mu_2$ is bounded from above and below for large values of $|\xi|$ and satisfies
\begin{align*}
\mu_2(\xi)&\underset{\xi\to 0}\simeq|\xi|^{\min\left\{2,2+\frac{\gamma - \beta - 2}{|1+\beta|}\right\}}&\quad\mbox{if}\quad \gamma\neq2+ \beta\,,\\
\mu_2(\xi)&\underset{\xi\to 0}\sim-\,\frac1{d\left|1 +\beta\right|}\,|\xi|^2\,\log|\xi|&\quad\mbox{if}\quad \gamma=2+\beta\,.
\end{align*}
If $\eta+\gamma - 2\,\beta+4\,|1+\beta|>2\,k$, then $\tilde\mu_k(\xi)\underset{|\xi|\to+\infty}\simeq|\xi|^{k-2}$ and
\begin{align*}
\tilde\mu_k(\xi)&\underset{\xi\to 0}\simeq|\xi|^{\min\left\{k,k+\frac{\gamma+\eta -2\,\beta-2k}{2\,|1+\beta|}\right\}}&\quad\mbox{if}\quad\gamma - 2\,\beta+\eta\neq2\,k\,,\\
\tilde\mu_k(\xi)&\underset{\xi\to 0}\simeq-\,|\xi|^k\,\log|\xi|&\quad\mbox{if}\quad\gamma - 2\, \beta+\eta=2\,k\,.
\end{align*}
\end{lemma}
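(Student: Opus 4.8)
The plan is to reduce both estimates to the asymptotic analysis of a single family of integrals of the form
\[
J_{p,q}(\xi):=\irdv{\frac{|v\cdot\xi|^p\,\bangle v^q}{\big(1+\bangle v^{2\,|1+\beta|}\,|\xi|^2\big)^2}}\,,
\]
by first expanding $\varphi$ and its derivatives. Recall that $\mu_2(\xi)=\nrm{|v\cdot\xi|^2\,\varphi(\xi,\cdot)\,\M}{}^2=\irdv{|v\cdot\xi|^4\,\varphi^2\,\M}$ up to the $\bangle v^{-d-\gamma}$ weight, and similarly $\tilde\mu_k(\xi)^2=\irdv{|v\cdot\xi|^{2k}\,\varphi^2\,\M\,\bangle v^\eta}$. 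Writing $\varphi=\bangle v^\beta/(1+\bangle v^{2|1+\beta|}|\xi|^2)$ and $\d\mu$-weight $\M\sim\bangle v^{-d-\gamma}$, one sees $\mu_2$ reduces to $J_{4,\,2\beta-d-\gamma}$ and $\tilde\mu_k$ to $J_{2k,\,2\beta-d-\gamma+\eta}$ (the $\bangle v^\eta$ appears with a $+$ sign in the $\d\mu$ convention, matching the $d+\gamma-\eta$ seen in the Fokker--Planck lemma). So the whole lemma is the statement that $J_{p,q}(\xi)$ has the claimed power-law (or $|\xi|^{\text{power}}\log|\xi|$) behaviour.

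\textit{First} I would handle the regime $|\xi|\to\infty$. Here the denominator is $\simeq\bangle v^{4|1+\beta|}|\xi|^4$ on the bulk $\bangle v\gtrsim|\xi|^{-1/|1+\beta|}$ but $\simeq 1$ on the small-$v$ region; a direct split of the integral (or, cleaner, the substitution $v=|\xi|^{-1/|1+\beta|}w$ when $1+\beta\neq 0$) shows $J_{p,q}(\xi)\simeq|\xi|^{p-4}$ provided the resulting $w$-integral converges both at $0$ and at $\infty$. Convergence at $\infty$ is exactly the hypothesis $\eta+\gamma+2-\beta+4|1+\beta|>2k$ for $\tilde\mu_k$ (and the analogous, automatically-satisfied inequality for $\mu_2$, using $\gamma>0$), while convergence at the origin is free since $p\ge0$. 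This gives $\mu_2\simeq 1$ and $\tilde\mu_k\simeq|\xi|^{k-2}$ for large $|\xi|$, as claimed.

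\textit{Second}, and this is the main work, I would treat $\xi\to0$. Set $\varepsilon:=|\xi|$ and $\mathsf e:=\xi/|\xi|$, and split $\R^d$ into $\{\bangle v\le \varepsilon^{-1/|1+\beta|}\}$, where the denominator is $\simeq1$, and its complement, where it is $\simeq \bangle v^{4|1+\beta|}\varepsilon^4$. On the inner region the integrand is $\simeq|v\cdot\xi|^p\bangle v^q$ and integration in $\bangle v$ up to $\varepsilon^{-1/|1+\beta|}$ produces, after also integrating out the angular variable (which converts $|v\cdot\mathsf e|^p$ into $|v|^p$ times a dimensional constant), a term of size $\varepsilon^p\cdot\varepsilon^{-(p+q+d)/|1+\beta|}$ when $p+q+d>0$ — i.e. $\varepsilon^{\,p+(p+q+d)/|1+\beta|}$ — whereas the outer region contributes $\varepsilon^{p}\cdot\varepsilon^{p-4}\cdot(\text{bulk }\bangle v\text{-integral of }\bangle v^{p+q+d-1-4|1+\beta|})$, which is $\simeq\varepsilon^{2p-4}\cdot\varepsilon^{-(p+q+d-4|1+\beta|)/|1+\beta|}=\varepsilon^{\,p}$ when the exponent of $\bangle v$ is negative. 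Comparing the two, the dominant term is $\varepsilon^{\min\{p,\ p+(p+q+d)/|1+\beta|\}}$ in the non-borderline case, and in the borderline case $p+q+d=0$ (which for $\mu_2$ is $\gamma=2+\beta$ and for $\tilde\mu_k$ is $\gamma-2\beta+\eta=2k$) the inner integral produces a logarithm: $\int^{\varepsilon^{-1/|1+\beta|}}\bangle v^{-1}\,\d\bangle v\sim \frac1{|1+\beta|}\,|\log\varepsilon|$, giving the $-|\xi|^{p}\log|\xi|$ behaviour. For $\mu_2$ one has $p=4$ and $q=2\beta-d-\gamma$, so $p+q+d=4+2\beta-\gamma$; the exponent $\min\{2,2+(\gamma-\beta-2)/|1+\beta|\}$ in the statement comes out by dividing these quantities by $2$ (since $\mu_2=J^{1/2}$... actually $\mu_2$ itself is the integral, so $\mu_2\simeq\varepsilon^{\min\{4,\,4+(4+2\beta-\gamma... )\}}$ — I would bookkeep the factor of two carefully so the final exponents read as stated, and in the borderline case pin down the constant $-1/(d|1+\beta|)$ by evaluating the angular average $\frac1{\omega_d}\int_{\mathbb S^{d-1}}|\omega\cdot\mathsf e|^{2}\,\d\omega=1/d$ together with the $1/|1+\beta|$ from the radial logarithm).

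\textit{The delicate point}, and the one I expect to cost the most care, is the matching of the two regions at the scale $\bangle v\sim|\xi|^{-1/|1+\beta|}$ and the verification that neither tail is artificially dominant — equivalently, that the exponent governing the $\bangle v$-integral changes sign exactly across the critical relation $p+q+d=0$, so that the $\min$ (resp.\ the logarithm) genuinely describes the leading order rather than merely an upper bound. Getting both an upper and a lower bound (the $\simeq$) requires that on each region the integrand be comparable from above and below to the model integrand, which is where keeping $\bangle v$ rather than $|v|$ pays off: $1+\bangle v^{2|1+\beta|}|\xi|^2$ is comparable to $\max\{1,\bangle v^{2|1+\beta|}|\xi|^2\}$ with absolute constants, uniformly in $\xi$, so the two-sided bounds are genuinely uniform. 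The sign condition $1+\beta$ possibly negative is harmless here because everything is written through $|1+\beta|$; only the substitution $v=|\xi|^{-1/|1+\beta|}w$ needs $1+\beta\neq0$, and the case $1+\beta=0$ (i.e.\ $\beta=-1$) degenerates to $\varphi=\bangle v^{-1}/(1+|\xi|^2)$, for which every claimed asymptotic is immediate by direct inspection.
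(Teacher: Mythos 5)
The central idea of your proposal — split $\R^d$ at the scale $\bangle v\sim|\xi|^{-1/|1+\beta|}$ (or equivalently substitute $v=|\xi|^{-1/|1+\beta|}u$), treat the denominator as $\simeq1$ inside and $\simeq\bangle v^{2|1+\beta|}|\xi|^2$ outside, and compare the two contributions — is exactly the one used in the paper. The trouble is with your reduction, not with the scaling method.

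The gap is in the very first line: $\mu_2$ is \emph{not} the square of the norm of $|v\cdot\xi|^2\,\varphi\,\M$. By the definition in~\eqref{Notations:LambdaMu}, $\mu_k=\irdv{|v\cdot\xi|^k\,\varphi\,\M}$ is a linear pairing, so
\[
\mu_2(\xi)=\int_{\R^d}|v\cdot\xi|^2\,\frac{\bangle v^{\beta}}{1+\bangle v^{2|1+\beta|}|\xi|^2}\,\frac{c_\gamma\,\d v}{\bangle v^{d+\gamma}}\,,
\]
with $\varphi$ to the \emph{first} power and $|v\cdot\xi|$ to the \emph{second} power. Only $\tilde\mu_k$, which is a genuine $\|\cdot\|_\eta$-norm, produces the squared denominator and the even power $|v\cdot\xi|^{2k}$; for $\tilde\mu_k^2$ your identification with $J_{2k,\,2\beta-d-\gamma+\eta}$ is right. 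But $\mu_2$ does not belong to your family $J_{p,q}$ at all: it would be a variant with the denominator to the first power and $p=2$, $q=\beta-d-\gamma$. This is not a factor-of-two bookkeeping issue as you hope in your aside — the \emph{threshold} also moves. With your $J_{4,\,2\beta-d-\gamma}$, the $\xi\to 0$ behaviour changes regime at $\gamma=4+2\beta$ (since $p+q+d=4+2\beta-\gamma$), whereas the lemma, and the correct integral, switch at $\gamma=2+\beta$; and when $\gamma>2+\beta$ your formula would predict $\varepsilon^4$ against the correct $\varepsilon^2$. So dividing exponents by $2$ cannot rescue it.

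Two smaller arithmetic slips, secondary to the above but worth fixing: your conclusion ``the dominant term is $\varepsilon^{\min\{p,\ p+(p+q+d)/|1+\beta|\}}$'' has the wrong sign — the exponent from the scaling is $p-(p+q+d)/|1+\beta|$, so the minimum is $\min\{p,\ p-(p+q+d)/|1+\beta|\}$; this is what actually reproduces the stated $\min\{k,\ k+(\gamma+\eta-2\beta-2k)/(2|1+\beta|)\}$ after taking a square root for $\tilde\mu_k$. And the intermediate simplification ``$\varepsilon^{2p-4}\cdot\varepsilon^{-(p+q+d-4|1+\beta|)/|1+\beta|}=\varepsilon^p$'' does not hold in general — both regions actually give $\varepsilon^{p-(p+q+d)/|1+\beta|}$ in the divergent regime, and it is the \emph{inner} region that contributes $\varepsilon^p$ when $p+q+d<0$. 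Once you replace the $\mu_2$ integrand by the correct one with a single denominator power (so $p=2$), and fix the sign in the minimum, your dyadic analysis reproduces the paper's $\min\{2,\ 2+(\gamma-\beta-2)/|1+\beta|\}$ with the logarithm in the borderline case, by exactly the same inner/outer comparison you describe.
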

\begin{proof} We start by considering $\xi\to0$. Let $c:=|1+\beta|\ge 0$. If $ \gamma>2+\beta$, then
\[
\mu_2(\xi)\underset{\xi\to 0}\sim|\xi|^2\rint\frac{c_\gamma\left|v_1\right|^2}{\bangle v^{d+\gamma- \beta}}\,\d v\,.
\]
$\bullet$ If $\gamma<2+\beta$, then $1+ \beta >0$ and $c>0$. With the change of variables $v=u\,|\xi|^{-1/c}$, we observe that
\[
\mu_2(\xi)\underset{\xi\to 0}\sim|\xi|^{2+\frac{\gamma- \beta-2}c}\rint\frac{|u_1|^2 }{ 1+|u|^{2\,c}}\frac{c_\gamma\,\d u}{ |u|^{d+\gamma- \beta}}
\]
using $\bangle{u\,|\xi|^{-1/c}}\underset{\xi\to 0}\sim|u|\,|\xi|^{-1/c}$ for any $u\in\R^d\backslash\lbrace0\rbrace$.
\\
$\bullet$ If $\gamma=2+\beta$ and $\gamma + \beta>0$, then $1+ \beta>0$, $c=1+\beta$ is positive and
\begin{align*}
\mu_2=|\xi|^2\rint\frac{c_\gamma\left|v_1\right|^2}{1+\bangle v^{2\,c}|\xi|^2}\frac{\d v}{\bangle v^{d+2}}\,.
\end{align*}
With the change of variables $v=u\,|\xi|^{-1/c}$, we have that
\begin{align*}
I_0:=\int_{|v|\ge|\xi|^{-\frac1c}}\frac{|v_1|^2}{1+\bangle v^{2\,c}|\xi|^2}\frac{\d v}{\bangle v^{d+2}}\underset{\xi\to 0}\sim\int_{|u|\ge1}\frac{|u_1|^2}{1+|u|^{2\,c}}\frac{\d u}{|u|^{d+2}}
\end{align*}
is finite. Using the invariance under rotation with respect to $\xi\in\R^d$,
\[
d\int_{|v|<|\xi|^{-\frac1c}}\frac{|v_1|^2}{1+\bangle v^{2\,c}|\xi|^2}\frac{\d v}{\bangle v^{d+2}}=\int_{|v|<|\xi|^{-\frac1c}}\frac{|v|^2}{1+\bangle v^{2\,c}|\xi|^2}\frac{\d v}{\bangle v^{d+2}}
\]
can be splitted, using $|v|^2=\bangle v^2 -1$, into
\[
-\int_{|v|<|\xi|^{-\frac1c}}\frac1{1+\bangle v^{2\,c}|\xi|^2}\frac{\d v}{\bangle v^{d+2}}\underset{\xi\to 0}\sim-\rint\frac{\d v}{\bangle v^{d+2}}
\]
and
\[
\int_{|v|<|\xi|^{-\frac1c}}\frac1{1+\bangle v^{2\,c}|\xi|^2}\frac{\d v}{\bangle v^d}=\int_{|v|<|\xi|^{-\frac1c}}\frac{\d v}{\bangle v^d}-\int_{|v|<|\xi|^{-\frac1c}}\frac1{1+\bangle v^{-2\,c}|\xi|^{-2}}\frac{\d v}{\bangle v^d}
\]
using $\frac1{1+X}=1-\frac1{1+1/X}$ with $X=\bangle v^{2\,c}|\xi|^2$.
\[
\int_{|v|<|\xi|^{-\frac1c}}\frac{\d v}{\bangle v^d}\underset{\xi\to 0}\sim-\,\frac{\omega_d}c\,\log|\xi|
\]
and
\[
\int_{|v|<|\xi|^{-\frac1c}}\frac1{1+\bangle v^{-2\,c}|\xi|^{-2}}\frac{\d v}{\bangle v^d}\underset{\xi\to 0}\sim\int_{|u|<1}\frac{|u|^{2c}}{1+|u|^{2c}}\frac{\d u}{|u|^d}
\]
by the change of variables $v=|\xi|^{-1/c}\,u$. After collecting terms, this yields
\[
\mu_2\underset{\xi\to 0}\sim-\,\frac{\omega_d}{c\,d}\,|\xi|^2\,\log|\xi|\,.
\]

On the other hand, when $|\xi|\to+\infty$, we have
\begin{align*}
\mu_2(\xi)=\rint\frac{|v\cdot\xi|^2\bangle v^{- \beta}}{\bangle v^{-2\, \beta}+\bangle v^2\,|\xi|^2}\frac{c_\gamma\,\d v}{\bangle v^{d+\gamma}}\underset{|\xi|\to+\infty}\sim\rint\frac{c_\gamma\left|v_1\right|^2}{\bangle v^{d+\gamma+2+\beta}}\,\d v\,.
\end{align*}
The claim on $\mu_2$ is now completed. All other estimates follow from similar computations and we shall omit further details.
\Qed\end{proof}

The coefficients $\lambda_0$, $\lambda_1$, $\tilde\lambda_0$ and $\tilde\lambda_1$ have also been defined in~\eqref{Notations:LambdaMu}. Our second technical estimate goes as follows.
\begin{lemma}\label{lem:equiv_symbol_lambda} The coefficients $\lambda_0$ and $\lambda_1$ are well defined for any $\gamma>0$. The coefficients $\tilde\lambda_0$ and $\tilde\lambda_1$ are also well defined if $\gamma>0$ and $\eta>-\,\gamma$. \end{lemma}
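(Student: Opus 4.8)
The plan is to expand the definitions collected in~\eqref{Notations:LambdaMu}, substitute the explicit forms $\M(v)=c_\gamma\,\bangle v^{-d-\gamma}$ and $\psi(v)=\bangle v^{-2}$, and reduce each coefficient to an integral of a single power of $\bangle v$. Since $\bangle v\ge1$ there is no difficulty near the origin, and such an integral converges precisely when the exponent is smaller than $-d$; the whole statement is therefore a bookkeeping of exponents.

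First I would handle $\lambda_0$ and $\lambda_1$. Using $|v\cdot\xi|\le|\xi|\,\bangle v$ we obtain
\[
\lambda_0=c_\gamma\irdv{\bangle v^{-d-\gamma-2}}\,,\qquad\lambda_1\le c_\gamma\,|\xi|\irdv{\bangle v^{-d-\gamma-1}}\,,
\]
and both integrals are finite as soon as $\gamma>0$ (in fact $\gamma>-1$ suffices, and $\lambda_0$ does not even depend on $\xi$). No restriction on $\eta$ is involved, consistently with the fact that $\lambda_0$ and $\lambda_1$ carry no $\eta$.

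Next I would treat $\tilde\lambda_0$ and $\tilde\lambda_1$. From $\nrm g\eta^2=\irdmu{|g|^2\,\bangle v^\eta}$ together with $\d\mu=\M^{-1}\,\d v$, one power of $\M$ cancels, so that
\[
\tilde\lambda_k^2=\irdv{|v\cdot\xi|^{2k}\,\psi^2\,\M\,\bangle v^\eta}=c_\gamma\irdv{|v\cdot\xi|^{2k}\,\bangle v^{-d-\gamma-4+\eta}}\,.
\]
Bounding $|v\cdot\xi|^{2k}\le|\xi|^{2k}\,\bangle v^{2k}$ for $k\in\{0,1\}$ majorizes this by $c_\gamma\,|\xi|^{2k}\irdv{\bangle v^{2k-d-\gamma-4+\eta}}$, which is finite exactly when $\eta<\gamma+4-2k$. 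For $k=0$ and $k=1$ this is comfortably implied by the ambient range $-\gamma<\eta<\gamma$ of the parameter, in particular under the hypotheses $\gamma>0$ and $\eta>-\gamma$ of the lemma; hence $\tilde\lambda_0$ and $\tilde\lambda_1$ are finite for every $\xi\in\R^d$.

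The lemma hides no genuine difficulty; the only subtle point is that the crude estimate $|v\cdot\xi|\le|\xi|\,\bangle v$ is wasteful and can be afforded here \emph{only} because $k$ is restricted to $\{0,1\}$. For $k=2$, needed for $\mu_2$ and $\tilde\mu_k$, the margin becomes tight and one must keep track of the anisotropy of $(v\cdot\xi)^2$ through a change of variables of the type $v\mapsto|\xi|^{-1/|1+\beta|}u$; this is precisely what forces the more delicate statement of Lemma~\ref{lem:equiv_symbol_mu}, from which, incidentally, the sharp $\xi\to0$ and $|\xi|\to+\infty$ asymptotics of $\lambda_0$, $\lambda_1$, $\tilde\lambda_0$, $\tilde\lambda_1$ (not needed in the sequel) would also follow by the same elementary computations.
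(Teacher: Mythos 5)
Your computation is the one the authors have in mind when they write ``straightforward and left to the reader'': expand~\eqref{Notations:LambdaMu}, substitute $\M=c_\gamma\bangle v^{-(d+\gamma)}$ and $\psi=\bangle v^{-2}$, and read off the exponent of $\bangle v$. The treatment of $\lambda_0$ and $\lambda_1$ is correct.

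There is, however, a sign that you take at face value and which is precisely what makes ``$\eta>-\gamma$'' the relevant hypothesis. You compute $\tilde\lambda_k^2=c_\gamma\irdv{|v\cdot\xi|^{2k}\bangle v^{-d-\gamma-4+\eta}}$ and deduce the condition $\eta<\gamma+4-2k$. This is an \emph{upper} bound on $\eta$: it is implied by the ambient constraint $\eta<\gamma$, but it is \emph{not} implied by the hypothesis $\eta>-\gamma$ that the lemma actually lists (take $\gamma=1$, $\eta=100$), so as you have written it the lemma's stated hypotheses do not suffice and your ``in particular under the hypotheses of the lemma'' is a non sequitur. The reason is that the exponent in the definition of $\tilde\lambda_k$ in~\eqref{Notations:LambdaMu} is a misprint: in Step~2 of the proof of Proposition~\ref{prop:entropy_decay_A}, the quantity labelled $\tilde\lambda_0$ arises from the Cauchy--Schwarz estimate
\[
\big\langle\psi\,\M,(1-\mathsf\Pi)\Ff\,\big\rangle\le\nrm{\psi\,\M}{-\eta}\,\nrm{(1-\mathsf\Pi)\Ff}\eta\,,
\]
so the weight must be $\bangle v^{-\eta}$, not $\bangle v^{\eta}$. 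With that weight,
\[
\tilde\lambda_k^2=c_\gamma\irdv{|v\cdot\xi|^{2k}\bangle v^{-d-\gamma-4-\eta}}\le c_\gamma\,|\xi|^{2k}\irdv{\bangle v^{2k-d-\gamma-4-\eta}}\,,
\]
which is finite precisely when $\eta>2k-\gamma-4$, \emph{i.e.}\ $\eta>-\gamma-4$ for $k=0$ and $\eta>-\gamma-2$ for $k=1$, both comfortably implied by $\eta>-\gamma$. That is the mechanism that makes the lemma's hypothesis the right one; the bookkeeping you carried out is otherwise exactly what is required.
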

The proof is straightforward and left to the reader.

\subsection{A macroscopic coercivity estimate}\label{Sec:macro}

We recall that $\mathsf R_\xi[\Ff]=-\,\dt\re\,\langle\mathsf A_\xi\Ff,\Ff\rangle$ if $f$ solves~\eqref{eq:main}
where
\[
\mathsf A_\xi=\frac1{\bangle v^2}\,\mathsf\Pi\,\frac{\(-\,i\,v\cdot\xi\)\bangle v^{\beta}}{1+\bangle v^{2\,|1+\beta|}\,|\xi|^2}=\psi\,\mathsf\Pi\mathsf T^*\,\varphi\,\Ff
\]
with $\varphi(\xi,v)=\frac{\bangle v^{ \beta}}{1+\bangle v^{2\,|1+\beta|}\,|\xi|^2}$ and $\psi(v):=\bangle v^{-2}$. In this section, our goal is to establish an estimate of $\mathsf R_\xi[\Ff]$. In this section, we use the notation~\eqref{Notations:LambdaMu} and prove Proposition~\ref{prop:entropy_decay_A}.

\begin{proof}[Proof of Proposition~\ref{prop:entropy_decay_A}] Since $\varphi$ and $\psi $ commute with $\mathsf T$ and $\mathsf\Pi\mathsf T\mathsf\Pi=0$, we get
\begin{align*}
\mathsf A_\xi\,\mathsf\Pi=-\,\psi\,\mathsf\Pi\mathsf T\mathsf\Pi\,\varphi=0,\quad\mathsf A_\xi^*\mathsf T\mathsf\Pi=\varphi\,\mathsf T\mathsf\Pi\mathsf T\mathsf\Pi\,\psi=0\,.
\end{align*}
Moreover, $\mathsf L\mathsf\Pi=0$. With these identities, using the micro-macro decomposition $\Ff=\mathsf\Pi\Ff+(1-\mathsf\Pi)\Ff$, we find that
\[
\mathsf R_\xi[\Ff]:=\I1+\I2+\I3+\I4+\I5+\I6+\I7
\]
where
\[
\I1:=\bangle{\mathsf A_\xi\,\mathsf T \mathsf\Pi\Ff ,\mathsf\Pi\Ff}\,,\!\quad\I2:=\bangle{\mathsf A_\xi\,\mathsf T \mathsf\Pi\Ff ,(1-\mathsf\Pi)\Ff}\,,\!\quad\I3:=\bangle{\mathsf A_\xi\,\mathsf T (1-\mathsf\Pi)\Ff ,\mathsf\Pi\Ff}\,,
\]
\[
\I4:=\bangle{\mathsf A_\xi\,\mathsf T (1-\mathsf\Pi)\Ff ,(1-\mathsf\Pi)\Ff}\,,\quad\I5:=\bangle{\mathsf A_\xi (1-\mathsf\Pi)\Ff, \mathsf T (1-\mathsf\Pi)\Ff}\,,
\]
\[
\I6:=-\bangle{\mathsf A_\xi\,\mathsf L(1-\mathsf\Pi)\Ff,\Ff}\,,\quad\I7:=-\bangle{\mathsf A_\xi (1-\mathsf\Pi)\Ff,\mathsf L(1-\mathsf\Pi)\Ff}\,.
\]

\step{Step 1: macroscopic coercivity.}

Since $\irdv\M=1$ and $\mathsf\Pi\Ff(\xi,v)=\rho_{\Ff}(\xi)\,\M(v)$, we first notice that
\[
|\rho_{\Ff}(\xi)|^2=\rint|\rho_{\Ff}(\xi)\,\M|^2\,\d\mu=\n{\mathsf\Pi\Ff}^2\,.
\]
and
\[
\I1=\bangle{\mathsf A_\xi\,\mathsf T \M,\M}|\rho_{\Ff}|^2=\bangle{\psi\,\mathsf\Pi\,|\mathsf T|^2\,\varphi\,\M,\M}\,\n{\mathsf\Pi\Ff}^2=\lambda_0\,\mu_2\,\n{\mathsf\Pi\Ff}^2
\]
by definition of $\mathsf A_\xi$.

\step{Step 2: micro-macro terms.}

By definition of $\mathsf A_\xi$,
\[
\I2=\big\langle{\psi\,\mathsf\Pi\mathsf T^*\,\varphi\,\mathsf T\mathsf\Pi\Ff,(1-\mathsf\Pi)\Ff}\big\rangle=\bangle{\M,|\mathsf T|^2\,\varphi\,\M }\,\rho_{\Ff}\,\big\langle\psi\,\M,(1-\mathsf\Pi)\Ff\,\big\rangle
\]
can be estimated using $|\rho_{\Ff}|=\n{\mathsf\Pi\Ff}$ and the Cauchy-Schwarz inequality
\[
\big\langle\psi\,\M,(1-\mathsf\Pi)\Ff\,\big\rangle\le\nrm{\psi\,\M}{-\eta}\,\nrm{(1-\mathsf\Pi)\Ff}\eta
\]
by
\[
\left|\I2\right|\le\tilde\lambda_0\,\mu_2\,\n{\mathsf\Pi\Ff}\,\nrm{(1-\mathsf\Pi)\Ff}\eta\,.
\]
By similar estimates, we obtain

\begin{align*}
\left|\I3\right|&\le\lambda_0\,\tilde\mu_2\,\n{\mathsf\Pi\Ff}\,\nrm{(1-\mathsf\Pi)\Ff}\eta\,,\\
\left|\I4\right|&\le\tilde\lambda_0\,\tilde\mu_2\,\nrm{(1-\mathsf\Pi)\Ff}\eta^2\,,\\
\left|\I5\right|&\le\tilde\lambda_1\,\tilde\mu_1\,\nrm{(1-\mathsf\Pi)\Ff}\eta^2\,.
\end{align*}
To get a bound on $\I6$, we use the fact that $\mathsf T^*=-\mathsf T$ to obtain
\begin{align*}
\I6&=\bangle{\psi\,\mathsf\Pi\mathsf T\varphi\,\mathsf L(1-\mathsf\Pi)\Ff,\Ff}=\bangle{\M, \mathsf T\varphi\,\mathsf L(1-\mathsf\Pi)\Ff} \bangle{\M\,\psi,\Ff}\\
&=-\bangle{\mathsf L^*\mathsf T\,\varphi\,\M, (1-\mathsf\Pi)\Ff} \bangle{\M\,\psi,\Ff}\,.
\end{align*}
By the micro-macro decomposition $\Ff=\mathsf\Pi\Ff+(1-\mathsf\Pi)\Ff$, we have
\[
\bangle{\M\,\psi,\Ff}=\lambda_0\,\rho_{\Ff}+\bangle{\M\,\psi,(1-\mathsf\Pi)\Ff}
\]
and the Cauchy-Schwarz inequality gives
\[
\bangle{\mathsf L^*\mathsf T\,\varphi\,\M, (1-\mathsf\Pi)\Ff}\le\mu_\mathsf L\,\nrm{(1-\mathsf\Pi)\Ff}\eta\,.
\]
This yields
\[
\left|\I6\right|\le\lambda_0\,\mu_\mathsf L\,\n{\mathsf\Pi\Ff}\,\nrm{(1-\mathsf\Pi)\Ff}\eta+\tilde\lambda_0\,\mu_\mathsf L\,\nrm{(1-\mathsf\Pi)\Ff}\eta^2\,.
\]
In the same way, we get
\begin{align*}
\left|\I7\right|&\le\lambda_\mathsf L\,\tilde\mu_1\,\nrm{(1-\mathsf\Pi)\Ff}\eta^2\,.
\end{align*}

\step{Step 3: cross terms.}

With $X:=\n{\mathsf\Pi\Ff}$ and $Y:=\nrm{(1-\mathsf\Pi)\Ff}\eta$, we collect all above estimates into
\begin{align*}
\mathsf R_\xi[\Ff]\le-\lambda_0\,\mu_2\,X^2&+\(\tilde\lambda_0\,\mu_2+\lambda_0\,\tilde\mu_2+\lambda_0\,\mu_\mathsf L\)XY\\
&+\(\tilde\lambda_0\,\tilde\mu_2+\tilde\lambda_1\,\tilde\mu_1+\tilde\lambda_0\,\mu_\mathsf L+\lambda_\mathsf L\,\tilde\mu_1\)Y^2\,,
\end{align*}
which by Young's inequality leads to
\begin{multline*}
\mathsf R_\xi[\Ff]\le\(\frac a2\(\tilde\lambda_0\,\mu_2+\lambda_0\,\tilde\mu_2+\lambda_0\,\mu_\mathsf L\)-\lambda_0\,\mu_2\)X^2\\
+\(\tilde\lambda_0\,\tilde\mu_2+\tilde\lambda_1\,\tilde\mu_1+\tilde\lambda_0\,\mu_\mathsf L+\lambda_\mathsf L\,\tilde\mu_1+\frac{\tilde\lambda_0\,\mu_2+\lambda_0\,\tilde\mu_2+\lambda_0\,\mu_\mathsf L}{2\,a}\)Y^2\,.
\end{multline*}
With the choice
\[
a=\frac{\lambda_0\,\mu_2}{\tilde\lambda_0\,\mu_2+\lambda_0\,\tilde\mu_2+\lambda_0\,\mu_\mathsf L}\,,
\]
we get
\[
\mathsf R_\xi[\Ff]\le-\,\frac12\,\lambda_0\,\mu_2\,\n{\mathsf\Pi\Ff}^2+\mathcal K(\xi)\,\nrm{(1-\mathsf\Pi)\Ff}\eta^2\,,
\]
with
\[
\mathcal K(\xi)=\tilde\lambda_0\,\tilde\mu_2+\tilde\lambda_1\,\tilde\mu_1+\tilde\lambda_0\,\mu_\mathsf L+\lambda_\mathsf L\,\tilde\mu_1+\frac{\(\tilde\lambda_0\,\mu_2+\lambda_0\,\tilde\mu_2+\lambda_0\,\mu_\mathsf L\)^2}{2\,\lambda_0\,\mu_2}\,.
\]

\step{Step 4: A uniform bound on $\mathcal K(\xi)$.}

According to Lemma~\ref{lem:equiv_symbol_lambda}, $\lambda_0$ and $\tilde\lambda_0$ are independent of $\xi$ and take finite positive values, so that
\[
\mathcal K(\xi)\lesssim \tilde\mu_2+\tilde\lambda_1\,\tilde\mu_1+\mu_\mathsf L+\lambda_\mathsf L\,\tilde\mu_1+\mu_2+\frac{\tilde\mu_2^2}{\mu_2}+\frac{\mu_\mathsf L^2}{\mu_2}\,.
\]
We also deduce from their definitions in~\eqref{Notations:LambdaMu} that $\mu_2$, $\tilde\mu_2$, $|\xi|\,\tilde\mu_1$ and $\tilde\lambda_1/|\xi|$ have finite, positive limits as $|\xi|\to+\infty$. By Proposition~\ref{prop:entropy_decay_A_Assumption}, $\mu_\mathsf L$ and $\lambda_\mathsf L$ are bounded from above, so that
\[
\forall\,\xi\in\R^d\;\mbox{such that}\;|\xi|\ge1\,,\quad\mathcal K(\xi)\lesssim 1+\mu_\mathsf L+\frac{\lambda_\mathsf L}{|\xi|}+\mu_\mathsf L^2\lesssim 1\,.
\]
It remains to investigate the behaviour of $\mathcal K(\xi)$ as $\xi\to0$ and we shall distinguish two main cases:
\\[4pt]
$\bullet$ if $1+\beta >0$, under the assumption that $\gamma\neq2+\beta$, $\gamma+\eta - 2\beta\neq4$ and $\gamma+\eta-2\beta\,\neq2$, for some positive constants $C_1$, $C_2$, $\widetilde C_1$, $\widetilde C_2$, we have
\[
\mu_2\underset{\xi\to 0}\sim C_2\,|\xi|^{\alpha }\,,\quad\tilde\mu_2\underset{\xi\to 0}\sim\widetilde C_2\,|\xi|^{\min\left\{2,\frac{\gamma+2\beta+\eta}{2\,(1+ \beta)}\right\}}\,,
\]
\[
\tilde\mu_1\underset{\xi\to 0}\sim C_1\,|\xi|^{\min\left\{1,\frac{\gamma+\eta}{2\,(1+\beta)}\right\}}\,,\quad\tilde\lambda_1\underset{\xi\to 0}\sim \widetilde C_1\,|\xi|\,,
\]
where $\alpha =\min\left\{2,\frac{\gamma+\beta}{1+\beta}\right\}$ as in~\eqref{alpha}. Since $\eta\ge- \beta\geq-2\beta-\gamma$ and $\eta + \gamma\ge 0$, this implies by Proposition~\ref{prop:entropy_decay_A_Assumption} that
\[
\forall\,\xi\in\R^d\;\mbox{such that}\;|\xi|\le1\,,\quad\mathcal K(\xi)\lesssim 1+\mu_\mathsf L+\lambda_\mathsf L+\frac{\tilde\mu_2^2}{\mu_2}+\frac{\mu_\mathsf L^2}{\mu_2}\lesssim 1+\frac{\tilde\mu_2^2}{\mu_2}\,.
\]
Then $\frac{\tilde\mu_2^2}{\mu_2}=O\(|\xi|^\varepsilon\)$ is bounded as $\xi\to0$ either if $\gamma<2+\beta$ because
\[
\textstyle\varepsilon=\min\left\{\frac{\eta +\beta}{1+\beta},\frac{4+3\beta-\gamma}{1+\beta}\right\}\,,\quad\eta\ge- \beta\,,\quad4+3\beta-\gamma=2\,(1+\beta)+2+(\beta-\gamma)\ge 0\,,
\]
or if $\gamma>2+\beta$ because
\[
\textstyle\varepsilon=\min\left\{2,\frac{\gamma+\eta-2}{1+\beta}\right\}\quad\mbox{and}\quad\eta\ge- \beta>2-\gamma\,.
\]
$\bullet$ if $1+\beta<0$, under the assumption that $\gamma-2\,\beta+\eta-4\neq 0$ and $\gamma -2\,\beta+\eta-2\neq 0$, for some positive constants $C_1$, $C_2$, $\widetilde C_1$, $\widetilde C_2$, we have
\[
\mu_2\underset{\xi\to 0}\sim C_2\,|\xi|^2\,,\quad\tilde\mu_2\underset{\xi\to 0}\sim\widetilde C_2\,|\xi|^{\min\left\{2,\frac{\gamma+\eta-6\,\beta-8}{2\,|1+\beta|}\right\}}\,,
\]
\[
\tilde\mu_1\underset{\xi\to 0}\sim C_1\,|\xi|^{\min\left\{1,\frac{\gamma+\eta - 4\,\beta-4}{2\,| \beta+1|}\right\}}\,,\quad\tilde\lambda_1\underset{\xi\to 0}\sim \widetilde C_1\,|\xi|\,.
\]
Since $\eta\ge- \beta>1$, we get $\gamma+\eta - 6\,\beta-8>0$ and $\gamma+\eta -4\, \beta-4>0$, so that
\[
\mathcal K(\xi)\lesssim 1+\frac{\tilde\mu_2^2}{\mu_2}\,.
\]
where $\frac{\tilde\mu_2^2}{\mu_2}=O\(|\xi|^\varepsilon\)$ is bounded as $\xi\to0$ because
\[
\textstyle\varepsilon=\min\left\{2,\frac{\gamma+\eta - 4\,\beta-6}{|\beta+1|}\right\}\quad\mbox{and}\quad\gamma+\eta - 4\,\beta-6>0\,.
\]
In the critical cases when a $\log|\xi|$ appears in the expression of $\mu_2$, $\tilde\mu_1$ or $\tilde\mu_2$, we obtain expressions of the form $|\xi|^\varepsilon\,\big|\log|\xi|\big|$ for some $\varepsilon>0$, so that all terms also remain bounded. We conclude that in all cases, $\mathcal K(\xi)$ is bounded from above uniformly with respect to $\xi$. This ends the proof of Proposition~\ref{prop:entropy_decay_A}.
\Qed\end{proof}

\subsection{A fractional Nash inequality and consequences}\label{Sec:fractionalNash}

For any $a>0$, let us define the function $\mathcal L_a$ by
\[
\forall\,s\ge0\,,\quad\mathcal L_a(s):=\frac{s^a}{(1+s^2)^{a/2}}
\]
and the quadratic form
\[
\mathcal Q_a[u]:=\int_{\R^d}\mathcal L_a\big(|\xi|\big)\,|\hat u(\xi)|^2\,\d\xi
\]
where $\hat u$ denotes the Fourier transform of a function $u\in\mathrm L^2(\d x)$ given by
\[
\hat u(\xi)=(2\pi)^{-d/2}\irdx{e^{-\,i\,x\cdot\xi}\,u(x)}\,.
\]
We recall that by Plancherel's formula, $\nrm u{\mathrm L^2(\d x)}^2=\nrm{\hat u}{\mathrm L^2(\d\xi)}^2$.
\begin{lemma}\label{lem:fractionalNash} Let $d\ge1$ and $a\in(0,2]$. There is a monotone increasing function $\Phi_a:\R_+\to\R_+$ with $\Phi_a(s)\sim s^{d/(d+a)}$ as $s\to 0^+$ such that
\[
\forall\,u\in\mathcal D(\R^d)\,,\quad\nrm u{\mathrm L^2(\d x)}^2\le\nrm u{\mathrm L^1(\d x)}^2\,\Phi_a\(\frac{\mathcal Q_a[u]}{\nrm u{\mathrm L^1(\d x)}^2}\).
\]
\end{lemma}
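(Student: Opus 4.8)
The plan is to prove a fractional Nash-type inequality by the classical Fourier splitting argument of Nash, adapted to the multiplier $\mathcal N_a(|\xi|)$ which behaves like $|\xi|^a$ near the origin and like $1$ at infinity. First I would fix $u\in\mathcal D(\R^d)$ and normalise so that $\nrm u{\mathrm L^1(\d x)}=1$; the general case follows by scaling $u\mapsto u/\nrm u{\mathrm L^1}$. By Plancherel, $\nrm u{\mathrm L^2(\d x)}^2=\int_{\R^d}|\hat u(\xi)|^2\,\d\xi$, and I split this integral over $\{|\xi|\le\varrho\}$ and $\{|\xi|>\varrho\}$ for a radius $\varrho>0$ to be optimised. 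On the low-frequency ball, I use the elementary bound $|\hat u(\xi)|\le(2\pi)^{-d/2}\nrm u{\mathrm L^1(\d x)}=(2\pi)^{-d/2}$, so that $\int_{|\xi|\le\varrho}|\hat u|^2\,\d\xi\le(2\pi)^{-d}\,\omega_d\,\varrho^d/d$. On the high-frequency region, I compare with the quadratic form: since $\mathcal N_a$ is monotone increasing, $\mathcal N_a(|\xi|)\ge\mathcal N_a(\varrho)$ for $|\xi|>\varrho$, hence $\int_{|\xi|>\varrho}|\hat u|^2\,\d\xi\le\mathcal N_a(\varrho)^{-1}\,\mathcal Q_a[u]$.

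Combining the two pieces gives, with $Q:=\mathcal Q_a[u]$,
\[
\nrm u{\mathrm L^2(\d x)}^2\le\frac{\omega_d}{(2\pi)^d\,d}\,\varrho^d+\frac{Q}{\mathcal N_a(\varrho)}
\]
for every $\varrho>0$, so that $\nrm u{\mathrm L^2(\d x)}^2\le\Phi_a(Q)$ where
\[
\Phi_a(s):=\inf_{\varrho>0}\(\frac{\omega_d}{(2\pi)^d\,d}\,\varrho^d+\frac s{\mathcal N_a(\varrho)}\).
\]
Undoing the normalisation reintroduces the factor $\nrm u{\mathrm L^1(\d x)}^2$ in front and replaces $s$ by $Q/\nrm u{\mathrm L^1(\d x)}^2$, which is exactly the claimed inequality. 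It remains to verify the two stated properties of $\Phi_a$: monotonicity and the asymptotics $\Phi_a(s)\sim s^{d/(d+a)}$ as $s\to0^+$. Monotonicity is immediate since $\Phi_a$ is an infimum of functions that are each nondecreasing in $s$. For the asymptotics, when $s$ is small the optimal $\varrho$ tends to $0$, so $\mathcal N_a(\varrho)\sim\varrho^a$ and the minimisation reduces to $\inf_{\varrho>0}\big(c_1\varrho^d+s\,\varrho^{-a}\big)$ with $c_1=\omega_d/((2\pi)^d d)$; differentiating gives the optimal $\varrho\sim(s)^{1/(d+a)}$ up to a constant, and plugging back yields $\Phi_a(s)\sim c_2\,s^{d/(d+a)}$ with an explicit constant $c_2=c_2(d,a)$. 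One should also check $\Phi_a(s)\to 0$ as $s\to 0$ and that $\Phi_a$ is finite for all $s\ge 0$, which follows by taking, e.g., $\varrho=1$ in the infimum together with $\mathcal N_a(1)=2^{-a/2}>0$.

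The only mild subtlety, and the step I expect to require the most care, is making the asymptotic equivalence $\Phi_a(s)\sim s^{d/(d+a)}$ precise rather than merely an order-of-magnitude statement: one must control the error between $\mathcal N_a(\varrho)$ and $\varrho^a$ uniformly as $\varrho\to 0$, show that the minimiser genuinely goes to zero (which uses $d>0$ and $a>0$), and check that the crude lower bound $\mathcal N_a(\varrho)\le\varrho^a$ together with an upper bound $\mathcal N_a(\varrho)\ge\varrho^a/(1+\varrho^2)^{a/2}\ge c\,\varrho^a$ on a fixed neighbourhood of $0$ sandwiches $\Phi_a$ between two constant multiples of the model infimum $\inf_\varrho(c_1\varrho^d+s\varrho^{-a})$ whose value is computed exactly. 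This two-sided comparison, combined with the exact computation $\inf_{\varrho>0}(c_1\varrho^d+s\varrho^{-a})=C(d,a)\,s^{d/(d+a)}$, delivers the claimed equivalence. The restriction $a\in(0,2]$ plays no essential role in the proof beyond what is needed for $\mathcal Q_a$ to be the relevant quadratic form elsewhere in the paper; the argument would work for any $a>0$.
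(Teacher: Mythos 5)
Your proposal is correct and follows essentially the same route as the paper: Fourier splitting at a radius $\varrho$, the trivial $\mathrm L^\infty$ bound on $\hat u$ for low frequencies, monotonicity of $\mathcal N_a$ for high frequencies, and then defining $\Phi_a$ by optimising over $\varrho$ and reading off the small-$s$ asymptotics from $\mathcal N_a(\varrho)\sim\varrho^a$ as $\varrho\to0$. The only cosmetic differences are that the paper explicitly names and analyses the minimiser $R(x)$ (obtaining $R^{d+a}(1+R^2)^{1-a/2}=x$) rather than working with the infimum abstractly, and that your observation that the infimum of nondecreasing functions of $s$ is nondecreasing gives monotonicity slightly less directly than via the explicit minimiser (if one wants strict monotonicity, one should evaluate the infimum for $s_2$ at its near-minimiser and compare to $s_1<s_2$, which you could add in one line).
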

\begin{proof} We rely on a simple argument based on Fourier analysis inspired by the proof of Nash's inequality in~\cite[page~935]{Nash58}, which goes as follows. Since $\nrm{\hat u}{\mathrm L^\infty(\d\xi)}\le\nrm u{\mathrm L^1(\d x)}$, we obtain
\begin{align*}
\nrm u{\mathrm L^2(\d x)}^2=\nrm{\hat u}{\mathrm L^2(\d\xi)}^2&\le\int_{|\xi|\le R}|\hat u(\xi)|^2\,\d\xi+\int_{|\xi|>R}|\hat u(\xi)|^2\,\d\xi\\
&\le\frac1d\,\omega_d\,\nrm u{\mathrm L^1(\d x)}^2\,R^d+\frac 1{\mathcal L_a(R)}\,\mathcal Q_a[u]
\end{align*}
for any $R>0$, using the monotonicity of $s\mapsto\mathcal L_a(s)$.

Let us consider the function
\[
f(x,R):=\frac1d\,R^d+\frac xa\,(1+R^{-2})^{a/2}
\]
and notice that, as a function of $R$, $f$ has a unique minimum $R=R(x)$ such that
\[
R^{d+a}\,(1+R^2)^{1-\frac a2}=x
\]
for any $x>0$. With $a\in(0,2]$, it is clear that $x\mapsto R(x)$ is monotone increasing and such that $R(x)\le x^{1/(d+2)}\,\big(1+o(1)\big)$ as $x\to+\infty$ and $R(x)=x^{1/(d+a)}\,\big(1+o(1)\big)$ as $x\to 0^+$. Altogether, for the optimal value $R=R(x)$, we obtain that $\phi(x)=f\big(x,R(x)\big)$ is such that
\begin{align*}
\phi(x)=\(\tfrac1d+\tfrac1a\)x^\frac d{d+a}\,\big(1+o(1)\big)\quad&\mbox{as}\quad s\to0^+\,,\\
\phi(x)=\frac xa\,\big(1+o(1)\big)\quad&\mbox{as}\quad s\to+\infty\,.
\end{align*}
The proof is concluded with $\Phi_a(s)=\omega_d\,\phi\big(\frac{a\,s}{\omega_d}\big)$.
\Qed\end{proof}

Let us consider the Fourier transform with respect to $x$ of a distribution function $f$ depending on $x$ and $v$ and define
\[
\mathsf Q_a[f]:=\irdmu{\mathcal Q_a[f]}\,.
\]
\begin{lemma}\label{lem:fractionalNash2} Let $d\ge1$ and $a\in(0,2]$. With the above notation, we have
\[
\forall\,f\in\mathrm L^1\cap\mathrm L^2(\d x\,\d\mu)\,,\quad\nrm{\mathsf\Pi f}{\mathrm L^2(\d x\,\d\mu)}^2\le\nrm f{\mathrm L^1(\d x\,\d v)}^2\,\Phi_a\(\frac{\mathsf Q_a[\mathsf\Pi f]}{\nrm f{\mathrm L^1(\d x\,\d v)}^2}\),
\]
where the function $\Phi_a$ is defined in Lemma~\ref{lem:fractionalNash}. \end{lemma}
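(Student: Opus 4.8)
The plan is to deduce Lemma~\ref{lem:fractionalNash2} from Lemma~\ref{lem:fractionalNash} by applying the latter to the single function $u=\rho_f$, where $\rho_f=\rho_f(x)=\irdv{f(x,v)}$ is the spatial density, and then recognizing that the various norms of $\rho_f$ that appear are exactly the norms of $\mathsf\Pi f$ in the $\mathrm L^2(\d\mu)$ sense. First I would record the two elementary identities that make this work. Since $\irdv\M=1$ and $\mathsf\Pi f=\rho_f\,\M$, we have
\[
\nrm{\mathsf\Pi f}{\mathrm L^2(\d x\,\d\mu)}^2=\irdx{|\rho_f(x)|^2\irdmu{\M^2}}=\irdx{|\rho_f(x)|^2}=\nrm{\rho_f}{\mathrm L^2(\d x)}^2\,,
\]
because $\irdmu{\M^2}=\irdv\M=1$. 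Similarly, by Plancherel in $x$ and the definition of $\mathcal Q_a$ and $\mathsf Q_a$,
\[
\mathsf Q_a[\mathsf\Pi f]=\irdmu{\mathcal Q_a[\rho_f\,\M]}=\irdmu{\M^2}\int_{\R^d}\mathcal N_a(|\xi|)\,|\widehat{\rho_f}(\xi)|^2\,\d\xi=\mathcal Q_a[\rho_f]\,.
\]
Finally, by Fubini and the triangle inequality, $\nrm{\rho_f}{\mathrm L^1(\d x)}\le\nrm f{\mathrm L^1(\d x\,\d v)}$.

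With these identities in hand, the proof is a one-line substitution. Applying Lemma~\ref{lem:fractionalNash} to $u=\rho_f\in\mathrm L^1\cap\mathrm L^2(\d x)$ (the assumption $f\in\mathrm L^1\cap\mathrm L^2(\d x\,\d\mu)$ guarantees, via the identities above and $\nrm{\rho_f}{\mathrm L^1}\le\nrm f{\mathrm L^1}$, that $\rho_f$ lies in the right space, with the usual density argument to pass from $\mathcal D(\R^d)$ to $\mathrm L^1\cap\mathrm L^2$) gives
\[
\nrm{\rho_f}{\mathrm L^2(\d x)}^2\le\nrm{\rho_f}{\mathrm L^1(\d x)}^2\,\Phi_a\!\(\frac{\mathcal Q_a[\rho_f]}{\nrm{\rho_f}{\mathrm L^1(\d x)}^2}\)\,.
\]
Translating the left-hand side and the numerator through the two displayed identities, and then using that $\Phi_a$ is monotone increasing together with $\nrm{\rho_f}{\mathrm L^1(\d x)}\le\nrm f{\mathrm L^1(\d x\,\d v)}$ — which simultaneously increases the prefactor $\nrm{\rho_f}{\mathrm L^1}^2$ and decreases the argument of $\Phi_a$, hence can only increase the right-hand side — we obtain
\[
\nrm{\mathsf\Pi f}{\mathrm L^2(\d x\,\d\mu)}^2\le\nrm f{\mathrm L^1(\d x\,\d v)}^2\,\Phi_a\!\(\frac{\mathsf Q_a[\mathsf\Pi f]}{\nrm f{\mathrm L^1(\d x\,\d v)}^2}\)\,,
\]
which is the claim.

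There is essentially no obstacle here: the only mild point to watch is the direction of the replacement $\nrm{\rho_f}{\mathrm L^1}\rightsquigarrow\nrm f{\mathrm L^1}$ inside $\Phi_a$, which must be done using monotonicity of $\Phi_a$ in the correct direction (larger $\mathrm L^1$ norm $\Rightarrow$ larger prefactor and smaller argument $\Rightarrow$ larger bound), and the density argument needed to upgrade the test-function class in Lemma~\ref{lem:fractionalNash} from $\mathcal D(\R^d)$ to $\mathrm L^1\cap\mathrm L^2(\d x)$, which is standard since both sides are continuous for the relevant topology and $\Phi_a$ is continuous. I would state these two remarks and omit the routine details.
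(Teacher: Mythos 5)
Your overall plan matches the paper's: reduce to the one-variable case via $\mathsf\Pi f=\rho_f\,\M$, using $\irdmu{\M^2}=\irdv\M=1$ and $\nrm{\rho_f}{\mathrm L^1(\d x)}\le\nrm f{\mathrm L^1(\d x\,\d v)}$. The only structural difference is that you apply Lemma~\ref{lem:fractionalNash} as a black box and then try to replace $\nrm{\rho_f}{\mathrm L^1}$ by $\nrm f{\mathrm L^1}$ after the fact, whereas the paper re-runs the small-$|\xi|$\,/\,large-$|\xi|$ split for $\widehat{\mathsf\Pi f}$ and makes that replacement inside the small-$|\xi|$ term, \emph{before} optimizing over $R$.

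That difference is where your write-up has a genuine gap. You justify the replacement by saying that a larger $\mathrm L^1$ norm ``simultaneously increases the prefactor and decreases the argument of $\Phi_a$, hence can only increase the right-hand side.'' But those two effects \emph{oppose} each other: with $m=\nrm{\rho_f}{\mathrm L^1}^2$ and $c=\mathsf Q_a[\mathsf\Pi f]$ fixed, increasing $m$ makes the prefactor $m$ larger \emph{and} makes $\Phi_a(c/m)$ smaller (since $\Phi_a$ is increasing), so monotonicity of $\Phi_a$ alone tells you nothing about the sign of the net change in $m\,\Phi_a(c/m)$. The conclusion you want is nevertheless true, but for a different reason: from the proof of Lemma~\ref{lem:fractionalNash},
\[
m\,\Phi_a\!\left(\frac cm\right)=\min_{R>0}\left(\frac{\omega_d}d\,m\,R^d+\frac c{\mathcal N_a(R)}\right),
\]
which, being a pointwise minimum over $R$ of functions that are nondecreasing in $m$, is itself nondecreasing in $m$. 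Equivalently, $\Phi_a$ is concave with $\Phi_a(0)=0$, so $s\mapsto\Phi_a(s)/s$ is nonincreasing and hence $m\mapsto c\cdot\Phi_a(c/m)/(c/m)$ is nondecreasing. Either of these observations closes the gap; without one of them the replacement step is not justified. The paper's route avoids the issue entirely by substituting $\nrm f{\mathrm L^1}$ for $\nrm{\rho_f}{\mathrm L^1}$ in the Fourier-split bound before minimizing over $R$, so no monotonicity of $m\mapsto m\,\Phi_a(c/m)$ is ever needed.
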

\begin{proof} We apply the strategy of Lemma~\ref{lem:fractionalNash} to $\mathsf\Pi f=\rho_f\,\M$ and bound
\[
\nrm{\rho_f}{\mathrm L^2(\d x)}=\nrm{\mathsf\Pi f}{\mathrm L^2(\d x\,\d\mu)}^2=\nrm{\widehat{\mathsf\Pi f}}{\mathrm L^2(\d\xi\,d\mu)}^2
\]
by
\begin{equation*}
\iint_{|\xi|\le R}|\widehat{\mathsf\Pi f}(\xi,v)|^2\,\d\xi\,\d\mu=\frac{\omega_d}d\,R^d\,\nrm {\rho_f}{\mathrm L^1(\d x)}^2\irdmu{\M^2} =\frac{\omega_d}d\,R^d\,\nrm f{\mathrm L^1(\d x\,\d v)}
\end{equation*}
and
\[
\iint_{|\xi|>R}|\widehat{\mathsf\Pi f}(\xi,v)|^2\,\d\xi\,\d\mu\le\frac{\mathcal Q_a[\rho_f]}{\mathcal L_a(R)}\irdmu{\M^2}=\frac{\mathsf Q_a[\mathsf\Pi f]}{\mathcal L_a(R)}\,.
\]
{}From this point, the computations are exactly the same as in the proof of Lem\-ma~\ref{lem:fractionalNash}.\Qed\end{proof}

\subsection{A limit case of the fractional Nash inequality}\label{Sec:fractionalNashLimit}

In the case when $\gamma=2+\beta$, we recall that
\[
\mathsf R_\xi[\Ff]\gtrsim\Lambda(\xi)\,\n{\mathsf\Pi\Ff}^2-\mathcal C\,\nrm{(1-\mathsf\Pi)\Ff}{- \beta}^2
\]
by Proposition~\ref{prop:entropy_decay_A}, where $\Lambda(\xi)=h(|\xi|)$ and $h(r)=r^2\,|\log r|/(1+r^2\,\log r)$. The function $h:[0,1/\sqrt e)\to\R$ is monotone increasing. Define
\[
\Phi(x):=\frac 1d\,x^{1+\frac2d} \left|\log x\right|\,.
\]
The proof of Corollary~\ref{Cor:fractionalNash2} relies on the following result.
\begin{lemma}\label{lem:fractionalNash3} Let $d\ge1$ and assume that $\gamma=2+\beta$. With the above notation, there exists a positive constant $\mathsf A$ such that, if
\[
\frac{\nrm{\mathsf\Pi f}{\mathrm L^2(\d x\,\d\mu)}^2}{\nrm f{\mathrm L^1(\d x\,\d v)}^2}\le\mathsf A\,,
\]
then
\[
\nrm{\Lambda^\frac12\,\mathsf\Pi\Ff}{\mathrm L^2(\d x\,\d\mu)}^2\ge\tfrac{\omega_d}{2\,d}\,\nrm f{\mathrm L^1(\d x\,\d\mu)}^2\,\Phi\(\frac{\nrm{\mathsf\Pi f}{\mathrm L^2(\d x\,\d\mu)}^2}{\nrm f{\mathrm L^1(\d x\,\d v)}^2}\).
\]
\end{lemma}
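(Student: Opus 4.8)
The plan is to reproduce the Fourier‑truncation argument of Lemma~\ref{lem:fractionalNash}, now with the logarithmically corrected weight $\mathcal L(\xi)=h(|\xi|)$, and to carry the extra factor $\big|\log|\xi|\big|$ through the optimisation; this is precisely what turns the power law $\Phi_a$ of Lemma~\ref{lem:fractionalNash} into $\Phi(x)=\tfrac1d\,x^{1+2/d}\,|\log x|$.

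I would begin from the reductions already used in the proof of Lemma~\ref{lem:fractionalNash2}. Since $\mathsf\Pi f=\rho_f\,\M$ and $\irdmu{\M^2}=1$, Plancherel's identity gives
\[
\nrm{\mathsf\Pi f}{\mathrm L^2(\d x\,\d\mu)}^2=\nrm{\widehat{\rho_f}}{\mathrm L^2(\d\xi)}^2\,,\qquad\nrm{\Lambda^{1/2}\,\mathsf\Pi\Ff}{\mathrm L^2(\d x\,\d\mu)}^2=\int_{\R^d}h\big(|\xi|\big)\,|\widehat{\rho_f}(\xi)|^2\,\d\xi=:\mathsf I\,,
\]
while $\nrm{\widehat{\rho_f}}{\mathrm L^\infty(\d\xi)}\le\nrm{\rho_f}{\mathrm L^1(\d x)}\le\nrm f{\mathrm L^1(\d x\,\d v)}$. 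Writing $X:=\nrm{\mathsf\Pi f}{\mathrm L^2(\d x\,\d\mu)}^2$ and $L:=\nrm f{\mathrm L^1(\d x\,\d v)}^2$, the claim is that $\mathsf I\ge\tfrac{\omega_d}{2d}\,L\,\Phi(X/L)$ whenever $X/L\le\mathsf A$.

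Next, for $R\in(0,1/\sqrt e)$, I would split the frequency integral at radius $R$: on $\{|\xi|\le R\}$ one bounds $|\widehat{\rho_f}|^2$ by its supremum, picking up $\tfrac{\omega_d}{d}R^d\,L$, and on $\{|\xi|>R\}$ one uses the monotonicity of $h$ recorded in the statement to replace $h(|\xi|)$ by $h(R)$ from below, so that $X\le\tfrac{\omega_d}{d}R^d\,L+h(R)^{-1}\mathsf I$, i.e.
\[
\mathsf I\ \ge\ h(R)\Big(X-\frac{\omega_d}{d}\,R^d\,L\Big)\,.
\]
I would then optimise over $R\in(0,1/\sqrt e)$, exactly as in the proof of Lemma~\ref{lem:fractionalNash}. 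The hypothesis $X/L\le\mathsf A$, with $\mathsf A$ fixed small enough, ensures both that the maximising radius $R\simeq(X/L)^{1/d}$ lies inside $[0,1/\sqrt e)$ — legitimising the monotonicity step — and that $R<1$, whence the elementary bound $h(R)=\dfrac{R^2\,|\log R|}{1+R^2\log R}\ge R^2\,|\log R|$ together with $R^2\simeq(X/L)^{2/d}$ and $|\log R|\simeq\tfrac1d\,|\log(X/L)|$. This yields $\mathsf I\gtrsim X\,(X/L)^{2/d}\,|\log(X/L)|\simeq L\,\Phi(X/L)$, and a careful bookkeeping of the constants — in the spirit of the elementary minimisation through which $\Phi_a$ is built in Lemma~\ref{lem:fractionalNash} — delivers the precise factor $\tfrac{\omega_d}{2d}$.

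The genuinely delicate point is exactly this bookkeeping of the logarithm, together with the fact that the closed‑form expression for $h$ is monotone only on $[0,1/\sqrt e)$: the interpolating weight $\mathcal L$ dips to zero near the sphere $|\xi|=1$, so the tail estimate above only controls the low‑frequency mass, and one still has to argue that any mass carried by $\{|\xi|\ge1/\sqrt e\}$ contributes to $\mathsf I$ at least a comparable amount. This is done using that $\mathcal L$ is bounded below by a positive constant outside any fixed neighbourhood of $|\xi|=1$, while such a neighbourhood has small Lebesgue measure and $|\widehat{\rho_f}|^2\le L$ there, so that its contribution to $X$ is itself small — optimising this width against $X/L\le\mathsf A$ as well. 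Reconciling the two frequency ranges so as to land exactly on $\Phi$, rather than anything conceptual, is where I expect the real work to be; in structure the proof is just the Nash truncation of Lemma~\ref{lem:fractionalNash} with a logarithmic weight.
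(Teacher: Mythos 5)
Your truncation at radius $R$, the use of $\nrm{\widehat{\rho_f}}{\mathrm L^\infty}\le\nrm{\rho_f}{\mathrm L^1}$ on low frequencies and of the weight on the tail, and the subsequent optimisation in $R$ is exactly the paper's argument, and your bookkeeping of the logarithmic factor ($R\simeq(X/L)^{1/d}$, $h(R)\simeq(X/L)^{2/d}\tfrac1d|\log(X/L)|$, hence $\mathsf I\gtrsim L\Phi(X/L)$) matches the paper's. You also correctly spot a point the paper's proof glides over: with $\mathcal L(\xi)=|\xi|^2|\log|\xi||/(1+|\xi|^2\log|\xi|)$ one has $\mathcal L=0$ on the unit sphere, so $h$ is \emph{not} monotone on $[R,+\infty)$ and the inequality
\[
\int_{|\xi|\ge R}|\widehat{\rho_f}|^2\,\d\xi\le\frac1{h(R)}\int_{|\xi|\ge R}\Lambda\,|\widehat{\rho_f}|^2\,\d\xi
\]
is not valid as written (the paper only records monotonicity of $h$ on $[0,1/\sqrt e)$, which does not cover the integration range).

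Your proposed repair, however, does not close the gap when $d\ge2$. Carrying out your plan quantitatively: peel off $A_\epsilon=\{|\,|\xi|-1\,|<\epsilon\}$ and use $|\widehat{\rho_f}|^2\le L$ there, giving a contribution $\lesssim\epsilon L$ to $X$; on $\{|\xi|\ge R\}\setminus A_\epsilon$ one has $\Lambda\ge\min\{h(R),c\,\epsilon\}$. Forcing both the ball and the annulus to carry at most $X/4$ of the mass gives $\epsilon\simeq X/L$ and $R\simeq(X/L)^{1/d}$. But then $h(R)\simeq(X/L)^{2/d}|\log(X/L)|$, which \emph{dominates} $\epsilon\simeq X/L$ as soon as $d\ge2$, so $\min\{h(R),c\,\epsilon\}\simeq X/L$ and the argument only delivers $\mathsf I\gtrsim L\,(X/L)^2$, weaker than $L\,\Phi(X/L)\simeq L\,(X/L)^{1+2/d}|\log(X/L)|$ by a power of $X/L$ and a logarithm. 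In the language of sublevel sets, for $d\ge2$ the set $\{\mathcal L<t\}$ has measure $\simeq t$ because of the annulus around $|\xi|=1$, not $\simeq R(t)^d$ with $R(t)^2|\log R(t)|\simeq t$; this is precisely why the fractional Nash optimisation returns the wrong exponent.

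The actual resolution is upstream: the closed-form $\mathcal L$ in the critical case is a defective placeholder. The quantity it stands for is the macroscopic coercivity weight $\lambda_0\,\mu_2(\xi)$ furnished by Proposition~\ref{prop:entropy_decay_A} and Lemma~\ref{lem:equiv_symbol_mu}, which is strictly positive for every $\xi\neq0$ and bounded away from zero on $\{|\xi|\ge\varepsilon\}$ for any $\varepsilon>0$; it only degenerates like $|\xi|^2|\log|\xi||$ at the origin. Replacing $\mathcal L$ by any nondecreasing function with those asymptotics --- for instance $\mathcal L(\xi)=|\xi|^2\,(1+|\log|\xi||)/\big(1+|\xi|^2\,(1+|\log|\xi||)\big)$, which equals $1/2$ at $|\xi|=1$ and is monotone on $\R_+$ --- makes $h$ genuinely increasing, and then both the paper's one-parameter Nash truncation and your own plan go through without any annulus bookkeeping.
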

\begin{proof}
As in the case $\gamma\neq2+\beta$, we use
\begin{align*}
\nrm{\mathsf\Pi f}{\mathrm L^2(\d\xi\,\d\mu)}^2&=\nrm{\rho_f}{\mathrm L^2(\d x)}^2=\int_{|\xi|<R}|\widehat{\rho_f}|^2\,\d\xi+\int_{|\xi|\ge R}|\widehat{\rho_f}|^2\,\d\xi\\
&\le\tfrac{\omega_d}d\,R^d\,\nrm{\widehat{\rho_f}}{\mathrm L^\infty(\d\xi)}^2+\frac1{h\(R\)}\int_{|\xi|\ge R} \Lambda\,|\widehat{\rho_f}|^2\,\d\xi\\
&\le\tfrac{\omega_d}d\,R^d\,\nrm f{\mathrm L^1(\d x\,\d\mu)}^2+\frac1{h\(R\)}\,\nrm{\Lambda^\frac12\,\mathsf\Pi\Ff}{\mathrm L^2(\d x\,\d\mu)}^2
\end{align*}
for some $R>0$, small enough. The last inequality can be written as
\[
X\le R^d\,a+\frac b{h(R)}
\]
with $a=\tfrac{\omega_d}d\,\nrm f{\mathrm L^1(\d x\,\d\mu)}^2$, $b=\nrm{\Lambda^\frac12\,\mathsf\Pi\Ff}{\mathrm L^2(\d x\,\d\mu)}^2$ and $X=\nrm{\mathsf\Pi f}{\mathrm L^2(\d\xi\,\d\mu)}^2$. There is a unique $R>0$, small, such that $R^{d+2}\,|\log R|\sim R^d\,h(R)=b/a$ if $b/a$ is small enough, from which we deduce that $X\le2\,a\,R^d$, \emph{i.e},
\[
\frac ba\gtrsim\Phi\(\frac X{2\,a}\)\quad\mbox{where}\quad\Phi(x):=\frac 1d\,x^{1+\frac2d} \left|\log x\right|\,.
\]
The conclusion holds for some $\mathsf A<R^d\,h(R)$ with $R=1/\sqrt e$, whose detailed expression is inessential.\Qed\end{proof}

\subsection{An extension of \texorpdfstring{Corollary~\ref{Cor:fractionalNash2} when $\gamma\le \beta$}{beta+gamma le0}}\label{Sec:Extbeta+gamma<0}

We do not have a good control of $\vertiii{(1-\mathsf\Pi)f}_{- \beta}^2$ when $ \beta\ge\,\gamma$, but we claim that the issue can be solved if we consider $\nrm{(1-\mathsf\Pi)f}{\mathrm L^2(\d x\,\bangle v^\eta\d\mu)}^2$.
\begin{corollary}\label{Cor:fractionalNash2ext} Let $\gamma\le \beta$ and $\eta\in(-\,\gamma,0)$. Under Assumption~{\rm (H)}, if $f$ is a solution of~\eqref{eq:main}, then for any $t\ge0$,
\[
\rint\mathsf R_\xi[\Ff]\,\d\xi\;\gtrsim\;\nrm{\mathsf\Pi f}{\mathrm L^2(\d x\,\d\mu)}^{2\,(1+\frac\alpha d)}-\nrm{(1-\mathsf\Pi)f}{\mathrm L^2(\d x\,\bangle v^\eta\d\mu)}^2\,.
\]\end{corollary}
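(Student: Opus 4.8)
The plan is to follow the proof of Corollary~\ref{Cor:fractionalNash2} almost verbatim, changing only the moment exponent fed to Proposition~\ref{prop:entropy_decay_A}. First I would record that the standing hypothesis $\gamma\le\beta$ forces $\beta>0$, hence $\gamma<2+\beta$, so we are in the non-critical regime: here $\mathcal L(\xi)=|\xi|^\alpha/\bangle\xi^\alpha=\mathcal N_\alpha(|\xi|)$ with $\alpha=\tfrac{\gamma+\beta}{1+\beta}\in(0,2)$. Moreover, since $-\gamma\ge-\beta$, every $\eta\in(-\gamma,0)$ automatically satisfies both $\eta\in(-\gamma,\gamma)$ and $\eta\ge-\beta$, so Proposition~\ref{prop:entropy_decay_A} applies with this $\eta$ and furnishes a bounded function $\xi\mapsto\mathcal K(\xi)$ such that $\mathsf R_\xi[\Ff]\gtrsim\mathcal L(\xi)\,\n{\mathsf\Pi\Ff}^2-\mathcal K(\xi)\,\nrm{(1-\mathsf\Pi)\Ff}\eta^2$.

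Next I would integrate this pointwise (in $\xi$) inequality over $\R^d$. The error term is handled by the boundedness of $\mathcal K$ together with Plancherel's identity in the $x$-variable: since $\mathsf\Pi$ acts only on $v$ one has $\widehat{(1-\mathsf\Pi)f}=(1-\mathsf\Pi)\Ff$, so $\rint\mathcal K(\xi)\,\nrm{(1-\mathsf\Pi)\Ff}\eta^2\,\d\xi\lesssim\rint\nrm{(1-\mathsf\Pi)\Ff}\eta^2\,\d\xi=\nrm{(1-\mathsf\Pi)f}{\mathrm L^2(\d x\,\bangle v^\eta\d\mu)}^2$. For the leading term, $\mathsf\Pi\Ff=\widehat{\rho_f}\,\M$ together with $\irdmu{\M^2}=\irdv\M=1$ gives $\n{\mathsf\Pi\Ff}^2=|\widehat{\rho_f}(\xi)|^2$, whence $\rint\mathcal L(\xi)\,\n{\mathsf\Pi\Ff}^2\,\d\xi=\int_{\R^d}\mathcal N_\alpha(|\xi|)\,|\widehat{\rho_f}(\xi)|^2\,\d\xi=\mathcal Q_\alpha[\rho_f]=\mathsf Q_\alpha[\mathsf\Pi f]$.

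It then remains to turn $\mathsf Q_\alpha[\mathsf\Pi f]$ into a power of $\nrm{\mathsf\Pi f}{\mathrm L^2(\d x\,\d\mu)}$. I would invoke Lemma~\ref{lem:fractionalNash2} with $a=\alpha\in(0,2]$ and invert the monotone increasing function $\Phi_\alpha$: since $\Phi_\alpha(s)\sim s^{d/(d+\alpha)}$ as $s\to0^+$, its inverse obeys $\Phi_\alpha^{-1}(s)\gtrsim s^{1+\alpha/d}$ on bounded intervals, so $\mathsf Q_\alpha[\mathsf\Pi f]\gtrsim\nrm{\mathsf\Pi f}{\mathrm L^2(\d x\,\d\mu)}^{2(1+\alpha/d)}$, the implicit constant depending only on $\nrm{f^{\mathrm{in}}}{\mathrm L^1(\d x\,\d v)}$ and $\nrm{f^{\mathrm{in}}}{\mathrm L^2(\d x\,\d\mu)}$; indeed $\nrm f{\mathrm L^1(\d x\,\d v)}$ stays bounded by Lemma~\ref{lem:propag_L1_m} and $\nrm f{\mathrm L^2(\d x\,\d\mu)}$ is nonincreasing along the flow (as $-\irdxmu{f\,\mathsf L f}\ge0$, see Section~\ref{sec:WPII}), so the quotient $\nrm{\mathsf\Pi f}{\mathrm L^2(\d x\,\d\mu)}^2/\nrm f{\mathrm L^1(\d x\,\d v)}^2$ remains in a fixed bounded interval. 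Combining the three estimates proves the corollary. The only point requiring a little care is this last inversion — one must check that the relevant regime is the near-zero asymptotics of $\Phi_\alpha$, the large-$\mathsf Q_\alpha$ case only making the inequality easier; beyond that, as anticipated in Section~\ref{Sec:Extbeta+gamma<0}, there is no genuine obstacle, the argument being the $\gamma\le\beta$ transcription of Corollary~\ref{Cor:fractionalNash2}.
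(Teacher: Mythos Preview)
Your proof is correct and follows exactly the approach the paper intends: the corollary is stated without proof in Section~\ref{Sec:Extbeta+gamma<0} precisely because, as you observe, it is the transcription of Corollary~\ref{Cor:fractionalNash2} (non-critical case) once Proposition~\ref{prop:entropy_decay_A} is fed the exponent $\eta\in(-\gamma,0)$, which is admissible since $\gamma\le\beta$ forces $\eta>-\gamma\ge-\beta$. One cosmetic point: the uniform-in-time bound on $\nrm f{\mathrm L^1(\d x\,\d v)}$ follows from mass conservation (or the $k=0$ case of Lemma~\ref{lem:Lyapunov}) rather than Lemma~\ref{lem:propag_L1_m}, whose statement assumes $k\in(0,\gamma)$.
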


\section{Completion of the proofs and extension}\label{Sec:MoreProofs}

\subsection{The case \texorpdfstring{$\gamma\le \beta$}{beta+gamma le0}}\label{Sec:beta+gamma<0}

Let us define
\[
X_\zeta:=\riint|(1-\mathsf\Pi)f|^2 \bangle v^\zeta\d x\,\d\mu\quad\mbox{and}\quad Y:=\riint|\mathsf\Pi f|^2\,\d x\,\d\mu\,.
\]
With this notation, Inequality~\eqref{Z} in Corollary~\ref{Cor:WPIIext} can be written as
\be{eta1}
\forall\,r>0\,,\quad -\irdx{\bangle{f,\mathsf Lf}}\ge r\,X_\eta-(1-\theta)\,\theta^\frac\theta{1-\theta}\,r^\frac1{1-\theta}\,X_k
\ee
with $\theta=\frac{k-\eta}{k+\beta}$, $- \beta\le-\,\gamma<\eta<0<k<\gamma$, while Corollary~\ref{Cor:fractionalNash2ext} simply means
\be{eta2}
\rint\mathsf R_\xi[\Ff]\,\d\xi\;\gtrsim\;Y^{1+\frac\alpha d}-X_\eta\,.
\ee
Let us consider
\[
\mathcal H:=X_0+Y+\delta(t)\,\re\(\irdx{\bangle{\mathsf Af,f}}\)\mbox{with}\quad\delta(t)=\delta_0\,(1+\varepsilon\,t)^{-\mathsf a}\,,
\]
for some constant numbers $\mathsf a\in(0,1)$, $\delta_0>0$ and $\varepsilon>0$, to be chosen. The major difference with the case $\gamma>\beta$ considered in Section~\ref{Sec:Proofs} is that we allow $\delta$ to depend on $t$ and that we shall actually make an explicit choice of this dependence.

We know that
\[
\big(1-\tfrac{\delta}2\big)\,(X_0+Y)\le\mathcal H\le\big(1+\tfrac{\delta}2\big)\,(X_0+Y)
\]
and compute
\[
-\frac{\d\mathcal H}{\d t}=-\,2\,\irdxmu{f\,\mathsf L f}+\delta(t)\rint\mathsf R_\xi[\Ff]\,\d\xi+\delta'(t)\,\re\(\irdx{\bangle{\mathsf Af,f}}\).
\]
Using~\eqref{eta1} and~\eqref{eta2}, we get the estimate
\[
-\frac{\d\mathcal H}{\d t}\gtrsim\delta\,Y^{1+\frac\alpha d}-\delta\,X_\eta+r\,X_\eta-r^\frac{k+ \beta}{\eta+\beta}-\tfrac{\delta\,\varepsilon}{1+\varepsilon\,t}\,\mathcal H\,.
\]
We recall that $\gtrsim$ means that the inequality holds up to a positive, finite constant, which changes from line to line. Next we choose $r=2\,\delta$, $\delta_0>0$ and $\varepsilon>0$ small enough so that the above right-hand side of the inequality is positive. However, we shall still do some further reductions before fixing the values of $\delta_0$ and $\varepsilon$. The decay rate of~$\mathcal H$ is governed by
\[
- \frac{\d\mathcal H}{\d t} \gtrsim\delta\,Y^{1+\frac\alpha d}+\delta\,X_\eta-\delta^\frac{k+ \beta}{\eta+\beta}-\tfrac{\delta\,\varepsilon}{1+\varepsilon\,t}\,\mathcal H\,,
\]
with a positive right-hand side at $t=0$. Using H\"older's inequality
\[
X_0\le X_\eta^\frac k{k-\eta}\,X_k^\frac\eta{\eta-k}
\]
and the fact that $X_k$ is uniformly bounded in $t$ by a positive constant depending only on the initial datum, we obtain
\[
-\frac{\d\mathcal H}{\d t}\gtrsim\delta\,Y^{1+\frac\alpha d}+\delta\,X_0^{1-\frac\eta k}-\delta^\frac{k+\beta}{\eta+\beta}-\tfrac{\delta\,\varepsilon}{1+\varepsilon\,t}\,\mathcal H\,,
\]
and we can still assume that the inequality has a positive right-hand side at $t=0$ without loss of generality. It is now clear that
\[
-\frac{\d\mathcal H}{\d t}\gtrsim\delta\(\mathcal H^{1+\kappa}-\delta^\frac{k-\eta}{\eta+\beta}-\tfrac\varepsilon{1+\varepsilon\,t}\,\mathcal H\)\quad\mbox{with}\quad\kappa=\max\left\{\frac\alpha d,-\frac\eta k\right\}\,.
\]
Up to a multiplication by a constant, we can actually fix the mutiplicative constant to a given value $\tau>0$ that will be chosen below (with the corresponding redefinition of $\varepsilon$ and $\delta_0$) so that the differential inequality is
\[
-\frac{\d\mathcal H}{\d t}\ge\tau\,\delta\(3\,\mathcal H^{1+\kappa}-\delta^\frac{k-\eta}{\eta+ \beta}-\tfrac\varepsilon{1+\varepsilon\,t}\,\mathcal H\).
\]
Now, let us fix $\delta_0>0$ and $\varepsilon>0$ small enough so that
\[
\delta_0^\frac{k-\eta}{\eta+\beta}+\varepsilon\,\mathcal H_0\le\mathcal H_0^{1+\kappa}
\]
with $\mathcal H_0=\mathcal H(t=0)$. We have to check that this condition is stable under the evolution, that is,
\be{Stab}
\forall\,t\ge0\,,\quad\delta(t)^\frac{k-\eta}{\eta+\beta}\le\mathcal H(t)^{1+\kappa}\quad\mbox{and}\quad\varepsilon\,\mathcal H(t)\le\mathcal H(t)^{1+\kappa}\,.
\ee

Keeping track of the coefficients is paid by unnecessary complications, so that we are going to make some simplifying assumptions, in order to emphasize the key idea of the estimate. Up to a change of variable $t\mapsto\varepsilon\,t$, we can choose $\varepsilon=1$ and also take $\delta_0=1$ and $\mathcal H_0=1$ without loss of generality, so that, in particular,
\[
\forall\,t\ge0\,,\quad\delta(t)=(1+t)^{-\mathsf a}\,.
\]
As a result, let us consider the differential inequality
\[
-\frac{\d\mathcal H}{\d t}\ge\tau\,\delta\(3\,\mathcal H^{1+\kappa}-\delta^\frac{k-\eta}{\eta+ \beta}-\tfrac{\mathcal H}{1+t}\).
\]
We aim at showing that
\be{barrier}
\mathcal H(t)\le\overline{\mathcal H}(t):=(1+t)^{-\,\tau}
\ee
where $\overline{\mathcal H}$ solves
\[
\frac{\d\overline{\mathcal H}'}{\d t}=-\,\tau\,\delta\,\overline{\mathcal H}^{1+\kappa}\quad\mbox{with}\quad\tau=\frac{1-\mathsf a}\kappa\,.
\]
As a consequence of
\[
\delta^\frac{k-\eta}{\eta+\beta}=(1+t)^{-\mathsf a\,\frac{k-\eta}{\eta+\beta}}\quad\mbox{and}\quad\tfrac{\overline{\mathcal H}}{1+t}=(1+t)^{-\frac{1-\mathsf a}\kappa-1}\,,
\]
we learn that
\[
\forall\,t\ge0\,,\quad\delta(t)^\frac{k-\eta}{\eta+\beta}\le\overline{\mathcal H}(t)^{1+\kappa}\quad\mbox{and}\quad\tfrac{\overline{\mathcal H}}{1+t}\le\overline{\mathcal H}(t)^{1+\kappa}
\]
under the condition that
\be{Rate}
-\mathsf a\,\frac{k-\eta}{\eta+\beta}\le-\,(1-\mathsf a)\,\frac{1+\kappa}\kappa\quad\mbox{and}\quad\mathsf a\ge0\,.
\ee
Since $-\,\frac{\d\mathcal H'}{\d t}\le-\,\frac{\d\overline{\mathcal H}'}{\d t}$ if $\mathcal H(t)=\overline{\mathcal H}(t)$, it is then clear that~\eqref{Stab} holds and $\mathcal H(t)\le\overline{\mathcal H}(t)$ for any $t\ge0$. In other words, $\overline{\mathcal H}$ is a barrier function and~\eqref{barrier} holds for any $t\ge0$. The result is also true for the generic case.

With the choice $\mathsf a=( \beta+\eta)/\beta$, Condition~\eqref{Rate} is satisfied if $- \beta\le\eta\le0\le k$ and $\kappa=|\eta|/k$, with $\tau=k/|\beta|$ if $d\ge2$ because
\[
\alpha=\frac{\gamma+\beta}{1+ \beta}\le\max\left\{1,\frac{2\,\gamma}{1+\gamma}\right\}
\]
and because $\kappa=|\eta|/k>\alpha/d$ for an appropriate choice of $\eta\in(-\,\gamma,0)$ and $k\in(0,|\eta|\,d/\alpha)$. The same argument applies if $d=1$ and $\gamma\le1$.

If $d=1$, in the range $1\le\gamma\le| \beta|$, we have $\alpha>1$ and distinguish two cases.\\
$\bullet$ Either $\kappa=\alpha>|\eta|/k$: with $|\eta|/\alpha<k<\gamma$ and $\eta>-\,\gamma$, we find that $\tau=\tau_\star(\eta,k)$ and $a=a_\star(\eta,k)$ where
\[
\tau_\star(\eta,k):=\frac{k-\eta}{\alpha\,(k+\beta)+\eta+ \beta}\quad\mbox{and}\quad a_\star(\eta,k):=\frac{(1+\alpha)\,(\eta+\beta)}{\alpha\,(k+\beta)+\eta+\beta}\,.
\]
Using $\frac{\partial\tau_\star}{\partial k}>0$ and $\frac{\partial\tau_\star}{\partial\eta}<0$, the largest admissible value of $\tau_\star$ is achieved by
\begin{align*}
\lim_{(\eta,k)\to(-\,\gamma,\gamma)}\tau_\star(\eta,k)&=\frac{2\,\gamma}{\alpha\,(\gamma+ \beta)+|\gamma - \beta|}\,.
\end{align*}
$\bullet$ Or $\alpha\le|\eta|/k=\kappa$: in that case, we can take $\mathsf a=( \beta + \eta)/ \beta$, Condition~\eqref{Rate} is satisfied if $-\,\gamma\le\eta\le0\le k\le|\eta|/\alpha<\gamma/\alpha$, $\tau=k/|\beta|$ and $\kappa=|\eta|/k$. This is possible as soon as $k<\gamma/\alpha$ since this condition is then verified if we take any $\eta\in(-\,\gamma,-\,k\,\alpha)$.

This completes the proof of Theorems~\ref{th:main2} and~\ref{th:main1} with $\tau=k/|\beta|$, except if $d=1$, $1\le\gamma\le|\beta|$ and $k\in[\gamma/\alpha,\gamma)$, where the rate can be chosen arbitrarily close to $\tau_\star(-\,\gamma,k)$.

\subsection{The case of a flat torus}\label{Sec:torus}

As in~\cite{bouin_hypocoercivity_2017}, the case of the flat $d$-dimensional torus~$\mathbb T^d$ (with position $x\in\mathbb T^d$ and velocity $v\in\R^d$) follows from our method without additional efforts. In that case, Equation~\eqref{eq:main} admits a global equilibrium given by $f_\infty=\rho_\infty\,\M$ with $\rho_\infty=\frac1{\left|\mathbb T^d\right|} \iint_{\mathbb T^d\times\R^d} f^{\mathrm{in}}\,\d x\,\d v$, and the rate of convergence to the equilibrium is just given by the microscopic dynamics
\begin{align*}
\nrm{f-f_\infty}{\mathrm L^2(\d x\,\d\mu)}&\lesssim e^{-\lambda\,t}\,\nrm{f^{\mathrm{in}}-f_\infty}{\mathrm L^2(\d x\,\d\mu)}&\quad\mbox{if}\quad \beta \in (- \gamma,0]\,,\\
\nrm{f-f_\infty}{\mathrm L^2(\d x\,\d\mu)}&\lesssim(1+t)^{-k/\beta}\,\nrm{f^{\mathrm{in}}-f_\infty}{\mathrm L^2(\bangle v^k\d x\,\d\mu)}&\quad\mbox{if}\quad \beta>0\,,
\end{align*}
with $k\in(0,\gamma)$. In particular, if $f=f(t,v)$ does not depend on $x$, then~\eqref{eq:main} is reduced to the homogeneous equation $\partial_tf=\mathsf Lf$ and we recover the rate of convergence of $f$ to $\M$ in the norm $\mathrm L^2(\d\mu)$, as in Section~\ref{Sec:intro}. This is coherent with the results in~\cite{bakry_rate_2008,wang_simple_2014,wang_functional_2015,chen_weighted_2017,lafleche_fractional_2020,2019arXiv191111535A}. Moreover, we point out that our result is a little bit stronger than some of those results, because it relies on a finite $\nrm{f^{\mathrm{in}}}{\mathrm L^2(\bangle v^k\d x\,\d\mu)}$ norm for the initial condition, which is a weaker condition than the usual boundedness condition on $\nrm{f^{\mathrm{in}}\,\M^{-1}}{\mathrm L^\infty(\d x\,\d v)}$, or $\mathrm H^1$-type estimates as in~\cite[Section~6]{2019arXiv191111535A}, where $\beta=0$ in Case~$\mathsf L=\mathsf L_2$. Remark however that weighted $\mathrm L^2$ norms already appear in the homogeneous case in~\cite{kavian_fokker-planck_2015,lafleche_fractional_2020}.

\subsection{About rates in the diffusion limits}\label{Sec:difflim}

We shall end this paper with a comment about the stability of the rates in the diffusive scaling. In the range of parameters for which we prove decay at rate $t^{-\frac{d}{\alpha}}$, our previous computations give that the rate is uniform in the rescaling $t \to \frac{t}{\varepsilon^\alpha}$ and $x \to \frac{x}{\varepsilon}$. When the rate is given by $t^{-\frac{k}{\beta}}$, the way that the rate degenerates into $t^{-\frac{d}{\alpha}}$ (the rate of the macroscopic limit, see e.g. \cite{bouin2020quantitative}) is a bit more intricate, and we leave this issue for future work. 

\section*{Acknowledgments}
{\small This work has been supported by the Project EFI (ANR-17-CE40-0030) of the French National Research Agency (ANR). The authors are deeply indebted to Christian Schmeiser, who was at the origin of the research project and participated to the preliminary discussions while he was visiting as the holder of the \emph{Chaire d'excellence} of the \emph{Fondation Sciences Math\'ematiques de Paris}, also supported by \emph{Paris Sciences et Lettres}. The project is also part of the Amadeus project \emph{Hypocoercivity} no.~39453PH.}\\
{\sl\scriptsize\copyright~2021 by the authors. This paper may be reproduced, in its entirety, for non-commercial purposes.}

\appendix
\section{Steady states and force field for the fractional Laplacian with drift}\label{Sec:Ebound}

This appendix is devoted to the Case~$\mathsf L=\mathsf L_3$ of the collision operator $\mathsf L$, that is, to $\mathsf L_3 f=\Delta_v^{\sigma/2}f+\nabla_v\cdot\(E\,f\)$. Our goal here is to prove that the \emph{collision frequency} $\nu(v)$ behaves like $|v|^{- \beta}$ with $\beta= \sigma -\gamma$ as $|v|\to+\infty$, as claimed in Section~\ref{Sec:intro}. By Definition~\eqref{eq:def_E} of the force field $E$, we know that
\[
\nabla_v\cdot(E\,\M)=-\Delta_v^{\sigma/2} \M=-\,\nabla_v\cdot\(\nabla_v(-\Delta_v)^\frac{\sigma-2}2\,\M\),
\]
and this implies that, up to an additive constant,
\[
E\,\M=-\,\nabla_v(-\Delta_v)^\frac{\sigma-2}2\,\M=-\,\nabla_v\(\frac{C_{d,\sigma}}{|v|^{d+\sigma-2}}*\frac{c_\gamma}{\bangle v^{d+\gamma}}\)
\]
where $c_\gamma$ and $C_{d,\sigma}$ are given respectively by~\eqref{eq:hypfattailed} and~\eqref{Garofalo}.
\begin{proposition}\label{prop:estE} Assume that $\gamma>0$, $\sigma\in(0,2)$ and let $ \beta=\sigma - \gamma$. There is a positive function $G\in\mathrm L^\infty(\R^d)$ with $1/G\in\mathrm L^\infty(B_0^c(1))$ such that $E$ is given by
\[
\forall\,v\in\R^d\,,\quad E(v)=G(v) \bangle v^{- \beta} v\,.
\]
\end{proposition}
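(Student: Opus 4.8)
The plan is to analyze the convolution $W(v) := \big(\frac{C_{d,\sigma}}{|v|^{d+\sigma-2}} * \frac{c_\gamma}{\bangle v^{d+\gamma}}\big)(v)$ and its gradient, since $E(v)\,\M(v) = -\nabla_v W(v)$ and $\M(v) = c_\gamma\bangle v^{-d-\gamma}$, so that $E(v) = -c_\gamma^{-1}\bangle v^{d+\gamma}\,\nabla_v W(v)$. The claim $E(v) = G(v)\,\bangle v^{-\beta}\,v$ with $\beta = \sigma-\gamma$ is therefore equivalent to the statement that $\nabla_v W(v)$ behaves like $\bangle v^{-d-\gamma}\bangle v^{\gamma-\sigma}\,v = \bangle v^{-d-\sigma}\,v$ at infinity, up to two-sided bounds, with $G$ bounded and bounded below away from the origin; near the origin one only needs $G$ bounded, which follows because $W$ is smooth there (the Riesz kernel $|v|^{-(d+\sigma-2)}$ is locally integrable as $\sigma<2$, and $\M$ is smooth and bounded). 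First I would record that $W$ is radial (both factors are radial), so $\nabla_v W(v) = W'(|v|)\,\tfrac{v}{|v|}$, and the whole problem reduces to a one-variable asymptotic analysis of $W'(r)$ as $r\to\infty$ and a qualitative sign/non-vanishing statement for all $r>0$.

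The core estimate is the large-$|v|$ behaviour of $W$. I would split the convolution integral over the regions $\{|v-w| < |v|/2\}$, $\{|w| < |v|/2\}$, and the complementary far region, exactly in the spirit of the splitting used in the proof of Lemma~\ref{lem:estim_lap}. On $\{|w| < |v|/2\}$ the Riesz kernel is comparable to $|v|^{-(d+\sigma-2)}$ and integrates $\M$ to a constant, giving a contribution $\asymp |v|^{-(d+\sigma-2)}$; this is the dominant term because $\gamma>0$ forces the other two regions to contribute lower order. On $\{|v-w|<|v|/2\}$ one has $\M(w)\asymp \bangle v^{-d-\gamma}$ and the local integrability of the Riesz kernel gives $\asymp \bangle v^{-d-\gamma}\cdot |v|^{2-\sigma}$, which is smaller since $\gamma>0$ means $d+\gamma > d+\sigma - (2-\sigma)$... more precisely $|v|^{-d-\gamma+2-\sigma}$ versus $|v|^{-d-\sigma+2}$ differ by the factor $|v|^{-\gamma}\to 0$. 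The far region is handled crudely by $|v-w|\gtrsim \max(|v|,|w|)$ and is again lower order. The same decomposition, differentiated, gives $W'(|v|) \asymp -|v|^{-(d+\sigma-1)}$ with a definite sign for $|v|$ large; one should be slightly careful to justify differentiating under the integral sign, e.g. by noting $\nabla_v W = (\nabla |v|^{-(d+\sigma-2)}) * \M$ with the convolution still absolutely convergent since $\gamma>0$.

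For the behaviour at finite nonzero $|v|$ I would use a maximum-principle / positivity argument rather than asymptotics: $W = c_d'\,(-\Delta)^{(\sigma-2)/2}\M$ with $\M>0$ and $\M\to 0$ at infinity is strictly positive everywhere, and $W'(r) = -E_{\mathrm{rad}}(r)\,c_\gamma\,\bangle r^{-d-\gamma}$... instead, the cleanest route is to show $W'(r) < 0$ for all $r>0$. This follows because $W$ is radial, tends to $0$ at infinity, is smooth and bounded at $0$, and satisfies $(-\Delta)^{\sigma/2}W = c\,\M > 0$ (applying $(-\Delta)^{\sigma/2}$ to both sides, $\sigma/2 + (\sigma-2)/2 = \sigma - 1$; more directly $-\Delta W = c'\,(-\Delta)^{\sigma/2}\M > 0$ when $\sigma < 2$ — wait, $(-\Delta)\circ(-\Delta)^{(\sigma-2)/2} = (-\Delta)^{\sigma/2}$, and $(-\Delta)^{\sigma/2}\M$ need not be sign-definite). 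The safe argument: differentiate the explicit radial convolution $W(r) = c\int_0^\infty k(r,s)\,\M(s)\,s^{d-1}\,ds$ where $k$ is the spherically averaged Riesz kernel, and show $\partial_r k(r,s) < 0$ for all $r,s>0$; this is a classical monotonicity property of Riesz potentials of radial decreasing densities (the potential of a radially decreasing, non-increasing mass is itself radially decreasing). Combining, $W'(r)<0$ for all $r>0$, so $G(v) = -c_\gamma^{-1}\bangle v^{d+\gamma}\,\bangle v^{\beta}\,|v|^{-1}\,W'(|v|) > 0$ on $\R^d$, it is bounded (using the large-$|v|$ estimate $|W'|\lesssim|v|^{-d-\sigma+1}$ and smoothness at $0$, where $W'(r)=O(r)$ so $|v|^{-1}W'(|v|)$ stays bounded), and $1/G$ is bounded on $B_0^c(1)$ by the matching lower bound $|W'(r)|\gtrsim r^{-d-\sigma+1}$ for $r\ge 1$. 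The main obstacle is carrying out the three-region splitting carefully enough to get \emph{two-sided} bounds on $W'$ (upper bounds are routine; the lower bound for large $r$ requires checking that the dominant region's contribution is not cancelled), together with a clean justification of the global monotonicity $W'(r)<0$ — that is where one must invoke or reprove the Riesz-potential monotonicity lemma rather than hand-wave it.
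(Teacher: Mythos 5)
Your plan runs parallel to the paper's for the asymptotic scale (a region-splitting of the radial convolution to extract the $|v|^{-(d+\sigma-2)}$ behaviour), but it contains one genuine gap and one substantive difference in method.

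The gap is in your justification of differentiating under the integral: you write $\nabla_v W = \big(\nabla|v|^{-(d+\sigma-2)}\big)*\M$ and assert it is absolutely convergent ``since $\gamma>0$''. That is false for $\sigma\in[1,2)$: the differentiated Riesz kernel $\nabla_v|v|^{2-d-\sigma}$ has singularity of order $|v|^{-(d+\sigma-1)}$ at the origin, which is locally integrable only when $\sigma<1$, and the decay hypothesis $\gamma>0$ on $\M$ has no bearing on this local singularity. The paper handles exactly this point by distinguishing $\sigma\in(0,1)$, where the convolution is absolutely convergent, from $\sigma\in[1,2)$, where $\nabla_v|v|^{2-d-\sigma}$ must be understood as a principal value and the integrand for $u(v)\cdot v$ acquires the subtracted form $\frac{w\cdot v}{|w|^{d+\sigma}}\big(\bangle{v-w}^{-(d+\gamma)}-\bangle v^{-(d+\gamma)}\big)$. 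Alternatively --- and this is what the paper does to get local regularity on $B_1(0)$ --- one can move the derivative onto the smooth factor, writing $u=(d+\gamma)\big(|v|^{2-d-\sigma}*\,v\,\bangle v^{-(d+\gamma+2)}\big)$, which is absolutely convergent for all $\sigma\in(0,2)$. As stated your argument does not cover $\sigma\ge1$; you must adopt one of these fixes.

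The difference in method is the source of the sign and the lower bound. You propose invoking the monotonicity of Riesz potentials of radially decreasing densities (convolution of two radially decreasing functions is radially decreasing, e.g.\ by a layer-cake argument). That is a valid fact, but it is heavier machinery than the problem requires, it only yields $W'(r)\le 0$ qualitatively, and you still need an independent quantitative lower bound $|W'(r)|\gtrsim r^{-(d+\sigma-1)}$ for $r\ge1$ together with a check that the near-diagonal and far regions do not erode the leading contribution. The paper instead proves a self-contained reflection estimate for $u(v)\cdot v$ when $|v|>1$: over the region symmetric under $w\mapsto -w$ one pairs terms to exhibit the integrand as $\big(\bangle{w-v}^{-(d+\gamma)}-\bangle{w+v}^{-(d+\gamma)}\big)\tfrac{w\cdot v}{|w|^{d+\sigma}}\ge 0$ for $w\cdot v>0$, while the near-diagonal region $|w-v|<1/2$ yields an explicit lower bound $\gtrsim |v|^{-(d+\sigma-2)}$. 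That single argument gives sign and scale simultaneously and renders the cancellation worry moot; you would do well to adopt it in place of the rearrangement lemma.
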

\begin{proof}
Let $u(v)=-\nabla_v\,\big(\frac1{|v|^{d+\sigma-2}}*\frac1{\bangle v^{d+\gamma}}\big)(v)$ so that $E(v)=C_{d,\sigma} \bangle v^{d+\gamma} u(v)$. Since
\[ 
u(v)=(d+\gamma)\(\frac1{|v|^{d+\sigma-2}}*\frac v{\bangle v^{d+\gamma+2}}\)
\]
where $\bangle v^{-(d+\gamma+2)} v\in C^\infty(\R^d)\cap\mathrm L^1(\d v)$, and $\sigma<2$, one has $u\in C^1_{\mathrm{loc}}(\R^d)$ and $u(0)=0$ which proves the result in $B_1(0)$. We look for an estimate of $u(v)\cdot v$ from above and below on $B_0^c(1)$. Notice that $u$ can also be written as
\be{eq:estim_E_u}
u(v)=\(d+\sigma-2\)\(\frac v{|v|^{d+\sigma}}*\frac1{\bangle v^{d+\gamma}}\).
\ee
Depending on the integrability at infinity of $v/|v|^{d+\sigma}$, that is, whether $\sigma\in(0,1)$ or not, we have to distinguish two cases.
\\[4pt]
\noindent $\bullet$ \textbf{Case $\sigma\in(0,1)$.} Using~\eqref{eq:estim_E_u}, we have the estimates
\begin{align*}
\left|\int_{|w|\ge\bangle v/2}\frac w{|w|^{d+\sigma}}\frac{\d w}{\bangle{w-v}^{d+\gamma}}\right|&\le\frac{2^{d+\sigma-1}}{\bangle v^{d+\sigma-1}}\rint\frac{\d w}{\bangle w^{d+\gamma}}\,,\\
\left|\int_{|w|<\bangle v/2}\frac w{|w|^{d+\sigma}}\frac{\d w}{\bangle{w-v}^{d+\gamma}}\right|&\le\(\int_{|w|<\bangle v/2}\frac{\d w}{|w|^{d+\sigma-1}}\)\frac{2^{d+\sigma-1}}{|v|^{d+\gamma}}\\
&\hspace*{2cm}\le\frac{2^{d+\sigma-1}\,\omega_d}{(1-\sigma)\,|v|^{d+\gamma+\sigma-1}}\,,
\end{align*}
and obtain
\[
\forall\,v\in\R^d\,,\quad|u(v)\cdot v|\le|u(v)|\,|v|\lesssim|v|^{-(d+\sigma-2)}\,.
\]

To get a bound from below on $u(v)\cdot v$, we cut the integral in two pieces and use the fact that $|v|>1$ and $|w-v|<1/2$ implies $w\cdot v>0$. First
\begin{multline*}
\int_{\substack{|w-v|>1/2\\ |w+v|>1/2}}\frac{w\cdot v}{|w|^{d+\sigma}}\frac{\d w}{\bangle{w-v}^{d+\gamma}}=\(\int_{\substack{|w-v|>1/2\\ w\cdot v>0}}\kern-3pt+\kern-3pt\int_{\substack{|w+v|>1/2\\ w\cdot v<0}}\)\kern-3pt\frac{w\cdot v}{|w|^{d+\sigma}}\frac{\d w}{\bangle{w-v}^{d+\gamma}}\\
=\int_{\substack{|w-v|>1/2\\ w\cdot v>0}}\(\frac1{\bangle{w-v}^{d+\gamma}}-\frac1{\bangle{w+v}^{d+\gamma}}\)\frac{w\cdot v}{|w|^{d+\sigma}}\,\d w\,,
\end{multline*}
which is positive since $\bangle{w+v}^2-\bangle{w-v}^2=2\,w\cdot v\ge 0$. The remaining terms are dealt with as follows
\begin{multline*}
\int_{\substack{|w-v|\le1/2\\[2pt]\mbox{\scriptsize or}\\[2pt] |w+v|\le1/2}}\frac{w\cdot v}{|w|^{d+\sigma}}\frac{\d w}{\bangle{w-v}^{d+\gamma}}\\
=\int_{|w-v|<\frac12}\(\frac1{\bangle{w-v}^{d+\gamma}}-\frac1{\bangle{w+v}^{d+\gamma}}\)\frac{w\cdot v}{|w|^{d+\sigma}}\,\d w\\
\ge\((4/5)^{d+\gamma}-(2/5)^{d+\gamma}\)\int_{|w-v|<\frac12}\frac{w\cdot v}{|w|^{d+\sigma}}\,\d w\,,
\end{multline*}
since $|w+v|\ge2\,|v|-|w-v|\ge\frac32$. Finally, if $|v|>1$ and $|w-v|<\frac12$, we get
\begin{align*}
2\,w\cdot v&=|v|^2+|w|^2-|w-v|^2\ge|v|^2-\frac12\ge\frac{|v|^2}2\,,\\
|w|&\le|v|+|w-v|\le2\,|v|\,,
\end{align*}
so that
\begin{align*}
\int_{|w-v|<\frac12}\frac{w\cdot v}{|w|^{d+\sigma}}\,\d w\ge\frac{|B_0(1/2)|}{2^{d+\sigma+2}}\frac1{|v|^{d+\sigma-2}}\,.
\end{align*}
This implies $u(v)\cdot v\ge C\,|v|^{-(d+\sigma-2)}$ for some $C>0$. Since $u$ is radial, we proved that
\begin{align*}
u(v)=G(v)\,\frac v{|v|^{d+\sigma}}
\end{align*}
where $G\in\mathrm L^\infty(\R^d)$ and $G^{-1}\in\mathrm L^\infty(B_0^c(1))$ and the conclusion holds with $\beta=\sigma - \gamma$.

\medskip\noindent $\bullet$ \textbf{Case $\sigma\in[1,2)$}. The gradient of $v\mapsto|v|^{2-d-\sigma}$ is a distribution of order $1$ that can be defined as a \emph{principal value}. Indeed, in the sense of distributions, for any $\varphi\in\mathcal D(\R^d)$, we have
\begin{align*}
\bangle{\nabla_v|v|^{2-d-\sigma},\varphi}_{\mathcal D',\mathcal D}&=-\rint\frac{\nabla_v\varphi(v)}{|v|^{d+\sigma-2}}\,\d v=-\rint\frac{\nabla_v(\varphi(v)-\varphi(0))}{|v|^{d+\sigma-2}}\,\d v\\
&=(d+\sigma-2)\rint\frac v{|v|^{d+\sigma}}\(\varphi(v)-\varphi(0)\)\d v\\
&=: (d+\sigma-2) \bangle{\pv\!\(\frac v{|v|^{d+\sigma}}\)\!,\varphi}_{\mathcal D',\mathcal D}\,.
\end{align*}
Identity~\eqref{eq:estim_E_u} is replaced by
\[
\frac{u(v)}{d+\sigma-2}=\pv\!\(\frac v{|v|^{d+\sigma}}\)*\frac1{\bangle v^{d+\gamma}}=\rint\frac w{|w|^{d+\sigma}}\(\frac1{\bangle{v-w}^{d+\gamma}}-\frac1{\bangle v^{d+\gamma}}\)\d w\,,
\]
so that, after computations like the ones in the proof of Lemma~\ref{lem:estim_lap},
\[
\frac{|u(v)|}{d+\sigma-2}\le\rint\frac1{|w-v|^{d+\sigma-1}}\left|\frac1{\bangle w^{d+\gamma}}-\frac1{\bangle v^{d+\gamma}}\right|\d w\lesssim\frac1{\bangle v^{d+\sigma-2}}\,.
\]
Now estimate $u(v)\cdot v$ by
\begin{multline*}
\int_{|w|\ge\frac12}\frac{w\cdot v}{|w|^{d+\sigma}}\(\frac1{\bangle{v-w}^{d+\gamma}}-\frac1{\bangle v^{d+\gamma}}\)\d w\\=\int_{|w|\ge\frac12}\frac{w\cdot v}{|w|^{d+\sigma}}\frac1{\bangle{v-w}^{d+\gamma}}\,\d w\gtrsim\frac1{\bangle v^{d+\sigma-2}}\,.
\end{multline*}
and
\begin{multline*}
\left\vert\int_{|w|<\frac12}\frac{w\cdot v}{|w|^{d+\sigma}}\(\frac1{\bangle{v-w}^{d+\gamma}}-\frac1{\bangle v^{d+\gamma}}\)\d w \right\vert\\
\le\sup_{w\in B_v(1/2)}\frac{(d+\gamma)\,|v|}{\bangle w^{d+\gamma+1}}\int_{|w|<\frac12}\frac{\d w}{|w|^{d+\sigma-2}}\lesssim\frac1{\bangle v^{d+\gamma}}\,.
\end{multline*}
The result follows from the fact that $d+\gamma>d>d+\sigma-2$.
\Qed\end{proof}

\bibliographystyle{acm}
\bibliography{BDL}
\newpage\tableofcontents
\end{document}